\documentclass[a4paper,10pt]{amsart}

\usepackage{etoolbox}
\usepackage{setspace} 
\usepackage{amsthm} 
\allowdisplaybreaks
\usepackage[only,llbracket,rrbracket]{stmaryrd} 
\usepackage{latexsym}
\usepackage{amsmath}
\usepackage{amssymb}
\usepackage{amscd}
\usepackage{mathrsfs} 
\usepackage{dsfont} 
\usepackage{xcolor}

\usepackage[backref,pagebackref,breaklinks,linktoc=all,hyperfootnotes=false]{hyperref} 
\hypersetup{
	colorlinks  =	true,
	linkcolor	=	{green!50!black},
	citecolor	=	{green!50!black},
	urlcolor    =   {green!50!black}
}

\usepackage{multibib} 

\newcites{GitHub}{\small GitHub Repositories}

\usepackage{mathrsfs}
\usepackage[utf8x]{inputenc}
\usepackage{enumitem} 
\usepackage{ucs}
\usepackage{graphicx}
\usepackage{tabularx}
\usepackage{wrapfig, booktabs} 
\usepackage{pgfplots}
\usepackage{caption}
\usepackage{subfigure}
\usepackage{longtable}
\usepackage{multicol}
\usepackage{mathtools}
\usepackage{tikz-cd}
\usepackage{tikz}

\usepackage{multicol} 
\usepackage{multirow} 

\usepackage{placeins} 

\definecolor{darkgreen}{rgb}{0,0.45,0}
\definecolor{darkred}{rgb}{0.75,0,0}
\definecolor{darkblue}{rgb}{0,0,0.6}

\usepackage[margin=1in]{geometry}
\usepackage[skins]{tcolorbox}
\usepackage{minibox}

\usepackage{float} 	

\usepackage[ruled,vlined]{algorithm2e} 

\makeatletter 
\def\@tocline#1#2#3#4#5#6#7{\relax
	\ifnum #1>\c@tocdepth 
	\else
	\par \addpenalty\@secpenalty\addvspace{#2}%
	\begingroup \hyphenpenalty\@M
	\@ifempty{#4}{%
		\@tempdima\csname r@tocindent\number#1\endcsname\relax
	}{%
		\@tempdima#4\relax
	}%
	\parindent\z@ \leftskip#3\relax \advance\leftskip\@tempdima\relax
	\rightskip\@pnumwidth plus4em \parfillskip-\@pnumwidth
	#5\leavevmode\hskip-\@tempdima
	\ifcase #1
	\or\or \hskip 1em \or \hskip 2em \else \hskip 3em \fi%
	#6\nobreak\relax
	\dotfill\hbox to\@pnumwidth{\@tocpagenum{#7}}\par
	\nobreak
	\endgroup
	\fi}
\makeatother

\setcounter{tocdepth}{1} 
\setcounter{secnumdepth}{3}
\onehalfspacing

\date{\today}
\author{Luca Dall'Ava, Aleksander Horawa}
\title{Balanced triple product $p$-adic $L$-functions and Stark points}

\newcommand{\N}{\mathbb{N}}
\newcommand{\zp}{\mathbb{Z}_p}
\newcommand{\cp}{\mathbb{C}_p}
\newcommand{\F}{\mathbb{F}}
\newcommand{\qp}{\mathbb{Q}_p}
\newcommand{\qpbar}{\overline{\qp}}
\newcommand{\ql}{\mathbb{Q}_\ell}
\newcommand{\qll}{\mathbb{Q}_{\ell^2}}
\newcommand{\A}{\mathbb{A}}
\newcommand{\Ac}{\mathcal{A}}

\newcommand{\R}{\mathbb{R}}

\newcommand{\T}{\mathbb{T}}
\newcommand{\Ttilde}{\widetilde{\mathbb{T}}}

\newcommand{\W}{\mathcal{W}}
\newcommand{\D}{\mathscr{D}}
\newcommand{\E}{\mathscr{E}}
\newcommand{\Etilde}{\widetilde{\E}}
\newcommand{\ZZ}{\mathscr{Z}}

\newcommand{\End}{\operatorname{End}}
\newcommand{\Image}{\operatorname{Im}}

\newcommand{\ph}[1]{\phantom{#1}}

\newcommand{\UL}[1]{\langle\varpi_{D_{#1}}\rangle}
\newcommand{\IL}[1]{\langle\sqrt{#1}\rangle_\mathbf{I}}

\newcommand{\Aut}{\operatorname{Aut}}
\newcommand{\disc}{\operatorname{disc}}
\newcommand{\cond}{\operatorname{cond}}

\newcommand{\AL}[1]{\tau_{#1}^D}
\newcommand{\Np}{\mathrm{N}_p}
\newcommand{\Mod}[1]{\ (\mathrm{mod}\ #1)}
\newcommand{\cyc}{\varepsilon_{\textrm cyc}}

\newcommand{\magma}{\texttt{magma}~\cite{Magma}}


\newcommand{\Hom}{\operatorname{Hom}}
\newcommand{\Ind}{\operatorname{Ind}}
\newcommand{\GL}{\operatorname{GL}}
\newcommand{\SL}{\operatorname{SL}}

\newcommand{\Gal}{\operatorname{Gal}}

\newcommand{\Sp}{\operatorname{Sp}}
\newcommand{\Frac}{\operatorname{Frac}}
\newcommand{\Ad}{\operatorname{Ad}}
\newcommand{\St}{\operatorname{St}}
\newcommand{\vol}{\operatorname{vol}}

\newcommand{\Z}{\mathbb{Z}}
\newcommand{\Q}{\mathbb{Q}} 
\newcommand{\C}{\mathbb{C}}
\newcommand{\bal}{\operatorname{bal}}
\renewcommand{\O}{\mathcal{O}}
\newcommand{\iso}{\cong}


\newcommand{\Addresses}{
	{
		\bigskip
        \bigskip
		\footnotesize
        \setstretch{1.2}
    
		\noindent L. Dall'Ava, \textsc{Dipartimento di Matematica, Università degli Studi di Milano,
			Milano, Italy.}\par\nopagebreak
   
		\noindent \textit{E-mail address:} \href{mailto:luca.dallava@unimi.it}{\texttt{luca.dallava@unimi.it}}\par\nopagebreak
  
		\noindent \textit{URL:} \url{https://sites.google.com/view/luca-dallava}
		
		\medskip
		
		\noindent A. Horawa, \textsc{Mathematical Institute, University of Oxford, Oxford, United Kingdom.}\par\nopagebreak
  
		\noindent \textit{E-mail address:} \href{mailto:horawa@maths.ox.ac.uk}{\texttt{horawa@maths.ox.ac.uk}}\par\nopagebreak
        
		\noindent \textit{URL:} \url{https://people.maths.ox.ac.uk/horawa/}
	
    }
}

\theoremstyle{plain}
\newtheorem{theoremaleph}{Theorem}

\newtheorem{conjaleph}[theoremaleph]{Conjecture}

\newtheorem{propositionaleph}[theoremaleph]{Proposition}

\newtheorem{theorem}{Theorem}[section]
\newtheorem{lemma}[theorem]{Lemma} 
\newtheorem{corollary}[theorem]{Corollary}
\newtheorem{proposition}[theorem]{Proposition}
\newtheorem{conjecture}[theorem]{Conjecture}


\theoremstyle{definition}
\newtheorem{definition}[theorem]{Definition}

\newtheorem{remark}[theorem]{Remark}
\newtheorem{example}[theorem]{Example}
\newtheorem*{notation*}{Notation}
\newtheorem{hypothesis}[theorem]{Hypothesis}
\newtheorem{hypothesisalph}{Hypothesis}

\newtheorem{hypothesisalphpr}{Hypothesis}

\numberwithin{equation}{section}

\newcommand{\mat}[4]{\left(\begin{smallmatrix}
		#1 & #2\\
		#3 & #4
	\end{smallmatrix}\right)}
\newcommand{\Mat}[4]{\left(\begin{matrix}
		#1 & #2\\
		#3 & #4
	\end{matrix}\right)}

\newcommand{\lcm}{\mathrm{lcm}}


\pgfplotsset{compat=1.16}

\begin{document}
	
	\begin{abstract}
            Let $E$ be an elliptic curve over $\Q$ and $\varrho_1, \varrho_2 \colon \Gal(H/\Q) \to \GL_2(L)$ be two odd Artin representations. We use $p$-adic methods to investigate the part of the Mordell--Weil group $E(H) \otimes L$ on which the Galois group acts via $\varrho_1 \otimes \varrho_2$. When the rank of the group is two, Darmon--Lauder--Rotger used a dominant triple product $p$-adic $L$-function to study this group, and gave an {\em Elliptic Stark Conjecture} which relates its value outside of the interpolation range to two Stark points and one Stark unit. Our paper achieves a similar goal in the rank one setting. We first generalize Hsieh's construction of a 3-variable balanced triple product $p$-adic $L$-function in order to allow Hida families with classical weight one specializations. We then give an {\em Elliptic Stark Conjecture} relating its value outside of the interpolation range to a Stark point and two Stark units. As a consequence, we give an explicit $p$-adic formula for a point which should conjecturally lie in $E(H) \otimes L$. We prove our conjecture for dihedral representations associated with the same imaginary quadratic field. This requires a generalization of the results of Bertolini--Darmon--Prasanna which we prove in the appendix. \\
            
		\noindent{\scshape{2020 Mathematics Subject Classification:}} 11G05, 11G40, 11F11, 11F12, 11R23 \\
		{\scshape{Key words:}} $p$-adic $L$-functions, Birch Swinnerton--Dyer Conjecture, Stark units, eigenvarieties
	\end{abstract}
	
	\maketitle

{
\hypersetup{hidelinks}
\tableofcontents
}

\section{Introduction}\label{section: introduction}

Let $E$ be an elliptic curve over $\Q$ and $\varrho_1, \varrho_2 \colon \Gal(H/\Q) \to \GL_2(L)$ be two odd Artin representations such that $\det \varrho_1 \cdot \det \varrho_2 = 1$. We can consider the part of the Mordell-Weil group $E(H)$ on which the Galois group $\Gal(H/\Q)$ acts via $\varrho_1 \otimes \varrho_2$; formally:
$$E[\varrho_1 \otimes \varrho_2] = \Hom_{\Gal(H/\Q)}(\varrho_1 \otimes \varrho_2, E(H)_L).$$
An equivariant version of the Birch--Swinnerton-Dyer conjecture then predicts that the rank of $E[\varrho_1 \otimes \varrho_2]$ is equal to the order of vanishing of the $L$-function $L(\varrho_E \otimes \varrho_1 \otimes \varrho_2, s)$ at $s = 1$, and the leading term of its Taylor expansion is explicitly related to the elements of $E[\varrho_1 \otimes \varrho_2]$. The goal of this paper is to investigate the group $E[\varrho_1 \otimes \varrho_2]$ using $p$-adic methods when its rank is 1. We start with a brief summary before explaining the details.

When $E[\varrho_1 \otimes \varrho_2]$ has rank 2, the analogous question was considered by Darmon, Lauder, and Rotger:
\begin{enumerate}
    \item They prove that there is a $p$-adic $L$-function $\mathcal L_p^{\varrho_1} (\ell)$ with an interpolation property which holds for integers $\ell \geq 3$.\footnote{In general, there is a 3-variable $p$-adic $L$-function $\mathcal L_p^{\varrho_1}(k, \ell, m)$ with an interpolation property when $\ell$ is {\em dominant}. Here, we take $(k, \ell, m) = (2, \ell, 1)$ for simplicity.} It is associated with the triple $(E, \varrho_1, \varrho_2)$, but it is {\em unbalanced} --- $\varrho_1$ plays a special role in the construction.~\cite{Darmon_Rotger}
    
    \item They conjecture a special value formula
    outside of the interpolation range, the {\em rank two Elliptic Stark Conjecture}:
    \begin{equation}\label{eqn:DLR_ES}
        \mathcal L_p^{\varrho_1}(1) \sim_{L^\times} \frac{ 
 \det \begin{pmatrix} \log_{E, p}(P_{1,1}) & \log_{E, p}(P_{1,2}) \\
 \log_{E, p}(P_{2,1}) & \log_{E, p}(P_{2,2}) \end{pmatrix} }{\log_p(u_1)},
    \end{equation}
    where $P_{i,j} \in E(H)_L$ are points in the $(\varrho_1 \otimes \varrho_2)$-isotypic component and $u_1 \in \O_H^\times$ is a unit in the~$\Ad^0\varrho_1$-isotypic component. \cite[Conjecture~ES]{Darmon_Lauder_Rotger_Stark_points}
    
    \item They prove~\eqref{eqn:DLR_ES} when $\varrho_1$ and $\varrho_2$ are both induced from the same imaginary quadratic field in which $p$ splits. \cite[Theorem 3.3]{Darmon_Lauder_Rotger_Stark_points}
\end{enumerate}

When $E[\varrho_1 \otimes \varrho_2]$ has rank 1, we achieve the same goals in the present paper:
\begin{enumerate}
    \item We prove that there is a $p$-adic $L$-function $\mathcal L_p^{\bal}(\ell)$ with an interpolation property which holds for integers $\ell \geq 2$.\footnote{Again, in general, there is a 3-variable $p$-adic $L$-function $\mathcal L_p^{\bal}(k, \ell, m)$ with an interpolation property when the weights are {\em balanced}. Here, we take $(k, \ell, m) = (2, \ell, \ell)$ for simplicity.} It is associated with the triple ($E$, $\varrho_1$, $\varrho_2$), but in our case it is {\em balanced} --- $\varrho_1$ and $\varrho_2$ play equivalent roles. [Theorem~\ref{thm:A}]
    
    \item We conjecture a special value formula outside of the interpolation range, the {\em rank one Elliptic Stark Conjecture}:
    \begin{equation}\label{eqn:ES_rank1}
        \mathcal L_p^{\bal}(1) \sim_{\sqrt{L^\times}} \frac{\log_{E,p}(P)}{\log_p(u_1)^{1/2} \log_p(u_2)^{1/2}}
    \end{equation}
    where $P \in E(H)_L$ is a point in the ($\varrho_1 \otimes \varrho_2$)-isotypic component and $u_i \in \O_H^\times$ is a unit in the $\Ad^0\varrho_i$-isotypic component, for $i = 1,2$. [Conjecture~\ref{conj:C}]

    \item We prove~\eqref{eqn:ES_rank1} when $\varrho_1$ and $\varrho_2$ are both induced from the same imaginary quadratic field in which $p$ splits. [Theorem~\ref{thm:D}]
\end{enumerate}
As opposed to the rank two setting, our formula (2) can be rewritten to give a $p$-adic analytic formula for a $p$-adic point:
\begin{equation}
	\exp_{E,p}(\mathcal L_p^{\bal}(1) \cdot \log_p(u_1)^{1/2} \cdot \log_p(u_2)^{1/2}) \in E(H_p)_L
\end{equation}
conjecturally lies in $E(H)_L$. 

Bertolini, Seveso, and Venerucci have also been studying the rank one group $E[\varrho_1 \otimes \varrho_2]$ using a balanced $p$-adic $L$-function\footnote{An algorithmic study of this $p$-adic $L$-function, motivated by their work, has been carried out in the PhD thesis of the first-named author~\cite{DallAva2021PhD}.}. Their Oberwolfach report~\cite{BSV} conjectures that there is a canonical multiple $L_p^{\rm can}$ of a $p$-adic $L$-function $L_p$ whose value at $(2,1,1)$ is equal to $\log_{E, p}(P)$. They explain that special cases of this conjecture have been verified in the CM setting. They give a different expression for this $p$-adic $L$-function using an endoscopic lift to ${\rm GSp}_4$, and relate its non-vanishing at $\ell = 1$ to the non-vanishing of an appropriate Selmer class. This is the subject of their forthcoming work with Andreatta~\cite{ABSV}.

In the rest of the introduction, we will state our results precisely and explain the new ideas which allow us to achieve these goals.

\subsection{Construction of the balanced $p$-adic $L$-function}

Let $f$ be the normalized weight two modular form associated with $E$ by the Modularity Theorem, and $g$, $h$ be the normalized weight one modular forms associated with $\varrho_1$, $\varrho_2$, respectively, by the work of Khare and Wintenberger~\cite{Khare_Wintenberger_I, Khare_Wintenberger_II}. The analytic properties of the $L$-function we are interested in are understood in terms of the Garrett triple product $L$-function associated with $f$, $g$, and $h$:
$$L(\varrho_E \otimes \varrho_1 \otimes \varrho_2, s) = L(f \times g \times h, s).$$
Then the completed $L$-function $\Lambda(f \times g \times h, s)$ satisfies a functional equation relating the value at $s$ to the value at $2-s$. The assumption that $\det \varrho_1 \det \varrho_2 = 1$ implies that the global root number $\epsilon(f \times g \times h)$ is $\pm 1$. Since we are interested in the rank one setting, we will assume that:
$$\epsilon(f \times g \times h) = -1$$
(see Hypothesis~\ref{hyp:epsilon-1}). Since the global root number is the product of local root numbers
$$\epsilon(f \times g \times h) = \prod_v \epsilon_v(f \times g \times h),$$
and $\epsilon_\infty(f \times g \times h) = 1$, the assumption amounts to the fact that there is an odd number of primes $v$ such that $\epsilon_v(f \times g \times h) = -1$.

Let $p \geq 5$ be a prime number. We assume that $f$, $g$, $h$ are ordinary and regular at $p$ (see Hypothesis~\ref{hyp:classicality}), and consider Hida families $\mathbf g$, $\mathbf h$ whose weight one specializations are fixed $p$-stabilizations $g_\alpha$, $h_\alpha$ of $g$ and $h$, respectively. Letting $g_\ell$, $h_m$ be the weight $\ell$, $m$ specializations of $\mathbf g$, $\mathbf h$, respectively, we note that 
$$\epsilon(f \times g_\ell \times h_m) = \prod_{v} \epsilon(f \times g_\ell \times h_m) = \prod_{v < \infty}  \epsilon_v(f \times g_\ell \times h_m) \cdot \epsilon_\infty(f \times g_\ell \times h_m) = -\epsilon_\infty(f \times g_\ell \times h_m)$$
by rigidity of automorphic types. In particular, if the weights $(2, \ell, m)$ are {\em balanced}, i.e.\ $2 \leq \ell < 2 + m$, $2 \leq m < 2 + \ell$, and $\ell + m \equiv 0 \pmod 2$, we have that
$$\epsilon(f \times g_\ell \times h_m) = +1,$$
and hence we expect that generically the central $L$-value is non-vanishing:
$$L(f \times g_\ell \times h_m, (\ell + m)/2) \neq 0,$$
and there should be a $p$-adic $L$-function interpolating these values divided by appropriate periods. In comparison, Darmon, Lauder, and Rotger start with $\epsilon(f \times g \times h) = +1$ in the $f$-dominant region and hence obtain $\epsilon(f \times g_\ell \times h_m) = +1$ in the $g$-dominant region, i.e.\ for $\ell > m + 2$.

There are two known constructions of $p$-adic $L$-functions interpolating the central critical $L$-values in the balanced region for three Hida families $\mathbf f$, $\mathbf g$, $\mathbf h$. The first construction was due to Greenberg and Seveso~\cite{GS2019}, and the second more explicit construction was given by Hsieh~\cite{Hsieh2021}. The difference between the constructions was outlined in the latter paper and it seems that Hsieh's approach is more suitable for arithmetic applications, such as the one in the present paper.

However, as observed by the first-named author in~\cite[Section 4.4.4]{DallAva2021Approx}, the ramification assumption~(3) in \cite[Theorem B]{Hsieh2021} implies that the Hida families $\mathbf g$ and $\mathbf h$ cannot both have classical weight one specializations. In particular, as far as we know, there is no suitable balanced triple product $p$-adic $L$-function to study classical points with weights $(2,1,1)$.

Therefore, the first goal of our paper is to extend Hsieh's construction to a setting where the Hida families $\mathbf g$ and $\mathbf h$ do have classical weight one specializations. The final assumption we make (see Hypothesis~\ref{hyp:Ccond_at_most_2}) is:
$$\epsilon_q(f \times g \times h) = -1 \text{ implies that } v_q(N_i) \leq 2,$$
where  $N_1$, $N_2$, $N_3$ are the levels of $f$, $g$, $h$, respectively.
This weakens Hsieh's assumption that $v_q(N_i) = 1$.

Fix an algebraic closure $\overline \Q$ of $\Q$ and field embeddings of $\overline \Q$ into $\C$ and $\C_p$.

\begin{theoremaleph}[Corollary~\ref{cor:interpolation}]\label{thm:A}
    Under the above hypotheses, there exist open admissible neighborhoods $\mathcal U_g$ and $\mathcal U_h$ of the classical weight $1$ in the weight space, and a (square root) balanced triple product $p$-adic $L$-function $\mathcal L_p^{\bal}\colon \mathcal U_g \times \mathcal U_h \to \C_p$, associated with $f$ and the Hida families $\mathbf g$ and $\mathbf h$. It has the following interpolation property for integers $\ell \in \mathcal U_g$ and $m \in \mathcal U_h$ such that $\ell, m \geq 2$ and $2 - \ell \leq m \leq 2+\ell$:
	\begin{align*}
		\mathcal L_p^{\bal}(\ell, m)^2 & = \frac{\Lambda(f  \times g_\ell \times h_m, c)}{\langle f, f \rangle \langle g_\ell, g_\ell \rangle \langle h_m, h_m \rangle} \cdot \frac{ \mathcal E_p^{\bal}(f \times g_\ell \times h_m)}{ \mathcal E_p(g_\ell, \Ad) \cdot \mathcal E_p(h_m, \Ad) } \in \overline \Q,
	\end{align*}
	where $c = (\ell + m)/2$ is the center of the functional equation, and
	\begin{align*}
		\mathcal E_p^{\bal}(f \times g_\ell \times h_m) & = (1 - \alpha_f \beta_g \beta_h p^{-c})(1- \beta_f \alpha_g \beta_h p^{-c}) (1 - \beta_f \beta_g \alpha_h p^{-c}) (1 - \beta_f \beta_g \beta_h p^{-c}), \\
		\mathcal E_p(F_k, \Ad) & =  (1 - \alpha_{F}^{-2} \chi_{F_k}(p) p^{k -1})(1 - \alpha_{F}^{-2} \chi_{F_k}(p) p^{k - 2}) & F_k \in \{g_\ell, h_m\}
	\end{align*}
	are as in~\cite[(1.2)]{Hsieh2021} and~\cite[p.\ 416]{Hsieh2021}, respectively.
\end{theoremaleph}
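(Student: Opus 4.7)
The plan is to extend Hsieh's construction in \cite{Hsieh2021} of the balanced triple product $p$-adic $L$-function, weakening the local ramification hypothesis from $v_q(N_i) = 1$ to $v_q(N_i) \leq 2$ at primes $q$ where $\epsilon_q(f \times g \times h) = -1$. Following Hsieh, the starting point is Ichino's formula: since $\epsilon(f \times g_\ell \times h_m) = +1$ throughout the balanced region, there is a definite quaternion algebra $B/\Q$ ramified exactly at the finite primes $q$ where $\epsilon_q(f \times g \times h) = -1$, and the square of the normalized central $L$-value factors, up to explicit local constants, as the square of a global trilinear period $I(\breve f, \breve g_\ell, \breve h_m) = \int_{[B^\times]} \breve f(t)\, \breve g_\ell(t)\, \breve h_m(t)\, dt$, for suitable Jacquet--Langlands transfers on $B^\times(\A)$. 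The strategy is to build Hida families of quaternionic forms $\breve{\mathbf g}, \breve{\mathbf h}$ whose weight one specializations are the chosen $p$-stabilizations $g_\alpha, h_\alpha$, choose appropriate local test data at each ramified prime, and $p$-adically interpolate $I(\breve f, \breve{\mathbf g}, \breve{\mathbf h})$ using Hida's ordinary projector; the resulting rigid analytic function on $\mathcal{U}_g \times \mathcal{U}_h$ is the square-root $p$-adic $L$-function $\mathcal L_p^{\bal}$, matching the square appearing on the left-hand side of the interpolation formula.

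The construction decomposes into three steps. First, one needs to produce the quaternionic Hida families $\breve{\mathbf g}, \breve{\mathbf h}$ passing through classical weight one forms: this requires control theorems for ordinary families on the definite quaternionic eigenvariety, together with a verification that $g_\alpha, h_\alpha$ are smooth classical points so that unique Hida families through them exist on admissible neighborhoods $\mathcal{U}_g, \mathcal{U}_h$ of weight one. Second, at each rational prime $q$ dividing $N_1 N_2 N_3$ one must prescribe local level structures at $q$ for the families and select local test vectors. At primes where Hsieh's hypothesis $v_q(N_i) = 1$ already holds, Hsieh's test vectors and the corresponding local zeta integrals carry over unchanged. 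Third, one assembles the global test vector, interpolates the resulting diagonal pairing via the ordinary projector following Hsieh's framework, and matches the outcome against Ichino's formula on integer weights $(2,\ell,m)$ in the balanced range, thereby reading off the Euler factors $\mathcal E_p^{\bal}$ and $\mathcal E_p(\cdot, \Ad)$.

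The main obstacle lies in the second step at the ``new'' primes $q$ with $\epsilon_q(f \times g \times h) = -1$ and $v_q(N_i) = 2$ for some $i$. Here one must: (i) classify the possible local representations $(\pi_{f,q}, \pi_{g,q}, \pi_{h,q})$ compatible with the root number sign and the conductor constraints — typically a ramified quadratic twist of Steinberg, a ramified principal series, or a dihedral supercuspidal — and their Jacquet--Langlands transfers to $B^\times(\Q_q)$; (ii) construct explicit local test vectors in $\pi_{g,q}^B \otimes \pi_{h,q}^B$ which are new at $q$, invariant under the appropriate quaternionic level subgroup, and pair non-trivially with the chosen vector for $\pi_{f,q}$; and (iii) compute the local trilinear integral and verify that the resulting $q$-factor is a non-zero explicit constant, which can be absorbed into the normalization so that the global interpolation formula takes the stated shape. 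This local calculation replaces the Gross--Prasad-type analysis Hsieh carries out in the $v_q(N_i) = 1$ regime and is the technical heart of the argument.

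Once these local inputs are in place, the remainder is largely formal and parallel to \cite{Hsieh2021}: the $p$-adic interpolation of the global trilinear period is realized as the diagonal restriction of the product of two $p$-adic families of quaternionic forms, paired against a $q$-expansion linear functional coming from $f$ via Hida's ordinary projector, and defines a rigid analytic function on $\mathcal{U}_g \times \mathcal{U}_h$. Evaluating at classical balanced weights $(2,\ell,m)$ and applying Ichino's formula — combined with the new local computations at the conductor-two primes and Hsieh's archimedean and unramified computations — produces the claimed formula for $\mathcal L_p^{\bal}(\ell,m)^2$ in terms of $\Lambda(f \times g_\ell \times h_m, c)$, the Petersson norms, and the Euler factors $\mathcal E_p^{\bal}$ and $\mathcal E_p(\cdot, \Ad)$.
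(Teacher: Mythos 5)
Your high-level plan correctly identifies the scaffolding: Ichino's integral representation on the definite quaternion algebra $B$, Jacquet--Langlands transfer of Hida families, local test-vector selection and local zeta computations at the ramified primes, then $p$-adic interpolation of the trilinear period via the ordinary projector. However, the proposal misses the central difficulty that the paper had to overcome, and as a result step (ii) of your ``main obstacle'' paragraph would fail as stated.

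At a prime $q$ where $B$ is ramified and the local representation $\pi_{g,q}$ (say) is a depth-zero supercuspidal of conductor $q^2$, the space of new vectors in the Jacquet--Langlands transfer $\pi_{g,q}^B$ at the relevant (Pizer/residually inert) level is \emph{two-dimensional}, not one-dimensional --- this is the multiplicity-two phenomenon of Pizer and Hijikata--Pizer--Shemanske (Proposition~\ref{prop: multiplicity-2} in the paper, traced locally to Proposition~\ref{prop:reps_of_cond2}). So you cannot simply ``construct explicit local test vectors \dots invariant under the appropriate quaternionic level subgroup'': there is a $\mathbb{P}^1$ of such choices, and their local trilinear integrals differ. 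The paper resolves this by introducing an extra operator $\langle\varpi_{D_q}\rangle$, built from a local uniformizer of $B_q$, that commutes with the Hecke action and splits the two-dimensional eigenspace into two one-dimensional $\langle\varpi_{D_q}\rangle$-eigenspaces indexed by a sign $\epsilon_q$ (Proposition~\ref{prop: mult-1 with varpi_d}). Only with this refinement does the local integral computation (Propositions~\ref{prop:local_int_scscsp} and~\ref{prop:local_integral_scscsc}) produce a well-defined non-zero constant for a canonical choice, and only then is the vector well-defined enough to carry through a family.

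The second and related gap is in the family step. The multiplicity-two problem propagates to Hida families: over the quaternionic eigenvariety there are (a priori) \emph{several} families transferring to a given Coleman/Hida family on $\mathrm{GL}_2$. The paper builds an \emph{extended} eigenvariety on which the $\langle\varpi_{D_q}\rangle$ act, proves that the projection to the classical quaternionic eigenvariety is finite and that the $\epsilon$-fixed pieces are open and closed (Propositions~\ref{prop: finite map from E_D tilde to E_D}, \ref{prop: closed embedding extended eigenvarieties}, \ref{prop: JL_p^epsilon is closed immersion}), and only from this deduces that for each choice of signs there is a \emph{unique} Coleman/Hida family on $D^\times$ lifting a given one on $\mathrm{GL}_2$ (Theorem~\ref{thm: multiplicity-1 for lifts of Coleman families}) and a control theorem (Corollary~\ref{cor: Hida's Control Theorem}). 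Your proposal invokes ``control theorems for ordinary families on the definite quaternionic eigenvariety'' without realizing that the existing ones (e.g.\ from~\cite{DallAva2021Hida}) give rank two rather than rank one at these primes, which is precisely what blocks the naive interpolation. Finally, you say the Euler factors are ``read off'' from Ichino's formula, but the adjoint factors $\mathcal E_p(g_\ell,\Ad)\,\mathcal E_p(h_m,\Ad)$ in the denominator are \emph{not} produced by the local zeta computation alone: they arise from the paper's choice to normalize $\Theta_{\mathbf F^{D\star,\epsilon}}$ by the \emph{quaternionic Petersson norm} $\|\mathbf F^{D\star,\epsilon}\|$ rather than by Gross periods as Hsieh does (see Definition~\ref{def:p-adic_L-fun} and Remark~\ref{rmk:difference_to_Hsieh}); without that normalization one would not obtain the stated formula.
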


A more general square root balanced triple product $p$-adic $L$-function for three Hida families $\mathbf f$, $\mathbf g$, $\mathbf h$ is constructed in Theorem~\ref{thm:interpolation}.

Note that the interpolation property for our triple product $p$-adic $L$-function differs from Hsieh~\cite[Theorem B]{Hsieh2021}. The three key differences (which we will expand on momentarily) are:
\begin{itemize}
	\item Hsieh's $p$-adic $L$-function is defined on the entire weight space.
	\item Our interpolation property does not involve Gross periods, but rather just the Petersson norms of the three forms $f$, $g_\ell$, and $h_m$.
	\item The Euler factor at $p$ in Hsieh's $p$-adic $L$-function is only $\mathcal E_p^{\bal}(f \times g_\ell \times h_m)$ instead of the above quotient. Indeed, the adjoint Euler factors above are absorbed by the Gross periods.
\end{itemize}
However, it seems that our $p$-adic $L$-function is the correct one for the eventual arithmetic application in the present work. For example, both the periods and the Euler factor at $p$ closely resemble those in~\cite{Darmon_Rotger} in the unbalanced setting. Moreover, the proof of Theorem~\ref{thm:D} below also only seems to work with this interpolation property. We expand on the differences in Remark~\ref{rmk:difference_to_Hsieh}.

Next, we explain the technical novelty which allows us to loosen the ramification assumption in Hsieh. Ichino's formula~\cite{Ichino2008} for the central value of the triple product $p$-adic $L$-function is on the quaternion algebra $D$ ramified at $v$ such that $\epsilon_v(f \times g_\ell \times h_m) = -1$, and utilizes the Jacquet--Langlands transfers $f^D$, $g^D_\ell$, $h^D_m$ of the three modular forms $f$, $g_\ell$, $h_m$. Under our assumptions, $D$ is ramified at $v = \infty$ and at an odd number of finite primes $q$. 

Let $q$ be an odd prime. As observed by Pizer~\cite{Pizer1980}, if $D$ is ramified at $q$ and $f$ is a (twist-minimal) new cusp form of weight $k \geq 2$, level $q^2$, character $\chi$ of conductor at most $q$, then:
$$\dim S_k^D(q^2, \chi)[f] = 2,$$
i.e.\ there is a two-dimensional space of quaternionic modular forms for $D^\times$ which transfer to the same modular form $f$ (cf.\ Proposition~\ref{prop: multiplicity-2}). This raises two questions:
\begin{enumerate}
	\item For each $(\ell, m)$, can we choose vectors $f^D$, $g^D_\ell$, $h^D_m$ associated with $f$, $g$, $h$ and compute the associated local integrals in Ichino's integral representation?
	\item Do there exist Hida families $\mathbf g^D$, $\mathbf h^D$ associated with the Hida families $\mathbf g$, $\mathbf h$ such that the specializations $g_\ell^D$, $h_m^D$ recover the choices in question (1)?
\end{enumerate}

We give a positive answer to these questions by introducing ``extra Hecke operators'' which recover multiplicity one in the Jacquet--Langlands correspondence.

\begin{propositionaleph}[Proposition~\ref{prop: mult-1 with varpi_d}]\label{prop:B}
There exists an operator $\langle \varpi_{D_q}\rangle$ on $\dim S_k^D(q^2, \chi)$ associated with a choice of local uniformizer $\varpi_{D_q}$ of $D_q$, commuting with the Hecke operators. For each twist-minimal new cusp form $f$, of weight $k \geq 2$, level $q^2$, and character $\chi$ of $\cond(\chi)\leq q$:
	$$S_k^D(q^2, \chi)[f] \iso \underbrace{S_k^D(q^2, \chi)[f]^{\langle\varpi_{D_q}\rangle = + \sqrt{\chi_q(q)}}}_{\dim = 1} \oplus \underbrace{S_k^D(q^2, \chi)[f]^{\langle\varpi_{D_q}\rangle = - \sqrt{\chi_q(q)}}}_{\dim = 1}.$$
    Here, $\chi_q$ is the local component at $q$ of the adèlization of $\chi$.
\end{propositionaleph}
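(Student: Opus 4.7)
The plan is to reduce the global statement to a purely local question at the ramified prime $q$, and then solve it using the explicit description of the Jacquet--Langlands transfer at $q$ in the minimal supercuspidal case.

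First, using the adelic formalism and strong multiplicity one away from $q$, I would identify $S_k^D(q^2, \chi)[f]$ with the space of local invariants $(\pi_q^D)^{K_q}$, where $\pi_q$ denotes the local component at $q$ of the automorphic representation attached to $f$, $\pi_q^D$ is its Jacquet--Langlands transfer to $D_q^\times$, and $K_q \subset D_q^\times$ is the compact open subgroup corresponding to Pizer's level structure at $q$. Under the twist-minimality assumption together with $\cond(\chi_q) \leq q$, $\pi_q$ is a minimal supercuspidal attached via the local Langlands correspondence to a regular character $\theta$ of $\F_{q^2}^\times$ whose restriction to $\F_q^\times$ matches $\chi_q$. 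Fixing a uniformizer $\varpi_{D_q}$ of the unique maximal order $\O_{D_q}$ satisfying $\varpi_{D_q}^2 = q$, and using $\O_{D_q}^\times/(1 + \varpi_{D_q}\O_{D_q}) \iso \F_{q^2}^\times$, the local transfer becomes
\[
\pi_q^D \iso \Ind_{\Q_q^\times \O_{D_q}^\times}^{D_q^\times} \tilde\theta,
\]
where $\tilde\theta$ is the extension of the inflation of $\theta$ by the central character $\chi_q$. In particular $K_q := 1 + \varpi_{D_q}\O_{D_q}$ acts trivially on $\pi_q^D$, so $(\pi_q^D)^{K_q}$ is the full two-dimensional induced representation, recovering Pizer's multiplicity.

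Second, I would define $\UL{q}$ globally by right translation by $\varpi_{D_q}$. Since $\varpi_{D_q}$ normalizes $K_q$, this is well-defined on $(\pi_q^D)^{K_q}$; it commutes with every Hecke operator away from $q$ (they act through disjoint local tensor factors) and with Hecke operators at $q$ (they all act through the single local group $D_q^\times$, so the commutativity is internal to the local representation theory of $D_q^\times$). Third, since $\varpi_{D_q}^2 = q$ lies in $Z(D_q^\times)$ and the central character of $\pi_q^D$ is $\chi_q$, one has $(\UL{q})^2 = \chi_q(q) \cdot \mathrm{id}$. Working in the induced model with basis adapted to the coset representatives $\{1, \varpi_{D_q}\}$ of $\Q_q^\times \O_{D_q}^\times \backslash D_q^\times$, a direct calculation shows that $\UL{q}$ acts by the matrix $\left(\begin{smallmatrix} 0 & 1 \\ \chi_q(q) & 0 \end{smallmatrix}\right)$. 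Since $\chi_q(q)$ is a nonzero root of unity, this matrix is semisimple with distinct eigenvalues $\pm\sqrt{\chi_q(q)}$, each of multiplicity one, which yields the desired decomposition.

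The main obstacle is the explicit local Jacquet--Langlands description of $\pi_q^D$ as an induced representation, together with the normalization of $\varpi_{D_q}$ so that $\varpi_{D_q}^2 = q$ exactly (rather than merely up to a unit of $\O_{D_q}$); once this local input is secured, the final eigenvalue computation reduces to the routine matrix calculation sketched above, and the commutativity with the Hecke algebra follows formally from the local-global compatibility of the automorphic description.
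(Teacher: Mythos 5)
Your argument is correct and follows the same route as the paper: reduce to the local component at $q$ via the identification $S_k^D(q^2,\chi)[f]\cong(\pi^D_{f,q})^{R_q^\times,\widetilde\chi}$, realize $\pi_{f,q}^D$ as the two-dimensional induced representation from the index-two subgroup $\Q_q^\times\O_{D_q}^\times$ (the paper's Proposition 2.8), note that $\UL{q}$ acts as $\pi^D_{f,q}(\varpi_{D_q})$, compute $\UL{q}^2 = \chi_q(q)\cdot\mathrm{id}$ from centrality of $\varpi_{D_q}^2=q$, and diagonalize the resulting off-diagonal matrix. One small caution: your claim that commutativity with the Hecke operators \emph{at} $q$ is ``internal to the local representation theory of $D_q^\times$'' is not a valid argument in general (two local operators need not commute), but it is also unnecessary, since the Hecke algebra in the paper excludes $U_\ell$ at the ramified primes $\ell\in\Sigma^{-,\mathrm{sc}}$ and $U_\ell$ is zero on $\ell$-new forms anyway.
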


To answer question (1), we can choose a vector in either of the one-dimensional eigenspaces. Section~\ref{Section: Local JL correspondence and test vectors} computes the necessary local integrals in Ichino's formula. Answering question~(2) is then also based on Proposition~\ref{prop:B}, although it is technical and occupies Sections~\ref{Section: Global JL correspondence and test vectors} and~\ref{Section: The JL correspondence in families}. We show that for each choice of sign $+1$ or $-1$, there is a one-dimensional space of Hida families on $D^\times$ associated with the Hida families on $\GL_2$, strengthening the results in \cite{DallAva2021Hida}. Theorem~\ref{thm: multiplicity-1 for lifts of Coleman families} proves this statement also in the case of Coleman families. From it, we deduce a control theorem à la Hida, which is the content of Corollary~\ref{cor: Hida's Control Theorem}. The proof of these results relies on the construction of auxiliary quaternionic eigenvarieties endowed with closed immersions in the classical cuspidal one. In order to construct an explicit $p$-adic $L$-function, we need to identify quaternionic Hida families with (a slight generalization of the) $\Lambda$-adic forms in~\cite{Hsieh2021}; this is done in Proposition~\ref{prop: identification chenevier families and hsieh families}.

Note that our choice of vectors on the quaternion algebra $D^\times$ is only well-defined up to scalars. We also restrict ourselves to working with an admissible affinoid in the weight space, instead of proving a control theorem over the whole weight space. It would be interesting to generalize our control theorem to the whole weight space and answer the natural questions about Gross periods for the different Hida families associated with the choice of eigenvalues of the operators $\langle\varpi_{D_q}\rangle$. This could lead to a definition of an integral $p$-adic $L$-function analogous to Hsieh's.

\subsection{The Elliptic Stark Conjecture}

Having defined the balanced triple product $p$-adic $L$-function $\mathcal L_p^{\bal}$, we turn to studying its value at the {\em BSD point} $(\ell, m) = (1,1)$ which lies outside of the interpolation range. We start by introducing the relevant arithmetic objects.

Let us briefly recall our setup: the triple $(f, g, h)$ corresponds to a triple $(E, \varrho_1, \varrho_2)$ of an elliptic curve $E$ over $\Q$, and two Artin representation $\varrho_i \colon \Gal(H/\Q) \to \GL(V_i)$ for two-dimensional $L$-vector spaces $V_1$ and $V_2$. Recall that:
$$E[V_1 \otimes V_2] = \Hom_{L[\Gal(H/\Q)]}(V_1 \otimes V_2, E(H) \otimes_\Q L),$$
and under the rank one assumption, we may choose a basis:
$$\Phi \colon V_1 \otimes V_2 \to E(H) \otimes_\Q L,$$ 
well-defined up to $L^\times$. 

We fix an embedding $H \hookrightarrow \Q_p^{\rm ur}$ and let $\sigma_p \in \Gal(H/\Q)$ be the associated Frobenius element. Under our classicality and regularity assumptions, we have that $\varrho_i(\sigma_p) = \begin{pmatrix}
	\alpha_i & 0 \\ 
	0 & \beta_i
\end{pmatrix}$ with $\alpha_i \neq \beta_i$. Letting $V_i^\alpha$ be the $\alpha_i$-eigenspace of $V_i$, we consider a non-zero element in the one-dimensional subspace $v_{\alpha \alpha} \in V^{\alpha \alpha} = V_1^{\alpha} \otimes V_2^{\alpha} \subseteq V_1 \otimes V_2$. Finally, we let
$$P_{\alpha \alpha} = \Phi(v_{\alpha \alpha}) \in E(H) \otimes L,$$
which is well-defined up to $L^\times$, but depends on the choice of Frobenius eigenvalues $\alpha_1$ and $\alpha_2$.

Next, we consider the adjoint representations $\Ad^0(\varrho_i) = \Hom^0(V_i, V_i)$. By \cite[Proposition\ 1.5]{Darmon_Lauder_Rotger_Stark_points}, the Stark unit group
$$U_H[\Ad^0 \varrho_i] = \Hom_{L[\Gal(H/\Q)]}( \Ad^0 V_i, \O_H^\times \otimes L)$$
is also of rank one, and we may choose a basis:
$$\Psi_i \colon \Ad^0 V_i \to \O_H^\times \otimes L,$$
well-defined up to $L^\times$. 

The eigenvalues of $\Ad^0\varrho_i(\sigma_p)$ are $1$, $\frac{\alpha_i}{\beta_i}$, $\frac{\beta_i}{\alpha_i}$, and we fix eigenvectors in the $\alpha_i/\beta_i$-eigenspaces:
$$v_i^{\alpha/\beta} \in (\Ad^0 V_i)^{\alpha_i/\beta_i}.$$
Finally, this defines
$$u_{i, \alpha} = \Psi_i(v_i^{\alpha/\beta}) \in \O_H^\times \otimes L,$$
which is well-defined up to $L^\times$, but depends on the choice of Frobenius eigenvalue $\alpha_i$.

Write $r(E, \varrho_{gh})$ for the rank of $E[\varrho_g \otimes \varrho_h]$ which is, conjecturally on BSD, equal to ${\rm ord}_{s=1}L(f \times g \times h, s)$.

\begin{conjaleph}[Rank one Elliptic Stark Conjecture~\ref{conj:ES}]\label{conj:C}
	If $r(E, \varrho_{gh}) > 1$, then $\mathcal L_p^{\bal}(1,1) = 0$. If $r(E, \varrho_{gh}) = 1$, then:
	$$\mathcal L_p^{\bal}(1,1)^2 \sim_{L^\times} \frac{\log_{E, p}( P_{\alpha \alpha} )^2}{\log_p(u_{1, \alpha}) \log_p(u_{2, \alpha})},$$
	where
	\begin{align*}
		\log_p \colon (\O_{H}^\times)_L & \to H_p \otimes L & \text{$p$-adic logarithm}, \\
		\log_{E, p} \colon  E(H)_L & \to H_p \otimes L & \text{$p$-adic formal group logarithm for $E$}.
	\end{align*}
\end{conjaleph}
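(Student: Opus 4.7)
Proof proposal: The full conjecture seems out of reach, since the right-hand side is algebraic while the left-hand side is defined by $p$-adic interpolation. The realistic target --- and the one attacked as Theorem~\ref{thm:D} in the paper --- is the dihedral case in which $\varrho_i = \Ind_K^\Q \psi_i$ for Hecke characters of the same imaginary quadratic field $K$ in which $p$ splits. The driving observation is that Artin formalism factors the triple $L$-function as
\begin{equation*}
L(f \times g \times h, s) \;=\; L(f_K, \psi_1 \psi_2, s) \cdot L(f_K, \psi_1 \psi_2^c, s),
\end{equation*}
where $f_K$ is the base change of $f$ to $K$ and $\psi_2^c$ is the Galois conjugate of $\psi_2$. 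Each factor on the right is exactly the kind of Rankin--Selberg $L$-function studied by Bertolini--Darmon--Prasanna.

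The first step is to lift this to a matching factorization of the square-root balanced $p$-adic $L$-function from Theorem~\ref{thm:A}. One constructs two anticyclotomic $p$-adic $L$-functions $\mathcal L_p^{\mathrm{BDP}}(f_K, \Psi_1 \Psi_2)$ and $\mathcal L_p^{\mathrm{BDP}}(f_K, \Psi_1 \Psi_2^c)$ on $\mathcal U_g \times \mathcal U_h$, where $\Psi_i$ is the Hida-family interpolation of $\psi_i$, and proves
\begin{equation*}
\mathcal L_p^{\bal}(\ell, m)^2 \;=\; (\text{explicit fudge factor}) \cdot \mathcal L_p^{\mathrm{BDP}}(f_K, \Psi_{1,\ell}\Psi_{2,m}) \cdot \mathcal L_p^{\mathrm{BDP}}(f_K, \Psi_{1,\ell}\Psi_{2,m}^c).
\end{equation*}
By rigidity this identity of rigid analytic functions reduces to a pointwise check at a Zariski-dense set of classical balanced $(\ell, m)$, where the interpolation formula of Theorem~\ref{thm:A} and a Waldspurger--Ichino-type formula for $\mathcal L_p^{\mathrm{BDP}}$ are both available. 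The local zeta integrals computed in Section~\ref{Section: Local JL correspondence and test vectors} are the key input for matching the Euler factors and periods on the two sides.

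The second step is to specialize to the BSD point $(\ell, m) = (1, 1)$, which lies outside the classical BDP interpolation range. This is where the generalization of Bertolini--Darmon--Prasanna proved in the paper's appendix enters. One of the two characters, say $\psi_{1,1} \psi_{2,1}$, contributes via generalized BDP the formal group logarithm $\log_{E,p}(P_{\alpha\alpha})$ of the generalized Heegner point $P_{\alpha\alpha} \in E(H)_L$; the other character, which is ``rank-zero'' on the elliptic curve side, contributes the product $\log_p(u_{1,\alpha}) \log_p(u_{2,\alpha})$ after factoring through Katz's two-variable $p$-adic $L$-function and identifying elliptic units inside the $\Ad^0 \varrho_i$-isotypic subspaces under the decomposition $\Ad^0 \varrho_i = \chi_K \oplus \Ind_K^\Q(\psi_i / \psi_i^c)$. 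Dividing reproduces the conjectural formula.

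The main obstacle is the appendix itself. The classical BDP theorem assumes the anticyclotomic character is self-dual of a very specific infinity type, and in our setting neither $\psi_{1,1}\psi_{2,1}$ nor $\psi_{1,1}\psi_{2,1}^c$ need satisfy that hypothesis. The generalization requires enlarging the class of admissible characters, identifying the correct $p$-adic period in each case, and computing the Euler factor at $p$ so that it matches the one appearing in Theorem~\ref{thm:A}; keeping track of the specific Frobenius eigencomponents pinning down $P_{\alpha\alpha}$ and the $u_{i,\alpha}$ is the most delicate bookkeeping. The whole argument is naturally carried out only up to $\sqrt{L^\times}$ --- hence the \emph{squared} form of the conjecture --- because the underlying $L$-function factorization is at the level of squares, and the choices of $v_{\alpha\alpha}$ and $v_i^{\alpha/\beta}$ are only pinned down up to $L^\times$.
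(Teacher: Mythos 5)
Your high-level strategy --- reduce to the dihedral case, factor the triple-product $L$-function over $K$, pull the Heegner-point logarithm out of (the generalization in the appendix of) BDP at the BSD point, and get the elliptic-unit logarithms from Katz's $p$-adic Kronecker limit formula --- correctly identifies what the paper proves as Theorem~\ref{thm:D}. But the specific factorization you write down is not the one the paper proves, and as stated it cannot actually produce the elliptic units, so the argument has a real gap in the middle.

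You propose $\mathcal L_p^{\bal}(\ell,m)^2 = (\text{fudge}) \cdot \mathcal L_p^{\mathrm{BDP}}(f_K,\Psi_{1,\ell}\Psi_{2,m})\cdot \mathcal L_p^{\mathrm{BDP}}(f_K,\Psi_{1,\ell}\Psi_{2,m}^c)$ and claim that the rank-zero factor supplies $\log_p(u_{1,\alpha})\log_p(u_{2,\alpha})$ ``after factoring through Katz's $p$-adic $L$-function.'' This step has no content: Katz's $p$-adic $L$-function is attached to $K$ alone and does not involve $f$, while $L(f_K\times\psi_1,s)$ does, so the latter cannot factor through the former. In the paper's Theorem~\ref{thm:CM_factorization} the two Katz $p$-adic $L$-functions appear \emph{multiplied with} $\mathcal L_p^{\bal,f}(\ell,\ell)^2$ on the left of the identity, not inside the factorization on the right; they arise from the Petersson norms $\langle g_\ell,g_\ell\rangle$, $\langle h_\ell,h_\ell\rangle$ in the denominator of the interpolation formula of Corollary~\ref{cor:interpolation}, converted to Katz values via equation~\eqref{eqn:Katz_and_Petersson}. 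That is the sole source of the elliptic units; a factorization that ignores the weight-$\ell$ theta-series periods cannot recover them. Moreover, the rank-zero factor cannot behave the way you need: once one restricts to the diagonal $\ell=m$ (which the paper does) it collapses to the constant $L(f_K\times\psi_1,1)$, and its arithmetic content is the \emph{algebraic} quantity $\langle c_{f,\psi_1},c_{f,\psi_1}\rangle^{1/2}$ from Gross's formula~\eqref{eqn:Gross}, not a unit logarithm. Even off the diagonal, BDP's $p$-adic Gross--Zagier limit formula at the central point requires local root number $-1$, and here $\epsilon(E/K,\psi_1)=+1$, so an anticyclotomic $p$-adic $L$-function built from that factor would again yield Gross' formula rather than a logarithm.
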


Both sides of the equality depend on the same choice of $\alpha_1$ and $\alpha_2$. At first glance, it seems that the right hand side depends on $H$ and the choices associated with it, but a simple computation reveals that it does not --- see Remark~\ref{rmk:sanity_check}.

Finally, we prove that this conjecture is true if $\varrho_1$, $\varrho_2$ are both induced from characters of the same imaginary quadratic field. Let $K$ be an an imaginary quadratic field and $p$ be a prime which splits in $K$. Let $\psi_g$, $\psi_h$ be finite order Hecke characters of $K$ such that $(\psi_g \psi_h) \circ N_{K/\Q} = 1$, and consider the theta series $g = \theta_{\psi_g}$, $h = \theta_{\psi_h}$. Let $\psi_1 = \psi_g \psi_h$ and $\psi_2 = \psi_g \psi_h^{\sigma}$. We introduce some explicit assumptions under which our running hypotheses hold:
\begin{enumerate}
	\item $\epsilon_q(f_K, \psi_2) = +1$ for all finite primes $q$ of $K$,
	\item $g$ and $h$ are ordinary and regular,
	\item if $\epsilon_q(f \times g \times h) = -1$, then $v_q(N_1) = 1$ and $v_q(N_2) = v_q(N_3) = 2$; 
\end{enumerate}

\begin{theoremaleph}[Theorem~\ref{thm:CM}]\label{thm:D}
	Under assumptions \emph{(1)--(3)} above, Conjecture~\ref{conj:C} is true.
\end{theoremaleph}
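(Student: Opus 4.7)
The plan is to exploit the CM structure to factor the problem into $p$-adic $L$-functions attached to Hecke characters of $K$. By Artin formalism, $\varrho_g \otimes \varrho_h \cong \Ind_K^\Q\psi_1 \oplus \Ind_K^\Q\psi_2$, giving the classical factorization
$$L(f \times g \times h, s) = L(f_K, \psi_1, s) \cdot L(f_K, \psi_2, s).$$
Assumption~(1) together with the archimedean sign forces $\epsilon(f_K, \psi_2) = -1$ and $\epsilon(f_K, \psi_1) = +1$, so under Birch--Swinnerton-Dyer the rank-one point $P_{\alpha\alpha}$ arises from the $\psi_2$-factor via Gross--Zagier--Kolyvagin--Zhang, while the Stark units $u_{i,\alpha}$ correspond to elliptic units governed by Katz's $p$-adic $L$-function on the adjoint CM sides.

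First I would spread $\psi_g$ and $\psi_h$ into Hida families of Hecke characters of $K$ whose theta series recover $\mathbf g$ and $\mathbf h$, and then construct on $\mathcal U_g \times \mathcal U_h$ two-variable $p$-adic $L$-functions of Bertolini--Darmon--Prasanna type attached to $f$ and each of $\psi_1, \psi_2$, together with one-variable Katz $p$-adic $L$-functions attached to the anticyclotomic characters $\psi_g/\psi_g^\sigma$ and $\psi_h/\psi_h^\sigma$ appearing in the CM decomposition of $\Ad^0\varrho_g$ and $\Ad^0\varrho_h$. Comparing with Theorem~\ref{thm:A} on the Zariski-dense set of balanced classical weights --- and tracking how the Petersson norms and adjoint Euler factors $\mathcal E_p(g_\ell, \Ad)$, $\mathcal E_p(h_m, \Ad)$ distribute through the CM decomposition of the adjoint of a theta series --- yields an identity of shape
$$\mathcal L_p^{\bal}(\ell, m)^2 \;\sim_{L^\times}\; \frac{\mathcal L_p^{\mathrm{BDP}}(f,\psi_1)(\ell, m)\cdot \mathcal L_p^{\mathrm{BDP}}(f,\psi_2)(\ell, m)}{\mathcal L_p^{\mathrm{Katz}}(\psi_g/\psi_g^\sigma)(\ell) \cdot \mathcal L_p^{\mathrm{Katz}}(\psi_h/\psi_h^\sigma)(m)}.$$

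Specializing at $(\ell, m) = (1,1)$ puts us outside the interpolation ranges of all four factors, and each must be evaluated separately. The $\psi_2$-BDP factor is the heart of the argument: the classical Bertolini--Darmon--Prasanna formula does not apply because our relaxed assumption~(3) permits non-minimal levels at the split primes where the relevant definite quaternion algebra ramifies, and this is precisely the gap that the appendix's generalization is designed to fill; it produces $\log_{E, p}(P)^2$ for a Heegner-type point $P$, which by the rank-one hypothesis equals a nonzero $L^\times$-multiple of $P_{\alpha\alpha}$. The $\psi_1$-BDP factor, whose underlying $L$-function has sign $+1$ and generically non-vanishing central value, contributes an explicit scalar in $L^\times$ via a standard rationality computation. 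The two Katz factors at $\ell = m = 1$ are evaluated using Katz's $p$-adic Kronecker limit formula, and a Galois-equivariance argument identifies the resulting $p$-adic logarithms of elliptic units with $\log_p(u_{1,\alpha})$ and $\log_p(u_{2,\alpha})$ respectively, up to $L^\times$. Combining these evaluations gives Conjecture~\ref{conj:C}.

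The main obstacle is the factorization step: matching $\mathcal L_p^{\bal}$ --- a square-root $p$-adic $L$-function constructed via Ichino's formula on the ramified quaternion algebra using the test vectors isolated by the extra operator of Proposition~\ref{prop:B} --- with the above CM product at the level of square roots, and not merely after squaring (which would introduce an uncontrolled algebraic ambiguity). The other substantial input is the appendix itself, which generalizes BDP to accommodate the non-squarefree conductors permitted by our framework.
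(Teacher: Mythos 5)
Your plan is essentially the proof the paper gives: factor the triple-product $L$-function via $V_g \otimes V_h \cong V_{\psi_1} \oplus V_{\psi_2}$, match $\mathcal L_p^{\bal}$ against a product of Rankin--Selberg and Katz $p$-adic $L$-functions on a dense set of balanced classical weights, and then specialize to $(1,1)$ using the $p$-adic Gross--Zagier formula (with the appendix's relaxed ramification hypotheses for the $\psi_2$-factor) together with Katz's $p$-adic Kronecker limit formula for the two adjoint factors. The key inputs you name --- the generalization of Bertolini--Darmon--Prasanna from Appendix~\ref{appendix BDP}, and Katz's formula for elliptic units identified with the Stark units $u_{i,\alpha}$ --- are exactly the ones the paper uses.

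One correction to your working factorization. On the diagonal $\ell = m$, which is where the paper works, the $\psi_1$-factor of $L(f \times g_\ell \times h_\ell, s)$ is $L(f_K \times \psi_1 N_K^{\ell-1}, s)$, whose central value at $s = \ell$ is always $L(f_K \times \psi_1, 1)$, independent of $\ell$. So there is no second ``BDP $p$-adic $L$-function for $\psi_1$'' to construct or to evaluate outside interpolation: that factor is a genuine constant, and the paper identifies it --- not by a bare rationality argument, but by Gross's explicit Waldspurger-type formula~\eqref{eqn:Gross} --- with the height pairing $\langle c_{f,\psi_1}, c_{f,\psi_1} \rangle$ of a Gross point on a definite Shimura set; this is exactly the $\langle c_{f,\psi_1}, c_{f,\psi_1}\rangle^{1/2} \in \sqrt{L^\times}$ appearing in the theorem's statement. (Using Gross's formula rather than a generic rationality statement is also what makes the constant's shape parallel to the $\log_{E,p}$ and $\log_p$ factors, as the paper remarks.) Finally, your worry about matching ``at the level of square roots'' is milder than you suggest: Theorem~\ref{thm:CM_factorization} is proved with both sides squared, after which one extracts the unsquared statement by absorbing sign and square-root ambiguities into $\sqrt{L^\times}$, which Conjecture~\ref{conj:ES} already permits.
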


The idea of the proof of this theorem is to factor the triple product $p$-adic $L$-function into Rankin--Selberg and Katz $p$-adic $L$-functions (Theorem~\ref{thm:CM_factorization}). Under our ramification hypotheses, we need a certain generalization of the result of Bertolini--Darmon--Prasanna~\cite{BDP} on Rankin--Selberg $p$-adic $L$-functions. Indeed, their result holds under the Heegner hypothesis, which is not satisfied under assumption (3) above. We loosen the Heegner hypothesis slightly to include the case of interest to us. We prove this result in Appendix~\ref{appendix BDP}.

In Appendix~\ref{section: appendix digest of examples}, we provide a digest of examples to which our conjecture and theorem apply. In future work, we hope to give numerical evidence for Conjecture~\ref{conj:C}, based on the algorithms in~\cite{DallAva2021Approx}. 

\subsection{Organization of the paper}

The main technical innovation of the paper is the study of extra operators on the space of quaternionic modular forms, which facilitate our choice of test vectors for the $p$-adic $L$-function. This occupies the first three sections, which consider the local (Section~\ref{Section: Local JL correspondence and test vectors}), global (Section~\ref{Section: Global JL correspondence and test vectors}), and $p$-adic (Section~\ref{Section: The JL correspondence in families}) Jacquet--Langlands transfers. A reader interested in the construction of the $p$-adic $L$-function may proceed directly to Section~\ref{Section: Balanced triple product $p$-adic $L$-function}, where we use this choice of test vectors to generalize Hsieh's construction. The arithmetic applications are discussed in Section~\ref{section: Application: Elliptic Stark Conjecture in rank one}, which may also be read independently of all the previous sections. Appendix~\ref{appendix BDP} gives generalizations of results of Bertolini--Darmon--Prasanna which are used the proof of Theorem~\ref{thm:D}, while Appendix~\ref{section: appendix digest of examples} contains examples.

\subsection*{Acknowledgments}

We would like to thank Alan Lauder, Matteo Longo, Kimball Martin, James Newton, Kartik Prasanna, and John Voight for many helpful discussions. This work represents a natural continuation of the doctoral work of LD and he is grateful to his advisor Massimo Bertolini for introducing him to the topic of balanced $p$-adic $L$-functions and setting him out on this path. LD is also thankful to Kimball Martin for bringing his attention to the reference~\cite{GrossPrasad1991}. Many of the ideas presented here are inspired by the work of Henri Darmon, Alan Lauder, and Victor Rotger. Their numerical investigation which led to such a precise statement of their rank two {\em Elliptic Stark Conjecture} paved the way for our rank one version of it. In particular, AH is thankful to Victor Rotger for his \href{https://people.maths.ox.ac.uk/horawa/Iwasawa2019-Rotger.pdf}{mini-course} at Iwasawa 2019 in Bordeaux, which inspired him to consider the question in the present work. 

AH was supported by the NSF grant DMS-2001293 and UK Research and Innovation grant MR/V021931/1. LD is grateful to the Università degli Studi di Padova (Research Grant funded by PRIN 2017 “Geometric, algebraic and analytic methods in arithmetic”) and Università degli Studi di Milano (Research fellowship “Motivi, regolatori e geometria aritmetica”) for their financial support.

\section{Local JL correspondence and test vectors}\label{Section: Local JL correspondence and test vectors}

In this section, we consider a finite extension $F$ of $\Q_\ell$ for $\ell \neq 2$ (which will eventually be $\Q_\ell$) and three local representations $\pi_1$, $\pi_2$, $\pi_3$ of $\GL_2(F)$ such that the product of their central characters is trivial. 

We classify when $\epsilon(\pi_1 \otimes \pi_2 \otimes \pi_3) = -1$ and in all cases when $c(\pi_1), c(\pi_2), c(\pi_3) \leq 2$, we compute the relevant local triple product integrals on a definite quaternion algebra over $F$. The new contribution of this section is the computations of these local integrals when $c(\pi_i) = 2$ for some $i$.

Throughout this section, we often work with Weil--Deligne representations $\sigma_i = \sigma(\pi_i)$ associated with $\pi_i$ via the Local Langlands Correspondence~\cite{Bushnell_Henniart}. Their triple product $\epsilon$- and $L$-factors agree with the automorphic ones~\cite[Proposition 2.1]{Harris_Kudla_on_a_conj}. 

For completeness, we briefly recall the explicit Local Langlands Correspondence~\cite[Chapter\ 8]{Bushnell_Henniart} for $\ell \neq 2$. For a character $\chi$ of $K^\times$, for a local field $K$, we write $\xi = \xi(\chi)$ for the associated character of $W_K$ via Class Field Theory.

\begin{enumerate}
    \item If $\pi = \pi(\chi_1, \chi_2)$ is a principal series representation, then $\sigma(\pi) = \xi(\chi_1) \oplus \xi(\chi_2)$ is reducible.
    \item If $\pi = \St \otimes \chi$ is a twist of the Steinberg representation, then $\sigma(\pi) = \Sp(2) \otimes \xi(\chi)$ is a twist of the special representation.
    \item If $\pi = \pi_{\psi}$ is supercuspidal, associated with an {\em admissible pair} $(K, \psi)$, where $K/F$ is a quadratic extension and $\psi$ is a character of $K^\times$, then $\sigma(\pi) = \Ind_{W_K}^{W_F} (\xi(\psi) \Delta_{\psi}^{-1})$ for a character $\Delta_{\psi}$ defined in \cite[Section 34.4]{Bushnell_Henniart}. Then $c(\pi) = 2$ if and only if $\pi$ has {\em depth 0} (or {\em level 0}), i.e.\ $K$ is unramified over $F$ and $\psi$ has conductor 1 ({\em level 0}). In this case $\Delta_{\psi}$ is the unramified quadratic character of $K^\times$.
\end{enumerate}

The representations in (2) and (3) are called {\em discrete series}. 

\subsection{Local $\epsilon$-factors and $L$-factors}

The following proposition classifies all cases when $\epsilon(\sigma_1 \otimes \sigma_2 \otimes \sigma_3) = -1$. 

\begin{proposition}[Prasad]\label{prop:epsilon=-1}
    Let $\sigma_1, \sigma_2, \sigma_3$ be three Weil--Deligne representations of $W_F$ such that the product of their determinants is trivial. Then $\epsilon(\sigma_1 \otimes \sigma_2 \otimes \sigma_3) = -1$ if and only if there is a reordering $\{\sigma_1', \sigma_2', \sigma_3'\} = \{\sigma_1, \sigma_2, \sigma_3\}$ such that one of the following holds:
    \begin{enumerate}
        \item $\sigma_i' \iso \Sp(2) \otimes \det(\sigma_i')$ for $i = 1, 2, 3$;
        \item $\sigma_1' \iso \Sp(2) \otimes \det(\sigma_1')$, $\sigma_2'$ is irreducible and $\sigma_3' \iso (\sigma_2')^\vee \otimes \det(\sigma_1')^{-1}$;
        \item there is a quadratic extension $K/F$ and characters $\xi_1$, $\xi_2$, $\xi_3$ such that $\sigma_i' = \Ind_{W_K}^{W_F} \xi_i$ is irreducible (so $\xi_i^\sigma \neq \xi_i$) and either $\xi_1\xi_2\xi_3 = 1$ or $\xi_1 \xi_2 \xi_3^\sigma = 1$ or $\xi_1\xi_2^{\sigma}\xi_3 = 1$ or $\xi_1\xi_2^\sigma\xi_3^\sigma = 1$.
    \end{enumerate}
\end{proposition}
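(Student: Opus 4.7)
The plan is to derive this from Prasad's classification of triple-product local root numbers. The triviality of $\prod_i \det(\sigma_i)$ forces the $8$-dimensional representation $\sigma_1 \otimes \sigma_2 \otimes \sigma_3$ to be essentially self-dual with trivial determinant, so $\epsilon(\sigma_1 \otimes \sigma_2 \otimes \sigma_3) \in \{\pm 1\}$. The strategy is a case analysis on the types of the $\sigma_i$ via the Local Langlands dictionary recalled above, exploiting multiplicativity of $\epsilon$-factors in short exact sequences to reduce to a short list of primitive cases.

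The first reduction handles the case where some $\sigma_i$, say $\sigma_1 = \xi \oplus \xi'$, is a principal series. Multiplicativity of $\epsilon$-factors gives
\[
\epsilon(\sigma_1 \otimes \sigma_2 \otimes \sigma_3) = \epsilon(\xi \otimes \sigma_2 \otimes \sigma_3) \cdot \epsilon(\xi' \otimes \sigma_2 \otimes \sigma_3),
\]
reducing the question to $\epsilon$-factors of twisted $4$-dimensional representations. The determinant relation $\xi \xi' \det \sigma_2 \det \sigma_3 = 1$ makes the two factors conjugate up to an unramified twist, so their product is $+1$ \emph{unless} $\sigma_2 \otimes \sigma_3$ contains a Steinberg-like constituent. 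The latter forces $\sigma_2$, $\sigma_3$ to be irreducible and dual to one another after twist by $\det(\sigma_1)$, producing case (2); in particular, all-principal-series triples always give $+1$.

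Next I consider the case where each $\sigma_i$ is irreducible. If some but not all are twists of $\Sp(2)$, further multiplicativity reduces to the previous paragraph and recovers cases (1) or (2). If all three are twists $\sigma_i = \Sp(2) \otimes \chi_i$, direct computation of the root number, combined with the constraint on $\chi_1 \chi_2 \chi_3$ imposed by $\prod_i \det \sigma_i = 1$, yields $\epsilon = -1$ precisely in case (1). Finally, if all three $\sigma_i = \Ind_{W_{K_i}}^{W_F} \xi_i$ are induced supercuspidals, Mackey's formula expresses $\sigma_1 \otimes \sigma_2 \otimes \sigma_3$ as a sum indexed by double cosets in $W_F$; this sum can contribute $-1$ only when $K_1 = K_2 = K_3 =: K$, and Deligne's inductivity of $\epsilon$-factors in degree zero identifies the sign with one of the four character relations in case (3). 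The determinant condition enters naturally via the identity $\det(\Ind_{W_K}^{W_F} \psi) = \eta_{K/F} \cdot \psi|_{F^\times}$, where $\eta_{K/F}$ is the quadratic character cutting out $K$.

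The main obstacle is the all-supercuspidal case: Deligne's formula $\epsilon(\Ind_{W_K}^{W_F} \psi) = \lambda(K/F, \psi_F) \cdot \epsilon(\psi)$ introduces Langlands $\lambda$-constants whose cancellations across the $8$-dimensional tensor product are delicate to track by hand, and one must carefully enumerate which orbits of $\Gal(K/F)$ acting on triples of characters can produce the trivial constituent responsible for the sign $-1$. Rather than rederive these identities from scratch, I would quote Prasad's original theorem and verify that his case list matches the statement above under the Local Langlands dictionary we recalled. The hypothesis $\ell \neq 2$ is important here, as it ensures that every irreducible two-dimensional representation of $W_F$ is induced from a quadratic extension, avoiding the extraordinary supercuspidals of residual characteristic $2$.
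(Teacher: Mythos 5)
Your overall strategy matches the paper's: a type-by-type case analysis using multiplicativity of $\epsilon$-factors, falling back on Prasad's Propositions 8.5 and 8.6 for the hard cases. The paper does exactly this for cases (1) and (2). For case (3) — the all-dihedral case you identify as the main obstacle — the paper does \emph{not} wrestle with $\lambda$-constants directly. Instead it decomposes $\sigma_1 \otimes \sigma_2 \iso \Ind_{W_K}^{W_F}(\xi_1\xi_2) \oplus \Ind_{W_K}^{W_F}(\xi_1\xi_2^\sigma)$, applies the projection formula and the identity $\lambda(K/F,\psi_F)^2 = \xi_{K/F}(-1)$ (so the $\lambda$-constants cancel across the two factors), and then reduces to Tunnell's theorem on $\GL_2(F) \times K^\times$ branching, which directly yields the four character conditions in (3). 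This is cleaner than tracking Langlands constants across the $8$-dimensional tensor product by hand, and you would do well to route your argument through Tunnell's theorem as well rather than deferring entirely to Prasad's paper.

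There is a genuine slip in your first paragraph. If $\sigma_1 = \xi \oplus \xi'$ is a direct sum of two characters (with trivial monodromy), then $\xi' \otimes \sigma_2 \otimes \sigma_3 \iso (\xi \otimes \sigma_2 \otimes \sigma_3)^\vee$ by the determinant relation, and $\epsilon(\rho)\epsilon(\rho^\vee) = \det(\rho)(-1)$. For $\rho = \xi \otimes \sigma_2 \otimes \sigma_3$ one has $\det(\rho) = \xi^4 (\det\sigma_2)^2 (\det\sigma_3)^2$, whose value at $-1$ is always $1$. So the product is $+1$ \emph{unconditionally}; there is no ``unless $\sigma_2 \otimes \sigma_3$ contains a Steinberg-like constituent'' escape clause. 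Relatedly, this case cannot ``produce case (2)'': in case (2), $\sigma_1' \iso \Sp(2) \otimes \det(\sigma_1')$ is an \emph{indecomposable} Weil--Deligne representation, not a direct sum of characters, so your multiplicativity decomposition $\sigma_1 = \xi \oplus \xi'$ simply does not apply to it. The correct conclusion of that paragraph is that whenever some $\sigma_i$ is decomposable as a Weil--Deligne representation, $\epsilon = +1$, which is exactly why no such $\sigma_i$ appears in the list (1)--(3).
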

\begin{proof}
    These results are contained in~\cite[Section 8]{Prasad1990}. See \cite[Proposition 8.6]{Prasad1990} for (1) and \cite[Proposition 8.5]{Prasad1990}~(2). For completeness, we compute $\epsilon(\sigma_1 \otimes \sigma_2 \otimes \sigma_3)$ for $\sigma_i = \Ind_{W_K}^{W_F} \xi_i$; the other possibilities are dealt with similarly. First, note that:
    \begin{align*}
        \sigma_1 \otimes \sigma_2 \otimes \sigma_3 & \iso \left(\Ind_{W_K}^{W_F}(\xi_1 \xi_2) \otimes \sigma_3\right) \oplus  \left(\Ind_{W_K}^{W_F}(\xi_1 \xi_2^\sigma) \otimes \sigma_3 \right)
    \end{align*}
    and hence by standard properties of $\epsilon$-factors:
    \begin{align*}
        \epsilon(\sigma_1 \otimes \sigma_2 \otimes \sigma_3) & = \epsilon\left(\Ind_{W_K}^{W_F}(\xi_1 \xi_2) \otimes \sigma_3 \right) \epsilon\left(\Ind_{W_K}^{W_F}(\xi_1 \xi_2^\sigma) \otimes \sigma_3 \right) \\
        & = \xi_{K/F}(-1) \epsilon(\sigma_3|_{W_K} \otimes \xi_1 \xi_2) \cdot  \xi_{K/F}(-1) \epsilon(\sigma_3|_{W_K} \otimes \xi_1 \xi_2^\sigma) \\
        & = \epsilon(\sigma_3|_{W_K} \otimes \xi_1 \xi_2) \cdot \epsilon(\sigma_3|_{W_K} \otimes \xi_1 \xi_2^\sigma) \\
        & = \det(\sigma_3)(-1) \epsilon(\sigma_3|_{W_K} \otimes \xi_1 \xi_2) \cdot  \det(\sigma_3)(-1) \epsilon(\sigma_3|_{W_K} \otimes \xi_1 \xi_2^\sigma),
    \end{align*}
    where $\xi_{K/F}$ is the quadratic character of $W_F$ associated with $K/F$. Tunnell's Theorem~\cite[Theorem 8.2]{Prasad1990} shows that $\det(\sigma_3)(-1) \epsilon(\sigma_3|_{W_K} \otimes \xi_1 \xi_2) = -1$ if and only if $(\xi_1 \xi_2)^{-1} = \xi_3$ or $(\xi_1 \xi_2)^{-1} = \xi_3^{\sigma}$. Therefore, $\epsilon(\sigma_1 \otimes \sigma_2 \otimes \sigma_3) = -1$ if and only if condition (3) holds after reordering.
\end{proof}

We make the following symplifying hypothesis.

\begin{hypothesis}\label{hyp:cond_at_most_2}
    For all $i$, the conductor $c(\pi_i)$ of $\pi_i$ is at most 2.
\end{hypothesis}

In particular, if $\sigma = \Ind_{W_K}^{W_F} \chi$, then $K/F$ is unramified, the conductor $c(\chi)$ of $\chi$ is 1, and $\chi^{\sigma} \neq \chi$. 

Next, we record the $L$-factors in each of the cases when $\epsilon(\sigma_1 \otimes \sigma_2 \otimes \sigma_3) = -1$.

\begin{proposition}\label{prop:local_L-factors_triple}
    We have that:
    $$L(\sigma_1 \otimes \sigma_2 \otimes \sigma_3, s) = \begin{cases}
        \zeta_F(s+3) \zeta_F(s+2)^2 & \text{case (1)} \\
        \zeta_F(2s+2) & \text{case (2)} \\
        \zeta_F(2s) & \text{case (3)}
    \end{cases}$$
\end{proposition}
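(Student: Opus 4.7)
The plan is to handle the three cases separately by decomposing $\sigma_1 \otimes \sigma_2 \otimes \sigma_3$ as a direct sum of irreducible Weil--Deligne subrepresentations --- characters, twists of the special representations $\Sp(n)$, and irreducible inductions from $W_K$ --- and then combining the standard formula $L((V, \rho, N), s) = \det(1 - q^{-s}\mathrm{Frob} \mid (V^I)^{N = 0})^{-1}$ for each summand.

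Case (3) is the cleanest. Applying Mackey's theorem twice gives
\[
\sigma_1 \otimes \sigma_2 \otimes \sigma_3 \;\cong\; \bigoplus_{(\eta_2, \eta_3) \in \{1, \sigma\}^2} \Ind_{W_K}^{W_F}\bigl(\xi_1\, \xi_2^{\eta_2}\, \xi_3^{\eta_3}\bigr),
\]
and by the character relations of Proposition~\ref{prop:epsilon=-1}(3), exactly one summand has trivial inducing character. Since $K/F$ is unramified by Hypothesis~\ref{hyp:cond_at_most_2}, this summand is $\Ind_{W_K}^{W_F}(\mathbf{1}) = \mathbf{1} \oplus \xi_{K/F}$ with $\xi_{K/F}$ the unramified quadratic character, and its $L$-factor is $\zeta_F(s)(1 + q^{-s})^{-1} = \zeta_F(2s)$. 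For each of the other three summands, I would argue that the inducing character $\eta$ is ramified by restricting to $\O_K^\times / (1 + \varpi_K \O_K) \cong k_K^\times$: each $\xi_i$ has conductor $1$ and $\xi_i^\sigma \neq \xi_i$, so $\xi_i|_{k_K^\times}$ does not factor through the norm $k_K^\times \to k_F^\times$, and the non-trivial products therefore remain non-trivial on the norm-one subgroup. Hence each such $\Ind_{W_K}^{W_F}\eta$ has conductor $2$ and contributes trivially to the $L$-factor.

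For cases (1) and (2) I would reduce to a triple tensor computation involving at least one Steinberg factor. In case (2), I first use $\sigma_3 \cong \sigma_2^\vee \otimes \det(\sigma_1)^{-1}$ together with $\sigma_2 \otimes \sigma_2^\vee \cong \mathbf{1} \oplus \Ad^0 \sigma_2$ to rewrite
\[
\sigma_1 \otimes \sigma_2 \otimes \sigma_3 \;\cong\; \Sp(2) \otimes (\mathbf{1} \oplus \Ad^0 \sigma_2).
\]
Hypothesis~\ref{hyp:cond_at_most_2} forces $\sigma_2$ to be a depth-zero supercuspidal $\Ind_{W_K}^{W_F}\psi$ with $K/F$ unramified, so $\Ad^0 \sigma_2 = \xi_{K/F} \oplus \Ind_{W_K}^{W_F}(\psi/\psi^\sigma)$ with the latter character ramified; only $\Sp(2)$ and $\Sp(2) \otimes \xi_{K/F}$ contribute, with $L$-factors multiplying to $\zeta_F(s+1)(1 + q^{-s-1})^{-1} = \zeta_F(2s+2)$. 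In case (1), iterating the Clebsch--Gordan identity $\Sp(2) \otimes \Sp(2) \cong \Sp(3) \oplus \det(\Sp(2))$ gives $\Sp(2)^{\otimes 3} \cong \Sp(4) \oplus (\Sp(2) \otimes \det(\Sp(2)))^{\oplus 2}$, whose three $L$-factors are $\zeta_F(s+3)$, $\zeta_F(s+2)$, $\zeta_F(s+2)$; once the overall twist arising from the characters $\det(\sigma_i)$ is checked to be unramified with trivial Frobenius eigenvalue, their product is the claimed $\zeta_F(s+3)\zeta_F(s+2)^2$.

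The main obstacle is the bookkeeping of twists and normalizations, especially in case (1): one must verify that the product of determinant twists imposed by the conditions $\sigma_i \cong \Sp(2) \otimes \det(\sigma_i)$ and $\prod_i \det\sigma_i = 1$ is unramified with trivial Frobenius eigenvalue, so no extra local factors appear. Once this is settled, each case reduces to a finite combinatorial check.
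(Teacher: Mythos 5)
Your argument runs along the same lines as the paper's own proof: case (3) is the same Mackey decomposition into four inductions with exactly one trivial inducing character; case (2) is the identity $\sigma_1 \otimes \sigma_2 \otimes \sigma_3 \cong \Sp(2) \otimes (\mathbf 1 \oplus \Ad^0\sigma_2)$, which is the content behind the paper's citation of Gelbart--Jacquet (you instead plug in the decomposition of $\Ad^0$ from Proposition~\ref{prop:local_L-factors_adjoint}, but it is the same calculation); and case (1) is the Clebsch--Gordan decomposition of $\Sp(2)^{\otimes 3}$, with the harmless observation that the overall twist by $\prod_i \det\sigma_i$ is trivial by hypothesis.

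One step in your case (3) does not hold as stated. It is not true that because each $\xi_i|_{k_K^\times}$ fails to factor through the norm, the three nontrivial products remain nontrivial on the norm-one subgroup. Using $\xi_1\xi_2\xi_3 = 1$, those products are (up to inverse) of the form $\xi_j^\sigma/\xi_j$; for a norm-one unit $y$ one has $\sigma(y) = y^{-1}$, hence $(\xi_j^\sigma/\xi_j)(y) = \xi_j(y)^{-2}$, and this is identically $1$ whenever $q$ is odd and $\xi_j$ restricted to the cyclic norm-one subgroup of order $q+1$ is the quadratic character --- even though in that case $\xi_j^\sigma/\xi_j$ is still ramified. What you actually need, and what does hold, is ramification, i.e.\ nontriviality on all of $\O_K^\times$: if $\xi_j^\sigma/\xi_j$ were unramified then $\xi_j^\sigma$ and $\xi_j$ would agree on $\O_K^\times$, and since they also agree at $\varpi_F$ (a $\sigma$-fixed uniformizer of $K$, as $K/F$ is unramified), this would force $\xi_j^\sigma = \xi_j$, contradicting the irreducibility of $\sigma_j$. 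With that repair your case (3) matches the paper's assertion that ``the other characters are ramified,'' and the rest of your proposal is correct.
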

\begin{proof}
    In case (1), note that:
    $$\Sp(2) \otimes \Sp(2) \otimes \Sp(2) \iso \Sp(4) \oplus \Sp(2)|\cdot| \oplus \Sp(2)|\cdot|$$
    and $L(\Sp(m), s) = \zeta_\ell(s+m-1)$. Part (2) follows from
    $$L(\Sp(2) \otimes \sigma \otimes \sigma^\vee, s) = L(\sigma \otimes \sigma^\vee, s+1)$$
    and \cite[Corollary 1.3]{Gelbart_Jacquet}, because $\sigma \iso \sigma \otimes \eta$ for an unramified quadratic character~$\eta$. For (3), note that:
    $$\sigma_1 \otimes \sigma_2 \otimes \sigma_3 \iso \Ind_{W_K}^{W_F}(\chi_1 \chi_2 \chi_3) \oplus \Ind_{W_K}^{W_F}(\chi_1 \chi_2 \chi_3^\sigma) \oplus  \Ind_{W_K}^{W_F}(\chi_1 \chi_2^\sigma \chi_3) \oplus \Ind_{W_K}^{W_F}(\chi_1 \chi_2 \chi_3^\sigma).$$
    Under the assumption that $\epsilon(\sigma_1 \otimes \sigma_3 \otimes \sigma_3) = -1$, exactly one of these characters is trivial and the other characters are ramified. This shows that:
    $$L(\sigma_1 \otimes \sigma_2 \otimes \sigma_3) = \zeta_{K}(s) = \zeta_F(2s),$$
    because $K/F$ is unramified.
\end{proof}

We will also need the adjoint $L$-factors.
    
\begin{proposition}\label{prop:local_L-factors_adjoint}
    We have that:
    \begin{align*}
        \Ad(\Sp(2) \otimes \xi) & \iso \Sp(3) |\cdot|, \\
        \Ad( \Ind_{W_K}^{W_F}(\chi) ) & \iso \Ind_{W_K}^{W_F}(\chi/\chi^\sigma) \oplus \chi_{K/F},
    \end{align*}
    and hence:
    \begin{align*}
        L(\Ad(\Sp(2) \otimes \xi), s) & = \zeta_F(s + 1), \\
        L(\Ad( \Ind_{W_K}^{W_F} (\xi) )) & = L(\chi_{K/F}, s) = \frac{\zeta_F(2s)}{\zeta_F(s)}.
    \end{align*}
\end{proposition}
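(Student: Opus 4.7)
The plan is to handle the two identities separately; in both, I would compute the full tensor product $\sigma \otimes \sigma^\vee$ and then extract $\Ad(\sigma)$ as its traceless part. Since $\Ad$ is invariant under twists by a character, I may discard the $\xi$ in the first identity from the outset.

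For the $\Sp(2)$ case, I would establish a Clebsch--Gordan-type decomposition by unpacking the standard filtration $0 \to L \to \Sp(2) \to L' \to 0$ (with $LL' = \det\Sp(2) = |\cdot|^{-1}$ in the paper's convention), inducing a three-step filtration on $\Sp(2) \otimes \Sp(2)$, and checking that the monodromy forces the three-dimensional indecomposable constituent to be a special representation. Combined with $\Sp(2)^\vee \iso \Sp(2) \otimes |\cdot|$, this yields $\Sp(2) \otimes \Sp(2)^\vee \iso \Sp(3)|\cdot| \oplus \mathbf{1}$, so subtracting the trace gives $\Ad(\Sp(2)) \iso \Sp(3)|\cdot|$. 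The $L$-factor is then read off from the one-dimensional $N$-fixed line of $\Sp(3)|\cdot|$, on which Frobenius acts by $q^{-1}$, giving $\zeta_F(s+1)$.

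For the induced case, the plan is to combine the projection formula with Mackey's formula $\Ind_{W_K}^{W_F}(\chi^{-1})|_{W_K} \iso \chi^{-1} \oplus (\chi^{-1})^\sigma$ to compute
\[
\Ind_{W_K}^{W_F}(\chi) \otimes \Ind_{W_K}^{W_F}(\chi^{-1}) \iso \Ind_{W_K}^{W_F}(\mathbf{1}) \oplus \Ind_{W_K}^{W_F}(\chi/\chi^\sigma) \iso \mathbf{1} \oplus \chi_{K/F} \oplus \Ind_{W_K}^{W_F}(\chi/\chi^\sigma),
\]
using $\Ind_{W_K}^{W_F}(\mathbf{1}) \iso \mathbf{1} \oplus \chi_{K/F}$. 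Removing the trace gives the asserted decomposition of $\Ad$. The $L$-factors then follow: inductivity reduces $L(\Ind_{W_K}^{W_F}(\chi/\chi^\sigma), s)$ to $L_K(\chi/\chi^\sigma, s)$, and under Hypothesis~\ref{hyp:cond_at_most_2} the admissible pair is of depth $0$, so $\chi|_{\O_K^\times}$ does not factor through the norm, making $\chi/\chi^\sigma$ ramified and this factor trivial. The remaining $L(\chi_{K/F}, s) = (1+q^{-s})^{-1} = \zeta_F(2s)/\zeta_F(s)$ (an unramified quadratic character, since $K/F$ is unramified by the same hypothesis) produces the stated formula.

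The only delicate point is keeping track of the $|\cdot|$ twist in the first identity; this amounts to careful bookkeeping with $\det\Sp(2)$ in the chosen normalization, and the remainder is a standard Weil--Deligne manipulation.
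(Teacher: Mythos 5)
The paper states this proposition without supplying a proof, so there is no argument to compare against; I assess your proposal on its own terms, and it is correct.

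For the induced case, the Mackey/projection-formula computation is exactly right: $\Ind_{W_K}^{W_F}(\chi) \otimes \Ind_{W_K}^{W_F}(\chi^{-1}) \iso \Ind_{W_K}^{W_F}(\mathbf{1}) \oplus \Ind_{W_K}^{W_F}(\chi/\chi^\sigma) \iso \mathbf{1} \oplus \chi_{K/F} \oplus \Ind_{W_K}^{W_F}(\chi/\chi^\sigma)$, whence the stated adjoint. Your ramification argument for $\chi/\chi^\sigma$ is also correct and could be made slightly more transparent: since $K/F$ is unramified one may take $\varpi_K = \varpi_F$ fixed by $\sigma$, so $\chi$ and $\chi^\sigma$ automatically agree on $\varpi_K^{\Z}$; if they also agreed on $\O_K^\times$ then $\chi = \chi^\sigma$, contradicting admissibility. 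Hence $\chi/\chi^\sigma$ is ramified, its $L$-factor is trivial, and the only contribution is the unramified quadratic $\chi_{K/F}$, giving $\zeta_F(2s)/\zeta_F(s)$.

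For the Steinberg case the Clebsch--Gordan argument is the right approach and the final $L$-factor $\zeta_F(s+1)$ is correct. One convention caveat, though: you posit $\det\Sp(2) = |\cdot|^{-1}$, which amounts to reading $|\cdot|$ as sending geometric Frobenius to $q$. The paper's own proof of the preceding proposition, however, uses $\Sp(2)^{\otimes 3} \iso \Sp(4) \oplus \Sp(2)|\cdot| \oplus \Sp(2)|\cdot|$ together with $L(\Sp(m),s) = \zeta_F(s+m-1)$ to conclude $\zeta_F(s+3)\zeta_F(s+2)^2$, which forces $L(\Sp(2)|\cdot|, s) = \zeta_F(s+2)$, i.e., geometric Frobenius acting by $q^{-1}$ under $|\cdot|$ --- the opposite sign. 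Under that normalization $\det\Sp(2) = |\cdot|$ and $\Ad(\Sp(2)) \iso \Sp(3)|\cdot|^{-1}$, so the displayed isomorphism $\Ad(\Sp(2)\otimes\xi)\iso\Sp(3)|\cdot|$ in the proposition appears to carry a sign that disagrees with the normalization of the previous proposition. Your compensating sign choice happens to reproduce the paper's display, and in any case the $L$-factor $\zeta_F(s+1)$, which is all that is used downstream, comes out correctly in both conventions. This is a bookkeeping ambiguity rather than a gap in your argument.
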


\subsection{Local Jacquet--Langlands correspondence}

In cases when $\epsilon(\pi_1 \otimes \pi_2 \otimes \pi_3) = -1$, the local integral should be non-vanishing on the non-split quaternion algebra $D$ over $F$ (cf.~\cite[Theorem 1.2]{Prasad1990}). In this section, we briefly summarize the local Jacquet--Langlands correspondence following~\cite[Chapter 13]{Bushnell_Henniart}.

The local Jacquet--Langlands correspondence is a bijection 
$$\mathrm{JL} \colon \mathrm{Rep}^{\mathrm{ds}}(\GL_2(F)) \to  \mathrm{Rep}(D^\times)$$ 
between discrete series representations of $\GL_2(F)$ and irreducible smooth admissible representations of $D^\times$. For $\pi = \St \otimes \chi$, we have that:
$$\mathrm{JL}(\St(2) \otimes \chi) = \chi \circ \mathrm{\nu},$$
where $\mathrm{\nu} \colon D^\times \to F^\times$ is the reduced norm.

For supercuspidal representations, an explicit description of the correspondence is given in \cite[Section 56]{Bushnell_Henniart}. We only describe it here for supercuspidal representations of conductor two, i.e.\ $\pi = \pi_{\psi}$ for the unramified quadratic extension $K$ of $F$ and character $\psi$ of $K^\times$ of conductor 1 such that $\psi^\sigma \neq \psi$. 

Recall that $\Delta_{\psi}$ is the unramified quadratic character of $K^\times$ in this case. We then have that:
\begin{itemize}
    \item $\pi_\psi \otimes \Delta_{\psi} \iso \pi_\psi$,
    \item under the local Langlands correspondence, the corresponding representation of $W_F$ is $\Ind_{W_K}^{W_F} (\xi(\psi) \Delta_{\psi})$.
\end{itemize}

We describe the representation 
$$\pi_\psi^D = \mathrm{JL}(\pi_\psi)$$
explicitly. Let $\O_D$ be a maximal order of $D$ and $\varpi_D \in \O_D$ be a uniformizer. We will assume that $\varpi_D^2 = \varpi_F$ for a uniformizer $\varpi_F$ of $F$. There is a filtration on $\O_D^\times$
\begin{equation}\label{Eq.: definition U_D^a}
    U_D^a = 
    \begin{cases}
        \O_D^\times & a = 0 \\
        1 + \varpi_D^a \O_D & a \geq 1. \\
    \end{cases}
\end{equation}
Given an unramified character $\psi$ of $K^\times$, we may extend it to a character $\Psi$ of $K^\times U^1_D$ by letting $U^1_D$ act trivially. Then:
$$\pi_\psi^D \iso {\textnormal{c-Ind}}_{ F^\times U^1_D }^{D^\times} \Psi.$$

Let $\pi$ be a smooth irreducible representation of $D^\times$, which is automatically finite-dimensional because $D^\times/F^\times$ is compact. Thus $\pi|_{U_D^a} = 1$ for $a \gg 0$. We define the {\em conductor} $c(\pi)$ of $\pi$ to be $a + 1$ where $a$ is the smallest integer such that $\pi|_{U_D^a} = 1$. Note that under the assumption that $\psi$ has conductor 1, $\pi_{\psi}^D$ has conductor two. For completess, we verify that all representations of $D^\times$ of conductor two are obtained this way, following~\cite{Carayol1984}. 

Note that $\varpi_K = \varpi_F$ because $K/F$ is an unramfied quadratic extension, and $k = \O_K/\varpi_K \O_K$ is a quadratic extension of $f = \O_F/\varpi_F \O_F$. Finally, since $d = \O_D/\varpi \O_D$ is also a quadratic extensions of $f$, it is isomorphic to $k$. By definition, a representation of conductor two factors through
\begin{equation}\label{Eq.: semidirect product local order}
    D^\times/U^1_D \iso d^\times \rtimes \langle \varpi_D \rangle.
\end{equation}
Note that $\varpi_D \O_D \varpi_D^{-1} = \O_D$ because the maximal order $\O_D \subseteq D$ is unique, and hence conjugation by $\varpi_D$ preserves $d^\times$. Moreover, $\varpi_D^2 = \varpi_F$ is in the center $F^\times$ of $D^\times$ and hence acts trivially. In particular, we have a subgroup:
$d^\times \times \langle \varpi_F \rangle \subseteq d^\times \rtimes \langle \varpi_D \rangle$.

Next, observe that a character $\psi$ of $K^\times$ which is trivial on $U^1_D$ corresponds precisely to a character of $k^\times \times \langle \varpi_F \rangle$. Therefore, we may identify $\pi_\psi^D$ with the inflation of the induction $\Ind_{d^\times \times \langle \varpi_F \rangle}^{d^\times \rtimes \langle \pi_D \rangle} \psi$. This representation is reducible unless $\psi^{\varpi_D} \neq \psi$. Altogether, we get the following result.

\begin{proposition}\label{prop:reps_of_cond2}
    Suppose $\pi$ is an smooth irreducible representation of $D^\times$ and that $c(\pi) = 2$. Then there is a character $\psi$ of $d^\times \times \langle \varpi_F \rangle$ with $\psi^{\varpi_D} \neq \psi$ such that:
    $$\pi = \mathrm{Inf}_{D^\times/U^1_D}^{D^\times} \Ind_{d^\times \times \langle \varpi_F \rangle}^{d^\times \rtimes \langle \varpi_D \rangle} \psi.$$
    In particular, $\pi$ is two-dimensional and in the basis corresponding to the decomposition above, we have:
    \begin{align*}
        \pi(x) & = \begin{pmatrix}
        \psi(x) & \\
        & \psi^{\varpi_D}(x) 
        \end{pmatrix}, & x \in d^\times \times \langle \varpi_F \rangle, \\
        \pi(\varpi_D) & = \begin{pmatrix}  & 1 \\
        \psi(\varpi_F) &  \end{pmatrix}.
    \end{align*}
    Finally, if $\omega$ is the central character of $\pi$, then $\psi|_{F^\times} = \omega$ and the last equality may be written:
    $$\pi(\varpi_D) = \begin{pmatrix}  & 1 \\
        \omega(\varpi_F) &  \end{pmatrix}.$$
\end{proposition}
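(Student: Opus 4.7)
The plan is to use the factorization~\eqref{Eq.: semidirect product local order} to reduce the classification of conductor-$2$ representations of $D^\times$ to the representation theory of the group $G := d^\times \rtimes \langle \varpi_D \rangle$, and then to apply Clifford theory to the abelian index-two subgroup $H := d^\times \times \langle \varpi_F \rangle$.

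First, since $c(\pi) = 2$ forces $\pi|_{U^1_D} = 1$, I would descend $\pi$ to a finite-dimensional irreducible representation of $G$ (finite-dimensionality being automatic since $D^\times/F^\times$ is compact). Next, I would note that $H$ is a subgroup of $G$ of index two (because $\varpi_D^2 = \varpi_F$), and that it is both normal --- the image of $\varpi_D$ normalizes $d^\times$ by uniqueness of the maximal order $\O_D$, and fixes $\varpi_F$ since it is central --- and abelian.

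The next step is the standard Clifford-theoretic dichotomy. The restriction $\pi|_H$ decomposes as a sum of characters of $H$, transitively permuted by $G/H \iso \langle \varpi_D \rangle / \langle \varpi_F \rangle$, whose non-trivial element acts by $\psi \mapsto \psi^{\varpi_D}$. Either $\pi|_H$ is a single $\varpi_D$-invariant character (and $\pi$ is a one-dimensional extension), or $\pi|_H = \psi \oplus \psi^{\varpi_D}$ with $\psi^{\varpi_D} \neq \psi$, in which case Frobenius reciprocity yields $\pi \iso \Ind_H^G \psi$, which is irreducible and two-dimensional; this is the form asserted in the proposition.

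Finally, the explicit matrix formulas should follow from unwinding $\Ind_H^G \psi = \C[G] \otimes_{\C[H]} \C_\psi$ in the basis $\{1 \otimes 1,\, \varpi_D^{-1} \otimes 1\}$, using the relations $\varpi_D^2 = \varpi_F \in H$ and $\varpi_D x \varpi_D^{-1} = x^{\varpi_D}$ for $x \in d^\times$. The claim about the central character is then immediate, since $F^\times$ lies in the centre of $H$ and so $\psi|_{F^\times} = \psi^{\varpi_D}|_{F^\times}$ must coincide with $\omega$. The main thing requiring care is simply verifying the semidirect product structure of~\eqref{Eq.: semidirect product local order}, after which the argument is essentially formal; I do not expect any serious obstacle here.
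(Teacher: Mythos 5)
Your Clifford-theoretic argument is a genuinely different route from the paper's, which simply cites Carayol for the classification; your version is more self-contained, but it exposes a gap that the paper's terse citation glosses over. You correctly set up the dichotomy --- either (a) $\pi|_H$ is a single $\varpi_D$-invariant character extending to a one-dimensional $\pi$, or (b) $\pi \iso \Ind_H^G \psi$ with $\psi^{\varpi_D} \neq \psi$ --- but you never explain why case (a) is excluded, and in fact it is \emph{not} excluded by the hypothesis $c(\pi) = 2$ alone. If $\chi$ is a character of $F^\times$ of conductor $1$, then $\pi = \chi \circ \nu$ is a smooth irreducible one-dimensional representation of $D^\times$ with $c(\pi) = 2$ under the paper's definition (one checks $\nu(U_D^1) = U_F^1$ while $\nu(\O_D^\times) = \O_F^\times$, so $\pi|_{U_D^1} = 1$ but $\pi|_{\O_D^\times} \neq 1$), and its restriction to $d^\times$ factors through the norm $N_{d/f}$, hence is $\varpi_D$-invariant. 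This is precisely the Jacquet--Langlands transfer of the twisted Steinberg $\St \otimes \chi$, which also has conductor $2$ on the $\GL_2$ side.

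So the statement as literally written is slightly too weak, and your proof correctly refuses to close: what needs to be added --- both to your argument and, implicitly, to the hypotheses --- is that $\pi$ be two-dimensional, equivalently that it correspond to a supercuspidal under the Jacquet--Langlands correspondence, equivalently that $\pi|_{d^\times}$ not be $\varpi_D$-invariant. This is the hypothesis actually in force in the application: Proposition~\ref{prop: mult-1 with varpi_d} assumes twist-minimality at the ramified prime, which rules out $\St \otimes \chi$ with $\chi$ ramified. Once you impose it, your Clifford dichotomy collapses to case (b) and the argument goes through; the matrix computation in the basis $\{1 \otimes 1, \varpi_D^{-1} \otimes 1\}$ and the central-character claim are both correct as you wrote them.
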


\begin{proof}
    The first claim then follows from \cite[Section 5.1]{Carayol1984}. The rest of the proposition is immediate.
\end{proof}

We identify the representation $\pi_\psi^D$ with the one described explicitly in Proposition~\ref{prop:reps_of_cond2}.

\subsection{Local integral for zero, two, and three supercuspidal representations of level $\ell^2$}

Let $\pi_1, \pi_2, \pi_3$ be irreducible admissible representations of $\GL_2(F)$ with central characters $\omega_i$ which satisfy $\omega_1 \omega_2 \omega_3 = 1$. Let $\sigma_1, \sigma_2, \sigma_3$ be corresponding representations of $W_F$ such that $\det(\sigma_1) \det(\sigma_2) \det(\sigma_3) = 1$. As above, let $D$ be the non-split quaternion algebra over $F$, and $\pi_i^D$ be the Jacquet--Langlands transfer of $\pi_i$ to $D^\times$. 

Prasad~\cite[Theorem 1.4]{Prasad1990} proves that there exists a non-zero trilinear form on $\pi_1^D \otimes \pi_2^D \otimes \pi_3^D$ if and only if $\epsilon(\sigma_1 \otimes \sigma_2 \otimes \sigma_3) = -1$. There is a natural trilinar form:
\begin{equation}
    I_v'(\phi) = \int\limits_{F^\times \backslash D^\times} \langle \pi(g) \phi,\widetilde \phi \rangle \, dg \qquad \phi \in \pi_1^D \otimes \pi_2^D \otimes \pi_3^D
\end{equation}
and the goal of this section is to choose vectors $\phi$ and compute $I_v'(\phi) \neq 0$ explicitly when $\epsilon(\sigma_1 \otimes \sigma_2 \otimes \sigma_3) = -1$ and $c(\sigma_i) \leq 2$. Here, $\langle -, - \rangle$ is a pairing between $\pi_1^D \otimes \pi_2^D \otimes \pi_3^D$ and its contragredient representation, and $\widetilde \phi$ is the vector dual to $\phi$ under this pairing. Recall that there were three cases (1)--(3) outlined in Proposition~\ref{prop:epsilon=-1} and we will treat each of them separately.

\begin{remark}
    As far as we know, these are the first such results when one of the components of $\pi_1 \times \pi_2 \times \pi_3$ is supercuspidal and $\epsilon(\pi_1 \otimes \pi_2 \otimes \pi_3) = -1$ so the trilinear form is on the quaternion algebra $D^\times$. When $\epsilon(\pi_1 \otimes \pi_2 \otimes \pi_3) = +1$, the trilinear form is on $\GL_2$ and Dimitrov--Nyssen~\cite{Dimitrov2010test} show how to choose vectors in $\pi_1 \times \pi_2 \times \pi_3$ when at least one component is not supercuspidal.
\end{remark}

\subsubsection{Case (1): zero supercuspidal representations}

Suppose $\pi_i = \St(2) \otimes \omega_i$ for $i = 1, 2, 3$. Then:
$$\pi_i^D = \omega_i \circ \mathrm{\nu} \colon D^\times \to \C^\times$$
is one-dimensional and we choose any non-zero vectors $\phi_i \in \pi_i^D$. 

\begin{proposition}\label{prop:local_int_spspsp}
    For characters $\omega_1, \omega_2, \omega_3$ of $F^\times$ such that $\omega_1 \omega_2 \omega_3 = 1$ and any non-zero vectors $\phi_i \in \pi_i^D$, we have that
    $$\frac{I_v'(\phi)}{\langle \phi,\widetilde \phi \rangle} = 2\mu(\O_D^\times)$$
    for $\phi = \phi_1 \times \phi_2 \times \phi_3$.
\end{proposition}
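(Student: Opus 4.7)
The plan is to observe that in this case every factor is one-dimensional, so the matrix coefficient $\langle \pi(g)\phi,\widetilde\phi\rangle$ is \emph{constant} in $g$, and the entire integral reduces to computing a volume.

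First, I would record the action of $D^\times$ on $\pi^D = \pi_1^D \otimes \pi_2^D \otimes \pi_3^D$. Since each $\pi_i^D = \omega_i \circ \nu$ is one-dimensional,
\[
\pi(g)\phi \;=\; (\omega_1\omega_2\omega_3)(\nu(g))\,\phi \;=\; \phi
\]
for every $g\in D^\times$, because the product of the central characters is trivial by assumption. Consequently $\langle \pi(g)\phi,\widetilde\phi\rangle = \langle \phi,\widetilde\phi\rangle$ is a constant function of $g \in F^\times\backslash D^\times$ (well-defined because the central character of $\pi^D$ is trivial), and pulling it out of the integral gives
\[
I'_v(\phi) \;=\; \langle \phi,\widetilde\phi\rangle \cdot \mu\bigl(F^\times\backslash D^\times\bigr).
\]

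It then remains to evaluate $\mu(F^\times\backslash D^\times)$. I would use the decomposition $D^\times = \bigsqcup_{n\in\Z}\varpi_D^n\,\O_D^\times$, together with the fact that $\varpi_D^2 = \varpi_F \in F^\times$ (so the cosets $\varpi_D^n$ modulo $F^\times$ repeat with period $2$) and $F^\times\cap \O_D^\times = \O_F^\times$. This yields a disjoint decomposition
\[
F^\times\backslash D^\times \;=\; \bigl(\O_F^\times\backslash \O_D^\times\bigr)\,\sqcup\,\bigl(\O_F^\times\backslash \varpi_D\O_D^\times\bigr).
\]
Under the standing normalization $\mu(\O_F^\times)=1$ (used throughout Hsieh's setup), the quotient measure satisfies $\mu(\O_F^\times\backslash \O_D^\times) = \mu(\O_D^\times)/\mu(\O_F^\times) = \mu(\O_D^\times)$, and the same holds for the coset $\varpi_D\O_D^\times$ by translation invariance. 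Summing the two pieces gives $\mu(F^\times\backslash D^\times) = 2\mu(\O_D^\times)$, completing the computation.

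There is no genuine obstacle here — the content is entirely in the identity $\omega_1\omega_2\omega_3 = 1$ and the bookkeeping of $D^\times/F^\times$. The only step requiring a little care is the volume normalization in the last paragraph, but this is forced by the Fubini relation $\int_{D^\times}f\,dg = \int_{F^\times\backslash D^\times}\!\int_{F^\times} f(zg)\,dz\,d\dot g$ applied to $f = \mathbf{1}_{\O_D^\times}$.
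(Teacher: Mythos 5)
Your argument is correct, and it is the natural one; the paper itself simply cites Woodbury~\cite[Proposition 4.5]{Woodbury2012explicit} rather than giving a self-contained proof, so you are not re-deriving the same proof but rather supplying the elementary computation that the citation elides. The key observations — that each $\pi_i^D=\omega_i\circ\nu$ is one-dimensional, that $\omega_1\omega_2\omega_3=1$ forces the matrix coefficient to be the constant $\langle\phi,\widetilde\phi\rangle$, and that $F^\times\backslash D^\times$ decomposes into the two pieces $\O_F^\times\backslash\O_D^\times$ and $\O_F^\times\backslash\varpi_D\O_D^\times$ of equal volume — are exactly the ingredients used inline in the paper's own proofs of the adjacent Propositions~\ref{prop:local_int_scscsp} and~\ref{prop:local_integral_scscsc}, where the decomposition
\[
F^\times\backslash D^\times \;=\; \bigl(F^\times\backslash F^\times\O_D^\times\bigr)\sqcup\bigl(F^\times\backslash F^\times\varpi_D\O_D^\times\bigr)
\]
appears explicitly. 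So your proof is consistent with, and can be read as the degenerate (zero-supercuspidal) case of, the computations the paper does carry out. The one place to be a bit more careful is the phrase ``by translation invariance'': the equality $\mu(\O_F^\times\backslash\varpi_D\O_D^\times)=\mu(\O_F^\times\backslash\O_D^\times)$ is best justified by right-invariance of the quotient Haar measure together with the fact that $\varpi_D$ normalizes $\O_D^\times$ (so $\O_D^\times\varpi_D=\varpi_D\O_D^\times$); but this is a cosmetic point and your conclusion, including the normalization $\mu(\O_F^\times)=1$, matches the convention used elsewhere in the section.
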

\begin{proof}
    See the proof of \cite[Proposition 4.5]{Woodbury2012explicit}.
\end{proof}

\subsubsection{Case (2): two supercuspidal representations}

Consider representations $\pi_1, \pi_2, \pi_3$ of $\GL_2(F)$ such that:
\begin{enumerate}
    \item $\pi_1$, $\pi_2$ are supercuspidal, $\pi_3 = \St \otimes \omega_3$ is a twist of the Steinberg representation,
    \item the product $\omega_1 \cdot \omega_2 \cdot \omega_3$ of their central characters it trivial,
    \item $\epsilon(\pi_1 \otimes \pi_2 \otimes \pi_3) = -1$, i.e. $\pi_1^\vee = \pi_2 \otimes \omega_3^{-1}.$
\end{enumerate}

We assume that $\pi_1$, $\pi_2$ have conductor 2 and let $\pi_i^D$ be the representation of $D^\times$ corresponding to $\pi_i$ for $i=1,2,3$ under the the Jacquet--Langlands correspondence. Then:
\begin{enumerate}
    \item $\pi_1^D = \pi_\psi$ for some character $\psi$ of $d^\times \times \langle \varpi_F \rangle$ such that $\psi|_{F^\times} = \omega_1$,
    \item $\pi_2^D \iso \pi_{\psi^{-1}} \otimes \omega_3^{-1}$,
    \item $\pi_3^D \iso \omega_3 \circ \mathrm{\nu}$.
\end{enumerate}

\begin{proposition}\label{prop:local_int_scscsp}
    Let $\pi_1, \pi_2, \pi_3$ be as above, $\epsilon_i \in \{\pm 1\}$ for $i = 1,2$, and
    \begin{enumerate}
        \item $\phi_i^{\epsilon_i} \in \pi_i^D$ non-zero such that $\pi_i^D(\varpi_D) \phi_i^{\epsilon_i} = \epsilon_i \sqrt{\omega_i(\ell)} \phi_i^{\epsilon_i}$ (cf. Proposition~\ref{prop:reps_of_cond2}),
        \item $\phi_3 \in \pi_3^D$ nonzero.
    \end{enumerate}
    Then for $\phi^\epsilon = \phi_1^{\epsilon_1} \times \phi_2^{\epsilon_2} \times \phi_3$, we have that:
    $$\frac{I_v'(\phi^\epsilon)}{\langle \phi^\epsilon,\widetilde{\phi^\epsilon} \rangle} = (1 + \epsilon_1 \epsilon_2 \sqrt{\omega_3(\ell)}) \frac{\mu(\O_D^\times)}{2}.$$
\end{proposition}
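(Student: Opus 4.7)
The plan is to reduce the integral to a finite sum and then evaluate it by character orthogonality. Since $F^\times \backslash D^\times$ is compact and all three representations factor through the finite quotient $D^\times / F^\times U^1_D$, the integrand is locally constant with finitely many values. Using the semidirect product decomposition $D^\times / U^1_D \cong d^\times \rtimes \langle \varpi_D \rangle$ from equation~(\ref{Eq.: semidirect product local order}), together with the central action of $\varpi_F \in F^\times$, I would split the integral into two pieces indexed by the cosets of $\langle \varpi_D \rangle$ in $D^\times/F^\times U^1_D$: one over coset representatives $x \in d^\times/f^\times$, and one over $x\varpi_D$ for $x \in d^\times/f^\times$.

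On the first coset, I would use the explicit model of $\pi_i^D$ provided by Proposition~\ref{prop:reps_of_cond2}, which realizes $\pi_i^D$ as an induction $\mathrm{Ind}_{d^\times \times \langle \varpi_F\rangle}^{d^\times \rtimes \langle \varpi_D\rangle} \psi_i$, with $\pi_i^D(\varpi_D)$ the antidiagonal swap. Writing $\phi_i^{\epsilon_i}$ in the induced-representation basis as $(1, \epsilon_i \sqrt{\omega_i(\ell)})^T$ and choosing the compatible dual vector $\widetilde{\phi_i^{\epsilon_i}} = (1, \epsilon_i/\sqrt{\omega_i(\ell)})^T$, a direct computation shows that for $x \in d^\times$ the matrix coefficient is
\[
\langle \pi_i^D(x)\phi_i^{\epsilon_i}, \widetilde{\phi_i^{\epsilon_i}}\rangle \;=\; \psi_i(x) + \psi_i^{\varpi_D}(x),
\]
independently of $\epsilon_i$. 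For the coset represented by $x\varpi_D$, the eigenvector property gives an extra factor of $\epsilon_i\sqrt{\omega_i(\ell)}$ for $i=1,2$, while $\pi_3^D(x\varpi_D)\phi_3 = \omega_3(\nu(x))\omega_3(\nu(\varpi_D))\phi_3$. Since $\omega_1\omega_2\omega_3=1$ and $\nu(\varpi_D)^2 = \varpi_F^2$, the combined extra factor on the second coset is exactly $\epsilon_1\epsilon_2\sqrt{\omega_3(\ell)}$ (with a consistent choice of square roots).

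This reduces the computation to evaluating
\[
S \;=\; \sum_{x \in d^\times/f^\times} \bigl(\psi_1(x) + \psi_1^{\varpi_D}(x)\bigr)\bigl(\psi_2(x) + \psi_2^{\varpi_D}(x)\bigr)\,\omega_3(\nu(x)).
\]
Expanding yields four sums of characters on $d^\times/f^\times$. Under the hypothesis $\epsilon(\pi_1\otimes\pi_2\otimes\pi_3) = -1$, which by Proposition~\ref{prop:epsilon=-1}(2) is equivalent to $\pi_2 \cong \pi_1^\vee \otimes \omega_3^{-1}$, the pair $\psi_2$ must equal one of $\psi_1^{-1}\omega_3^{-1}$ or $(\psi_1^{\varpi_D})^{-1}\omega_3^{-1}$ (twisted appropriately by $\omega_3\circ\nu$). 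Combined with the supercuspidality condition $\psi_i^{\varpi_D}\neq \psi_i$, character orthogonality singles out exactly two of the four terms, each contributing the same nonzero constant; the result is thus independent of $\epsilon_i$. Combining this with the $(1 + \epsilon_1\epsilon_2\sqrt{\omega_3(\ell)})$ factor from the two cosets and comparing with $\langle \phi^\epsilon, \widetilde{\phi^\epsilon}\rangle = 2^3/\mathrm{(norm.)}$ produces the claimed ratio $(1+\epsilon_1\epsilon_2\sqrt{\omega_3(\ell)})\mu(\O_D^\times)/2$.

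The main obstacle I anticipate is bookkeeping of normalizations: fixing a pairing between $\pi_i^D$ and its contragredient, a consistent choice of $\widetilde{\phi_i^{\epsilon_i}}$, a consistent square-root convention across $i=1,2$ (so that the sign of $\nu(\varpi_D)=\pm\varpi_F$ is absorbed uniformly), and the Haar-measure normalizations on $D^\times$, $F^\times$, $\O_D^\times$, $U^1_D$ so that the coset volumes combine into precisely $\mu(\O_D^\times)/2$. Once these normalizations are pinned down in a small separate lemma (along the lines of~\cite[Proposition 4.5]{Woodbury2012explicit} used in the proof of Proposition~\ref{prop:local_int_spspsp}), the rest is routine character arithmetic.
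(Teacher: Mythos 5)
Your proposal follows essentially the same route as the paper's proof: split $I_v'$ along the coset decomposition $D^\times = F^\times\O_D^\times \cup \varpi_D F^\times\O_D^\times$, extract the factor $(1 + \epsilon_1\epsilon_2\sqrt{\omega_3(\ell)})$ from the $\varpi_D$-eigenvalue property (using $\omega_1\omega_2\omega_3=1$ to combine $\sqrt{\omega_1(\ell)\omega_2(\ell)}\,\omega_3(\ell)=\sqrt{\omega_3(\ell)}$), and then evaluate the remaining sum over $d^\times$ by character orthogonality, using $\psi_1^{\varpi_D}\neq\psi_1$ to kill the cross terms. The only cosmetic difference is that the paper expands $\pi(x)\phi^\epsilon$ fully in the basis $\{\phi^\eta\}_{\eta}$ and identifies the surviving coefficients $S_{\eta\epsilon}$, whereas you read off the diagonal matrix coefficient $\psi_i(x)+\psi_i^{\varpi_D}(x)$ directly; these are the same computation. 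The normalization bookkeeping you flag (choice of $\widetilde{\phi_i}$, the sign of $\nu(\varpi_D)$, measure conventions) is indeed where care is needed, and the paper glosses over it just as you anticipate.
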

\begin{proof}
    We simplify the notation throughout the proof and write $\pi_i = \pi_i^D$. We compute, using the above description of the local representations:
    \begin{align*}
        I_v'(\phi^\epsilon) & = \int\limits_{F^\times \backslash D^\times} \langle \pi(g) \phi^\epsilon,\widetilde{\phi^\epsilon} \rangle \, dg \\
        & = \int\limits_{\O_D^\times} \langle \pi(g) \phi^\epsilon,\widetilde{\phi^\epsilon} \rangle \, dg + \int\limits_{\O_D^\times} \langle \pi(g)\pi(\varpi_D) \phi^\epsilon,\widetilde{\phi^\epsilon} \rangle \, dg &  D^\times = F^\times \O_D^\times \cup \varpi_D F^\times \O_D^\times \\
        & = (1 + \epsilon_1 \epsilon_2 \sqrt{\omega_1(\ell) \omega_2(\ell)} \omega_3(\ell)) \sum_{x \in d^\times} \langle \pi(x) \phi^{\epsilon},\widetilde{\phi^\epsilon} \rangle  \mu(U^1_D) \\
        & = (1 + \epsilon_1 \epsilon_2 \sqrt{\omega_3(\ell)}) \mu(U^1_D) \sum_{x \in d^\times} \langle \pi(x) \phi^{\epsilon},\widetilde{\phi^\epsilon} \rangle
    \end{align*}
    Finally, for $x \in d^\times$, we have that:
    \begin{align*}
        \pi_1(x) \phi_1^{\epsilon_1} & = \frac{\psi(x) + \epsilon_1 \psi^{\varpi_D}(x)}{2} \phi^+_1 + \frac{\psi(x) - \epsilon_1 \psi^{\varpi_D}(x)}{2} \phi^-_1, \\
        \pi_2(x) \phi_2^{\epsilon_2} & = \left( \frac{\psi^{-1}(x) + \epsilon_2 \psi^{-1,\varpi_D}(x)}{2} \phi^+_2 + \frac{\psi^{-1}(x) - \epsilon_2 \psi^{-1,\varpi_D}(x)}{2} \phi^-_2 \right) \omega_3(N_{d/k} x)^{-1}, \\
        \pi_3(x) \phi_3 & = \omega_3(N_{d/k} x) \phi_3.
    \end{align*}
    We hence obtain
    $$\sum_{x \in d^\times} \pi(x) \phi^{\epsilon} = S_{++} \phi^{++} + S_{+-} \phi^{+-} + S_{-+} \phi^{-+} + S_{--} \phi^{--}$$ 
    for:
    \begin{align*}
        S_{++} & = \frac{1}{4} \sum_{x \in d^\times} (\psi(x) + \epsilon_1 \psi^{\varpi_D}(x)) (\psi^{-1}(x) + \epsilon_2 \psi^{-1, \varpi_D}(x)), \\
        & = \frac{1}{4} |d^\times|(1 + \epsilon_1 \epsilon_2) \\
        S_{+-} &  = \frac{1}{4} \sum_{x \in d^\times} (\psi(x) + \epsilon_1 \psi^{\varpi_D}(x)) (\psi^{-1}(x) - \epsilon_2 \psi^{-1, \varpi_D}(x)) \\
        & = \frac{1}{4} |d^\times|(1 - \epsilon_1 \epsilon_2), \\
        S_{-+} &  = \frac{1}{4} \sum_{x \in d^\times} (\psi(x) - \epsilon_1 \psi^{\varpi_D}(x)) (\psi^{-1}(x) + \epsilon_2 \psi^{-1, \varpi_D}(x)) \\
        & = \frac{1}{4} |d^\times|(1 - \epsilon_1 \epsilon_2) ,\\
        S_{--} & = \frac{1}{4} \sum_{x \in d^\times} (\psi(x) - \epsilon_1 \psi^{\varpi_D}(x)) (\psi^{-1}(x) - \epsilon_2 \psi^{-1, \varpi_D}(x)) \\
        & = \frac{1}{4} |d^\times|(1 + \epsilon_1 \epsilon_2). \\
    \end{align*}
    This shows that:
    $$\sum_{x \in d^\times} \pi(x) \phi^\epsilon = \frac{1}{2} |d^\times| (\phi^{\epsilon} + \phi^{-\epsilon})$$
    and hence:
    $$I_v'(\phi^\epsilon) = (1 + \epsilon_1 \epsilon_2 \sqrt{\omega_3(\ell)}) \frac{\mu(\O_D^\times)}{2} \langle \phi^{\epsilon} + \phi^{-\epsilon},\widetilde{\phi^{\epsilon}} \rangle = (1 + \epsilon_1 \epsilon_2 \sqrt{\omega_3(\ell)}) \frac{\mu(\O_D^\times)}{2} \langle \phi^{\epsilon},\widetilde{\phi^{\epsilon}} \rangle.$$
    The rest of the results follow.
\end{proof}

\subsubsection{Case (3): three supercuspidal representations}

We next consider the case of three twist-minimal supercuspidal representations $\pi_1, \pi_2, \pi_3$ of $\GL_2(F)$ conductor $2$ such that $\omega_1 \cdot \omega_2 \cdot \omega_3 = 1$; according to Proposition~\ref{prop:reps_of_cond2}:
\begin{equation}
    \pi_i^D = \psi_{\psi_i}^D
\end{equation}
where $\psi_i^{\sigma} \neq \psi_i$ and we note that
\begin{equation}\label{eqn:product_cc_l2}
    \psi_1 \psi_1^{\varpi_D} \psi_2 \psi_2^{\varpi_D} \psi_3 \psi_3^{\varpi_D} = 1.
\end{equation}

Proposition~\ref{prop:epsilon=-1}~(3) classifies when $\epsilon(\pi_1 \otimes \pi_2 \otimes \pi_3) = -1$. We compute the local integrals associated with all possible choices of vectors $\phi_i^\pm \in \pi_i^D = \pi_{\psi_i}^D$ as in Proposition~\ref{prop:local_int_scscsp} and check that $I_v'(\phi) \neq 0$ for some $\phi$ exactly when $\epsilon(\pi_1 \otimes \pi_2 \otimes \pi_3) = -1$.

\begin{proposition}\label{prop:local_integral_scscsc}
    Let $\pi_1, \pi_2, \pi_3$ be as above. For $\epsilon \in \{\pm 1\}^3$ let $\phi^\epsilon = \phi_1^{\epsilon_1} \times \phi_2^{\epsilon_2} \times \phi_3^{\epsilon_3} \in \pi^D = \pi_1^D \times \pi_2^D \times \pi_3^D$. 
    Then:
    $$\frac{I_v'(\phi^\epsilon)}{\langle \phi^\epsilon,\widetilde{\phi^\epsilon} \rangle} = (1 + \epsilon_1 \epsilon_2 \epsilon_3) \frac{\mu(\O_D^\times)}{4}.$$
\end{proposition}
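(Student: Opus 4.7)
The plan is to adapt the proof strategy of Proposition~\ref{prop:local_int_scscsp} to the three-supercuspidal setting. First, I would split the integral using the coset decomposition $D^\times = F^\times \O_D^\times \sqcup F^\times \varpi_D \O_D^\times$ into contributions over $\O_D^\times$ and over $\varpi_D \O_D^\times$. The crucial first observation is that, with compatible choices of square roots satisfying $\sqrt{\omega_1(\varpi_F)}\sqrt{\omega_2(\varpi_F)}\sqrt{\omega_3(\varpi_F)} = 1$ (possible since $\omega_1\omega_2\omega_3 = 1$), the vector $\phi^\epsilon$ is an eigenvector of $\pi(\varpi_D) = \pi_1^D(\varpi_D) \otimes \pi_2^D(\varpi_D) \otimes \pi_3^D(\varpi_D)$ with eigenvalue $\epsilon_1 \epsilon_2 \epsilon_3$. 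Combined with the change of variable $h \mapsto \varpi_D h \varpi_D^{-1}$ on $\O_D^\times$ (which preserves the Haar measure since $\varpi_D \O_D \varpi_D^{-1} = \O_D$), this yields
\[
I_v'(\phi^\epsilon) = (1 + \epsilon_1 \epsilon_2 \epsilon_3) \int_{\O_D^\times} \langle \pi(g) \phi^\epsilon, \widetilde{\phi^\epsilon}\rangle\, dg.
\]

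Next, I would exploit that each $\pi_i^D$ factors through $\O_D^\times/U^1_D \cong d^\times$ to reduce the remaining integral to $\mu(U^1_D) \sum_{x \in d^\times} \langle \pi(x) \phi^\epsilon, \widetilde{\phi^\epsilon}\rangle$. From Proposition~\ref{prop:reps_of_cond2}, the coefficient of $\phi_i^{\epsilon_i}$ in $\pi_i^D(x)\phi_i^{\epsilon_i}$ equals $\tfrac{1}{2}(\psi_i(x) + \psi_i^{\varpi_D}(x))$, independently of the sign $\epsilon_i$. Choosing compatible dual bases with $\langle \phi_i^\delta, \widetilde{\phi_i^\epsilon}\rangle = \delta_{\delta, \epsilon}$, the matrix coefficient factors, and after expanding the product the sum breaks into eight character sums $\sum_{x \in d^\times} \chi_\eta(x)$, with $\chi_\eta = \prod_i \psi_i^{?(\eta_i)}$ indexed by $\eta \in \{0,1\}^3$ (where $?(0)$ is trivial and $?(1)$ denotes $\varpi_D$-conjugation). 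Each such sum equals $|d^\times|$ or $0$ by character orthogonality.

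To count the contributing terms, I would observe that $\chi_\eta|_{F^\times} = \omega_1\omega_2\omega_3 = 1$ and that $\chi_\eta$ is trivial on $U^1_D \cap K^\times$ by the conductor hypothesis, so triviality on $d^\times$ upgrades to triviality on all of $K^\times$. Equation~\eqref{eqn:product_cc_l2} yields $\chi_{(1,1,1)-\eta} = \chi_\eta^{-1}$, so trivial characters come in pairs. Under the hypothesis $\epsilon(\pi_1 \otimes \pi_2 \otimes \pi_3) = -1$, Proposition~\ref{prop:epsilon=-1}(3) singles out precisely one such pair; two distinct trivial pairs would force $\psi_i = \psi_i^\sigma$ for some $i$, contradicting the irreducibility of $\pi_i$. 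Hence exactly two of the eight characters contribute, the total character sum equals $2|d^\times|$, and the claimed formula follows from $\mu(\O_D^\times) = |d^\times|\mu(U^1_D)$.

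The hard part will be the final combinatorial step: aligning the eight characters $\chi_\eta$ produced by the expansion with Prasad's classification, and using the central character condition $\omega_1\omega_2\omega_3 = 1$ to promote triviality on $d^\times$ to triviality on $K^\times$, which is the level at which Prasad's criterion actually operates. This is also the step that cleanly explains why $I_v'(\phi^\epsilon)$ vanishes on half of the sign patterns, in parallel with the rigidity that makes Prasad's $\epsilon$-dichotomy consistent with the non-vanishing of the trilinear form on $D^\times$.
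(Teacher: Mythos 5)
Your argument is correct and follows essentially the same route as the paper: split the integral over the two $F^\times\O_D^\times$-cosets, observe that $\phi^\epsilon$ is a $\pi(\varpi_D)$-eigenvector with eigenvalue $\epsilon_1\epsilon_2\epsilon_3$ (once the square roots are normalized so that $\sqrt{\omega_1(\ell)}\sqrt{\omega_2(\ell)}\sqrt{\omega_3(\ell)}=1$), reduce the $\O_D^\times$-piece to a sum over $d^\times$, expand the diagonal matrix coefficient into eight character sums indexed by $\delta\in\{0,1\}^3$, and count the trivial ones. One small streamlining: the change of variable $h\mapsto\varpi_D h\varpi_D^{-1}$ is not needed, since the eigenvector identity already collapses the second coset directly. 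Your remark that triviality of $\chi_\eta$ on $d^\times$ upgrades to triviality on $K^\times$ (because $\chi_\eta(\varpi_F)=\omega_1\omega_2\omega_3(\varpi_F)=1$ and the conductor hypothesis), so that the eight $d^\times$-sums match Prasad's criterion on $W_K$-characters, is a helpful clarification left implicit in the paper. Be slightly careful with the phrasing that ``Proposition~\ref{prop:epsilon=-1}(3) singles out precisely one such pair'': the $\epsilon=-1$ hypothesis only guarantees at least one trivial $\chi_\eta$, and uniqueness of the pair is precisely the ``two distinct pairs would force $\psi_i=\psi_i^{\varpi_D}=\psi_i^{\sigma}$'' admissibility argument, which you do supply.
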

\begin{proof}
    Once again, we simplify the notation to write $\pi_i = \pi_i^D$ etc. We proceed as in the proof of Proposition~\ref{prop:local_int_scscsp}:
     \begin{align*}
        I_v'(\phi^\epsilon) & = \int\limits_{F^\times \backslash D^\times} \langle \pi(g) \phi^\epsilon,\widetilde{\phi^\epsilon} \rangle \, dg \\
        & = \int\limits_{\O_D^\times} \langle \pi(g) \phi^\epsilon,\widetilde{\phi^\epsilon} \rangle \, dg + \int\limits_{\O_D^\times} \langle \pi(g)\pi(\varpi_D) \phi^\epsilon,\widetilde{\phi^\epsilon} \rangle \, dg &  D^\times = F^\times \O_D^\times \cup \varpi_D F^\times \O_D^\times \\
        & = (1 + \epsilon_1 \epsilon_2 \epsilon_3 \sqrt{\omega_1(\ell) \omega_2(\ell) \omega_3(\ell)}) \sum_{x \in d^\times} \langle \pi(x) \phi^{\epsilon},\widetilde{\phi^\epsilon} \rangle  \mu(U^1_D) \\
        & = (1 + \epsilon_1 \epsilon_2 \epsilon_3) \mu(U^1_D) \sum_{x \in d^\times} \langle \pi(x) \phi^{\epsilon},\widetilde{\phi^\epsilon} \rangle.
    \end{align*}
    
    Next, we need to compute $\sum\limits_{x \in d^\times} \langle \pi(x) \phi^\epsilon,\widetilde{\phi^\epsilon} \rangle$. For $x \in d^\times$ and $i = 1,2,3$:
    \begin{align*}
        \pi_i(x) \phi_i^{\epsilon_i} & = \frac{\psi_i(x) + \epsilon_i \psi_i^{\varpi_D}(x)}{2} \phi^+_i + \frac{\psi_i(x) - \epsilon_i\psi_i^{\varpi_D}(x)}{2} \phi_i^-.
    \end{align*}
    Therefore:
    \begin{align*}
        \pi(x) \phi^\epsilon & = \sum_{\eta \in \{\pm 1\}^3} s_{\eta\epsilon} \phi^\eta, \\
        s_{\eta\epsilon} & = \frac{1}{8} \prod_{i=1}^3 (\psi_i(x) + \epsilon_i \eta_i \psi_i^{\varpi_D}(x)) \\
        & = \frac{1}{8} \sum_{ \delta \in \{0,1\}^3 }  (\epsilon_1 \eta_1)^{\delta_1} (\epsilon_2 \eta_2)^{\delta_2} (\epsilon_3 \eta_3)^{\delta_3}\psi_1^{\varpi_D^{\delta_1}}(x) \psi_2^{\varpi_D^{\delta_2}}(x) \psi_3^{\varpi_D^{\delta_3}}(x).
    \end{align*}
    and hence:
    $$\langle \pi(x) \phi^\epsilon,\widetilde{\phi^\epsilon} \rangle = s_{\epsilon\epsilon} \langle \phi^\epsilon,\widetilde{\phi^\epsilon} \rangle = \frac{1}{8} \sum_{\delta \in \{0,1\}^3} \psi_1^{\varpi_D^{\delta_1}}(x) \psi_2^{\varpi_D^{\delta_2}}(x) \psi_3^{\varpi_D^{\delta_3}}(x) \langle \phi^\epsilon,\widetilde{\phi^\epsilon} \rangle.$$
    Altogether, we have that:
    \begin{align*}
        \frac{\sum\limits_{x \in d^\times} \langle \pi(x) \phi^\epsilon,\widetilde{\phi^\epsilon} \rangle}{\langle \phi^\epsilon,\widetilde{\phi^\epsilon} \rangle} & = \frac{1}{8} \cdot |d^\times| \cdot \left|\left\{ \delta \in \{0,1\}^3 \ \middle| \ \psi_1^{\varpi_D^{\delta_1}} \psi_2^{\varpi_D^{\delta_2}} \psi_3^{\varpi_D^{\delta_3}} = 1 \right\} \right|.
    \end{align*}
    
    Without loss of generality, suppose that $\psi_1 \psi_2 \psi_3 = 1$. We claim that then:
    $$S = \left\{ \delta \in \{0,1\}^3 \ \middle| \ \psi_1^{\varpi_D^{\delta_1}} \psi_2^{\varpi_D^{\delta_2}} \psi_3^{\varpi_D^{\delta_3}} = 1 \right\} = \{ (0,0,0), (1,1,1) \}.$$
    By assumption, $(0,0,0) \in S$ and by equation~\eqref{eqn:product_cc_l2} also $(1,1,1) \in S$. To verify that $S$ cannot be larger, suppose without loss of generality that $(1,0,0) \in S$, i.e.\ $\psi_1^{\varpi_1} \psi_2 \psi_3 = 1$. Then $\psi_1^{\varpi_1} = \psi_1$, but this contradicts the admissibility of the pair $(K, \psi_1)$. 
\end{proof}

\begin{remark}
    It would be interesting to treat the case of supercuspidal representations of higher conductor as well, but this would take us too far afield from the ultimate arithmetic goals of the paper.
\end{remark}

\section{Global JL correspondence and test vectors}\label{Section: Global JL correspondence and test vectors}

\noindent We are ready to study the global consequences of Section \ref{Section: Local JL correspondence and test vectors}. We focus our attention on quaternionic modular forms with level structure given by orders which are \emph{residually inert} at the primes where the quaternion algebra ramifies; the local theory considered in the previous section allows a precise understanding of such forms.

From now on, $D$ denotes a quaternion algebra over $\Q$ (and not a local quaternion algebra as in the previous section). For simplicity of exposition, until Section \ref{section: Pairings}, we restrict ourselves to the case where the quaternion algebra is ramified exactly at one odd prime $\ell$ and at infinity. However, everything we state in this section generalizes to any definite quaternion $\Q$-algebra (see also Remark~\ref{rmk: W in families}); in particular, the results in Section \ref{section: Pairings} and Sections \ref{Section: The JL correspondence in families}--\ref{section: Application: Elliptic Stark Conjecture in rank one} deal with the general situation.

For any place $v$ of $\Q$, we denote $D_v=D\otimes_\Q\Q_v$, where we understand $\Q_\infty$ to be $\R$. Similarly, for any order $R\subset D$ and any finite place $v$, we denote $R_v=R\otimes_\Z\Z_v$. As a last piece of notation, we set $D(\A)= D\otimes_\Q \A$, for $\A$ the adèles of $\Q$, $\widehat{D}= D\otimes_\Q \A_{f}$, for $\A_{f}$ the finite adèles, and $\widehat{R}= R \otimes_\Z \widehat{\Z}$, for $\widehat{\Z}$ the profinite completion of the integers.

\subsection{A remark on the structure of quaternion algebras at ramified primes}\label{Section: A remark on the structure of quaternion algebras at ramified primes}

We begin by recalling a general formalism to deal with definite quaternion algebras over a local field, which will make our exposition clearer and independent of the choices of the uniformizers. Most of the content of this section can be found in \cite[Section 13]{Voight2021Book}. Let $D_\ell$ be a quaternion division algebra over $\ql$ and let $\ql(\varpi_\ell)$ and $\qll$ be, respectively, one of the two ramified quadratic extension of $\ql$ and the unique unramified quadratic one; as $D_\ell$ is division, there exists embeddings  of these two fields in $D_\ell$. Consider $\nu_\ell:D_\ell\longrightarrow\ql$ to be the reduced norm map at $\ell$. We extend the $\ell$-adic valuation $v_\ell$ of $\ql$, to the division algebra: $w_\ell=\tfrac{1}{2}\cdot v_\ell\circ \nu_\ell$. We fix $$\O_{D_\ell}=\{x\in D_\ell \mid w_\ell(x)\in \Z_\ell\}$$ and denote by $\varpi_{D_\ell}\in\O_{D_\ell}$ a uniformizer of $D_\ell$, namely an element in $\O_{D_\ell}$ with valuation $w_\ell(\varpi_{D_\ell})=1/2$; it is not difficult to notice that one can take $\varpi_{D_\ell}=\varpi_{\ell}$. Therefore, we can decompose the division algebra $D_\ell$ as
\begin{equation}
    D_\ell = \qll \oplus \varpi_{\ell} \qll.
\end{equation}
Denoting by $\varpi_{\qll}$ the uniformizer of $\qll$, we can further write
\begin{equation}
    D_\ell = \ql \oplus \varpi_{\qll} \ql \oplus \varpi_{\ell} \ql \oplus \varpi_{\qll}\varpi_{\ell}\ql,
\end{equation}
with the condition $\varpi_{\qll}\varpi_{\ell} = \varpi_{\ell}\overline{\varpi_{\qll}}=-\varpi_{\ell}\varpi_{\qll}$.

\subsection{Residually inert orders}

We briefly recall the definition of residually inert (at $\ell$) orders in $D$. The interested reader may consult \cite[Section 24.3]{Voight2021Book},~\cite{HPS1989orders} and~\cite{Pizer80p2} for a more detailed exposition.
\begin{definition}
    Let $N$ be a positive integer prime to $\ell$. We say that an order $R\subset D$ of level $N\ell^2$ is 
    \begin{itemize}
        \item \emph{residually split} at the prime $q\mid N$ if $R_q$ is an Eichler order of level $q^{\textrm{val}_q(N)}$;
        \item \emph{residually inert} at $\ell$ (also known as special or Pizer order) if there exists a ramified quadratic extension $\ql(\varpi_\ell)/\ql$, such that $R_\ell$ is conjugate to 
        \begin{equation*}
            \O_\ell + \left\{x \in D_\ell \mid \nu_\ell(x)\in\ell\Z_\ell\right\}=\O_\ell + \varpi_{D_\ell} \O_{D_\ell},
        \end{equation*}
        where $\O_\ell$ is the ring of integers of $\ql(\varpi_\ell)$.
    \end{itemize}
\end{definition}    
In order to shorten the notation, we call such a global order a \emph{Pizer order of level $N\ell^2$}, however, we remark that these orders are a type of \emph{basic} orders and we point to \cite[Remark 24.5.7]{Voight2021Book} for a complete discussion on the different terminologies for such orders. 
\begin{remark}
    \leavevmode
    \begin{itemize}
        \item If the order has level $N\ell$, then the local order at $\ell$ is no longer residually inert, but it is the unique maximal order in $D_\ell$. In particular, an order $R$ of level $N\ell^2$ is contained in an Eichler order of level $N\ell$.
        \item Adding a subscript $\ell$ at the notation of  Section \ref{Section: Local JL correspondence and test vectors}, we notice that, by equation~\eqref{Eq.: definition U_D^a}, $R_\ell^\times \supseteq U_{D_\ell}^1$ with quotient $\mathbb{F}_\ell^\times$. The quotient in equation~\eqref{Eq.: semidirect product local order} recovers the observations in \cite[proofs of Propositions\ 1.8 and 9.26]{Pizer80p2}.
    \end{itemize}    
\end{remark}
For any prime $q\neq \ell$, we fix a $\Q_q$-linear isomorphism $\iota_q \colon D_q\cong M_2(\Q_q)$; up to changing this isomorphism, we may assume that
\begin{equation}\label{Eq.: iota_q(R) upper triangular}
    \iota_q(R_q) =\left\{ \gamma \in M_2(\Z_q)\ \middle|\ \gamma \equiv \Mat{*}{*}{0}{*}\Mod{ N M_2(\Z_q)}\right\}.
\end{equation}
At the ramified prime $\ell$, we assume that $R_\ell = \O_\ell + \varpi_{D_\ell} \O_{D_\ell}$. For the rest of this section, we fix $R$ to be a Pizer order of level $N\ell^2$, and advise the reader that every time we pick a Pizer order, we are implicitly assuming the above identifications. We also introduce the following notation
\begin{equation}
    U_1(R) =\left\{r=(r_q)\in \widehat{R}^{\times} \ \middle| \ 
    \iota_q(r_q) \equiv \Mat{*}{*}{0}{1}\Mod{NM_2(\Z_q)},\textrm{ for }q\mid N \textnormal{ and } r_\ell\in 1+\varpi_{D_\ell} R_\ell \right\}.
\end{equation}
It is not difficult to notice that it is an open compact subgroup of $\widehat{B}^\times$ (cf. also \cite[Lemma 2.1.3]{DallAva2021Hida}).

\subsection{Lifting characters}\label{Section: Lifting characters}

Let $\chi$ be a Dirichlet character of conductor $C$, for $C\mid N\ell$. 
Every such character can be lifted to a character of $\widehat{R}^\times$; for simplicity we consider the case of $\ell$ odd. A similar construction works for $\ell=2$ and a precise recipe is provided in \cite[Section 7.2]{HPS1989orders} as we focus on the odd case. Let $q$ be a prime and let $\chi_q$ be the $q$-component of the character $\chi$. We define the lift $\widetilde{\chi}_{q}$ of $\chi_q$ to $R_q^\times$ as follows:
\begin{enumerate}[align=left]
    \item If $q\mid N\ell$, but $q\nmid C$, we set $\widetilde{\chi}_{q}(r)=1$ for any $r\in R_q^\times$.
    \item If $q\mid N$ and $q\mid C$, we set $\widetilde{\chi}_{q}(r)=  \chi_q(d)$ for any $r\in R_q^\times$ such that $\iota_q(r)=\mat{a}{b}{c}{d}$;
    \item If $q=\ell>2$ and $\ell\mid C$, we fix, once and for all, an odd character $\varepsilon_\ell$ with $\cond(\varepsilon_\ell)=\ell$. For every even character $\phi_\ell$ of $\cond(\phi_\ell)=\ell$, we fix, once and for all, a character $\gamma_\ell$ with $\cond(\gamma_\ell)=\ell$ and such that $\gamma_\ell^2=\phi_\ell$. As remarked in \cite[Section 7.2]{HPS1989orders}, the particular choice of $\varepsilon_\ell$ and $\gamma_\ell$ is not important, but the fact that a certain choice is fixed once and for all is crucial.
    \begin{enumerate}[align=left, itemsep=2ex]
        \item We first extend $\varepsilon_\ell$ to $R_\ell^\times$ via the composition:
        \begin{equation*}
            \begin{tikzcd}
            	\O_\ell^\times \arrow[d, hook] \arrow[rr]                                                                                         &  & (\O_\ell/\varpi_{D_\ell}\O_\ell)^\times \arrow[rr, "\iso"'] &  & (\Z_\ell/\ell\Z_\ell)^\times \arrow[d, "\varepsilon_\ell"'] \\
            	{\,\,\,\,\,\,\,\,\,\,\,R_\ell^\times=(\O_\ell + \varpi_{D_\ell} \O_{D_\ell})^\times} \arrow[rrrr, "\widetilde{\varepsilon}_\ell"] &  &                                                             &  & \qpbar^\times.                                             
            \end{tikzcd}
        \end{equation*}
        \item If $\chi_\ell$ is even, let $\gamma_\ell$ be the \emph{square root} character associated with it, as above. We define $\widetilde{\chi}_\ell(r)=\gamma_\ell(\nu_\ell(r))$, for any $r\in R_\ell^\times$. Clearly, $\widetilde{\chi}_\ell(r)=\widetilde{\chi}_\ell(r')$ if $r\equiv r'\Mod{\varpi_{D_\ell}\O_{D_\ell}}$.
        \item If $\chi_\ell$ is odd, then $\chi_\ell=\varepsilon_\ell \cdot \chi'_\ell$, with $\chi'_\ell$ even. We then define $\widetilde{\chi}_\ell=\widetilde{\varepsilon}_\ell\widetilde{\chi'_\ell}$.
    \end{enumerate}
\end{enumerate}
\begin{definition}\label{def: quaternionic lift of character}
    We denote the lift of $\chi$ to $\widehat{R}^\times$ by $\widetilde{\chi}$, defined as $\widetilde{\chi}=\prod\limits_{q\mid C}\widetilde{\chi}_q$.
\end{definition}

Recall that we can define the adèlization of $\chi$,
\begin{equation}
    \chi_\A:\Q^\times\backslash\A^\times/\R_{+}(1+N\widehat{\Z})^\times\longrightarrow \C^\times,
\end{equation}
as the unique finite order Hecke character such that $\chi_\A((1,\ldots,1,q,1,\ldots))=\chi(q)^{-1}$; then $\chi_q(q)=\chi(q)^{-1}$ if $q\nmid N$.
\begin{remark}
    \leavevmode
    \begin{enumerate}
        \item The restriction of $\widetilde{\chi}$ to $\Z$ recovers the starting character $\chi$. At any prime $q$ away from $\ell$, the lifting process of each local component $\chi_q$ consists exactly in the adèlization of its inverse, $(\chi_q^{-1})_\A$.
        \item The lifting process can be constructed compatibly with the inclusion of Pizer orders $R\supseteq R'$, with $R'$ Pizer order of level $N'\ell^2$, $N\mid N'$.
        \item By construction, the character $\widetilde{\chi}$ is trivial on $U_1(R)$. More precisely, it is a character of $(\widehat{R}/\delta(R))^\times$, for $\delta(R)$ the different ideal of $\widehat{R}$ (cf. \cite[Sections 7.1-7.2]{HPS1989orders}).
    \end{enumerate}
\end{remark}

\subsection{Forms on definite quaternion algebras}

In this section we recall the various notions of quaternionic forms and their explicit relations. We fix an odd prime $p\neq \ell$ and an absolute closure of $\qp$, which we denote by $\qpbar$.

\subsubsection{Quaternionic modular forms}

Let $A$ be a commutative ring and consider the space of polynomials in two variables, $A[X,Y]$. We endow this module with the action of invertible matrices $GL_2(A)$ given by
\begin{equation}
    \gamma \cdot P(X,Y) = P\left((X,Y)\cdot \gamma\right),
\end{equation}
for any $\gamma \in GL_2(A)$ and $P\in A[X,Y]$. For any $m\in \Z_{\geq 0}$ we define the submodule of homogeneous polynomials of degree $m$ and denote it by $L_m(A)$; the $GL_2(A)$-action descends to an action on $L_m(A)$. In the following, the ring $A$ will be an algebra over the ring of integers of a finite extension of $\qp$, for example, $\qpbar$.

\begin{definition}[Quaternionic modular forms]
    Let $R$ be a Pizer order of level $N\ell^2$, and assume that $p\mid N$. Let $A$ be a $\zp$-algebra and fix an $A$-valued Dirichlet character $\chi$ with conductor $C\mid N\ell$. A {\em quaternionic modular form} for $D^\times$, of weight $k\in \Z_{\geq 2}$, level structure $R$, and character $\chi$, is a continuous function $\varphi: \widehat{D}^\times \longrightarrow L_{k-2}(A)$, such that
    \begin{equation*}
        \varphi(d\hat{d}zr)= (\chi_\A)^{-1}(z)\,\widetilde{\chi}(r)\,z_p^{2-k}\, (r_p^{-1}\cdot \varphi(\hat{d})),
    \end{equation*}
    for $d\in D^\times$, $\hat{d}\in\widehat{D}^\times$, $z\in \A_{f}^\times$, and $r \in \widehat{R}^\times$.
    Here $\widetilde{\chi}$ is the lifting constructed in Definition~\ref{def: quaternionic lift of character}. We denote the space of quaternionic modular forms by $S_k^D(R,\chi,A)$.
\end{definition}

Note that, for $z\in \A_{f}^\times\cap \widehat{R}^\times$, $\widetilde{\chi}(z) = (\chi_\A)^{-1}(z)$, and that $\varphi$ is right-invariant under $U_1(R)^{(p)}$, i.e. away from $p$. The space $S_k^D(R,\chi,A)$ inherits a right-action of $GL_2(\zp)$ given by $\varphi_{|\gamma}(\hat{d})=\gamma^{-1} \cdot \varphi(\hat{d})$.

As defined, quaternionic modular forms are not the \emph{classical} quaternionic modular forms considered in~\cite{HPS1989orders}, which are (non-unitarized) automorphic forms on $D$. In order to clarify the situation, as well as making explicit certain actions on the space of quaternionic modular forms (see Section \ref{Section: Operators on quaternionic modular forms}), we recall the definition of automorphic forms and algebraic quaternionic modular forms, highlighting their relations with quaternionic modular forms.

\subsubsection{Automorphic forms on $D^\times$}

Fix an isomorphism $\iota:\qpbar\cong \C$ and let $\Psi_\infty:D_\infty=D\otimes_\Q \R\hookrightarrow M_2(\C)$ be an embedding such that $\iota(\iota_p(d))=\Psi_\infty(d)$, for any $d\in D^\times\subset D_p^\times\overset{\iota_p}{\cong}GL_2(\qp)$. For any $k\geq 2$, let
\begin{equation}
    \rho_{\infty,k}^{u}:D_\infty^\times \longrightarrow \Aut(L_{k-2}(\C))
\end{equation}
be the unitarized representation defined by
\begin{equation}
   \rho_{\infty,k}^{u}(d_\infty) (P(X,Y)) = |\nu_{D_\infty}(d_\infty)|^{\tfrac{k-2}{2}}\, \Psi_\infty(d_\infty) \cdot P(X,Y).
\end{equation}

\begin{definition}[Automorphic forms on $D^\times$]
    Let $R$ and $\chi$ be as above. We define the space $\Ac^{D}_{k}(R,\chi)$ of {\rm automorphic forms} on $D^\times$, of weight $k$, level $R$, and character $\chi$, as the vector space of continuous functions $\phi \colon D(\A)^\times\longrightarrow L_{k-2}(\C)$, such that
    \begin{equation*}\label{eq: infty action on QMF}
        \phi(d x d_\infty r z) = (\chi_\A)^{-1}(z)\,\widetilde{\chi}(r)\,\rho_{\infty,k}^{u}(d_\infty^{-1})\left( \phi(x)\right),
    \end{equation*}
    for $d\in D^\times$, $d_\infty\in D_\infty^\times$, $z\in \A^\times$, and $r\in \widehat{R}^\times$.
\end{definition}

There is a correspondence between quaternionic modular forms and automorphic forms. Precisely, to each $\varphi \in S^D_{k}(R,\chi)$ we associate the automorphic form $\Phi(\varphi)\in \Ac^D_{k}(R,\chi)$ defined as
\begin{equation}\label{eq: lift from QMFs to AF on D}
    \Phi(\varphi)(d) = \rho_{\infty,k}^{u}(d_\infty^{-1}) \Big(\iota \big( d_p\cdot \varphi(d_f)\big)\Big),
\end{equation}
for any $d\in D(\A)^\times$, with finite part $d_f\in \widehat{D}^\times$, and components $d_p$ and $d_\infty$, respectively at $p$ and infinity.

\subsubsection{Algebraic quaternionic modular forms}\label{Section: Algebraic quaternionic modular forms}

In order to construct the $p$-adic $L$-function in the following  Section \ref{Section: Balanced triple product $p$-adic $L$-function}, we need Hida families of quaternionic modular forms. We define them in Section \ref{Section: Families of quaternionic modular forms}, similarly to \cite[Definition 4.1]{Hsieh2021}, and for this purpose we should introduce algebraic quaternionic modular forms.\\

Consider the Iwasawa algebra $\Lambda=\zp\llbracket 1+p\zp\rrbracket$. For any $z\in 1+p\zp$, we denote by $[z]_\Lambda$ the group-like element of $\Lambda$ determined by $z$. For $r\geq 1$, let $R_{N\ell^2p^r}$ be a sequence of nested Pizer orders on level $N\ell^2p^r$ contained in the nested sequence of Eichler order $R_{Np^r}$. We consider the finite set
\begin{equation}
    X_{r}= D^\times\backslash\widehat{D}^\times / U_1(R_{N\ell^2 p^r}),
\end{equation}
and let $\O[X_r]$ be the finitely generated $\O$-module spanned by divisors on $X_r$. The Iwasawa algebra $\Lambda$ acts $\zp$-linearly on $\O[X_r]$ via
\begin{equation}
    [z]_\Lambda \cdot x = x\, (1,\ldots, 1, \mat{z}{0}{0}{z},1,\ldots),
\end{equation}
for $x\in X_r$, $z\in 1+p\zp$ and $(1,\ldots, 1, \mat{z}{0}{0}{z},1,\ldots)$ the adèle with $p$-th component $\mat{z}{0}{0}{z}$.

\begin{definition}[Algebraic quaternionic modular forms]
    Let $\chi$ be a Dirichlet character of conductor dividing $N\ell p^r$. We define the space of {\em algebraic quaternionic modular forms} of level $R_{N\ell^2p^r}$, weight $k$ and character $\chi$ as the finite $\Lambda$-module
    $$    \mathbb{S}_{k}^D(R_{N\ell^2p^r},\chi)=\left\{f\in \Hom_{\rm cts}(\O[X_r], L_{k-2}(\qpbar)) \, \middle| \begin{array}{c}  f(xzr) = (\chi_\A)^{-1}(z)\widetilde{\chi}(r) f(x) \\ \textrm{ for all }\, z\in\A_{f}^\times, \, r\in \widehat{R}_{N\ell^2p^r}^\times \end{array} \right\}.
    $$
\end{definition}
Recall that any $\varphi\in S^{D}_{k}(R_{N\ell^2p^r},\chi)$ is not $U_1(R_{N\ell^2 p^r})$-invariant but only $U_1(R_{N\ell^2 p^r})^{(p)}$-invariant. The relation between $\mathbb{S}_{k}^D(R_{N\ell^2p^r},\chi)$ and $S_{k}^{D}(R_{N\ell^2p^r},\chi)$ is made clear via the association
\begin{equation}
    S_{k}^{D}(R_{N\ell^2p^r},\chi)\ni \varphi(x) \longmapsto f(x)=x_p\cdot \varphi(x)\in\mathbb{S}_{k}^D(R_{N\ell^2p^r},\chi).
\end{equation}

\begin{remark}
    We will define Hecke operators on $\mathbb{S}_{k}^D(R_{N\ell^2p^r},\chi)$ momentarily (cf.\ Section \ref{Section: Hecke operators}). For now, let us mention that the $\Lambda$-action on the space of quaternionic modular forms $S_{k}^D(R_{N\ell^2p^r},\chi)$ takes the form of
    \begin{equation}\label{Eq: Lambda action}
        [z]_\Lambda\cdot \varphi(x) = z\cdot \varphi(x (1,\ldots, 1, \mat{z}{0}{0}{z},1,\ldots))=(\chi_{\A,p})^{-1}(z)\,\varphi(x),
    \end{equation}
    In other words, we can think of the $\Lambda$-action as a diamond operator at the prime $p$.
\end{remark}

\subsection{Operators on quaternionic modular forms}\label{Section: Operators on quaternionic modular forms} 

\subsubsection{Hecke operators}\label{Section: Hecke operators}
 
As in the usual setting of classical modular forms, one can define Hecke operators acting on the space of quaternionic modular forms. These are easily defined taking into account the adèlic description of these forms; for any prime $q\neq \ell$, let $\varsigma_q\in\A_{f}^\times$ be the element characterized by $\varsigma_{q,q}=q$ and $1$ at other places. Let $A$ be again a $\zp$-algebra and let $\varphi\in S_k^D(R_{N\ell^2},\chi,A)$. For any $\hat{d}\in\widehat{D}^\times$, the Hecke operator $T_q$ is determined by
	\begin{equation}
		T_q \varphi(\hat{d}) =\begin{cases}
			\varphi\left(\hat{d}\mat{1}{0}{0}{\varsigma_q}\right) + \displaystyle\sum_{a\in\Z/q\Z}\varphi\left(\hat{d}\mat{\varsigma_q}{a}{0}{1}\right) & \textrm{ for each prime }q\nmid N\ell^{2},\\
			\\
			\varphi|_{\mat{1}{0}{0}{p}}\left(\hat{d}\mat{1}{0}{0}{\varsigma_p}\right) + \displaystyle\sum_{a\in\Z/q\Z}\varphi|_{\mat{p}{a}{0}{1}}\left(\hat{d}\mat{\varsigma_p}{a}{0}{1}\right) & \textrm{ for }q=p \textrm{ and }p\nmid N,
		\end{cases}
	\end{equation}
	while the Hecke operator $U_q$ is
	\begin{equation}
		U_q \varphi(\hat{d}) =\begin{cases}
			\displaystyle\sum_{a\in\Z/q\Z}\varphi\left(\hat{d}\mat{\varsigma_q}{a}{0}{1}\right) & \text{ for }q\mid N,\\
			\\
			\displaystyle\sum_{a\in\Z/p\Z}\varphi|_{\mat{p}{a}{0}{1}}\left(\hat{d}\mat{\varsigma_p}{a}{0}{1}\right) & \textrm{ for }q=p \textrm{ and } p\mid N.
		\end{cases}
	\end{equation}
 
    \begin{remark}
        \leavevmode
        \begin{enumerate}
            \item In the case of a definite quaternion algebra, one can express the action of quaternionic Hecke operators via Brandt matrices. Via these matrices we can also define Hecke operators for the primes of ramification for the quaternion algebra. As Hecke operators are compatible with the Jacquet--Langlands correspondence (up to a sign at $\ell$), applied to forms which are new at $\ell$, $U_\ell$ will be the zero-operator. We refer the interested reader to~\cite{Eichler73},~\cite{P1977},~\cite{Pizer80p2} and~\cite{HPS1989basis} for their precise definition.
            \item We define an Atkin--Lehner operator in Definition~\ref{def: quaternionic Atkin--Lehner}, following~\cite{Pizer80p2}. One can find quaternionic analogues of the local Atkin--Lehner operators in Section 9 of loc.\ cit.\ and~\cite{P1977}. For what concerns diamond operators, they can be defined as in~\cite[Section 4.4]{Hsieh2021}.
            \item The first-named author wishes to remark that the definition of the $U_\ell$ operator provided in~\cite{DallAva2021Hida} is not correctly stated. As above, one should use Brandt matrices to define it and not the involution obtained by multiplication with an element of norm $\ell$ in $D_\ell^\times$. This definition is correct for Eichler orders, as it coincides (up to a sign) with the Atkin--Lehner operator. The proofs and results contained in loc.\ cit.\ are unfazed by this misstated definition.
        \end{enumerate}
	\end{remark}

\subsubsection{Multiplicity two}

A peculiar property of Pizer orders of level $N\ell^2$ is the failure of multiplicity one for Hecke-eigenspaces of quaternionic modular forms. This phenomenon has been extensively studied in both~\cite{Pizer80p2} and~\cite{HPS1989basis}, and it can be summarized in the following proposition.

\begin{proposition}[Pizer, Hijikata--Pizer--Shemanske]\label{prop: multiplicity-2}
    Let $f\in S_k(\Gamma_1(N\ell^2),\chi)^{\rm new}$ be a Hecke-eigenform, twist-minimal at $\ell$. Suppose that the $\ell$-component of $\chi$ has conductor $\cond(\chi_\ell)\leq \ell$. There is an isomorphism of Hecke-eigenspaces
    \begin{equation*}
        S_k^D(R_{N\ell^2},\chi,\qp(f))[f]=S_k(N\ell^2,\chi)[f]^{\oplus 2}.
    \end{equation*}
    In particular, the Hecke-eigenspace $S_k^D(R_{N\ell^2},\chi,\qp(f))[f]$ is two-dimensional.
\end{proposition}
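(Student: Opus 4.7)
The plan is to deduce the statement by a multiplicity computation, using the global Jacquet--Langlands correspondence together with the local results of Section~\ref{Section: Local JL correspondence and test vectors}. Let $\pi_f = \bigotimes_v' \pi_{f,v}$ be the cuspidal automorphic representation of $\GL_2(\A)$ generated by $f$, and let $\pi_f^D = \bigotimes_v' \pi_{f,v}^D$ be its Jacquet--Langlands transfer to $D^\times(\A)$. Via the correspondence~\eqref{eq: lift from QMFs to AF on D} between quaternionic modular forms and automorphic forms on $D^\times$, the Hecke eigenspace $S_k^D(R_{N\ell^2},\chi,\qp(f))[f]$ is identified with the space of vectors in $\pi_f^D$ which have weight $k$ at infinity and transform under $U_1(R_{N\ell^2}) \subset \widehat{D}^\times$ by the character attached to $\chi$. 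Since $D$ is definite and $\pi_{f,\infty}^D$ is the irreducible representation of $D_\infty^\times$ of dimension $k-1$, the weight-$k$ line at infinity is one-dimensional; the remaining dimension factors as a product over finite places $q$ of the dimensions of the local character-twisted $K_q$-fixed subspaces of $\pi_{f,q}^D$.

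At primes $q \nmid N\ell$, the representation $\pi_{f,q}^D = \pi_{f,q}$ is an unramified principal series and contributes a one-dimensional $\GL_2(\Z_q)$-fixed line. At primes $q \mid N$ with $q \neq \ell$, again $\pi_{f,q}^D = \pi_{f,q}$, and~\eqref{Eq.: iota_q(R) upper triangular} defines an Iwahori-type congruence subgroup with character $\widetilde{\chi}_q$ as in Section~\ref{Section: Lifting characters}; by Casselman's local newform theorem, the corresponding twisted-invariant subspace is one-dimensional, spanned by the local new vector.

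The essential point is the prime $q = \ell$. Twist-minimality at $\ell$ combined with $v_\ell(N\ell^2) = 2$ and $\cond(\chi_\ell) \leq \ell$ forces $\pi_{f,\ell}$ to be a supercuspidal representation of conductor $2$; accordingly, Proposition~\ref{prop:reps_of_cond2} describes the Jacquet--Langlands transfer $\pi_{f,\ell}^D$ as an explicit two-dimensional irreducible representation of $D_\ell^\times$ which factors through $D_\ell^\times / U_{D_\ell}^1$. Since the $\ell$-component of the level satisfies $U_1(R_{N\ell^2})_\ell = U_{D_\ell}^1$, the entire representation $\pi_{f,\ell}^D$ is fixed by it, and the character through which $R_\ell^\times / U_{D_\ell}^1$ acts must be checked to agree with $\widetilde{\chi}_\ell$. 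This matching is precisely the reason why the lift in Definition~\ref{def: quaternionic lift of character} uses the fixed odd character $\varepsilon_\ell$ and a chosen square root $\gamma_\ell$ of each even ramified component; the verification is carried out in~\cite[Section 7]{HPS1989orders}. Consequently, the local contribution at $\ell$ equals $2$.

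Multiplying the local dimensions gives $\dim S_k^D(R_{N\ell^2},\chi,\qp(f))[f] = 2$, and combining with the classical multiplicity-one statement $\dim S_k(N\ell^2,\chi)[f] = 1$ yields the asserted isomorphism of Hecke eigenspaces. The main technical obstacle is the character-matching at $\ell$ described above --- one must confirm that the ad hoc recipe for $\widetilde{\chi}_\ell$ on $R_\ell^\times$ really coincides with the restriction of the central character of $\pi_{f,\ell}^D$ via the explicit model of Proposition~\ref{prop:reps_of_cond2}; all other contributions follow from standard newform theory and the unramified Jacquet--Langlands correspondence.
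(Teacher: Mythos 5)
Your argument is correct in outline and matches the conceptual framing the paper gives in the remark immediately following the proposition (the paper itself defers to Pizer~\cite{Pizer80p2} and Hijikata--Pizer--Shemanske~\cite{HPS1989basis} rather than giving a full proof; the remark locates the phenomenon in the fact that $\dim \pi_{D,f}^{R_{N\ell^2}^\times} = 2$ via Proposition~\ref{prop:reps_of_cond2}, which is exactly your local-global factorization).

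One small imprecision: you assert that $U_1(R_{N\ell^2})_\ell = U_{D_\ell}^1$, but in fact $U_1(R_{N\ell^2})_\ell = 1 + \varpi_{D_\ell} R_\ell$ is a \emph{proper} subgroup of $U_{D_\ell}^1 = 1 + \varpi_{D_\ell}\O_{D_\ell}$ (a short computation with $R_\ell = \O_\ell + \varpi_{D_\ell}\O_{D_\ell}$ shows $\varpi_{D_\ell}R_\ell/\ell\O_{D_\ell} \cong \F_\ell$ while $\varpi_{D_\ell}\O_{D_\ell}/\ell\O_{D_\ell} \cong \F_{\ell^2}$). This does not damage the argument: what you actually use is that $\pi_{f,\ell}^D$ has conductor $2$, hence is trivial on $U_{D_\ell}^1$, which in turn contains $U_1(R_{N\ell^2})_\ell$; restricted to $R_\ell^\times$ the representation factors through $R_\ell^\times/U_{D_\ell}^1 \cong \F_\ell^\times$, on which both basis eigenvectors of Proposition~\ref{prop:reps_of_cond2} transform by the \emph{same} character (since $\psi$ and $\psi^{\varpi_D}$ agree on $\F_\ell^\times \subset d^\times$). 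After the character-matching with $\widetilde{\chi}_\ell$ that you correctly identify as the delicate point, this is exactly why the local $\ell$-contribution is $2$ rather than $1$. Replacing the erroneous equality with the inclusion makes the proof precise.
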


\begin{remark}
    The Jacquet--Langlands correspondence associates to any automorphic representation $\pi_D$ of $D^\times(\A)$ an automorphic representation $\pi$ of $\GL_2(\A)$. By Strong Multiplicity One for $\GL_2$ and its inner forms, there cannot be two different automorphic  representations $\pi_D$ and $\pi_D'$ of $D^\times(\A)$ that correspond to the same automorphic representation $\pi$ of $\GL_2(\A)$. Proposition~\ref{prop: multiplicity-2} is therefore about automorphic forms, i.e.\ elements of $\pi_D$ and $\pi$. 

    For $\GL_2(\A)$, there is a well-known theory of newforms: there is a compact open subgroup $K_\pi \subseteq \GL_2(\A_f)$ such that $\dim \pi_f^{K_\pi} = 1$. This is no longer the case for $D^\times(\A)$; in fact, underlying Proposition~\ref{prop: multiplicity-2} is the fact that $\dim \pi_{D, f}^{R_{N\ell^2}^\times} = 2$, which was observed locally at the prime $\ell$ in Proposition~\ref{prop:reps_of_cond2}.
\end{remark}

\subsubsection{An extra operator}

	Using the methods of Section \ref{Section: Local JL correspondence and test vectors}, we can define an extra operator acting on the space of $A$-valued quaternionic modular forms $S_k^D(R_{N\ell^2},\chi,A)$; its nature is local, arising from the structure of the order at the prime of ramification $\ell$.
	
	With a slight change of notation from Section \ref{Section: Local JL correspondence and test vectors}, let $\varpi_D$ be the element in $\widehat{D}^\times$ defined as
    \begin{equation}
        \varpi_D=\left(1,\ldots,1,\varpi_{D_\ell},1,\ldots\right),
    \end{equation}
    for $\varpi_{D_\ell}$ a quaternionic uniformizer of $D_\ell/\ql$; note that $N_{\Q_{\ell}(\varpi_{D_\ell})/\Q_{\ell}}(\varpi_{D_\ell})=\varpi_{\ql}$ for a uniformizer of $\Q_\ell$. We will choose $\varpi_{D_\ell}$ so that $\varpi_{D_\ell}^2=\ell$. Equivalently, we are fixing our chosen ramified quadratic extension $\Q_{\ell}(\varpi_{D_\ell})$ to be $\Q_{\ell}(\sqrt{\ell})$. Notice that one can fix another choice of uniformizer and, mutatis mutandis, all the results in this section and in Section~\ref{Section: The JL correspondence in families} are similarly obtained.
	
	\begin{definition}
	    We define the operator $\UL{\ell}$ on the space $S_k^D(R_{N\ell^2},\chi,A)$ as 
	    \begin{equation*}
	        \UL{\ell}\,\varphi(\hat{d}) =\varphi\left(\hat{d}\varpi_D\right).
	    \end{equation*}
	\end{definition}
	\begin{proposition}\label{prop: UL properties}
            \leavevmode
	    \begin{enumerate}
	        \item Iterating $\UL{\ell}$ twice, one has
	        \begin{equation*}
	            \UL{\ell}^2\,\varphi(\hat{d})=\varphi\left(\hat{d}\,(1,\ldots,1,\varpi_{D,\ell}^2,1,\ldots)\right)=\chi_N(\ell)^{-1}\varphi(\hat{d}).
	        \end{equation*}
	        \item The operator $\UL{\ell}$ commutes with the Hecke-operators.
	    \end{enumerate}
	\end{proposition}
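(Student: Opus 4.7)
The plan for part (1) is to exploit that $\varpi_D^2$ is central. Since $\varpi_{D_\ell}^2 = \ell \in \Q_\ell^\times$, the element $\varpi_D^2 = (1,\ldots,1,\ell,1,\ldots)$ (with $\ell$ at the $\ell$-th component) lies in the adèlic center $\A_f^\times \hookrightarrow \widehat{D}^\times$. Applying the defining transformation rule for $\varphi$ with $d = r = 1$ and $z = \varpi_D^2$, and noting that $z_p = 1$ so $z_p^{2-k} = 1$, one obtains
\[
\UL{\ell}^2 \varphi(\hat d) = \varphi(\hat d\, \varpi_D^2) = (\chi_\A)^{-1}(\varpi_D^2)\, \varphi(\hat d).
\]
It remains to identify $(\chi_\A)^{-1}(\varpi_D^2)$ with $\chi_N(\ell)^{-1}$. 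The plan is to write $\varpi_D^2 = \ell_{\mathrm{diag}} \cdot y$, where $\ell_{\mathrm{diag}}$ is the diagonal embedding of $\ell \in \Q^\times$ into $\A^\times$ and $y \in \A^\times$ has $\ell^{-1}$ at every place except the $\ell$-th (where it has $1$). Triviality of $\chi_\A$ on $\Q^\times$ kills $\ell_{\mathrm{diag}}$, triviality on $\R_+$ handles the archimedean place, and unramifiedness of $\chi_\A$ outside $\cond(\chi) \mid N\ell$ localizes the remaining contribution of $y$ to the primes $q \mid N$. Multiplying the local factors $\chi_{\A,q}(\ell^{-1})$ for $q \mid N$ recovers $\chi_N(\ell)$, whose inverse is the desired eigenvalue. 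The subtle point is that $\chi$ may ramify at $\ell$; however, the $\ell$-th component of $y$ is $1$, so the ramified $\ell$-part of $\chi$ does not contribute.

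For part (2), the key observation is that each Hecke operator is built from right multiplication by adèles supported at a single prime different from $\ell$ (for $T_q$ or $U_q$, the support is at $q$; for $T_p$ or $U_p$, at $p$), while $\varpi_D$ is supported only at $\ell$. Since adèlic elements with disjoint support commute in $\widehat{D}^\times$, each summand in $T_q(\UL{\ell}\varphi)(\hat d)$ matches the corresponding summand in $\UL{\ell}(T_q\varphi)(\hat d)$, and similarly for $U_q$. For the operators $T_p$ and $U_p$ which carry an additional slash action on the target $L_{k-2}(A)$, one further observes that the slash $\varphi \mapsto \varphi|_\gamma$ acts on the value of $\varphi$ while $\UL{\ell}$ acts by translating its argument; hence $(\UL{\ell}\varphi)|_\gamma = \UL{\ell}(\varphi|_\gamma)$ is immediate. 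A term-by-term comparison using the explicit formulas from Section~\ref{Section: Hecke operators} then establishes the commutation for all Hecke operators in the list.

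The main obstacle is the careful bookkeeping in the character computation of part (1), particularly the clean separation of the $\ell$-part and the prime-to-$\ell$ part of $\chi$ when evaluating $\chi_\A$. Once the identity $(\chi_\A)^{-1}(\varpi_D^2) = \chi_N(\ell)^{-1}$ is firmly in hand, both parts follow formally from the locality of $\varpi_D$ and the defining transformation law of $\varphi$.
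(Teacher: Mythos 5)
The proposal is correct and takes essentially the same approach as the paper: both reduce part (1) to observing that $\varpi_D^2=(1,\ldots,1,\ell,1,\ldots)\in\A_f^\times$ lies in the center, apply the transformation law of quaternionic modular forms, and then pin down the scalar $(\chi_\A)^{-1}(\varpi_D^2)$ by exploiting triviality of $\chi_\A$ on $\Q^\times\R_+$ and (un)ramification of its local components; and both prove part (2) by the disjoint-support observation that $\varpi_D$ lives only at the place $\ell$ while the Hecke operators in $\T_{\mathbf I}$ move only places $q\neq\ell$. The paper packages the character computation by factoring $\chi=\chi_\ell\chi_N$ and evaluating $\chi_\A^{-1}$ at the diagonal $\ell$, whereas you factor the idele $\varpi_D^2=\ell_{\mathrm{diag}}\cdot y$; these are the same calculation in slightly different bookkeeping. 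One remark: in your part (2) you should explicitly include the case of the operator $\langle z\rangle_\Lambda$ (the $\Lambda$-action), which is also supported at $p\neq\ell$ and so commutes for the same reason — but this is a one-line addition, not a gap.
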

	\begin{proof}
	    We notice that $(1,\ldots,1,\ell,1,\ldots)\in \A_{f}^\times$, and
        \begin{equation*}
            \chi_\A^{-1}\left(\ell\right) = 1 
            = \chi_{\ell,\A}^{-1}\left((1,\ldots,1,\ell,1,\ldots)\right)\chi_{N,\A}^{-1}\left((\ell,\ldots,\ell,1,\ell,\ldots)\right),
        \end{equation*}
        hence 
        \begin{equation*}
            \chi_{\ell,\A}^{-1}\left((1,\ldots,1,\ell,1,\ldots)\right)=\chi_{N,\A}\left((\ell,\ldots,\ell,1,\ell,\ldots)\right)=\chi_{N}^{-1}(\ell).
        \end{equation*}
        Both statements follow now from the definitions of $\UL{\ell}$, quaternionic modular forms, and Hecke operators away from $\ell$.
	\end{proof}
    The choice of the notation $\UL{\ell}$ is justified by the above proposition; this extra operator is closer to a diamond operator than a replacement for the Hecke operator at $\ell$. Let us remark that $\chi_N^{-1}$ evaluated at $\ell$ coincides with the evaluation at the same integer of the adèlization of the character $\chi_\ell$; this follows by the definition of the adèlization as the rational elements not prime to the conductor are mapped to 1.
	\begin{proposition}\label{prop: mult-1 with varpi_d}
	    Let $f\in S_k(\Gamma_1(N\ell^2),\chi)^{\rm new}$ be a Hecke-eigenform, twist-minimal at $\ell$. As above, suppose that the $\ell$-component of $\chi$ has conductor $\cond(\chi_\ell)\leq \ell$. The two-dimensional Hecke-eigenspace $S_k^D(R_{N\ell^2},\chi,A)[f]$ decomposes into one-dimensional eigenspaces under the action of $\UL{\ell}$,
	    \begin{equation*}
	        S_k^D(R_{N\ell^2},\chi,A)[f]=S_k^D(R_{N\ell^2},\chi,A)[f]^{\UL{\ell}=+\sqrt{\chi_N^{-1}(\ell)}}\oplus S_k^D(R_{N\ell^2},\chi,A)[f]^{\UL{\ell}=-\sqrt{\chi_N^{-1}(\ell)}}.
	    \end{equation*}
	\end{proposition}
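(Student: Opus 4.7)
The plan is to combine the two assertions of Proposition~\ref{prop: UL properties} with the explicit local description in Proposition~\ref{prop:reps_of_cond2}. Since $\UL{\ell}$ commutes with all the Hecke operators away from $\ell$ (part (2) of Proposition~\ref{prop: UL properties}), it preserves the Hecke-eigenspace $S_k^D(R_{N\ell^2},\chi,A)[f]$, which is two-dimensional by Proposition~\ref{prop: multiplicity-2}. By part (1) of Proposition~\ref{prop: UL properties}, the restriction $T := \UL{\ell}\big|_{S_k^D(R_{N\ell^2},\chi,A)[f]}$ satisfies $T^2 = \chi_N^{-1}(\ell)\,\mathrm{Id}$. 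As $\ell \nmid \cond(\chi_N)$ and we may enlarge $A$ to contain a square root of $\chi_N^{-1}(\ell)$, the minimal polynomial of $T$ divides $X^2 - \chi_N^{-1}(\ell)$, which has two distinct non-zero roots $\pm\sqrt{\chi_N^{-1}(\ell)}$. Hence $T$ is diagonalizable and the eigenspace decomposition is into subspaces of $T$-eigenvalues $\pm\sqrt{\chi_N^{-1}(\ell)}$.

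It remains to rule out the degenerate case where one eigenspace is the whole two-dimensional space and the other is zero, i.e.\ to show that $T$ is not a scalar. This is where the local input of Section~\ref{Section: Local JL correspondence and test vectors} enters. By Strong Multiplicity One and the Jacquet--Langlands correspondence, the Hecke-eigenspace cut out by $f$ corresponds to $(\pi_{D,f})^{U_1(R_{N\ell^2})^{(\ell)}} \otimes \pi_{D,\ell}^{U_D^1}$ for the automorphic representation $\pi_D$ of $D^\times(\A)$ associated with~$f$. At places $q \neq \ell$, the conductor assumptions force the invariant subspace to be one-dimensional, so the entire two-dimensional structure lives at $\ell$. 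Under the assumption that $\cond(\chi_\ell) \le \ell$ the local representation $\pi_{D,\ell}$ has conductor $2$ and is covered by Proposition~\ref{prop:reps_of_cond2}: in a suitable basis its restriction to $\varpi_{D_\ell}$ is the matrix
\[
\pi_{D,\ell}(\varpi_{D_\ell}) \;=\; \begin{pmatrix} 0 & 1 \\ \omega_\ell(\varpi_\ell) & 0 \end{pmatrix},
\]
which is manifestly not a scalar. Since the global operator $\UL{\ell}$ is, by construction, the restriction to the Hecke-eigenspace of right translation by the adèle $\varpi_D$ whose only non-trivial component is $\varpi_{D_\ell}$, its action on $S_k^D(R_{N\ell^2},\chi,A)[f]$ is identified with $\pi_{D,\ell}(\varpi_{D_\ell})$ up to a unit scalar coming from the chosen identification. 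In particular $T$ has trace zero, hence is not a scalar multiple of the identity, so both eigenspaces have dimension exactly one.

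The main technical point to verify carefully is the final identification: one has to match the quaternionic-modular-form definition of $\UL{\ell}$ (right-translation by the adèle $\varpi_D$ on functions on $\widehat D^\times$) with the local action $\pi_{D,\ell}(\varpi_{D_\ell})$ on the automorphic realization of $\pi_D$, and to check that the scalar $\omega_\ell(\varpi_\ell)$ appearing in Proposition~\ref{prop:reps_of_cond2} is precisely the $\chi_N^{-1}(\ell)$ obtained in part (1) of Proposition~\ref{prop: UL properties}. The latter comparison is already carried out in the proof of that proposition via the identity $\chi_{\ell,\A}^{-1}((1,\ldots,\ell,\ldots,1)) = \chi_N^{-1}(\ell)$, since $\omega_\ell = \chi_{\ell,\A}^{-1}$ on $\varpi_\ell$. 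Once this compatibility is recorded, the proposition follows.
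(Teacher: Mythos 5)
Your proof is correct and follows essentially the same strategy as the paper: identify the Hecke-eigenspace with the (semi-)invariant vectors in the automorphic Jacquet--Langlands transfer $\pi_f^D$, observe that the action of $\UL{\ell}$ is local at $\ell$ and is given by $\pi_{f,\ell}^D(\varpi_{D_\ell})$, and invoke Proposition~\ref{prop:reps_of_cond2} for the explicit matrix. The only structural difference is that you first establish diagonalizability abstractly via the relation $T^2 = \chi_N^{-1}(\ell)\,\mathrm{Id}$ from Proposition~\ref{prop: UL properties}(1), and then use the local matrix only to rule out the scalar case (via trace zero); the paper goes directly to the explicit matrix of Proposition~\ref{prop:reps_of_cond2}, which simultaneously yields diagonalizability and the two distinct eigenvalues $\pm\sqrt{\omega(\varpi_F)}$. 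Your extra step is harmless but not necessary: the explicit matrix $\left(\begin{smallmatrix}0 & 1\\ \omega(\varpi_F) & 0\end{smallmatrix}\right)$ already does all the work, and the final paragraph where you verify $\omega_\ell(\varpi_\ell)=\chi_N^{-1}(\ell)$ matches the paper's closing sentence. One small point worth tightening: the phrase ``up to a unit scalar coming from the chosen identification'' is overly cautious — the identification of right translation by the adèle $\varpi_D$ with the local operator $\pi_{f,\ell}^D(\varpi_{D_\ell})$ is exact (no scalar ambiguity), as $\varpi_D$ has all components $1$ away from $\ell$.
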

	\begin{proof}
	    Let $\pi_f$ be the automorphic representation associated with $f$ and  consider $\pi_{f,\ell}$ its local component at $\ell$. By definition of automorphic forms, the quaternionic modular forms in $S_k^D(R_{N\ell^2},\chi,A)[f]$ correspond to (semi-)invariant vectors in the automorphic representation $\pi_f^D$, obtained as the Jacquet--Langlands transfer of $\pi_f$. In other words, we can identify $S_k^D(R_{N\ell^2},\chi,A)[f]$ with the space $(\pi_f^D)^{R^\times}$ of (semi-)invariant vectors. By definition, the action of the operator $\UL{\ell}$ is local on the representation $\pi_{f,\ell}^D$ and it acts as the local uniformizer $\varpi_{D_\ell}$. Since the representation $\pi_{f,\ell}^D$ is a twist-minimal smooth representation of $D_\ell^\times$ of conductor $2$, we can apply Proposition~\ref{prop:reps_of_cond2} and conclude the proof. Notice that the central character of $\pi_f$ is (up to the archimedean component) the adèlization of the character $\chi$, thus, the central character of $\pi_{f,\ell}$ at $\ell$ coincides with $\chi_{\A,\ell}\left((1,\ldots,1,\ell,1,\ldots)\right)=\chi_N^{-1}(\ell)$.
	\end{proof}

	\begin{remark}\label{rmk: W in families}
        \leavevmode
	    \begin{enumerate}
	        \item The result of Lemma~\ref{prop: mult-1 with varpi_d} depends only on the local representation at $\ell$, more precisely, on its automorphic type and its \emph{minimal} conductor.
	        \item The local automorphic type, the conductor and the twist-minimality property are rigid in $p$-adic Hida families of modular forms (see \cite[Lemma 2.14]{FouquetOchiai2012} and \cite[Remark 3.1]{Hsieh2021}).
            \item If one considers more general quaternion algebras, the multiplicity will be given by
            \begin{equation*}
                \prod_{\substack{\ell \in {\rm Ram}(D)-\{\infty\}:\\ {\rm cond}_\ell(\pi)=\ell^{2}}} 2.
            \end{equation*}
            Therefore, considering a $\UL{\ell}$-operators for each $\ell\in {\rm Ram}(D)-\{\infty\}$, one can recover multiplicity one for quaternionic modular forms.
            \item In the case $\chi = 1$, one can define a whole set of operators, defining a dihedral group of order $2(\ell+1)$, as in \cite[Section 9]{Pizer80p2}. Among these, by our analysis in Section~\ref{Section: Local JL correspondence and test vectors}, one can recognize the operator $\UL{\ell}$, the Atkin--Lehner operator, and a few mysterious others arising from the local structure of the order. The indefinite setting has been studied in~\cite{deVeraPiquero2013}.
        \end{enumerate}
	\end{remark}

	Our analysis is partially motivated by the explicit work of Pizer~\cite{Pizer80p2}, where the study of Pizer orders is addressed with a more elementary flavor; in particular, the above lemma provides a more explicit take on the conductor $\ell^2$ case of the statements contained in \cite[Section 9]{HPS1989basis}. Moreover, if the central character is trivial, we recover the setting studied by Pizer. Even though the authors of loc.\ cit.\ observed and studied the higher multiplicity phenomenon without addressing how to recover one-dimensional eigenspaces, their explicit work has been a key input for our results. It also helped us to implement the operator $\UL{\ell}$ in Pizer's setting in \magma.  Our implementation can be found at \citeGitHub{1} and we use it to compute a first example below.

	\begin{example}
	    With the above notations, take $\ell=7$ and let $f\in S_2(\Gamma_0(7^2))^{\rm new}$, be the unique newform of level of level $7^2$ and trivial character. In \cite[Example 10.5]{HPS1989basis}, the authors compute that the $f$-eigenspace of quaternionic modular forms of weight 2, level $7^2$ and trivial character, is generated by two forms, namely $\varphi_1$ and $\varphi_2$, whose values on the ideal classes are
	    \begin{align*}
	        \varphi_1 \longleftrightarrow (1,-1,0,0)^t, && \varphi_2 \longleftrightarrow (0,0,1,-1)^t.
	    \end{align*}
        Since the character is trivial, the eigenvalues of the operator $\UL{\ell}$ are $\pm 1$.  Our implementation in \verb|magma| yields the action of the $\UL{\ell}$-operator as
	    \begin{align*}
	        \UL{7}\varphi_1=\varphi_1, && \UL{7}\varphi_2=-\varphi_2.
	    \end{align*}
	    Thus, we decompose
	    \begin{equation*}
	        S^D_2(R_{7^2})^{\rm new}[f]= \left(S^D_2(R_{7^2})^{\rm new}[f]\right)^{\UL{7}=+1}\oplus\left(S^D_2(R_{7^2})^{\rm new}[f]\right)^{\UL{7}=-1},
	    \end{equation*}
	    for $\left(S^D_2(R_{7^2}\right)^{\rm new}[f])^{\UL{7}=+1}=\langle\varphi_1\rangle$ and $\left(S^D_2(R_{7^2})^{\rm new}[f]\right)^{\UL{7}=-1}=\langle\varphi_2\rangle$.
	\end{example}

    For completeness, we conclude this section with an observation on the action of the operator $\UL{\ell}$ on forms which are $\ell$-new but \emph{not} twist-minimal at the same prime.
    \begin{lemma}\label{lem: Action of Uell on non-twist-minimal forms}
        The $\UL{\ell}$-eigenvalue on the space $S_k^D(Np^n\ell^2,\chi)$ is $\pm \sqrt{\chi_{\A,Np^n}(\ell)^{-1}}$.
    \end{lemma}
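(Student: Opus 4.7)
The plan is to generalize the computation of Proposition~\ref{prop: UL properties}(1) from level $R_{N\ell^2}$ to level $Np^n\ell^2$. The key observation is that the squared operator $\UL{\ell}^2$ depends only on the transformation property of $\varphi$ under right-translation by an element of $\A_f^\times \widehat{R}^\times$, so the twist-minimality (or lack thereof) of the local automorphic type at $\ell$ plays no role. In particular, $\UL{\ell}^2$ should act as a single scalar on the entire space $S_k^D(Np^n\ell^2, \chi)$, and the eigenvalues of $\UL{\ell}$ are then determined up to sign as its square roots.

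I would begin with $\varpi_{D_\ell}^2 = \ell$ and write
\[
\UL{\ell}^2 \varphi(\hat{d}) = \varphi\bigl(\hat{d} \cdot (1, \ldots, 1, \ell, 1, \ldots)\bigr),
\]
where the nontrivial entry sits at the $\ell$-th place. Then I would factor this adele as $\ell \cdot r$, with $\ell$ the diagonally embedded scalar in $\A_f^\times$ and $r \in \widehat{R}^\times$ given by $r_\ell = 1$ and $r_q = \ell^{-1}$ at each $q \neq \ell$; this is valid because $\ell^{-1} \in \Z_q^\times$ at every finite $q \neq \ell$, hence $r_q$ lies in $R_q^\times$. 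Applying the defining transformation property yields
\[
\UL{\ell}^2 \varphi(\hat{d}) = \chi_\A^{-1}(\ell)\, \widetilde\chi(r)\, \ell^{2-k}\, \bigl(r_p^{-1} \cdot \varphi(\hat{d})\bigr),
\]
and two of these factors cancel: $\chi_\A^{-1}(\ell) = 1$ by the triviality of Hecke characters on $\Q^\times$, and $r_p^{-1} = \ell \cdot I$ acts on $L_{k-2}$ as multiplication by $\ell^{k-2}$, absorbing the $\ell^{2-k}$. Thus $\UL{\ell}^2$ equals the scalar $\widetilde\chi(r)$.

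It then remains to identify $\widetilde\chi(r)$ with $\chi_{\A, Np^n}(\ell)^{-1}$. Using the explicit rules of Section~\ref{Section: Lifting characters}, the local factor at each prime $q \mid Np^n$ with $q \mid \cond(\chi)$ contributes $\chi_q(\ell^{-1}) = \chi_q(\ell)^{-1}$, while the $\ell$-local factor is trivial because $r_\ell = 1$. Collecting these local factors reproduces precisely $\chi_{\A, Np^n}(\ell)^{-1}$, mirroring the manipulation in Proposition~\ref{prop: UL properties}(1) that reduced $\chi_{\ell,\A}^{-1}((1,\ldots,1,\ell,1,\ldots))$ to $\chi_N^{-1}(\ell)$. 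Since $\UL{\ell}^2$ acts as $\chi_{\A, Np^n}(\ell)^{-1}$ on the whole space, every $\UL{\ell}$-eigenvalue $\lambda$ on $S_k^D(Np^n\ell^2, \chi)$ must satisfy $\lambda^2 = \chi_{\A, Np^n}(\ell)^{-1}$, giving the claim.

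The main obstacle here is purely notational rather than conceptual: one must carefully track the splitting of $\chi_\A$ into local components and reconcile the paper's use of the Dirichlet character $\chi_N$ (in Proposition~\ref{prop: UL properties}) with the adelic notation $\chi_{\A, Np^n}$ (in the present lemma), which differ by inversion conventions. No new ideas are required, and the twist-minimality hypothesis of Proposition~\ref{prop: mult-1 with varpi_d} plays no role: we are only pinning down the possible eigenvalues, not the dimension of each eigenspace.
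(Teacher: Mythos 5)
Your proposal takes a genuinely different route from the paper's. The paper's proof is local: it observes that for a form which is $\ell$-new but not twist-minimal, the Jacquet--Langlands transfer $\pi_\ell^D$ of the local representation at $\ell$ has conductor~$1$, hence is a character of $D_\ell^\times/U_{D_\ell}^1 \iso \langle \varpi_{D_\ell} \rangle$; it is then pinned down up to a sign by $\pi_\ell^D(\varpi_{D_\ell})^2 = \pi_\ell^D(\varpi_{D_\ell}^2) = \omega_\ell(\varpi_{\Q_\ell})$, the value of the central character. Your argument is instead global: you compute $\UL{\ell}^2$ directly from the defining transformation law of quaternionic modular forms, exactly as in the proof of Proposition~\ref{prop: UL properties}(1), and observe that it acts by a scalar on \emph{all} of $S_k^D(Np^n\ell^2,\chi)$ — twist-minimality plays no role. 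This yields a slightly stronger statement (every $\UL{\ell}$-eigenvalue on the full space, not just on the $\ell$-new non-twist-minimal part, is $\pm$ a fixed square root), and it unifies this lemma with Proposition~\ref{prop: mult-1 with varpi_d} by a single elementary computation. The paper's version, by contrast, buys a representation-theoretic explanation that slots neatly next to Proposition~\ref{prop:reps_of_cond2}. Both are correct; you did not see the paper relies on the local picture rather than the transformation law.

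One small caveat, which you yourself flag as "purely notational": your computation of $\widetilde\chi(r)$ correctly gives $\prod_{q \mid Np^n} \chi_q(\ell)^{-1}$, i.e.\ the Dirichlet character of modulus $Np^n$ evaluated at $\ell$, inverted. You then equate this with the stated target $\chi_{\A,Np^n}(\ell)^{-1}$ without independently verifying the Dirichlet-to-adelic dictionary. Since $\chi_{\A,q}|_{\Z_q^\times}$ is the \emph{inverse} of the Dirichlet component $\chi_q$ under the paper's conventions (compare the paper's $\chi_{\A,q}(q) = \chi(q)^{-1}$), one has $\chi_{\A,Np^n}(\ell) = \chi_{Np^n}(\ell)^{-1}$, so the identification you invoke silently absorbs an inversion. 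Rather than resolve this here — the paper's own usage vacillates between the Dirichlet symbol $\chi_N(\ell)^{-1}$ in Proposition~\ref{prop: mult-1 with varpi_d} and the adelic symbol $\chi_{\A,Np^n}(\ell)^{-1}$ in the present lemma — you would be better served to state your conclusion in the unambiguous form $\UL{\ell}^2 = \chi_{Np^n}(\ell)^{-1}$ and leave matching it to the lemma's notation as a separate, flagged step.
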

    \begin{proof}
        Once again, looking at the local description. If $\pi_\ell$ is an admissible irreducible representation of $D_\ell^\times$ of conductor 1, then it is (inflated from) a character $\pi_\ell \colon \langle \varpi_{D_\ell} \rangle \to \C^\times$. In particular, it is determined by $\pi_\ell(\varpi_{D_\ell})$ and once again $\pi_\ell(\varpi_{_\ell}^2) = \pi(\varpi_{\ql})^2 = \chi(\varpi_{\ql})^2$.
     \end{proof}

\subsubsection{$\Lambda$-action and operators}\label{Section: Lambda-action and operators}

Recall the notation introduced in Section~\ref{Section: Algebraic quaternionic modular forms}; equation~\eqref{Eq: Lambda action} shows that the $\Lambda$-action commutes with Hecke operators and also with the $\UL{\ell}$-operators. Considering the $\Lambda$-action on quaternionic modular forms is critical for ensuring the necessary properties of the morphisms between eigenvarieties we construct in~Section \ref{Section: Eigenvarieties}. More precisely, we make use of the relation
\begin{equation}\label{Eq: relation between UL^2 and Lambda-action}
    \UL{\ell}^2\, \varphi = \chi_{\A,\ell}(\ell)\, [\langle\ell\rangle]_\Lambda \cdot \varphi,
\end{equation}
for $\langle\ell\rangle\in 1+p\zp$ the image of $\ell$ under the projection $\langle\cdot\rangle:\zp^\times \longrightarrow 1+p\zp$.

\subsection{Pairings}\label{section: Pairings}

We introduce here the general setting we deal with in Sections \ref{Section: The JL correspondence in families} and~\ref{Section: Balanced triple product $p$-adic $L$-function}. Let $D$ be a definite quaternion algebra over $\Q$; it is ramified at an odd number of primes $\ell\,||\disc(D)$, for $\disc(D)$ the discriminant of $D$. We denote by $R_{N_+,N_-}$ a fixed Pizer order of level $N_+N_-$, with $N_-=N_{-}^{\mathrm{sp}}\,N_{-}^{\mathrm{sc}}$, such that
\begin{align}\label{eq: level hypothesis}
    (N_+,\,\disc(D))=1, && (N_{-}^{\mathrm{sp}},\, N_{-}^{\mathrm{sc}})=1, && N_{-}^{\mathrm{sp}}\, || \disc(D),&& N_{-}^{\mathrm{sc}}\, || \disc(D)^2.
\end{align}
We will occasionally simplify this notation to $R_{N} = R_{N_+, N_-}$ where $N = N_+ N_-$. The notation refers to the local factors of the corresponding representations of $\GL_2(\A)$: they are {\em special} at primes dividing $N_{-}^{\mathrm{sp}}$ and {\em supercuspidal}  at primes dividing $N_{-}^{\mathrm{sc}}$. As discussed above, we need the extra operator $\UL{\ell}$ at primes $\ell | N_{-}^{\mathrm{sc}}$. We remark that $(N_{-}^{\mathrm{sp}})^2 N_{-}^{\mathrm{sc}}=\disc{D}^2$. Moreover, we fix $\chi$ to be a Dirichlet character of conductor $c\mid N_+N_-$, and a weight $k\in\Z_{\geq 2}$.

\subsubsection{The quaternionic Petersson product}\label{Section: The quaternionic Petersson product} 
    
    We recall here the definition of the Petersson product between quaternionic modular forms; our definition extends the one provided in \cite[Section 4.2]{Hsieh2021}.
    Following \cite[Section 2.1]{GS2019}, we consider the \emph{$p$-adic norm form} $\Np:D^\times\backslash\widehat{D}^\times \longrightarrow \zp^\times$ given by
    \begin{equation}
        \Np(-)=\big(|\nu_{\A_f}(-)|_{\A_f} \, \nu_{\A_f}(-)\big)_p,
    \end{equation}
    for $(-)_p:\A_f^\times\longrightarrow\qp^\times$, the projection to the $p$-component. In other words, $\Np$ is defined as the $p$-component of the normalization of the reduced norm by the adèlic absolute value. With the notation of \cite[Section 4.2]{Hsieh2021}, we denote the \emph{$p$-adic cyclotomic character} by $\cyc$; this is the character
    \begin{align}
        \cyc:\Q_+\backslash \A_{f}^\times & \longrightarrow \zp^\times \nonumber \\
        a & \mapsto \big(|a|_{\A_f} \, a\big)_p = |a|_{\A_f} \,a_p. \label{Eq:p-adic cyclotomic character}
    \end{align}
    Therefore, $N_p = \cyc \circ \nu_{\A_{f}}$, as the quaternion algebra $D$ is definite.
    
    We consider the pairing
    \begin{equation}
        (-,-)_{N_+,N_-}:S_k^D(R_{N_+,N_-},\chi) \times S_k^D(R_{N_+,N_-},\chi^{-1})\longrightarrow \qpbar,
    \end{equation}
    given by
    \begin{equation}
        (\varphi_1,\varphi_2)_{N_+,N_-}=\sum\limits_{[x]\in D^\times\backslash\widehat{D}^\times/\widehat{R}_{N_+,N_-}^\times}\frac{\Np(x)^{k-2}}{\#\Gamma_{N_+,N_-}(x)}\langle\varphi_1(x),\varphi_2(x)\rangle_{k-2},
    \end{equation}
    where $\Gamma_{N_+,N_-}(x)=(D^\times \cap x \widehat{R}_{N_+,N_-}^\times x)\,\Q^\times / \Q^\times$ and
    \begin{equation}
        \langle-,-\rangle_{k-2}:L_{k-2}(\qpbar)\times L_{k-2}(\qpbar)\longrightarrow \qpbar
    \end{equation}
    is the paring defined by
    \begin{equation}
        \langle X^iY^{k-2-i},X^jY^{k-2-j}\rangle_{k-2}=\begin{cases}
            (-1)^i\binom{k-2}{i}^{-1} & \textnormal{ if } i+j = k-2,\\
            0 & \textnormal{ if }i+j\neq k-2.
        \end{cases}
    \end{equation}
    For any $g\in \GL_2(\qpbar)$, let $\Bar{g}=\det(g)g^{-1}$ be its image under the canonical involution on $\GL_2(\qpbar)$. Given such $g$ and any $P,\,Q\in L_{k-2}(\qpbar)$, we have:
    \begin{equation}
        \langle g\cdot P,Q\rangle_{k-2}=\langle P,\Bar{g}\cdot Q\rangle_{k-2}.
    \end{equation}
    That implies that the pairing $(-,-)_{N_+,N_-}$ is well defined, as
    \begin{align}  
        \Np(xr)^{k-2}\langle\varphi_1(xr),\varphi_2(xr)\rangle_{k-2} & =\Np(x)^{k-2} \nu_p(r_p)^{k-2}\langle r_p^{-1}\cdot \varphi_1(x),r_p^{-1}\cdot \varphi_2(x)\rangle_{k-2}\,\nonumber \\ & = \nu_p(r_p)^{k-2} \nu_p(r_p^{-1})^{k-2}\Np(x)^{k-2}\langle \varphi_1(x),\varphi_2(x)\rangle_{k-2}\,\\
        & =\Np(x)^{k-2}\langle\varphi_1(x),\varphi_2(x)\rangle_{k-2}, \nonumber
    \end{align}
    for $r\in \widehat{R}_{N_+,N_-}^\times$ and $[x]\in D^\times\backslash\widehat{D}^\times/\widehat{R}_{N_+,N_-}^\times$.
    \begin{definition}\label{def: quaternionic Atkin--Lehner}
        With the notation of Section~\ref{Section: A remark on the structure of quaternion algebras at ramified primes} (cf.\  also \cite[Section 9]{Pizer80p2}), let $\AL{N_+,N_-}\in \widehat{D}^\times$ the element determined by
        \begin{equation*}
            \AL{N_+,N_-, q}= \begin{cases}
                1 & \textnormal{ if }q\nmid N_+N_-,\\
                \iota_q^{-1}\left(\mat{0}{1}{-q^{v_q(N_+N_-)}}{0}\right) & \textnormal{ if }q\mid N_+,\\
                1 & \textnormal{ if }q\mid N_{-}^{\mathrm{sp}},\\
                \varpi_{\qll} & \textnormal{ if }q=\ell\mid N_{-}^{\mathrm{sc}}.
            \end{cases}
        \end{equation*}
        We define the Atkin--Lehner operator
        \begin{equation*}
            [\AL{N_+,N_-}]:S_k^D(R_{N_+,N_-},\chi)\longrightarrow S_k^D(R_{N_+,N_-},\chi^{-1}),
        \end{equation*}
        as
        \begin{equation*}
            [\AL{N_+,N_-}] \,\varphi(x) = \AL{N_+,N_-, p}\cdot\varphi(x \AL{N_+,N_-})\, \chi_\A(\nu(x)).
        \end{equation*}
    \end{definition}
    \begin{lemma}
        The Atkin-Lehner operator is a well-defined involution.
    \end{lemma}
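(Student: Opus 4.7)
The proof splits into (i) verifying well-definedness and (ii) computing the square.

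For (i), we must verify that $[\AL{N_+,N_-}]\varphi$ satisfies the transformation law of $S^D_k(R_{N_+,N_-},\chi^{-1})$. The essential step is to check that $\AL{N_+,N_-}$ normalizes $\widehat R_{N_+,N_-}^\times$, verified place-by-place using Section~\ref{Section: A remark on the structure of quaternion algebras at ramified primes}: at $q\nmid N_+N_-$ and $q\mid N_{-}^{\rm sp}$ this is trivial since $\AL{N_+,N_-,q}=1$; at $q\mid N_+$ it is the classical Atkin--Lehner normalization of the upper-triangular Eichler order by $\mat{0}{1}{-q^v}{0}$; and at $\ell\mid N_{-}^{\rm sc}$, the uniformizer $\varpi_{\qll}$ normalizes the Pizer order $R_\ell = \O_\ell+\varpi_{D_\ell}\O_{D_\ell}$ by direct inspection of the decomposition $D_\ell=\qll\oplus\varpi_\ell\qll$. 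Given normalization, the transformation under $\widehat R^\times$ reduces to the character identity
$$\widetilde{\chi}\bigl(\AL{N_+,N_-}^{-1}r\AL{N_+,N_-}\bigr)\cdot \chi_\A(\nu(r)) = \widetilde{\chi^{-1}}(r),\qquad r\in\widehat R^\times,$$
verified place-by-place using the explicit construction of $\widetilde{\chi}_q$ in Section~\ref{Section: Lifting characters}. The transformations under $D^\times$ on the left and $\A_f^\times$ in the centre are then automatic, using that $\chi_\A$ is trivial on $\Q^\times$.

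For (ii), applying the definition twice (noting that the second application uses $\chi_\A^{-1}(\nu(x))$ since $[\AL{N_+,N_-}]\varphi$ has character $\chi^{-1}$) gives
$$[\AL{N_+,N_-}]^2\varphi(x) = \AL{N_+,N_-,p}^{2}\cdot \varphi\bigl(x\,\AL{N_+,N_-}^2\bigr)\cdot \chi_\A\bigl(\nu(\AL{N_+,N_-})\bigr).$$
A local computation shows that $\AL{N_+,N_-}^2$ is a central idèle $z\in\A_f^\times$: at $q\mid N_+$ it equals $-q^{v_q(N_+N_-)}\cdot I$; at $\ell\mid N_{-}^{\rm sc}$, a suitable choice of $\varpi_{\qll}$ places $\varpi_{\qll}^2$ in $\Q_\ell$; and elsewhere it is trivial. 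Using the central transformation law $\varphi(xz)=\chi_\A^{-1}(z)z_p^{2-k}\varphi(x)$, the scalar action of $\AL{N_+,N_-,p}^2$ on $L_{k-2}(\qpbar)$ cancels $z_p^{2-k}$, and the remaining adèlic factor $\chi_\A^{-1}(z)\,\chi_\A(\nu(\AL{N_+,N_-}))$ evaluates to $1$ by triviality of $\chi_\A$ on $\Q^\times$ and careful sign-tracking at the primes $q\mid N_+$ (where $\nu(\AL{N_+,N_-,q}) = q^v$ while $\AL{N_+,N_-,q}^2 = -q^v\cdot I$). This yields $[\AL{N_+,N_-}]^2\varphi=\varphi$.

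The main technical obstacle lies in the character identity at the Pizer primes $\ell\mid N_{-}^{\rm sc}$, where the lift $\widetilde{\chi}_\ell$ involves the auxiliary characters $\widetilde{\varepsilon}_\ell$ and the square-root character $\gamma_\ell$ from Section~\ref{Section: Lifting characters}. Tracking their interaction with conjugation by $\varpi_{\qll}$ on $R_\ell^\times$ requires a delicate use of the decomposition $D_\ell=\qll\oplus\varpi_\ell\qll$, with the identity $\gamma_\ell^2=\phi_\ell$ ensuring compatibility with the quadratic ambiguity that arises from the square of the Atkin--Lehner element.
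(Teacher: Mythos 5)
Your proposal follows the same overall route as the paper — normalization of $\widehat R^\times_{N_+,N_-}$ by $\AL{N_+,N_-}$, a place-by-place character identity, and centrality of $\AL{N_+,N_-}^2$ — and the character identity you write down is the correct one. However, the essential step is missing: you identify the Pizer primes $\ell\mid N_-^{\rm sc}$ as the crux and then defer to ``a delicate use of the decomposition'' without carrying out any computation. The paper's key observation there is short and concrete: writing $r = x + \varpi_{\qll}y + \varpi_{D_\ell}z \in R_\ell^\times$ and using $\varpi_{\qll}\varpi_{D_\ell} = -\varpi_{D_\ell}\varpi_{\qll}$, one gets $r\varpi_{\qll} \equiv \varpi_{\qll}r \pmod{\varpi_{D_\ell}\O_{D_\ell}}$. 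This congruence is exactly what makes the lift $\widetilde\chi_\ell$ — in particular the piece $\widetilde\varepsilon_\ell$, which factors only through $R_\ell^\times$ modulo $\varpi_{D_\ell}\O_{D_\ell}$ (the piece $\gamma_\ell\circ\nu_\ell$ is conjugation-invariant for free) — unchanged under conjugation by $\varpi_{\qll}$. Your closing invocation of $\gamma_\ell^2 = \phi_\ell$ is also misattributed: that identity is used to cancel the adelic factor $\chi_{\A,\ell}(\nu_\ell(r))$ appearing in the well-definedness check (since $\widetilde\chi_\ell(r)^2 = \chi_\ell(\nu_\ell(r))$ in the even case), and has nothing to do with any ``quadratic ambiguity'' from squaring the Atkin--Lehner element.

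On the involution: you are more careful than the paper's displayed formula in retaining the factor $\chi_\A(\nu(\AL{N_+,N_-}))$ from the second application of the operator with character $\chi^{-1}$, which is a genuine improvement. But the assertion that the combined factor $\chi_\A^{-1}(\AL{N_+,N_-}^2)\,\chi_\A(\nu(\AL{N_+,N_-}))$ is $1$ ``by triviality of $\chi_\A$ on $\Q^\times$'' is not a proof; the idèle $\AL{N_+,N_-}^{-2}\nu(\AL{N_+,N_-})$ equals $-1$ at every prime of $N_+N_-^{\rm sc}$ (note the sign at the Pizer primes too, since $\nu(\varpi_{\qll}) = -\varpi_{\qll}^2$) and $1$ elsewhere, and evaluating $\chi_\A$ on it requires tracking the ramification of $\chi$ at the primes of $N_-^{\rm sp}$ and ultimately yields $\chi(-1)$, which must then be related to the parity constraint $\chi(-1) = (-1)^k$. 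None of that is done. So the proposal sets up the right scaffolding but leaves the load-bearing calculations unproved.
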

    \begin{proof}
        It is easy to notice that the level of $[\AL{N_+,N_-}] \,\varphi$ is still $R_{N_+,N_-}$, but we need to check that its character is $\chi^{-1}$. Given $r\in \widehat{R}_{N_+,N_-}^\times$, we compute
        \begin{equation*}
            ([\AL{N_+,N_-}] \varphi)(xr)=\AL{N_+,N_-, p}\cdot \varphi(x \,r\, \AL{N_+,N_-})\,\chi_\A(\nu(x))\,\chi_\A(\nu(r)).
        \end{equation*}
        We must only check the primes $\ell\mid N_{-}^{\mathrm{sc}}$, as the local behavior at the other primes is the one of the usual Atkin--Lehner operator. Let $\ell$ be such a prime. With the formalism of Section \ref{Section: A remark on the structure of quaternion algebras at ramified primes}, we note that, for $r=x+\varpi_{\qll}y+\varpi_{D_\ell} z\in R_\ell^\times$, with $x,y\in \Z_\ell$ and $z\in \O_{D_\ell}$,
        \begin{equation*}
            r \varpi_{\qll}=x\varpi_{\qll}+\varpi_{\qll}y\varpi_{\qll}+\varpi_{D_\ell} z \varpi_{\qll} = \varpi_{\qll} ( x - y \varpi_{\qll}) + \varpi_{D_\ell} z \varpi_{\qll}.
        \end{equation*}
        Therefore, $r\varpi_{\qll} \equiv \varpi_{\qll} r\Mod{\varpi_{D_\ell}\O_{D_\ell}}$, hence $\widetilde{\chi}_\ell(r\,\varpi_{\qll})=\widetilde{\chi}_\ell(\varpi_{\qll} r)$. As
\begin{equation*}
	\varpi_{\qll} R_{N_+,N_-,\ell}= R_{N_+,N_-,\ell}\,\varpi_{\qll}
\end{equation*}        
we deduce that the operator is well defined, and inverting the character. We conclude noticing that $[\AL{N_+,N_-}] [\AL{N_+,N_-}] \,\varphi(x) = (\AL{N_+,N_-, p})^2\cdot\varphi\left(x (\AL{N_+,N_-})^2\right)$, and
        \begin{equation*}
            (\AL{N_+,N_-, q})^2= \begin{cases}
            	1 & \textnormal{ if }q\nmid N_+N_-,\\
                -q^{v_q(N_+N_-)} & \textnormal{ if }q\mid N_+,\\
                1 & \textnormal{ if }q\mid N_{-}^{\mathrm{sp}},\\
                \varpi_{\qll}^2 & \textnormal{ if }q=\ell\mid N_{-}^{\mathrm{sc}}.
            \end{cases}
        \end{equation*}
        Therefore, $(\AL{N_+,N_-, q})^2\in\Z_q$, for each $q$.
    \end{proof}
    \begin{proposition}\label{prop: quaternionic Petersson pairing is perfect and equivariant}
        The pairing
        \begin{equation*}
            \langle-,-\rangle_{N_+,N_-}:S_k^D(R_{N_+,N_-},\chi)\times S_k^D(R_{N_+,N_-},\chi) \longrightarrow \qpbar
        \end{equation*}
         defined by $\langle \varphi_1,\varphi_2\rangle_{N_+,N_-}=(\varphi_1,[\AL{N_+,N_-}]\varphi_2)_{N_+,N_-}$ is perfect and equivariant for both the action of the Hecke operators and of $\UL{\ell}$.
    \end{proposition}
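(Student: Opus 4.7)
The proposition has two parts: perfectness of $\langle-,-\rangle_{N_+,N_-}$, and its equivariance under both the Hecke operators and the extra operators $\UL{\ell}$ for $\ell \mid N_{-}^{\mathrm{sc}}$. I will handle them in this order.

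For perfectness, observe that $(-,-)_{N_+,N_-}$ decomposes as a finite orthogonal sum: for each double coset $[x] \in D^\times \backslash \widehat{D}^\times /\widehat{R}_{N_+,N_-}^\times$, the contribution is a rescaling of the non-degenerate pairing $\langle-,-\rangle_{k-2}$ on the $\Gamma_{N_+,N_-}(x)$-invariants of $L_{k-2}(\qpbar)$. Constructing delta-type forms supported on a single coset witnesses non-degeneracy of $(-,-)_{N_+,N_-}$, and composing with the involutive isomorphism $[\AL{N_+,N_-}]\colon S_k^D(R_{N_+,N_-},\chi) \cong S_k^D(R_{N_+,N_-},\chi^{-1})$ transfers this to $\langle-,-\rangle_{N_+,N_-}$.

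For equivariance, I set up the adjoint $T^*$ of an operator $T$ with respect to $(-,-)_{N_+,N_-}$ via $(T\varphi_1,\varphi_2)_{N_+,N_-} = (\varphi_1, T^*\varphi_2)_{N_+,N_-}$; equivariance of $\langle-,-\rangle_{N_+,N_-}$ under $T$ then reduces to the intertwining identity $T^* \circ [\AL{N_+,N_-}] = [\AL{N_+,N_-}] \circ T$. For $T = T_q$ with $q \nmid N_+N_-$, standard coset manipulations give $T_q^* = T_q$, and $T_q$ commutes with $[\AL{N_+,N_-}]$ since the latter is supported at primes dividing $N_+N_-$. For $T = U_q$ with $q\mid N_+$, the local Atkin--Lehner at $q$ conjugates $U_q$ to its adjoint, as in the classical theory. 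At primes $\ell \mid N_{-}^{\mathrm{sc}}$, $U_\ell$ vanishes on quaternionic forms, so there is nothing to check.

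The main new input is $\UL{\ell}$-equivariance. On the one hand, a change of variables $y = x\varpi_D$ in the definition of $(-,-)_{N_+,N_-}$---legitimate because $\varpi_D$ normalizes $\widehat{R}_{N_+,N_-}^\times$ at $\ell$, thereby inducing a bijection on double cosets while preserving the stabilizers $\Gamma_{N_+,N_-}(x)$---identifies $\UL{\ell}^*$ as an explicit scalar multiple of $\UL{\ell}^{-1}$, which by Proposition~\ref{prop: UL properties} is itself an explicit scalar multiple of $\UL{\ell}$. On the other hand, comparing $\UL{\ell}\circ[\AL{N_+,N_-}]$ with $[\AL{N_+,N_-}]\circ \UL{\ell}$ reduces, at the $\ell$-component, to the anticommutation $\varpi_{\qll}\varpi_\ell = -\varpi_\ell\varpi_{\qll}$ recorded in Section~\ref{Section: A remark on the structure of quaternion algebras at ramified primes}. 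This anticommutation produces a central insertion of $-1$ at $\ell$, whose effect on $\varphi_2$ is the scalar $\widetilde{\chi}_\ell(-1)$ dictated by the character lift in Section~\ref{Section: Lifting characters}. Combined with the twist $\chi_\A(\nu(\varpi_D))^{-1}=\chi(\ell)$ coming from the $\chi_\A$-factor in the definition of $[\AL{N_+,N_-}]$, these scalars combine to give exactly the required intertwining $\UL{\ell}^* \circ [\AL{N_+,N_-}] = [\AL{N_+,N_-}] \circ \UL{\ell}$.

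The main obstacle is the delicate local bookkeeping at $\ell$. Because $[\AL{N_+,N_-}]$ uses the generator $\varpi_{\qll}$ of the unramified quadratic subextension while $\UL{\ell}$ uses the quaternionic uniformizer $\varpi_{D_\ell}$, three sources of scalars must cancel exactly: the sign from the anticommutation, the twist $\chi_\A(\nu(\varpi_D))$, and the value $\widetilde{\chi}_\ell(-1)$, which depends on the parity of $\chi_\ell$. This amounts to a short case analysis according to $\cond(\chi_\ell)\in\{1,\ell\}$ and the parity of $\chi_\ell$, with each case reducible to the squaring relation $\UL{\ell}^2 = \chi_N(\ell)^{-1}$ from Proposition~\ref{prop: UL properties}.
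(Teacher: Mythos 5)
Your approach to the $\UL{\ell}$-equivariance is essentially the one the paper uses: a change of variables $x\mapsto x\varpi_{D_\ell}^{-1}$ (equivalently $y = x\varpi_D$), justified by the invariance $\Gamma_{N_+,N_-}(x) = \Gamma_{N_+,N_-}(x\varpi_{D_\ell})$ of the stabilizers, combined with the anticommutation $\varpi_{\qll}\varpi_{D_\ell} = -\varpi_{D_\ell}\varpi_{\qll}$. The Hecke-equivariance, which you set up via the adjoint $T^*$, is in the paper simply delegated to \cite[Section 4.2]{Hsieh2021} and \cite[Lemma 3.5]{Hida2006}, but the content is the same.

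The one place where you diverge, and where I would push back a little, is the final bookkeeping of scalars at $\ell$. You introduce $\UL{\ell}^*$ separately, then relate $\UL{\ell}^*\circ[\AL{N_+,N_-}]$ to $[\AL{N_+,N_-}]\circ\UL{\ell}$, attributing the sign from anticommutation to a factor $\widetilde{\chi}_\ell(-1)$ and the central-character twist, and then propose a case analysis over $\cond(\chi_\ell)\in\{1,\ell\}$ and the parity of $\chi_\ell$. The paper avoids this entirely: it rewrites the sign as $\varpi_{D_\ell}^{-1}\,\varpi_{\qll} = \varpi_{\qll}\,\varpi_{D_\ell}\,\nu(\varpi_{D_\ell})^{-1}$, so that the $-1$ is absorbed into the reduced norm $\nu(\varpi_{D_\ell}) = -\ell$, and the resulting central insertion contributes a single factor $\chi_{\A}(\nu(\varpi_{D_\ell}))$ from the transformation law of $\varphi_2$, which cancels against the factor $\chi_{\A}^{-1}(\nu(\varpi_{D_\ell}))$ produced by the substitution in $\chi_\A(\nu(x))$. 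No parity discussion is needed, and the relation $\UL{\ell}^2 = \chi_N(\ell)^{-1}$ from Proposition~\ref{prop: UL properties} is not invoked in the equivariance computation itself. Your route should close, but you have left the exact cancellation as a claim rather than a computation; the uniform identity $\varpi_{D_\ell}^{-1}\,\varpi_{\qll} = \varpi_{\qll}\,\varpi_{D_\ell}\,\nu(\varpi_{D_\ell})^{-1}$ is what makes the cancellation manifest and is the step worth writing out. (The perfectness, which you argue via the orthogonal decomposition of $(-,-)_{N_+,N_-}$ into nondegenerate pieces and the involutivity of $[\AL{N_+,N_-}]$, is left unproved in the paper, so your sketch is a reasonable supplement.)
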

    \begin{proof}
        The Hecke-equivariance follows exactly as in \cite[Section 4.2]{Hsieh2021} from \cite[Lemma 3.5]{Hida2006}. Since $\ell^2||\disc(D)$, it is not difficult to compute that $\Gamma_{N_+,N_-}(x)=\Gamma_{N_+,N_-}(x\,\varpi_{D_\ell})$. Writing temporarily $X = D^\times\backslash\widehat{D}^\times/\widehat{R}_{N_+,N_-}^\times$, we have that:
        \begin{align*}
            (\UL{\ell}\,\varphi_1,\varphi_2)_{N_+,N_-} & = \sum\limits_{[x]\in X} \frac{\Np(x)^{k-2}}{\#\Gamma_{N_+,N_-}(x)}\langle \varphi_1(x),\AL{N_+,N_-, p}\cdot \varphi(x\AL{N_+,N_-})\chi_\A(\nu(x))_{N_+,N_-} \rangle_{k-2} \\ 
            & \hspace*{-70pt} = \hspace*{-4pt}\sum\limits_{[x] \in X} \frac{\chi_{\A}^{-1}(\nu(\varpi_{D_\ell}))\,\Np(x)^{k-2}}{\Np(\varpi_{D_\ell})^{k-2}\,\#\Gamma_{N_+,N_-}(x)}\langle \varphi_1(x),\AL{N_+,N_-, p}\cdot \varphi(x \varpi_{D_\ell}^{-1} \AL{N_+,N_-})\chi_\A(\nu(x))_{N_+,N_-} \rangle_{k-2} \\
            & \hspace*{-70pt} = \chi_{\A}^{-1}(\nu(\varpi_{D_\ell})) \sum\limits_{[x]\in X}\frac{\Np(x)^{k-2}}{\#\Gamma_{N_+,N_-}(x)}\langle \varphi_1(x),\AL{N_+,N_-, p}\cdot \varphi(x \AL{N_+,N_-, p} \varpi_{D_\ell} \nu(\varpi_{D_\ell})^{-1})\, \chi_\A(\nu(x))_{N_+,N_-} \rangle_{k-2}\\
            & \hspace*{-70pt} = (\varphi_1,\UL{\ell}\,\varphi_2)_{N_+,N_-}
        \end{align*}
        where the second equality is obtained by the substitution $x\mapsto x\,  \varpi_{D_\ell}^{-1}$, and we have used $\Np(\varpi_{D_\ell})^{k-2} = 1$ and $\varpi_{D_\ell}^{-1}\AL{N_+,N_-, p} = \AL{N_+,N_-, p} \varpi_{D_\ell} (-\varpi_{D_\ell}^{-2}) = \AL{N_+,N_-, p} \varpi_{D_\ell} \nu(\varpi_{D_\ell})^{-1}$.
    \end{proof}

\subsubsection{Automorphic and quaternionic pairings}

    Let $\chi_\A$ be the adèlization of $\chi$ and $R_{N_+,N_-}$ a Pizer order in $D$ of level $N_+N_-$, we denote by $\Ac^{D}_{2}(R_{N_+,N_-},\chi_\A)$ the space of scalar-valued (i.e. weight 2) automorphic forms of level $\widehat{R}_{N_+,N_-}$ and character $\chi_\A$. For any two such forms $f\in \Ac^{D}_{2}(R_{N_+,N_-},\chi_\A)$ and $f'\in \Ac^{D}_{2}(R_{N_+,N_-},\chi_\A^{-1})$, we define the pairing
    \begin{equation}
        \langle f,f' \rangle =\int_{\A^\times D^\times\backslash D(\A)^\times} f(x)\,f'(x)\,dx
    \end{equation}
    for $dx$ the Tamagawa measure on $\A^\times\backslash D(\A)^\times$.
    
    Let $\varphi\in S_k^{D}(R_{N_+,N_-},\chi,\qpbar)$ be a quaternionic modular form of weight $k$ and character $\chi$, and let $u$ be a polynomial in $L_{k-2}(\C)$. We can define the matrix coefficient $\Phi_u(\varphi)\in \Ac^{D}_{2}(R_{N_+,N_-},\chi_\A^{-1})$ as $\Phi_u(\varphi)(x)=\langle\Phi(\phi)(x),u\rangle_{k-2}$, for any $x\in D(\A)^\times$ (recall equation~\eqref{eq: lift from QMFs to AF on D}). The above pairing is related to the one defined in Proposition~\ref{prop: quaternionic Petersson pairing is perfect and equivariant} via the following lemma.
    \begin{lemma}\label{lemma:Hsieh4.5}
         Let $\varphi$ be as above and consider $u$ and $v$ in $L_{k-2}(\C)$. Hence,
        \begin{equation*}
            \langle [\AL{N_+,N_-}]\,\Phi_u(\varphi),\Phi_v(\varphi) \rangle = \frac{\vol(R_{N_+,N_-}^\times)}{\left(N_+N_-^{\mathrm{sc}}\right)^{(k-2)/2}  (k-1)} \, \langle\varphi,\varphi\rangle_{N_+,N_-} \,\langle u ,v\rangle_{k-2}.
        \end{equation*}
    \end{lemma}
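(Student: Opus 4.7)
The plan is to mimic the proof of~\cite[Lemma~4.5]{Hsieh2021}, adapting it to the Pizer order setting. The only new feature is the local contribution at primes $\ell \mid N_-^{\mathrm{sc}}$, where both the Atkin--Lehner element and the quaternionic norm behave differently than in the Eichler or square-free discriminant cases.

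First, I would unfold the left-hand side by decomposing the adelic integral. By strong approximation for $D$, together with the assumption that $D$ is definite, we have
\begin{equation*}
    D(\A)^\times = \bigsqcup_{[x_i] \in X} D^\times \cdot x_i \cdot (D_\infty^\times \widehat R_{N_+,N_-}^\times)
\end{equation*}
where $X = D^\times\backslash\widehat D^\times/\widehat R_{N_+,N_-}^\times$, and the stabilizer of $x_i$ modulo $\Q^\times$ is exactly $\Gamma_{N_+,N_-}(x_i)$. Using the Tamagawa measure, the integral defining $\langle f, f'\rangle$ then factors as
\begin{equation*}
    \langle f, f'\rangle = \sum_{[x_i]\in X} \frac{1}{\#\Gamma_{N_+,N_-}(x_i)} \int_{\R^\times\backslash D_\infty^\times} \int_{\widehat\Z^\times\backslash \widehat R_{N_+,N_-}^\times/\widehat\Z^\times} f(x_i \, r \, d_\infty) f'(x_i \, r \, d_\infty)\, dr\, dd_\infty
\end{equation*}
up to a normalization of measures that I would fix to match Hsieh's conventions.

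Second, I would separate the archimedean and finite contributions. At the finite places, the character $\widetilde\chi$ and its inverse cancel after applying the Atkin--Lehner operator, leaving an integrand that is constant on $\widehat R_{N_+,N_-}^\times$, producing a factor $\vol(\widehat R_{N_+,N_-}^\times)$. The archimedean integral is the matrix coefficient integral
\begin{equation*}
    \int_{\R^\times\backslash D_\infty^\times} \langle \rho_{\infty,k}^u(d_\infty^{-1})P, u\rangle_{k-2}\, \langle \rho_{\infty,k}^u(d_\infty^{-1}) P', v\rangle_{k-2}\, dd_\infty,
\end{equation*}
for $P = \iota(x_{i,p}\cdot \varphi(x_{i,f}))$ and $P' = \iota((x_i \AL{N_+,N_-})_p \cdot \varphi(x_i \AL{N_+,N_-})_f)$. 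Schur orthogonality for the compact group $\R^\times\backslash D_\infty^\times$ acting on the irreducible representation $L_{k-2}(\C)$ yields
\begin{equation*}
    \int_{\R^\times\backslash D_\infty^\times} \langle \rho_{\infty,k}^u(d_\infty^{-1}) P, u\rangle_{k-2}\, \langle \rho_{\infty,k}^u(d_\infty^{-1}) P', v\rangle_{k-2}\, dd_\infty = \frac{1}{k-1}\langle P, P'\rangle_{k-2}\,\langle u, v\rangle_{k-2},
\end{equation*}
normalized so that the Tamagawa measure at $\infty$ is the standard one.

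Third, I would assemble the pieces. The factor $\Np(x_i)^{k-2}$ in the definition of $\langle \varphi,\varphi\rangle_{N_+,N_-}$ appears as $|\nu(x_i)|^{k-2}_{\A_f}\,\nu(x_{i,p})^{k-2}$, exactly what emerges from comparing the unitarized archimedean action with the $p$-component of $\Phi(\varphi)$. The new local factor at primes $\ell \mid N_-^{\mathrm{sc}}$ comes from the choice $\AL{N_+,N_-,\ell} = \varpi_{\qll}$, whose reduced norm has $\ell$-adic valuation one; tracking its contribution to $\Np$ at each $\ell$ and to the $|\nu(\AL{N_+,N_-})|$ factor produces exactly the denominator $(N_+ N_-^{\mathrm{sc}})^{(k-2)/2}$, since at primes $q \mid N_+$ one gets $q^{v_q(N_+N_-)(k-2)/2}$ from the standard Atkin--Lehner element, while at primes $\ell \mid N_-^{\mathrm{sp}}$ the Atkin--Lehner is trivial and contributes no factor.

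The main obstacle will be keeping the measure normalizations and character cancellations straight at the supercuspidal primes $\ell \mid N_-^{\mathrm{sc}}$, since the Atkin--Lehner element $\varpi_{\qll}$ is not a uniformizer of $D_\ell$ but of the unramified quadratic subfield, and the character lift $\widetilde\chi_\ell$ was defined through the nontrivial square-root convention of Section~\ref{Section: Lifting characters}. I would handle this by using the computation $r\varpi_{\qll}\equiv \varpi_{\qll} r\pmod{\varpi_{D_\ell}\O_{D_\ell}}$ already used in the proof that the Atkin--Lehner involution is well-defined, which guarantees that $\widetilde\chi_\ell$ pairs correctly with $\widetilde{\chi^{-1}}_\ell$ after translation by $\AL{N_+,N_-,\ell}$.
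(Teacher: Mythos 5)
Your proof plan takes the same route as the paper's: decompose the adelic integral over $\A^\times D^\times \backslash D(\A)^\times$ into a finite sum over the class set times an archimedean integral, apply Schur orthogonality at the archimedean place to produce the $1/(k-1)$ factor, and track the contribution of the Atkin--Lehner twist to the level factor in the denominator. The paper compresses this by citing~\cite[Lemma 3.2]{GS2019} for the underlying computation and then explaining only the normalization discrepancies; you instead work out the coset decomposition and matrix-coefficient integral explicitly, which is legitimate and arguably more transparent.

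There is, however, a concrete error in your bookkeeping at the supercuspidal primes, and it does not cancel. You assert that the reduced norm of $\AL{N_+,N_-,\ell} = \varpi_{\qll}$ has $\ell$-adic valuation one. But $\varpi_{\qll}$ lives in the \emph{unramified} quadratic extension $\Q_{\ell^2}$ of $\Q_\ell$, and the reduced norm of $D_\ell$ restricted to $\Q_{\ell^2}^\times$ is the field norm $N_{\Q_{\ell^2}/\Q_\ell}$; on an unramified extension this doubles valuations, so $v_\ell(\nu_\ell(\varpi_{\qll}))$ is always even and can never equal one. For the denominator to come out to $\left(N_+N_-^{\mathrm{sc}}\right)^{(k-2)/2}$, consistently with the paper's concluding observation that $\nu(\AL{N_+,N_-}) = N_+N_-^{\mathrm{sc}}$, one needs $v_\ell(\nu_\ell(\varpi_{\qll})) = 2 = v_\ell(N_-^{\mathrm{sc}})$ at each $\ell \mid N_-^{\mathrm{sc}}$. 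Your claimed valuation-one contribution would instead give $\left(N_+(N_-^{\mathrm{sc}})^{1/2}\right)^{(k-2)/2}$, an honest discrepancy of $(N_-^{\mathrm{sc}})^{(k-2)/4}$. You may have been thinking of the normalized valuation $w_\ell = \tfrac12 v_\ell\circ\nu_\ell$ on $D_\ell$, for which $w_\ell(\varpi_{\qll}) = 1$ is indeed correct, but that is not the $\ell$-adic valuation of the reduced norm that enters the final count.
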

    \noindent Before proving the lemma, we remark that if $R$ is an Eichler order of level $N_+$, the Atkin--Lehner operator becomes $\AL{N_+,1}$, and we recover the formula in \cite[equation (4.5)]{Hsieh2021}.
    \begin{proof}[Proof of Lemma~\ref{lemma:Hsieh4.5}]
        This is an application of Schur orthogonality relations for linear pairings once we split the integration over $\A^\times D^\times\backslash D(\A)^\times = \A_f^\times D^\times\backslash \widehat{D}^\times\times \R^\times \backslash D_\infty^\times$. More precisely, one can proceed as in the proof of \cite[Lemma 3.2]{GS2019}, obtaining
        \begin{equation*}
            \langle \chi_\A\,\Phi_u(\varphi),\Phi_v(\varphi) \rangle = \frac{\vol(R_{N_+,N_-}^\times)}{(k-1)} \, \langle\chi_\A\,\varphi,\varphi\rangle_{N_+,N_-} \,\langle u ,v\rangle_{k-2}.
        \end{equation*}
        The factor $\nu\left(\AL{N_+,N_-}\right)^{(k-2)/2}$ arises from the definition of the isomorphism $f$ in \cite[equation (16)]{GS2019}; one must notice that they work with automorphic forms valued in the dual space $(L_{k-2}(\C))^\vee$, so their action at $\infty$ is obtained by inverting $x_\infty$. Expressing everything under our definitions and our choices of normalizations we obtain the stated equality. We conclude noticing that $\nu\left(\AL{N_+,N_-}\right)=N_+N_-^{\mathrm{sc}}$.
    \end{proof}
    
\section{The JL correspondence in families}\label{Section: The JL correspondence in families}

As in Section \ref{section: Pairings}, we take $D$ and $R_{p^nN_+,N_-}$ for $n\in\Z_{\geq 0}$ and $p\geq 5$ a prime with $(p,N_+N_-)=1$, satisfying the conditions in equation~\eqref{eq: level hypothesis}. We also fix a Dirichlet character $\chi$ of conductor $c\mid N_+N_-$ and assume that, for all primes $q\mid N_-$,
\begin{align}\label{eq: conductor hypothesis}
    v_q(c) = v_q(N_{-}^{\mathrm{sp}}) -1 = 0,\,\,\, \textnormal{if}\,\,\, q \mid N_{-}^{\mathrm{sp}}, && \textnormal{and} && v_q(c)\leq v_q(N_{-}^{\mathrm{sc}})-1 = 1 \,\,\, \textnormal{if}\,\,\, q \mid N_{-}^{\mathrm{sc}}.
\end{align}

This section is devoted to the study of quaternionic $p$-adic Hida families, with level structure given by such Pizer orders. Our eventual goal is to study a Jacquet--Langlands transfer in families for Coleman/Hida families on $\GL_2$ of level $N$. We show that for any collection of signs $\epsilon_\ell$ for each $\ell | N_-^{\rm sc}$ corresponding to eigenvalues of $\UL{\ell}$, there is a Coleman/Hida family on $D^\times$ that transfers to a given Coleman/Hida family on $\GL_2$. To that effect, we study eigenvarieties for the Hecke algebra augmented by the extra operators $\UL{\ell}$, and obtain an open immersion of eigenvarieties \'a la Chenevier~\cite{Chenevier2004}. The main tool is therefore Chenevier's interpolation technique, adjusted for dealing with this more general setup. We conclude with a control theorem in the spirit of Hida (see Corollary~\ref{cor: Hida's Control Theorem}). 

The main reference for this section is Bellaiche's book~\cite{Bellaiche2021}, together with~\cite{Buzzard2007},~\cite{Chenevier2004}, and \cite{Ludwig2017}. We try to keep the discussion as brief as possible; we recall only the necessary notions and definitions, and provide references where details can be found.

\subsection{Eigenvarieties}\label{Section: Eigenvarieties}

Let $\W$ denote the weight space, i.e. the rigid analytic space over $\qp$ whose $\qpbar$-points are
\begin{equation}
    \W(\qpbar)=\Hom_{\rm cts}(\zp^\times, \qpbar^\times).
\end{equation}
We freely identify any $\qpbar$-point $\kappa\in\W(\qpbar)$ with the corresponding character of $\zp^\times$. The space $\W$ is endowed with an admissible covering $\mathscr{C}$ by admissible affinoid open subsets, as constructed in \cite[Section 6]{Buzzard2007}; in loc.\ cit.\ the covering is considered on the Fredholm variety, while here we prefer to deal only with the weight space, as in~\cite{Bellaiche2021}, out of convenience of exposition. For any $X= \mathrm{Sp}(A)\in \mathscr{C}$, we consider the space of cuspidal overconvergent $p$-adic modular forms as defined in \cite[Section 6]{Buzzard2007} and denote it by $\mathbb{S}_{N_+N_-,\chi}^\dagger(X)$ (in loc. cit. $\mathbb{S}_{N_+N_-,\chi}^\dagger(X_n)$ is denoted by $M_n$, for $X_n\in\mathscr{C}$). This is a Banach $A$-module and satisfies \emph{Property} (Pr) as in \cite[Section 3.1.6]{Bellaiche2021}. Similarly, we consider the space of cuspidal overconvergent $p$-adic quaternionic modular forms as defined in \cite[Section 9]{Buzzard2007}, and denote it by $\mathbb{S}_{N_+N_-,\chi}^{D,\dagger}(X)$ (in loc. cit. $\mathbb{S}_{N_+N_-,\chi}^{D,\dagger}(X_n)$ is denoted by $\mathbf{S}_\kappa^D(U;r)$, for $\kappa$ and $r$ associated with $X_n\in\mathscr{C}$); once again, this space is a Banach $A$-module and satisfies \emph{Property} (Pr). In order to keep the discussion of this section brief, we opt to not recall the precise definition of overconvergent automorphic forms and instead refer the interested reader to~\cite{Buzzard2007} and~\cite{Chenevier2004}. Let us remark that this decision will not compromise the proofs of the main results as they rely only on properties and operators on classical automorphic forms.\\

We are interested in the eigenvectors of $\UL{\ell}$, for each $\ell\mid N_{-}^{\mathrm{sc}}$; therefore, we fix once and for all a square root
\begin{equation}
    \sqrt{\chi_{\A,N_+N_-/\ell^2}(\ell)^{-1}},
\end{equation}
which we call the \emph{positive} square root of $\chi_{\A,N_+N_-/\ell^2}(\ell)^{-1}$. We consider the field extension
\begin{equation}
    \mathcal{K}=\qp\left(\left\{\sqrt{\chi_{\A,N_+N_-/\ell^2}(\ell)^{-1}}\right\}_{\ell|N_{-}^{\mathrm{sc}}}\right)
\end{equation}
and let $\O$ be its ring of integers. Moreover, let $\mathbf{I'}=\O\llbracket 1+p\zp\rrbracket$ be the corresponding finite extension of the Iwasawa algebra $\Lambda$. As we will make use of equation~\eqref{Eq: relation between UL^2 and Lambda-action}, we need to extend $\mathbf{I'}$, adjoining the square roots of $[\langle\ell\rangle]_\mathbf{I'}$; we set
\begin{equation}
    \mathbf{I} = \mathbf{I'}\left[\left\{\sqrt{[\langle\ell\rangle]_\mathbf{I'}}\right\}_{\ell\mid N_{-}^{\mathrm{sc}}}\right]
\end{equation}
for the corresponding finite flat extension of $\Lambda$. Taking its normal closure in $\Frac(\mathbf{I})$ if needed, we can assume that $\mathbf{I}$ is a normal domain. As above, we fix a compatible choice of square roots
\begin{equation}
    \sqrt{[\langle\ell\rangle]_\mathbf{I'}}, \, \textrm{ for }\, \ell\mid N_{-}^{\mathrm{sc}},
\end{equation}
which we refer to as the \emph{positive} square roots; we shorten the notation and set
\begin{equation}\label{Eq: square root operators}
    \IL{\ell}=\sqrt{[\langle\ell\rangle]_\mathbf{I'}}, \, \textrm{ for }\, \ell\mid N_{-}^{\mathrm{sc}}.
\end{equation}

Let $\T_{\mathbf{I}}$ be the polynomial algebra over $\mathbf{I}$ generated by the Hecke operators $T_q$ for $q\nmid pN_+N_-$, $U_l$ for $l\mid pN_+N_{-}^{\mathrm{sp}}$ and the Diamond operators away form $pN_+N_-$; it is the Hecke algebra away from $\ell\mid N_{-}^{\mathrm{sc}}$, and we remark that $\T_{\mathbf{I}}$ is a commutative algebra over $\mathbf{I}$ with a distinguished element, the $U_p$ operator. We extend the $\Lambda$-action defined in Section \ref{Section: Lambda-action and operators} to an $\mathbf{I}$-action via the group-like elements. Similarly, we consider corresponding $\mathbf{I}$-action on the space of $p$-adic modular forms and automorphic forms (cf.\ \cite[Section 2.3]{Hsieh2021}).

Getting back to Banach modules, for any $X = \mathrm{Sp}(A)$ as above, there exist ring homomorphisms,
\begin{align}
     \psi_X:\T_{\mathbf{I}}\longrightarrow \End_A(\mathbb{S}_{N_+N_-,\chi}^\dagger(X)) && \textrm{ and } &&   \psi_X^D:\T_{\mathbf{I}}\longrightarrow \End_A(\mathbb{S}_{N_+N_-,\chi}^{D,\dagger}(X)),
\end{align}
such that the image of the $U_p$-operator under each map defines a compact operator (see~\cite[Section 6, Lemma 12.2]{Buzzard2007}). Moreover, for any $X'= \mathrm{Sp}(A') \in \mathscr{C}$, the $A'$-modules $\mathbb{S}_{N_+N_-,\chi}^\dagger(X)\widehat{\otimes}_A A'$ and $\mathbb{S}_{N_+N_-,\chi}^\dagger(X')$ (resp. $\mathbb{S}_{N_+N_-,\chi}^{D,\dagger}(X)\widehat{\otimes}_A A'$ and $\mathbb{S}_{N_+N_-,\chi}^{D,\dagger}(X')$) are \emph{linked} (see \cite[Definition 3.5.1]{Bellaiche2021}), as the $\mathbf{I}$-action commutes with Hecke operators (cf. Section \ref{Section: Lambda-action and operators}). 

\subsubsection{Classical eigenvarieties}\label{section: classical eigenvarieties}

    By \cite[Theorem 3.6.3]{Bellaiche2021} (cf. \cite[Theoreme 6.3.6]{Chenevier2004}), there exist two cuspidal eigenvarieties $\E_{\GL_2}$ and $\E_{D}$ associated, respectively, with the eigenvariety data
    \begin{align}
        \D^{\GL_2}_{N_+N_-,\chi}=(\W,\,\T_{\mathbf{I}},\, U_p,\,\mathbb{S}_{N_+N_-,\chi}^\dagger,\, \psi_{\GL_2}) && \textrm{ and } && \D^{D}_{N_+N_-,\chi}=(\W,\,\T_{\mathbf{I}},\, U_p,\,\mathbb{S}_{N_+N_-,\chi}^{D,\dagger},\, \psi_{D}).
    \end{align}
    
    We recall that the eigenvariety $\E_{\GL_2}$ (resp. $\E_{D}$) is a rigid analytic space over $\mathcal{K}$ endowed with
    \begin{itemize}
        \item a locally finite morphism $\omega_{\GL_2}:\E_{\GL_2}\longrightarrow \W$ (resp. $\omega_{D}:\E_{D}\longrightarrow \W$), called the \emph{weight map};
        \item a morphism of rings $\psi_{\GL_2}:\T_{\mathbf{I}}\longrightarrow \O(\E_{\GL_2})$ (resp. $\psi_{D}:\T_{\mathbf{I}}\longrightarrow \O(\E_{D})$), which sends $U_p$ to an invertible function;
        \item the morphism $\omega_{\GL_2}\times  \psi_{\GL_2}(U_p)^{-1}:\E_{\GL_2}\longrightarrow \W\times \A^1_{\rm rig}$ (resp. $\omega_{D}\times  \psi_{D}(U_p)^{-1}:\E_{D}\longrightarrow \W\times \A^1_{\rm rig}$) is finite (\cite[Proposition 3.7.7]{Bellaiche2021}).
    \end{itemize}
    The general definition of eigenvariety satisfies analogous properties and it can be found in \cite[Definition 3.6.2]{Bellaiche2021}.
\begin{remark}\label{rmk: def e properties of eigenvarieties}
    \leavevmode
    \begin{enumerate}
        \item By \cite[Theorem 3.6.3]{Bellaiche2021}, there exists an eigenvariety for each eigenvariety data and it is unique up to unique isomorphism.
        \item The eigenvariety construction of $\E_{D}$ provided in~\cite{Buzzard2007} applies in our case, as the adèlization $\widehat{R}^\times$ is a compact open subset of $\widehat{D}^\times$; cf. \cite[Lemma 2.3]{DallAva2021Hida}.
        \item By \cite[Proposition 3.7.7]{Bellaiche2021}, all the eigenvarieties over $\W$ are separated; moreover, as in~\cite{Ludwig2017}, it is enough for us to work with the reduced rigid analytic space associated with each eigenvariety. 
    \end{enumerate} 
\end{remark}
\begin{hypothesis}
    From now on, we assume all the eigenvarieties we consider throughout this note are reduced and separated.    
\end{hypothesis}

\begin{lemma}[{\cite[Lemma 5.9]{Buzzard2007}}]\label{lem: points on eigenvariety are eigensystems}
    Let $(\E,\psi,\omega)$ be either $(\E_{\GL_2},\psi_{\GL_2},\omega_{\GL_2})$ or $(\E_{D},\psi_{D},\omega_{D})$. For any discretely valued extension $\mathcal{K}'/\mathcal{K}$, the association
    \begin{align*}
        \E(\mathcal{K}') \longrightarrow \Hom_{\rm ring}(\T_{\mathbf{I}},\mathcal{K}')\times \W(\mathcal{K}') && \textrm{ mapping } && z\longmapsto (\,[h\mapsto \psi(h)(z)],\, \omega(z)),
    \end{align*}
    defines a bijection between the $\mathcal{K}'$-valued points of $\E$ and the set of $\mathcal{K}'$-valued systems of eigenvalues with non-zero $U_p$-eigenvalue.
\end{lemma}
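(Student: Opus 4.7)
The plan is to verify this bijection by unwinding the definition of an eigenvariety recalled in Section~\ref{section: classical eigenvarieties} and reducing to a statement about Hecke algebras acting on Banach modules of overconvergent automorphic forms over admissible affinoids of $\W$.

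For the forward direction, given $z \in \E(\mathcal{K}')$, the composition $h \mapsto \psi(h)(z)$ is a ring homomorphism $\T_\mathbf{I} \to \mathcal{K}'$, and $\omega(z) \in \W(\mathcal{K}')$ by definition. Since the eigenvariety data stipulates that $\psi(U_p) \in \mathcal{O}(\E)^\times$, the $U_p$-eigenvalue $\psi(U_p)(z)$ is automatically non-zero, so the map lands in systems with non-zero $U_p$-eigenvalue. Functoriality in $\mathcal{K}'$ is immediate.

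For injectivity, suppose two points $z_1, z_2 \in \E(\mathcal{K}')$ give the same pair. The defining property of an eigenvariety makes the morphism $\omega \times \psi(U_p)^{-1} \colon \E \to \W \times \A^1_{\mathrm{rig}}$ finite, and the image of $\T_\mathbf{I}$ together with $\omega^\ast \mathcal{O}(\W)$ generates $\mathcal{O}(\E)$ as a topological $\mathcal{O}(\W)$-algebra over each admissible affinoid of the base (this is part of the construction in \cite[Theorem 3.6.3]{Bellaiche2021}). Because we are working with the reduced eigenvariety (cf.\ Remark~\ref{rmk: def e properties of eigenvarieties}), regular functions separating points are precisely those coming from $\omega$ and $\psi$; hence $z_1 = z_2$.

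The main task is surjectivity. Given a pair $(\lambda, \kappa)$ with $\lambda(U_p) \neq 0$, first choose an admissible affinoid $X = \mathrm{Sp}(A) \in \mathscr C$ containing $\kappa$; after enlarging $\mathcal{K}'$ if necessary, view $\kappa$ as a maximal ideal $\mathfrak{m}_\kappa \subset A$. The idea is that $\lambda$ must arise from the generalized eigenspace on $\mathbb{S}^\dagger_{N_+ N_-, \chi}(X) \otimes_A A/\mathfrak{m}_\kappa$ (respectively $\mathbb{S}^{D,\dagger}_{N_+ N_-, \chi}(X) \otimes_A A/\mathfrak{m}_\kappa$) on which $U_p$ acts with eigenvalue $\lambda(U_p) \neq 0$. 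This nonzero slope condition places us in the finite-slope part, which by Coleman's spectral theory is finite-dimensional and defines a point on the spectral variety of $\psi_X(U_p)$ in $X \times \A^1_\mathrm{rig}$. Chenevier's machinery packages this into a closed point of $\E$ over $(\kappa, \lambda(U_p)^{-1})$ whose associated $\T_\mathbf{I}$-eigensystem is precisely $\lambda$; see \cite[Proposition 3.7.5, Theorem 3.6.3]{Bellaiche2021}. The hardest (and only non-formal) part is ensuring that \emph{every} system of eigenvalues of non-zero $U_p$-slope actually appears on some generalized eigenspace: this is exactly the existence half of the construction, and reduces to Riesz theory for the compact operator $\psi_X(U_p)$ on the Banach $A$-module satisfying property (Pr), already invoked in Section~\ref{section: classical eigenvarieties}. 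With that point in hand, uniqueness follows from the injectivity argument above, completing the bijection.
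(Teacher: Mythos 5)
The paper does not actually supply a proof of this lemma: it is cited verbatim as \cite[Lemma 5.9]{Buzzard2007}, and the remark immediately following points to \cite[Theorem 3.7.1]{Bellaiche2021} for the general eigenvariety version. So there is no ``paper's own proof'' to compare against — the authors rely on the black box.

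Your blind reconstruction re-derives Buzzard's lemma by unwinding the eigenvariety machine, and the outline is sound: the forward map is well-defined because $\psi(U_p)$ is a unit, injectivity comes from the fact that the local pieces $\mathrm{Sp}(\T(U))$ are by construction topologically generated by images of $\T_{\mathbf I}$ and $\O(U)$ over the spectral variety, and surjectivity reduces to the Riesz/Coleman spectral theory that produces the affinoid pieces in the first place. Two small points worth tightening. First, the injectivity does not really hinge on reducedness; it hinges on the fact that each $\T(U)$ is defined as the (reduced) image of $\T_{\mathbf I} \otimes \O(U)$ inside the endomorphism ring, so $\mathcal K'$-points of $\mathrm{Sp}(\T(U))$ are literally ring homomorphisms $\T(U) \to \mathcal K'$, determined by their values on generators — reducedness is baked into the construction rather than being the mechanism of separation. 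Second, in the surjectivity step you should be explicit that a ``system of eigenvalues'' is by definition one realized by an eigenform in the relevant Banach module of overconvergent forms; otherwise the claim would be false (not every pair $(\lambda,\kappa)$ with $\lambda(U_p)\neq 0$ lies on $\E$). Once that is understood, the finite-slope Riesz decomposition hands you the generalized eigenspace and hence the point of $\E$, exactly as you say. In short: your proof is correct in substance and is essentially Buzzard's argument rewritten in Bella\"{i}che's language; the paper simply cites the result instead of proving it.
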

\begin{remark}
    Lemma~\ref{lem: points on eigenvariety are eigensystems} holds for any general eigenvariety (\cite[Theorem 3.7.1]{Bellaiche2021}).
\end{remark}

\begin{proposition}[$p$-adic extension of the JL correspondence]\label{prop: p-adic extension of the JL correspondence}
    The Jacquet--Langlands correspondence extends uniquely to a closed immersion
    \begin{equation*}
        \E_{D}\xhookrightarrow{\ph{000}JL_p\ph{000}} \E_{\GL_2}
    \end{equation*}
    compatible with the eigenvariety structure, that is, $\omega_{D}=\omega_{\GL_2}\circ JL_p$ and $\psi_{D}=JL_p^*\circ\psi_{\GL_2}$.
\end{proposition}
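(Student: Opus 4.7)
The plan is to apply Chenevier's interpolation principle for eigenvarieties: I would build $JL_p$ from the classical Jacquet--Langlands correspondence on the Zariski-dense locus of classical points of $\E_D$, and then verify the closed immersion property by checking surjectivity on local rings. More precisely, the universal property of cuspidal eigenvarieties (\cite[Theorem~3.7.1]{Bellaiche2021}) reduces producing a morphism $JL_p \colon \E_D \to \E_{\GL_2}$ with $\omega_D = \omega_{\GL_2} \circ JL_p$ and $\psi_D = JL_p^* \circ \psi_{\GL_2}$ to exhibiting a Zariski-dense subset $Z \subseteq \E_D$ such that for each $z \in Z$, the Hecke eigensystem at $z$ determined by $\psi_D$ appears with nonzero $U_p$-eigenvalue in the weight-$\omega_D(z)$ fiber of $\mathbb{S}_{N_+N_-,\chi}^\dagger$. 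I would take $Z$ to be the set of classical points of $\E_D$, which is Zariski dense by the standard classicality result for cuspidal overconvergent forms. By Lemma~\ref{lem: points on eigenvariety are eigensystems}, each such $z$ encodes a cuspidal automorphic representation $\pi_D$ of $D^\times(\A)$ of weight $\omega_D(z)$, level $N_+N_-$, and character $\chi$; its classical Jacquet--Langlands transfer $\pi$ on $\GL_2(\A)$ has the same weight, level, and character, contributes a classical eigenform to $\mathbb{S}_{N_+N_-,\chi}^\dagger(\omega_D(z))$, and shares with $\pi_D$ both the $\T_{\mathbf{I}}$-eigensystem and the $U_p$-eigenvalue (the generators of $\T_{\mathbf{I}}$ lie at places where JL preserves Hecke eigenvalues, and at $p$ the transfer is trivial since $p \nmid \disc(D)$). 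This produces $JL_p$, and its uniqueness follows from Zariski density of $Z$ together with separatedness of $\E_{\GL_2}$.

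For the closed immersion property, I would argue locally. Over any admissible affinoid $X = \mathrm{Sp}(A) \subseteq \W$, both $\E_D|_X$ and $\E_{\GL_2}|_X$ decompose as finite disjoint unions of affinoids of the form $\mathrm{Sp}(T)$, where $T$ is the commutative finite $A$-subalgebra of $\End_A(M_X)$ generated by the image of $\T_{\mathbf{I}}$ acting on the relevant Banach module $M_X$ (\cite[Theorem~3.6.3]{Bellaiche2021}). Restricting to matching components, $JL_p|_X$ corresponds to a ring map $T_{\GL_2} \to T_D$ over $A$, and the compatibility $\psi_D = JL_p^* \circ \psi_{\GL_2}$ forces this map to send the image of each Hecke operator in $T_{\GL_2}$ to its counterpart in $T_D$. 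Since $T_D$ is generated over $A$ by these images, the ring map is surjective. Hence $JL_p$ is a closed immersion locally, and therefore globally after gluing over an admissible cover of $\W$.

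The main obstacle I anticipate lies in the classical input: one must confirm that the Jacquet--Langlands transfer of any cuspidal automorphic representation appearing in the space defining $\E_D$ actually lands in the space defining $\E_{\GL_2}$ with the same weight, level, character, and $U_p$-slope. The hypotheses in equations~\eqref{eq: level hypothesis} and~\eqref{eq: conductor hypothesis} were imposed precisely so that classical JL produces representations of the correct level and character; nevertheless, matching the local components at the supercuspidal primes $q \mid N_{-}^{\mathrm{sc}}$ relies on the conductor and depth analysis of Section~\ref{Section: Local JL correspondence and test vectors}, and this bookkeeping is likely to be the most tedious step in making the sketch rigorous.
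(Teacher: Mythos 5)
Your proposal follows essentially the same strategy as the paper's proof: interpolate the classical Jacquet--Langlands correspondence from a dense set of classical points, and get the closed immersion from surjectivity of Hecke algebras onto the structure sheaf of the target. The paper packages this into a single application of Ludwig's interpolation result~\cite[Proposition 2.10]{Ludwig2017}, which handles both the existence of the morphism and the closed-immersion property at once; you unpack those two steps by hand, with the local surjectivity argument (``$T_D$ is generated over $A$ by the images of Hecke operators, so the map is surjective'') standing in for the closed-immersion half of Ludwig's proposition. That unfolding is correct and arguably more transparent.

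Two minor imprecisions worth flagging. First, ordinary Zariski density of classical points is not quite what the interpolation machinery requires: one needs the strictly stronger notion of \emph{very Zariski-dense} from~\cite[Definition 3.8.1]{Bellaiche2021}, which the paper obtains via a \emph{classical structure} (Coleman's classicality theorem together with~\cite[Theorem 7.17]{HPS1989basis}, then~\cite[Proposition 3.8.6]{Bellaiche2021}). Without this, one cannot control accumulation of classical points on every component, and the interpolation step is not rigorous. Second, your attribution to~\cite[Theorem~3.7.1]{Bellaiche2021} is misplaced: that result identifies points with eigensystems, but does not by itself furnish a ``universal property'' that produces morphisms from dense subsets of compatible eigensystems; the morphism-production step is exactly the content of the interpolation lemma of Chenevier/Ludwig. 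Finally, you should make explicit that injectivity of the assignment $z \mapsto JL_p(z)$ on classical points rests on Strong Multiplicity One for $\GL_2$ and its inner forms --- without it two distinct points of $\E_D$ could a priori carry the same eigensystem after transfer, which would break both the well-definedness of the map and the closed-immersion argument.
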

Before proving the proposition, we need to recall the notion of arithmetic points on $\W(\qpbar)$.
\begin{definition}
    We say that $\kappa\in\W(\qpbar)$ is {\rm classical} if there exists a point $z\in\E_{\GL_2}(\qpbar)$ such that $\omega_{\GL_2}(z)=\kappa$ and $z$ corresponds to a classical modular form; we denote by $\W^{\rm cl}(\qpbar)$ the set of classical points.
    We say that $\kappa\in\W(\qpbar)$ is \textit{arithmetic} if there exists a point $z\in\E_{\GL_2}(\qpbar)$ such that $\omega_{\GL_2}(z)=\kappa$ and $z$ corresponds to a form of weight-character $\kappa=(k,\varepsilon)\in\W(\qpbar)=\Hom(\zp^\times, \qpbar^\times)$ with $k\in\Z_{\geq2}$ and $\varepsilon$ a $p$-adic character of finite order. We denote by $\W^{\rm arith}(\qpbar)$ the set of arithmetic points. By Coleman classicality's theorem~\cite[Theorem 1.1]{Coleman1997}, we know that arithmetic points of small slope are classical.
\end{definition}
\begin{proof}[Proof of Proposition~\ref{prop: p-adic extension of the JL correspondence}]
    We want to apply \cite[Proposition 2.10]{Ludwig2017}. We define the \emph{very Zariski-dense} (see \cite[Definition 3.8.1]{Bellaiche2021}) subset $Z\subset \E_{D}(\qpbar)$, as the set of arithmetic $\qpbar$-points $z$ such that $\omega_{D}(z)\in\W^{\rm arith}(\qpbar)$. \emph{Coleman's classicality theorem} combined with \cite[Theorem 7.17]{HPS1989basis} (cf. also Proposition~\ref{prop: multiplicity-2}) implies that both $\E_{D}$ and $\E_{\GL_2}$ are endowed with a \emph{classical structure} (see \cite[Definition 3.8.5]{Bellaiche2021}) given by classical forms over $\W^{\rm arith}(\qpbar)$. \cite[Proposition 3.8.6]{Bellaiche2021} tells us that the subset of all classical points $\ZZ\subset\E_{D}$ is \emph{very Zariski-dense} in $\E_{D}$, thus, setting $Z=\ZZ(\qpbar)$ we can assume it to be very Zariski-dense in $\E_{D}(\qpbar)$; $Z$ embeds in $\E_{\GL_2}(\qpbar)$ by the classical \emph{Jacquet--Langlands correspondence} and \emph{Strong Multiplicity One} for automorphic representations. More precisely, Lemma~\ref{lem: points on eigenvariety are eigensystems} defines an injective association
    \begin{equation*}
        JL_p:\,\E_{D}^{\rm cl}(\qpbar)\ni z \longleftrightarrow \pi_{z}^{D}\longmapsto \pi_z \longleftrightarrow JL_p(z)\in \E_{\GL_2}^{\rm cl}(\qpbar),
    \end{equation*}
    where $\pi_z^D$ is the automorphic representation over $D^\times(\A)$ associated with the classical point $z$ and $\pi_z=JL(\pi_z^D)$. This inclusion is compatible with the structure of the two eigenvarieties by definition of the map. The uniqueness is ensured by the reducedness of the eigenvarieties together with \cite[Lemma 7.2.7]{BellaicheChenevier2009} and Lemma~\ref{lem: points on eigenvariety are eigensystems}.
\end{proof}

\subsubsection{The extended eigenvariety}

    The Hecke algebra $\T_{\mathbf{I}}$ acts on quaternionic modular forms with $R_{p^nN_+,N_-}$-level structure and, for each $\ell\mid N_{-}^{\mathrm{sc}}$, we can extend $\T_{\mathbf{I}}$ by adjoining the operator $\UL{\ell}$.
    
    \begin{definition}
        We define the $\mathcal{K}$-algebra
        \begin{equation*}
            \Ttilde_{\mathbf{I}}=\T_{\mathbf{I}}\Big[\UL{\ell}\,\Big|\, \ell\mid N_{-}^{\mathrm{sc}}\Big]
        \end{equation*}
        endowed with the canonical injection $\T_{\mathbf{I}}\hookrightarrow \Ttilde_{\mathbf{I}}$. We refer to it as the \emph{extended Hecke algebra}. By Proposition~\ref{prop: UL properties} and equation~\eqref{Eq: relation between UL^2 and Lambda-action}, $\Ttilde_{\mathbf{I}}$ is commutative.
    \end{definition} 
    While the algebra $\T_{\mathbf{I}}$ acts on both $\mathbb{S}_{N_+N_-,\chi}$ and $\mathbb{S}_{N_+N_-,\chi}^D$, $\Ttilde_{\mathbf{I}}$ acts only on $\mathbb{S}_{N_+N_-,\chi}^D$.
    We can consider the tuple
    \begin{equation}
        \widetilde{\D}^{D}_{N_+N_-,\chi}=(\W,\,\Ttilde_{\mathbf{I}},\, U_p,\,\mathbb{S}_{N_+N_-,\chi}^{D,\dagger},\, \psi_{D}),
    \end{equation}
    which defines an eigenvariety datum, as one only needs to prove the following lemma.
    \begin{lemma}\label{lem: extended links}
        For any pair of affinoid subdomains, $X'=\Sp(A')\subseteq X=\Sp(A)\subset \W$ with $X,X'\in\mathscr{C}$, the $A'$-modules $\mathbb{S}_{N_+N_-,\chi}^{D,\dagger}(X)\widehat{\otimes}_{A}A'$ and $\mathbb{S}_{N_+N_-,\chi}^{D,\dagger}(X')$ are linked (see \cite[Definition 3.5.1]{Bellaiche2021}) under $\Ttilde_{\mathbf{I}}$.
    \end{lemma}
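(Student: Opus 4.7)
The plan is to upgrade the linkedness datum that already exists under $\mathbb{T}_{\mathbf{I}}$ (which, as recalled in Section~\ref{section: classical eigenvarieties}, is what enabled the construction of $\E_D$ from $\D^{D}_{N_+N_-,\chi}$ via \cite{Buzzard2007} and~\cite{Bellaiche2021}) to linkedness under the larger algebra $\widetilde{\T}_{\mathbf{I}}$. The only new input is that the operators $\IL{\ell}$ for $\ell\mid N_{-}^{\mathrm{sc}}$ commute with everything appearing in the existing linking data.

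First I would recall the continuous $A'$-linear structure maps realizing the link under $\T_{\mathbf{I}}$: a restriction map $\rho_{X,X'}\colon \mathbb{S}_{N_+N_-,\chi}^{D,\dagger}(X)\widehat{\otimes}_A A' \to \mathbb{S}_{N_+N_-,\chi}^{D,\dagger}(X')$ induced by pulling back the family from $X$ to $X'$, and an overconvergence map $\sigma_{X,X'}$ going the other way, built from the action of $U_p$ on spaces of functions convergent on progressively smaller neighborhoods (as in~\cite[Section 6]{Buzzard2007}). Both maps commute with the image of $\T_{\mathbf{I}}$ and their compositions recover the action of the compact operator $U_p$ on either side, fulfilling~\cite[Definition 3.5.1]{Bellaiche2021}.

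Next I would observe that the operator $\IL{\ell}$ is, up to the scalar $\sqrt{[\langle \ell \rangle]_{\mathbf{I}'}} \in \mathbf{I}$, just a square root of the diamond-type operator $\UL{\ell}^2$ arising from right-translation by the finite adèle $\varpi_D$ whose $\ell$-component is a uniformizer of $D_\ell$ and whose component at every other prime (including $p$) is trivial, as per Proposition~\ref{prop: UL properties} and equation~\eqref{Eq: relation between UL^2 and Lambda-action}. In particular, the action of $\UL{\ell}$ (and hence $\IL{\ell}$) is completely disjoint from the $p$-adic weight data used in defining overconvergence, and it commutes with $U_p$, with all $T_q$ and $U_l$ in $\T_{\mathbf{I}}$, and with the $\mathbf{I}$-action, since $\ell \neq p$ and the relevant right-translation preserves the right-$U_1(R_{p^n N_+,N_-})$-invariance condition away from $p$.

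The verification therefore reduces to a routine check: applying $\IL{\ell}$ to a family of overconvergent quaternionic forms on $X$ produces another such family on $X$ (the overconvergence radius at $p$ is preserved because the translation is supported away from $p$), and this operation commutes with both $\rho_{X,X'}$ and $\sigma_{X,X'}$ because these maps only modify the form in the $p$-adic direction. The main (and only) subtlety is bookkeeping: choosing the positive square root $\IL{\ell}$ in $\mathbf{I}$ consistently, so that the action on the two Banach modules matches under the structure maps; this is automatic from the $\mathbf{I}$-linearity of $\rho_{X,X'}$ and $\sigma_{X,X'}$, since $\IL{\ell}$ belongs to the coefficient ring $\mathbf{I}$. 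We conclude that $\rho_{X,X'}$ and $\sigma_{X,X'}$ intertwine the $\widetilde{\T}_{\mathbf{I}}$-actions, so the two modules remain linked after enlarging the algebra.
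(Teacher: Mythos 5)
Your approach matches the paper's: take the linking data already in place for $\T_{\mathbf{I}}$ (from Buzzard's construction) and verify that the structure maps are equivariant under the extra operators, using the fact that these operators act only at the primes $\ell \mid N_{-}^{\mathrm{sc}}$, all away from $p$, together with Proposition~\ref{prop: UL properties} and the relation~\eqref{Eq: relation between UL^2 and Lambda-action}. The paper phrases this as an application of \cite[Lemma 3.5.2]{Bellaiche2021} to \cite[Lemma 12.2]{Buzzard2007}, which is what you are doing in effect.

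However, be careful with the notation: the new \emph{operators} adjoined to $\T_{\mathbf{I}}$ to form $\Ttilde_{\mathbf{I}}$ are the $\UL{\ell}$; the $\IL{\ell}$ are not operators on modular forms but \emph{scalars} in the base ring $\mathbf{I}$ --- by definition $\IL{\ell}=\sqrt{[\langle\ell\rangle]_{\mathbf{I}'}}$, see~\eqref{Eq: square root operators}. Your opening sentence "the only new input is that the operators $\IL{\ell}$ \dots commute with everything" and the clause "the operator $\IL{\ell}$ is, up to the scalar $\sqrt{[\langle\ell\rangle]_{\mathbf{I}'}}$, a square root of the diamond-type operator $\UL{\ell}^2$" conflate the two: $\IL{\ell}$ \emph{is} that scalar, and it is $\UL{\ell}$ whose equivariance you actually need to check. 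Once you systematically replace $\IL{\ell}$ with $\UL{\ell}$ in those two places, the argument is the paper's: $\UL{\ell}$ is right-translation by an adèle trivial at $p$, so it preserves overconvergence, commutes with $U_p$ and with the restriction and overconvergence maps, and hence with all of $\T_{\mathbf{I}}$; the $\mathbf{I}$-linearity of the linking maps handles the $\IL{\ell}$-scalars automatically, exactly as you note at the end.
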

    \begin{proof}
        We apply \cite[Lemma 3.5.2]{Bellaiche2021} to  \cite[Lemma 12.2]{Buzzard2007}; we must prove that the two morphisms are $\Ttilde_{\mathbf{I}}$-equivariant. The first inclusion morphism is $\T_{\mathbf{I}}$-equivariant (recall that we are changing the structure only at the ramified places). Thus, the properties of $\UL{\ell}$ in Proposition~\ref{prop: UL properties} together with the relation in Section~\ref{Section: Lambda-action and operators} imply the $\Ttilde_{\mathbf{I}}$-equivariance of both morphisms.
    \end{proof}
    We deduce the following proposition.
    \begin{proposition}\label{prop: existence of extended eigenvariety}
        There exists a reduced eigenvariety associated with the datum $\widetilde{\D}^{D}_{N_+N_-,\chi}$, which we denote by $(\Etilde_{D},\widetilde{\omega}_{D},\widetilde{\psi}_{D})$. We will refer to it as the \emph{extended eigenvariety for $D$}.
    \end{proposition}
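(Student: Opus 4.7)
The plan is to deduce the existence of $\Etilde_D$ from the general eigenvariety machinery of Bellaïche~\cite[Theorem 3.6.3]{Bellaiche2021}, by verifying that the tuple $\widetilde{\D}^{D}_{N_+N_-,\chi}$ satisfies the four axioms of an eigenvariety datum in the sense of~\cite[Definition 3.6.2]{Bellaiche2021}. This is essentially parallel to the construction of $\E_D$ in Section~\ref{section: classical eigenvarieties}, with $\T_\mathbf{I}$ replaced by $\Ttilde_\mathbf{I}$; the non-trivial input is that the actions of the extra operators $\UL{\ell}$ remain compatible with the weight-space structure.

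First, I would recall that the weight space $\W$ and its admissible covering $\mathscr{C}$ by affinoids are unchanged, and that for every $X=\Sp(A)\in\mathscr{C}$ the Banach module $\mathbb{S}^{D,\dagger}_{N_+N_-,\chi}(X)$ satisfies Property (Pr) and carries a compact $U_p$-action by~\cite[Section 9, Lemma 12.2]{Buzzard2007}. Second, the algebra $\Ttilde_\mathbf{I}$ is commutative: the operators $\UL{\ell}$ commute pairwise (they act at distinct primes $\ell\mid N_-^{\mathrm{sc}}$), and they commute with $\T_\mathbf{I}$ as well as with the $\mathbf{I}$-action by Proposition~\ref{prop: UL properties} combined with the relation in equation~\eqref{Eq: relation between UL^2 and Lambda-action}. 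Third, the morphism $\psi_D \colon \T_\mathbf{I}\to\End_A\bigl(\mathbb{S}^{D,\dagger}_{N_+N_-,\chi}(X)\bigr)$ extends canonically to $\Ttilde_\mathbf{I}$ by sending each $\UL{\ell}$ to its action on overconvergent quaternionic forms (which is defined on the classical spaces as in Section~\ref{Section: Operators on quaternionic modular forms} and extends to overconvergent ones exactly as do the Hecke operators, since its definition is purely a local right-translation by $\varpi_D$ at the prime $\ell$, away from $p$).

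The final and only genuinely content-bearing axiom is the linking condition between the Banach modules $\mathbb{S}^{D,\dagger}_{N_+N_-,\chi}(X)\widehat{\otimes}_A A'$ and $\mathbb{S}^{D,\dagger}_{N_+N_-,\chi}(X')$ for nested affinoids $X'\subseteq X$ in $\mathscr{C}$; this has been arranged to be exactly the statement of Lemma~\ref{lem: extended links}. With the four axioms verified, \cite[Theorem 3.6.3]{Bellaiche2021} furnishes the eigenvariety $(\Etilde_D,\widetilde\omega_D,\widetilde\psi_D)$, unique up to unique isomorphism. To obtain the \emph{reduced} eigenvariety we replace $\Etilde_D$ by its underlying reduced rigid analytic space, as in Remark~\ref{rmk: def e properties of eigenvarieties}(3); the eigenvariety axioms pass to the nilreduction by~\cite[Lemma 7.2.7]{BellaicheChenevier2009}. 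I do not foresee any obstacle beyond the verification already carried out in Lemma~\ref{lem: extended links} and the commutativity of $\Ttilde_\mathbf{I}$: once these are in hand, the construction is entirely formal from Bellaïche's machinery.
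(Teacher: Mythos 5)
Your proposal is correct and takes essentially the same route as the paper: the only non-formal ingredient is the linking condition, which the paper has isolated as Lemma~\ref{lem: extended links}, and once that is in hand the existence (and reducedness) of $\Etilde_D$ follows from Bellaïche's general eigenvariety machinery. Your additional spelling-out of the remaining axioms (Property (Pr), commutativity of $\Ttilde_{\mathbf{I}}$, extension of $\psi_D$ to $\Ttilde_{\mathbf{I}}$) is exactly what the paper leaves implicit when it says ``we deduce the following proposition.''
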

    This variety plays the role of an auxiliary eigenvariety, whose points parameterize system of Hecke eigenvalues with finite $U_p$-slope together with a collection of additional eigenvalues associated with the $\UL{\ell}$-operators. In particular, applying Lemma~\ref{lem: points on eigenvariety are eigensystems} to the inclusion $\T_{\mathbf{I}}\hookrightarrow\Ttilde_{\mathbf{I}}$, we obtain a morphism on the $\cp$-points of the eigenvarieties. This canonical map extends to a morphism of eigenvarieties; in fact, more is true.

    \begin{proposition}\label{prop: finite map from E_D tilde to E_D}
        The canonical inclusion $\T_{\mathbf{I}}\hookrightarrow \Ttilde_{\mathbf{I}}$ defines a unique finite morphism,
        \begin{equation*}
            \widetilde{\pi}:\Etilde_{D}\xrightarrow{\ph{000000}} \E_{D},
        \end{equation*}
        compatible with the eigenvariety structure. Moreover, it sends classical points to classical points.
    \end{proposition}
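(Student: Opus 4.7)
The plan is to apply the universal property of eigenvarieties \cite[Theorem 3.6.3]{Bellaiche2021} to $\Etilde_D$ viewed as a rigid space over the smaller datum $\D^D_{N_+N_-,\chi}$. Precisely, composing $\widetilde{\psi}_D$ with the canonical inclusion $\T_\mathbf{I} \hookrightarrow \Ttilde_\mathbf{I}$ and keeping the weight map $\widetilde{\omega}_D$ realizes $\Etilde_D$ as a rigid space equipped with an eigenvariety structure for $\D^D_{N_+N_-,\chi}$. The universal property then produces a unique morphism $\widetilde{\pi} \colon \Etilde_D \to \E_D$ compatible with the weight maps and with the $\T_\mathbf{I}$-actions. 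As in the proof of Proposition~\ref{prop: p-adic extension of the JL correspondence}, uniqueness can equivalently be deduced from reducedness of the eigenvarieties and the very Zariski density of classical points via Lemma~\ref{lem: points on eigenvariety are eigensystems} and \cite[Lemma 7.2.7]{BellaicheChenevier2009}.

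Finiteness of $\widetilde{\pi}$ will come from the integrality relation \eqref{Eq: relation between UL^2 and Lambda-action}: for every $\ell \mid N_{-}^{\mathrm{sc}}$, the action of $\UL{\ell}$ on any space of quaternionic modular forms satisfies
\[
\UL{\ell}^2 \;=\; \chi_{\A,\ell}(\ell)\,[\langle\ell\rangle]_\Lambda \;=\; \chi_{\A,\ell}(\ell)\,\IL{\ell}^2,
\]
so the image of $\UL{\ell}$ in $\End_A(\mathbb{S}^{D,\dagger}_{N_+N_-,\chi}(X))$ is integral of degree $\leq 2$ over the image of $\T_\mathbf{I}$. Locally over an admissible affinoid $X=\Sp(A)\in\mathscr{C}$ and a slope bound $h$, the construction of the eigenvariety (\cite[Section 3]{Bellaiche2021}) presents the relevant piece of $\E_D$ (resp.\ $\Etilde_D$) as $\Sp$ of the image of $\T_\mathbf{I}$ (resp.\ $\Ttilde_\mathbf{I}$) in $\End_A$ of the slope-$\leq h$ part of $\mathbb{S}^{D,\dagger}_{N_+N_-,\chi}(X)$; thus the image of $\Ttilde_\mathbf{I}$ is finite over that of $\T_\mathbf{I}$, generated by the $2^{\#\{\ell\mid N_{-}^{\mathrm{sc}}\}}$ monomials $\prod_\ell \UL{\ell}^{e_\ell}$ with $e_\ell\in\{0,1\}$. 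Gluing along the admissible cover $\mathscr{C}$ and the slope decomposition then gives finiteness of $\widetilde{\pi}$.

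The classical-to-classical assertion follows from Lemma~\ref{lem: points on eigenvariety are eigensystems}: a classical point $z\in\Etilde_D(\qpbar)$ is by definition a system of $\Ttilde_\mathbf{I}$-eigenvalues realized by a classical quaternionic eigenform $\varphi$ of weight-character $\widetilde{\omega}_D(z)$, and restricting this system to $\T_\mathbf{I}$ yields the Hecke eigensystem of the same $\varphi$, which corresponds to a classical point of $\E_D$. The genuine content is concentrated in the finiteness step, which rests on the quadratic relation \eqref{Eq: relation between UL^2 and Lambda-action} — this is precisely the reason for enlarging $\mathbf{I}'$ to $\mathbf{I}$ by adjoining the square roots $\IL{\ell}$ in \eqref{Eq: square root operators}.
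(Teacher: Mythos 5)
Your overall strategy is aligned with the paper's: construct $\widetilde{\pi}$ from the inclusion $\T_{\mathbf I}\hookrightarrow\Ttilde_{\mathbf I}$, derive finiteness from the quadratic relation~\eqref{Eq: relation between UL^2 and Lambda-action}, deduce uniqueness from reducedness and density of classical points, and handle classicality by restricting eigensystems. The finiteness and classicality steps are correct and match the paper exactly (your observation that this is precisely why $\mathbf I'$ is enlarged to $\mathbf I$ is accurate). However, your construction of $\widetilde{\pi}$ is imprecise. You assert that restricting $\widetilde\psi_D$ along $\T_{\mathbf I}\hookrightarrow\Ttilde_{\mathbf I}$ ``realizes $\Etilde_D$ as a rigid space equipped with an eigenvariety structure for $\D^D_{N_+N_-,\chi}$'' and then invoke \cite[Theorem 3.6.3]{Bellaiche2021}. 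But Theorem 3.6.3 is an existence-and-uniqueness statement for the eigenvariety of a fixed datum, not a categorical universal property mapping arbitrary equipped rigid spaces to it. Moreover, $\Etilde_D$ with the restricted $\T_{\mathbf I}$-action is genuinely \emph{not} an eigenvariety for $\D^D_{N_+N_-,\chi}$: if it were, uniqueness would force $\Etilde_D\cong\E_D$, which is false (the extended eigenvariety generically sees two eigenlines where $\E_D$ sees one, by Proposition~\ref{prop: mult-1 with varpi_d}). What is true, and what the paper actually does, is the explicit local construction following \cite[Proposition 3.15]{Ludwig2017}: over each affinoid $U$ in the admissible cover, the inclusion $\T_{\mathbf I}(U)\hookrightarrow\Ttilde_{\mathbf I}(U)$ of reduced operator images gives a map of affinoids $\Sp(\Ttilde_{\mathbf I}(U))\to\Sp(\T_{\mathbf I}(U))$, and these glue by \cite[Lemma 5.2]{Buzzard2007}. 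Your argument should be corrected to run through this local gluing; once that is done, the rest (uniqueness via \cite[Lemma 7.2.7]{BellaicheChenevier2009} and Lemma~\ref{lem: points on eigenvariety are eigensystems}, finiteness via the quadratic relation, classicality via restriction) is the paper's argument.
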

    \begin{proof}
        We follow the proof of \cite[Proposition 3.15]{Ludwig2017} and construct the map on the local pieces of the eigenvariety. More precisely, we take $\Etilde_{D,X}=\widetilde{\omega}_{D}^{-1}(X)\subset  \Etilde_{D}$, for any $X=\Sp(A)$ in the admissible covering $\mathscr{C}$. We consider the admissibile covering $\{\Etilde_{D,X}(U)\}_{U}$ indexed over connected open affinoids $U\subseteq X$; by construction, each $\Etilde_{D,X}(U)$ is an affinoid $\Sp(\Ttilde_{\mathbf{I}}(U))$, for
        \begin{equation*}
            \Ttilde_{\mathbf{I}}(U) = \Image\left(\Ttilde_{\mathbf{I}}\otimes_{\mathcal{K}}\O(U)\longrightarrow \End_{\O(U)}\left(\left(\mathbb{S}_{N_+N_-,\chi}^{D,\dagger}(X)\widehat{\otimes}_{\O(X)}\O(U)\right)^{\textnormal{finite slope}}\right)\right)^{\rm red}    
        \end{equation*}
        the reduced image of $\Ttilde_{\mathbf{I}}\otimes_{\mathcal{K}}\O(U)$. For any $U$ as above, $\T_{\mathbf{I}}\hookrightarrow \Ttilde_{\mathbf{I}}$ induces the inclusion $\T_{\mathbf{I}}(U)\hookrightarrow \Ttilde_{\mathbf{I}}(U)$; therefore, we obtain
        \begin{equation*}
            \iota^*_{U}: \Etilde_{D,X}(U)\longrightarrow \E_{D,X}(U).
        \end{equation*}
        Note that the $\E_{D,X}(U)$ form an admissible covering of $\E_{D,X}$. The composition of $\iota^*_{U}$ with the natural inclusion $ \E_{D,X}(U)\hookrightarrow  \E_{D,X}$ determines the morphism
        \begin{equation*}
            \widetilde{\pi}_{U}:\Etilde_{D,X}(U)\longrightarrow \E_{D,X}.
        \end{equation*}
        Let $X'$ be an admissible open in $\mathscr{C}$ and take $U\subset X$ and $U'\subset X'$, connected open affinoids. Up to refining the covering, we can assume that $X\subseteq X'$. Invoking \cite[Lemma 5.2]{Buzzard2007} as in \cite[Proposition 3.15]{Ludwig2017}, we deduce that $\widetilde{\pi}_{U}|_{\Etilde_{D,X}(U\cap U')}=\widetilde{\pi}_{U\cap U'}$, therefore the morphisms $\widetilde{\pi}_{U}$ glue compatibly. Again, the uniqueness is ensured by the reducedness of the eigenvarieties together with \cite[Lemma 7.2.7]{BellaicheChenevier2009} and Lemma~\ref{lem: points on eigenvariety are eigensystems}. Let now $z$ be a classical point in $\Etilde_{D}(\qpbar)$. Such $z$ corresponds to a classical eigenform $\varphi_z$ identified by a system of Hecke and $\UL{\ell}$-eigenvalues for $\ell\mid N_{-}^{\mathrm{sc}}$, say $\widetilde{\lambda}_z$. The morphism $\widetilde{\pi}$, by construction, maps $\widetilde{\lambda}_z$ to the eigensystem $\lambda_z$ obtained by forgetting the $\UL{\ell}$-eigevalues. Lemma~\ref{lem: points on eigenvariety are eigensystems} shows that $\pi(z)$ is classical. Finiteness follows from the relation
        \begin{equation*}
            \UL{\ell}^2\,-\chi_{N_+N_-/\ell^2}(\ell)^{-1}\cdot [\langle\ell\rangle]_\mathbf{I} = 0
        \end{equation*}
        obtained from equation~\eqref{Eq: relation between UL^2 and Lambda-action}.
    \end{proof}

\subsubsection{Idempotents}
    For any $\ell\mid N_{-}^{\mathrm{sc}}$, recall the square root operators $\IL{\ell}$ defined in equation~\eqref{Eq: square root operators}. They are invertible elements of the algebra $\mathbf{I}$, as the $[\langle\ell\rangle]_\mathbf{I}$ are invertible. Therefore, they determine invertible operators. We can hence define idempotents in $\mathbf{I}$ and on $\mathbb{S}_{N_+N_-,\chi}^D$.
    \begin{definition}\label{def: idempotents}
        For each prime $\ell\mid N_{-}^{\mathrm{sc}}$, we set
        \begin{equation*}
            e_{\pm}^{D_\ell} = \frac{1}{2}\left(\mathds{1}\mp \frac{1}{\IL{\ell}\,\sqrt{\chi_{\A,N_+N_-/\ell^2}(\ell)^{-1}}}\,\UL{\ell}\,\right).
        \end{equation*}
        Moreover, for each tuple of signs $\epsilon=(\epsilon_\ell)_{\ell\mid N_{-}^{\mathrm{sc}}}\in \{\pm 1\}^{\#\{\ell \mid N_{-}^{\mathrm{sc}}\}}$, we define a canonical idempotent
        \begin{equation*}
            e_{\epsilon}^{D}=\prod\limits_{\ell\mid N_{-}^{\mathrm{sc}}}e_{\epsilon_\ell}^{D_\ell}.
        \end{equation*}
    \end{definition}
\begin{remark}
    \leavevmode
    \begin{enumerate}
        \item It is immediate from the definition that the sum $ e_{+}^{D_\ell}+ e_{-}^{D_\ell}$ is the identity element $\mathds{1}$ in $\Ttilde_{\mathbf{I}}$. Therefore, the sum of all these idempotents is
        \begin{equation*}
            \sum\limits_{\epsilon\in \{\pm 1\}^{\#\{\ell \mid N_{-}^{\mathrm{sc}}\}}}e_{\epsilon}^{D} = \mathds{1}.
        \end{equation*}
        \item The operators $\IL{\ell}$ take care of the $p$-component of the square root as explained by equation~\eqref{Eq: Lambda action}.
    \end{enumerate}
\end{remark}
\begin{proposition}\label{prop: idempotents dec}
    \leavevmode
    \begin{enumerate}
        \item Let $\kappa=(k,\kappa_p)\in \W^{\rm cl}(\qpbar)$ be any weight-character and let $n=\max\{1,v_p(\cond(\kappa_p))\}$. For any $\ell\mid N_{-}^{\mathrm{sc}}$, let  $S_k^{D,\pm}(p^nN_+N_-,\chi\kappa_p)=e_{\pm}^{D_\ell}S_k^Dp^nN_+N_-,\chi\kappa_p)$. There is a canonical decomposition
        \begin{equation*}
            S_k^D(p^nN_+N_-,\chi\kappa_p)=S_k^{D,+}(p^nN_+N_-,\chi\kappa_p)\oplus S_k^{D,-}(p^nN_+N_-,\chi\kappa_p).
        \end{equation*}
        \item The projectors $e_{\pm}^{D_\ell}$ are orthogonal with respect to the quaternionic Petersson inner product defined in Section \ref{Section: The quaternionic Petersson product}.
    \end{enumerate}
\end{proposition}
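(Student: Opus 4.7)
The plan is to reduce both statements to the fact that $\UL{\ell}^2$ acts as an explicit invertible scalar that is compatible with the normalization built into $e_{\pm}^{D_\ell}$. First I would compute $(e_{\pm}^{D_\ell})^2$ and $e_{+}^{D_\ell}e_{-}^{D_\ell}$ directly from the defining formula
\[
e_{\pm}^{D_\ell}=\tfrac{1}{2}\bigl(\mathds{1}\mp A_\ell\,\UL{\ell}\bigr),\qquad A_\ell:=\bigl(\IL{\ell}\sqrt{\chi_{\A,N_+N_-/\ell^2}(\ell)^{-1}}\bigr)^{-1},
\]
specialized at the weight-character $\kappa=(k,\kappa_p)$. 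The relation~\eqref{Eq: relation between UL^2 and Lambda-action} gives $\UL{\ell}^2=\chi_{\A,\ell}(\ell)[\langle\ell\rangle]_\mathbf{I}$, while by construction $\IL{\ell}^2=[\langle\ell\rangle]_{\mathbf{I}'}$. Combined with the identity $\chi_{\A}(\ell)=1$ (triviality of the adèlization on $\Q^\times$), which factors as $\chi_{\A,\ell}(\ell)\chi_{\A,N_+N_-/\ell^2}(\ell)=1$, one finds $(A_\ell\UL{\ell})^2=\mathds{1}$ after specialization. Hence $A_\ell\UL{\ell}$ is an involution on $S_k^D(p^nN_+N_-,\chi\kappa_p)$ and $e_+^{D_\ell},e_-^{D_\ell}$ are a pair of complementary orthogonal idempotents. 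Part~(1) is then the standard splitting of a module by such a pair.

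For part~(2), I would invoke the $\UL{\ell}$-equivariance of the Petersson pairing established in Proposition~\ref{prop: quaternionic Petersson pairing is perfect and equivariant}. If $\varphi_{\pm}\in S_k^{D,\pm}(p^nN_+N_-,\chi\kappa_p)$, then by the defining formulas they satisfy
\[
\UL{\ell}\,\varphi_{+}=-A_\ell^{-1}\varphi_{+},\qquad \UL{\ell}\,\varphi_{-}=+A_\ell^{-1}\varphi_{-},
\]
so
\[
-A_\ell^{-1}\langle\varphi_{+},\varphi_{-}\rangle_{N_+,N_-}=\langle\UL{\ell}\varphi_{+},\varphi_{-}\rangle_{N_+,N_-}=\langle\varphi_{+},\UL{\ell}\varphi_{-}\rangle_{N_+,N_-}=A_\ell^{-1}\langle\varphi_{+},\varphi_{-}\rangle_{N_+,N_-},
\]
forcing $\langle\varphi_{+},\varphi_{-}\rangle_{N_+,N_-}=0$ since $A_\ell^{-1}$ is invertible. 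Applying the product $\prod_{\ell\mid N_-^{\rm sc}} e_{\epsilon_\ell}^{D_\ell}$ of mutually commuting idempotents and iterating this argument over the various primes $\ell\mid N_-^{\rm sc}$ delivers the full orthogonality statement of part~(2), and also extends part~(1) to the refined decomposition indexed by tuples $\epsilon\in\{\pm1\}^{\#\{\ell\mid N_-^{\rm sc}\}}$.

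The main bookkeeping obstacle is tracking how $\IL{\ell}$ and $[\langle\ell\rangle]_\mathbf{I}$ specialize through the chosen weight-character and verifying that the square roots fixed in~\eqref{Eq: square root operators} are indeed compatible with $\chi_{\A,\ell}(\ell)$ so as to give exactly $(A_\ell\UL{\ell})^2=\mathds{1}$; once this is done, no further work is required. The choice $n=\max\{1,v_p(\cond(\kappa_p))\}$ ensures that $\kappa_p$ factors through $(\Z/p^n\Z)^\times$, so the classical form really lies in the space on which $\UL{\ell}$ acts with the asserted eigenvalues. Finally, the canonicity claim in~(1) follows from the fact that the $e_{\pm}^{D_\ell}$ were constructed intrinsically from the datum $(\UL{\ell},\IL{\ell},\chi)$, independently of any choice of basis.
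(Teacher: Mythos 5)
Your proof is correct and follows the same route as the paper's own (necessarily brief) argument: use $e_+^{D_\ell} + e_-^{D_\ell} = \mathds{1}$ together with idempotency for part (1), and the $\UL{\ell}$-equivariance of the Petersson pairing (Proposition~\ref{prop: quaternionic Petersson pairing is perfect and equivariant}) for part (2). The one place you go beyond the paper is in actually verifying that the $e_\pm^{D_\ell}$ are idempotents, by showing $(A_\ell\UL{\ell})^2 = \mathds{1}$ from equation~\eqref{Eq: relation between UL^2 and Lambda-action}, the definition of $\IL{\ell}$, and the factorization of $\chi_\A(\ell) = 1$ into local pieces; the paper simply asserts this is "clear" after Definition~\ref{def: idempotents}. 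This is useful bookkeeping since idempotency of $\frac{1}{2}(\mathds{1}\mp A_\ell\UL{\ell})$ is actually \emph{equivalent} to $(A_\ell\UL{\ell})^2=\mathds{1}$. Your explicit eigenvalue computation $\UL{\ell}\varphi_\pm = \mp A_\ell^{-1}\varphi_\pm$ and the resulting cancellation argument for orthogonality is just a spelled-out version of the paper's appeal to the equivariance proposition and equation~\eqref{Eq: Lambda action}. The closing remark on iterating over $\ell\mid N_-^{\rm sc}$ is a bonus the paper leaves implicit.
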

\begin{proof}
    It is clear that the two idempotents $e_{\pm}^{D_\ell}$  give rise to projectors on $S_k^D(p^nN\ell^2,\chi\kappa_p)$, satisfying the identity $\mathds{1} = e_{+}^{D_\ell} + e_{-}^{D_\ell}$, i.e. $e_{-}^{D_\ell} = \mathds{1} - e_{+}^{D_\ell}$. Therefore, for any $\varphi\in S_k^D(p^nN\ell^2,\chi\kappa_p)$, we compute $e_{\pm}^{D_\ell}e_{\mp}^{D_\ell}\varphi=0$, in fact, $e_{\pm}^{D_\ell}e_{\mp}^D\varphi= e_{\pm}^{D_\ell}\varphi - e_{\pm}^{D_\ell}e_{\pm}^{D_\ell}\varphi= e_{\pm}^{D_\ell}\varphi-e_{\pm}^{D_\ell}\varphi=0$. This proves $(1)$. Part $(2)$ follows from Proposition~\ref{prop: quaternionic Petersson pairing is perfect and equivariant} and equation~\eqref{Eq: Lambda action}.
\end{proof}

 \subsubsection{The extended eigenvarieties of idempotent type}   
    We fix one of the idempotents in Definition~\ref{def: idempotents}, say $e_{\epsilon}^{D}$, and consider the tuple
    \begin{equation}
        e_{\epsilon}^{D}\widetilde{\D}^{D}_{N_+N_-,\chi}=(\W,\,\Ttilde_{\mathbf{I}},\, U_p,\,e_{\epsilon}^{D}\mathbb{S}_{N_+N_-,\chi}^{D,\dagger},\, \psi_{D}).
    \end{equation}

    \begin{proposition}\label{prop: existence of idempotent type extended eigenvariety}
        There exists a unique reduced eigenvariety associated with the datum $e_{\epsilon}^{D}\widetilde{\D}^{D}_{N_+N_-,\chi}$, which we denote by $(\Etilde_{D}^{\epsilon},\widetilde{\omega}_{D}^{\epsilon}=\widetilde{\omega}_{D|_{\Etilde_{D}^{\epsilon}}},\widetilde{\psi}_{D})$ and we will refer to it as the \emph{extended eigenvariety for $D$}.
    \end{proposition}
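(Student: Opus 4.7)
The plan is to invoke the eigenvariety machine of Bellaïche \cite[Theorem 3.6.3]{Bellaiche2021} (equivalently Buzzard's construction~\cite{Buzzard2007}) applied to the datum $e_{\epsilon}^{D}\widetilde{\D}^{D}_{N_+N_-,\chi}$, exactly as was done for $\widetilde{\D}^{D}_{N_+N_-,\chi}$ in Proposition~\ref{prop: existence of extended eigenvariety}. Since the eigenvariety machine produces a reduced eigenvariety from any eigenvariety datum in which the Banach modules satisfy Property (Pr) and are suitably linked, the task reduces to checking these two conditions after applying the idempotent $e_{\epsilon}^{D}$.

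First, I would observe that $e_{\epsilon}^{D}$ is an idempotent of $\Ttilde_{\mathbf{I}}$ which lies in the commutant of $U_p$ (since all Hecke operators, the $\UL{\ell}$ and the diamond operators commute by Proposition~\ref{prop: UL properties} and the discussion of Section~\ref{Section: Lambda-action and operators}). Therefore, for each $X = \mathrm{Sp}(A) \in \mathscr C$, $e_{\epsilon}^{D}\mathbb{S}_{N_+N_-,\chi}^{D,\dagger}(X)$ is a direct summand of the Banach $A$-module $\mathbb{S}_{N_+N_-,\chi}^{D,\dagger}(X)$, hence itself satisfies Property (Pr) (direct summands of projective Banach modules remain projective). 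The operator $U_p$ restricts to a compact operator on this summand, because compactness is preserved under restriction to the image of a continuous projector. Thus the data $(\W,\Ttilde_{\mathbf{I}}, U_p, e_{\epsilon}^{D}\mathbb{S}_{N_+N_-,\chi}^{D,\dagger}, \psi_D)$ still satisfies the ``module'' axiom of an eigenvariety datum.

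Next, for any inclusion $X' = \mathrm{Sp}(A') \subseteq X = \mathrm{Sp}(A)$ with $X, X' \in \mathscr C$, I need to check that $e_{\epsilon}^{D}\mathbb{S}_{N_+N_-,\chi}^{D,\dagger}(X) \,\widehat\otimes_A A'$ and $e_{\epsilon}^{D}\mathbb{S}_{N_+N_-,\chi}^{D,\dagger}(X')$ are linked as $\Ttilde_{\mathbf{I}}$-modules in the sense of \cite[Definition 3.5.1]{Bellaiche2021}. This is inherited from Lemma~\ref{lem: extended links}: the linking morphisms there are $\Ttilde_{\mathbf{I}}$-equivariant, in particular equivariant for the idempotent $e_{\epsilon}^{D}$, and hence restrict to maps between the corresponding direct summands. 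Applying \cite[Lemma 3.5.2]{Bellaiche2021} to these restricted morphisms yields the required linking. This is the step one has to be slightly careful about, but no new difficulty arises beyond commutativity of $e_{\epsilon}^{D}$ with the transition maps.

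Having verified the axioms, Bellaïche's construction produces a reduced rigid analytic space $\Etilde_{D}^{\epsilon}$ over $\mathcal K$, equipped with a locally finite weight map $\widetilde\omega_{D}^{\epsilon}$ to $\W$ and a ring homomorphism $\widetilde\psi_D \colon \Ttilde_{\mathbf{I}} \to \O(\Etilde_{D}^{\epsilon})$. Uniqueness of the eigenvariety follows from the uniqueness statement in \cite[Theorem 3.6.3]{Bellaiche2021} together with the reducedness assumption already adopted after Remark~\ref{rmk: def e properties of eigenvarieties}. Finally, the equality $\widetilde\omega_{D}^{\epsilon} = \widetilde\omega_{D}|_{\Etilde_{D}^{\epsilon}}$ asserted in the statement is a consequence of the fact that the natural inclusion $e_{\epsilon}^{D}\mathbb{S}_{N_+N_-,\chi}^{D,\dagger} \hookrightarrow \mathbb{S}_{N_+N_-,\chi}^{D,\dagger}$ induces, by functoriality of the eigenvariety machine (cf.\ the proof of Proposition~\ref{prop: finite map from E_D tilde to E_D}), a closed immersion $\Etilde_{D}^{\epsilon} \hookrightarrow \Etilde_{D}$ compatible with the weight maps and the Hecke actions.
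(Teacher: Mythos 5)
Your proof is correct and follows essentially the same route as the paper's: establish Property (Pr) because the idempotent $e_{\epsilon}^{D}$ (being linear, continuous, and commuting with $U_p$) cuts out a direct summand, then verify the linking condition by commutativity with the transition maps via Lemma~\ref{lem: extended links}, and finally invoke Bellaïche's eigenvariety machine. Your write-up is more detailed than the paper's terse two-sentence argument but covers the same logical content.
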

    \begin{proof}
        The above Proposition~\ref{prop: idempotents dec} guarantees that the projection by $e_{\epsilon}^{D}$ (which is linear and continuous) produces Banach modules satisfying \emph{Property $(Pr)$} (see \cite[Exercise 3.1.23]{Bellaiche2021}). Since $e_{\epsilon}^{D}$ commutes with all Hecke operators, Lemma~\ref{lem: extended links} shows that the modules are still linked.
    \end{proof}

\begin{proposition}\label{prop: closed embedding extended eigenvarieties}
    The natural map 
    \begin{equation*}
        \widetilde{\iota}^{\,\epsilon}:\Etilde_{D}^{\epsilon}\xhookrightarrow{\ph{000000}} \Etilde_{D},
    \end{equation*}
    is the unique closed immersion compatible with the eigenvariety structure. Moreover, $\Etilde_{D}$ is the disjoint union of the images of $\Etilde_{D}^{\epsilon}$, for all the $\epsilon\in \{\pm 1\}^{\#\{\ell \mid N_{-}^{\mathrm{sc}}\}}$.
\end{proposition}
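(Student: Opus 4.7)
The plan is to reduce to a local computation on the admissible covering $\mathscr{C}$ of $\W$, mimicking the proof of Proposition~\ref{prop: finite map from E_D tilde to E_D}. First I fix $X=\Sp(A)\in\mathscr{C}$ and a connected open affinoid $U\subseteq X$. By the construction of $\Etilde_D$ and $\Etilde_D^\epsilon$ one has $\widetilde{\omega}_D^{-1}(U)=\Sp(\Ttilde_{\mathbf{I}}(U))$ and $(\widetilde{\omega}_D^\epsilon)^{-1}(U)=\Sp(\Ttilde_{\mathbf{I}}^\epsilon(U))$, where $\Ttilde_{\mathbf{I}}(U)$ and $\Ttilde_{\mathbf{I}}^\epsilon(U)$ are the reduced images of $\Ttilde_{\mathbf{I}}\otimes_{\mathcal{K}}\O(U)$ acting on the finite-slope parts of $\mathbb{S}_{N_+N_-,\chi}^{D,\dagger}(X)\widehat{\otimes}_{\O(X)}\O(U)$ and $e_\epsilon^D\,\mathbb{S}_{N_+N_-,\chi}^{D,\dagger}(X)\widehat{\otimes}_{\O(X)}\O(U)$, respectively.

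By Definition~\ref{def: idempotents}, each $e_\epsilon^D$ is a polynomial in the operators $\UL{\ell}$ and the invertible elements $\IL{\ell}\in\mathbf{I}$, hence lies in $\Ttilde_{\mathbf{I}}\otimes_{\mathcal{K}}\O(U)$. Since $\{e_\epsilon^D\}_\epsilon$ consists of pairwise orthogonal idempotents summing to $1$ (Proposition~\ref{prop: idempotents dec}), and since the action of $\Ttilde_{\mathbf{I}}\otimes_{\mathcal{K}}\O(U)$ preserves the decomposition
\[
\mathbb{S}_{N_+N_-,\chi}^{D,\dagger}(X)\widehat{\otimes}_{\O(X)}\O(U)=\bigoplus_\epsilon e_\epsilon^D\,\mathbb{S}_{N_+N_-,\chi}^{D,\dagger}(X)\widehat{\otimes}_{\O(X)}\O(U),
\]
one obtains a ring-theoretic splitting $\Ttilde_{\mathbf{I}}(U)=\prod_\epsilon\Ttilde_{\mathbf{I}}^\epsilon(U)$. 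Passing to $\Sp$ gives a decomposition
\[
\widetilde{\omega}_D^{-1}(U)=\bigsqcup_\epsilon (\widetilde{\omega}_D^\epsilon)^{-1}(U),
\]
in which each inclusion is simultaneously open and closed. Gluing these local decompositions as $U$ varies within $X$ and as $X$ ranges over $\mathscr{C}$ -- by the same patching argument as in Proposition~\ref{prop: finite map from E_D tilde to E_D}, invoking \cite[Lemma 5.2]{Buzzard2007} -- produces the global disjoint union $\Etilde_D=\bigsqcup_\epsilon \widetilde{\iota}^{\,\epsilon}(\Etilde_D^\epsilon)$. Each $\widetilde{\iota}^{\,\epsilon}$ is therefore a closed immersion, and it is compatible with the eigenvariety data by construction, since it is induced by the canonical projection $\Ttilde_{\mathbf{I}}(U)\twoheadrightarrow\Ttilde_{\mathbf{I}}^\epsilon(U)$.

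Uniqueness then follows from the same pattern used in Propositions~\ref{prop: p-adic extension of the JL correspondence} and~\ref{prop: finite map from E_D tilde to E_D}: classical points of $\Etilde_D^\epsilon$ form a very Zariski-dense subset, Lemma~\ref{lem: points on eigenvariety are eigensystems} pins down the map on them, and reducedness of $\Etilde_D$ together with \cite[Lemma 7.2.7]{BellaicheChenevier2009} upgrades this to uniqueness of the morphism. The main obstacle, and the only step that is not purely formal, is the identification of the direct factor $e_\epsilon^D\,\Ttilde_{\mathbf{I}}(U)$ of the reduced Hecke image with the algebra $\Ttilde_{\mathbf{I}}^\epsilon(U)$ defined via the projected Banach module; this is where one genuinely exploits that $e_\epsilon^D$ is not merely an abstract projector but an honest element of $\Ttilde_{\mathbf{I}}$, so that the freedom to rescale independently on each $\epsilon$-component of the module is implemented by an element of the Hecke algebra.
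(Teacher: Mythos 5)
Your proof is correct and, in spirit, follows the "alternative approach" that the paper explicitly flags, namely the local patching argument of Ludwig~\cite[Proposition 2.10]{Ludwig2017}, whereas the paper's primary proof is a one-line appeal to \cite[Exercises 3.7.2 and 3.6.4]{Bellaiche2021} together with Proposition~\ref{prop: idempotents dec}. What those exercises encode is precisely the formal statement that a direct sum decomposition of the Banach module sheaf, implemented by orthogonal idempotents living in the Hecke algebra, induces a decomposition of the associated eigenvariety as a disjoint union of closed-and-open subvarieties. You unwound this black box: you correctly identified that $e_\epsilon^D \in \Ttilde_{\mathbf I}$ (this is the genuinely non-formal input, and you rightly singled it out), deduced the ring-theoretic splitting $\Ttilde_{\mathbf{I}}(U)=\prod_\epsilon\Ttilde_{\mathbf{I}}^\epsilon(U)$ on local affinoid pieces, passed to $\Sp$ to get the local disjoint union, and glued via \cite[Lemma 5.2]{Buzzard2007} exactly as in the proof of Proposition~\ref{prop: finite map from E_D tilde to E_D}. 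The uniqueness argument via density of classical points, Lemma~\ref{lem: points on eigenvariety are eigensystems}, reducedness, and \cite[Lemma 7.2.7]{BellaicheChenevier2009} matches the paper's conventions elsewhere. The trade-off is the usual one: the Bellaiche-exercise route is shorter but opaque, while your explicit local argument makes visible why the decomposition is open and closed and why compatibility with the eigenvariety data is automatic (it comes from the canonical projection $\Ttilde_{\mathbf{I}}(U)\twoheadrightarrow\Ttilde_{\mathbf{I}}^\epsilon(U)$). Both are valid; yours is a legitimate, more self-contained unfolding of the same idempotent decomposition.
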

\begin{proof}
    This is just \cite[Exercise 3.7.2 and Exercise 3.6.4]{Bellaiche2021}, together with the direct sum decomposition of Proposition~\ref{prop: idempotents dec}. Alternatively, one can also consider an approach similar to the proof of \cite[Proposition 2.10]{Ludwig2017}.
\end{proof}
\begin{proposition}\label{prop: JL_p^epsilon is closed immersion}
    The morphism $\widetilde{JL}_p^{\epsilon}$ of eigenvarieties, obtained by composition,
    \begin{equation*}
        \widetilde{JL}_p^{\epsilon} = JL_p \circ \widetilde{\pi} \circ \widetilde{\iota}^{\,\epsilon} : \Etilde_{D}^{\epsilon}\xhookrightarrow{\ph{000000}} \E_{\GL_2}
    \end{equation*}
    is a closed immersion.
\end{proposition}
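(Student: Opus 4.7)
The plan is to factor the composition and to show that $\widetilde{\pi}\circ\widetilde{\iota}^{\,\epsilon}\colon \Etilde_{D}^{\epsilon}\to \E_{D}$ is itself a closed immersion; since $JL_p$ is a closed immersion by Proposition~\ref{prop: p-adic extension of the JL correspondence}, and closed immersions are stable under composition, this will yield the result. I already have $\widetilde{\iota}^{\,\epsilon}$ as a closed immersion (Proposition~\ref{prop: closed embedding extended eigenvarieties}) and $\widetilde{\pi}$ as a finite map (Proposition~\ref{prop: finite map from E_D tilde to E_D}), so the only real work is to upgrade $\widetilde{\pi}\circ\widetilde{\iota}^{\,\epsilon}$ from finite to a closed immersion.

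I would argue locally on the admissible covering $\mathscr{C}$, following the construction in Proposition~\ref{prop: finite map from E_D tilde to E_D}. Fix $X=\Sp(A)\in\mathscr{C}$ and a connected open affinoid $U\subseteq X$. Then
\[
\E_{D,X}(U)=\Sp(\T_{\mathbf{I}}(U)),\qquad \Etilde_{D,X}^{\epsilon}(U)=\Sp(\Ttilde_{\mathbf{I}}^{\epsilon}(U)),
\]
where $\T_{\mathbf{I}}(U)$ and $\Ttilde_{\mathbf{I}}^{\epsilon}(U)$ are the reduced images of $\T_{\mathbf{I}}\otimes_{\mathcal{K}}\O(U)$ and $\Ttilde_{\mathbf{I}}\otimes_{\mathcal{K}}\O(U)$ inside the endomorphism algebras of the finite-slope part of $\mathbb{S}^{D,\dagger}_{N_+N_-,\chi}(X)\widehat{\otimes}_{A}\O(U)$ and of its $e_{\epsilon}^{D}$-summand, respectively. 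The decisive observation is that on $e_{\epsilon}^{D}\mathbb{S}^{D,\dagger}_{N_+N_-,\chi}$, Definition~\ref{def: idempotents} forces each generator $\UL{\ell}$ to act as the scalar $-\epsilon_{\ell}\,\IL{\ell}\,\sqrt{\chi_{\A,N_+N_-/\ell^2}(\ell)^{-1}}\in\mathbf{I}$; this is consistent with the quadratic relation of equation~\eqref{Eq: relation between UL^2 and Lambda-action}. Hence the image of $\Ttilde_{\mathbf{I}}\otimes\O(U)$ on the $\epsilon$-summand coincides with the image of $\T_{\mathbf{I}}\otimes\O(U)$, because every additional generator already lies in the coefficient ring.

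Combined with the direct-sum decomposition of Proposition~\ref{prop: idempotents dec}, projection via $e_{\epsilon}^{D}$ therefore induces a surjection of reduced $\O(U)$-algebras $\T_{\mathbf{I}}(U)\twoheadrightarrow \Ttilde_{\mathbf{I}}^{\epsilon}(U)$. Dually this gives a closed immersion $\Etilde_{D,X}^{\epsilon}(U)\hookrightarrow \E_{D,X}(U)$. These glue over the admissible cover exactly as in the proof of Proposition~\ref{prop: finite map from E_D tilde to E_D} (using \cite[Lemma 5.2]{Buzzard2007} to check compatibility on overlaps) to a global closed immersion $\widetilde{\pi}\circ\widetilde{\iota}^{\,\epsilon}\colon \Etilde_{D}^{\epsilon}\hookrightarrow \E_{D}$; uniqueness follows from reducedness together with \cite[Lemma 7.2.7]{BellaicheChenevier2009} and Lemma~\ref{lem: points on eigenvariety are eigensystems}. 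Composing with the closed immersion $JL_p$ yields the desired closed immersion $\widetilde{JL}_p^{\,\epsilon}\colon \Etilde_{D}^{\epsilon}\hookrightarrow \E_{\GL_2}$.

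The main point to take care of is the reduction of $\UL{\ell}$ to the coefficient ring on the $\epsilon$-component: one needs the idempotent $e_{\epsilon}^{D}$ to genuinely split the overconvergent module (not only its classical subspace), so that the algebraic identity between $\UL{\ell}$ and $-\epsilon_{\ell}\IL{\ell}\sqrt{\chi^{-1}}$ holds as endomorphisms of $e_{\epsilon}^{D}\mathbb{S}^{D,\dagger}_{N_+N_-,\chi}(X)$. This is automatic because $e_{\epsilon}^{D}$ is built from elements of $\Ttilde_{\mathbf{I}}$ acting continuously on the Banach module, so the decomposition of Proposition~\ref{prop: idempotents dec} extends verbatim to the overconvergent setting. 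Everything else in the argument is the standard translation between surjections of reduced affinoid algebras and closed immersions of affinoid spaces, plus the gluing machinery already employed in Proposition~\ref{prop: finite map from E_D tilde to E_D}.
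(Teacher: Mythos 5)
Your proof is correct and rests on the same key observation as the paper's: once you restrict to the $\epsilon$-component, each operator $\UL{\ell}$ acts as a scalar already lying in the coefficient ring $\mathbf{I}$ (namely $\epsilon_\ell$ times $\IL{\ell}$ times a fixed square root of $\chi$), so the image of $\Ttilde_{\mathbf{I}}$ coincides with the image of $\T_{\mathbf{I}}$ and no information is lost when one forgets the extended operators. The packaging differs: the paper's proof does not factor through $\E_{D}$ but works directly with $\widetilde{JL}_p^{\epsilon}$, adapting the machinery of \cite[Lemma 2.9, Proposition 2.10]{Ludwig2017} over affinoids $V\subset\W\times\A^1_{\rm rig}$ and showing that the restriction of the structural surjection $\Ttilde_{\mathbf{I}}\otimes\O(V)\twoheadrightarrow\O(X)$ to $\T_{\mathbf{I}}\otimes\O(V)$ is still surjective (by including the $\IL{\ell}$ in the finite sets $I_V$). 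Your route instead isolates the intermediate morphism $\widetilde{\pi}\circ\widetilde{\iota}^{\,\epsilon}\colon\Etilde_{D}^{\epsilon}\to\E_{D}$, proves it is a closed immersion by exhibiting local surjections of reduced affinoid $\O(U)$-algebras over the admissible covering $\mathscr{C}$, and then composes with $JL_p$. Both are sound; your version has the small bonus of anticipating (a weaker form of) what the paper later proves in Proposition~\ref{prop: pi restrict is iso onto its image} and its corollary, and your justification that the idempotent splitting persists on the overconvergent Banach modules (rather than only on classical subspaces) addresses the one genuine subtlety. One cosmetic remark: with Definition~\ref{def: idempotents} as written (using $\mp$), your sign $-\epsilon_{\ell}\,\IL{\ell}\,\sqrt{\chi_{\A,N_+N_-/\ell^2}(\ell)^{-1}}$ is the one that actually follows, whereas the paper's proof of this proposition and the proof of Theorem~\ref{thm: multiplicity-1 for lifts of Coleman families} record the eigenvalue without the minus sign; this is an internal sign/normalization discrepancy in the source, not an error on your part, and it is immaterial to the argument.
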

\begin{proof}
    The morphism preserves the eigenvariety structures and it is finite, by composition of finite morphisms of eigenvarieties; recall that closed immersions are finite by definition, see e.g. \cite[Definition 4.5.7 and Definition 4.10.1]{FresnelVanderPut2004}. As in the proof of \cite[Lemma 2.9]{Ludwig2017}, we consider an affinoid $V\subset \W\times \A^1_{\rm rig}$, its preimage $X=(\widetilde{\omega}_{D}^{\epsilon}\times\widetilde{\psi}_{D}(U_p)^{-1})^{-1}(V)$ in $\Etilde_{D}^{\epsilon}$, and the natural \emph{surjective} map
    \begin{equation*}
        \widetilde{\psi}_{D}\otimes(\widetilde{\omega}_{D}^{\epsilon}\times\widetilde{\psi}_{D}(U_p)^{-1})^*\colon\Ttilde_{\mathbf{I}} \otimes_K \O(V) \longrightarrow \O(X).
    \end{equation*}
    To prove that $\widetilde{JL}_p^{\epsilon}$ is a closed immersion, it is enough to show that the  restriction 
    \begin{equation*}
        \widetilde{\psi}_{D}\otimes(\widetilde{\omega}_{D}^{\epsilon}\times\widetilde{\psi}_{D}(U_p)^{-1})^*:\T_{\mathbf{I}} \otimes_K \O(V) \longrightarrow \O(X).
    \end{equation*}
    is still surjective. This follows by the finiteness of $\O(X)$ as a $\O(V)$-module (see Section \ref{section: classical eigenvarieties}) and from the relation
    \begin{equation*}
        \widetilde{\psi}_{D}(\UL{\ell})= \epsilon_{\ell}\, \sqrt{\chi_{\A,\ell}(\ell)}\,\,\widetilde{\psi}_{D}(\IL{\ell}),\,\textrm{ for all }\, \ell \mid N_{-}^{\mathrm{sc}}.
    \end{equation*}
    We can hence follow through the proof of \cite[Proposition 2.10]{Ludwig2017}, considering finite subsets $I_V\subset \T_{\mathbf{I}}$ containing the operators $\IL{\ell}$ for $\ell \mid N_{-}^{\mathrm{sc}}$.
\end{proof}
We can summarize the above eigenvarieties and their morphisms in the following commutative diagrams:
\begin{equation} 
    \begin{tikzcd}
	\widetilde{\mathscr{E}}_{D}^{\,\epsilon} \arrow[rrrr, "\widetilde{\iota}^{\, \epsilon}", hook] \arrow[rrrrrrrrdd, "\widetilde{JL}_p^{\epsilon}", hook, bend left, shift left=2] \arrow[ddd, "\widetilde{\omega}^{\epsilon}_{D}\times  \widetilde{\psi}^{\epsilon}_{D}(U_p)^{-1}"'] &  &  &  & \widetilde{\mathscr{E}}_{D} \arrow[dd, "\widetilde{\pi}"] \arrow[llllddd, "\widetilde{\omega}_{D}\times  \widetilde{\psi}_{D}(U_p)^{-1}"'] &  &  &  &                                                                                                                   &  & \widetilde{\mathbb{T}}_{\mathbf{I}}      \\
	&  &  &  &                                                                                                                                            &  &  &  &                                                                                                                   &  &                             \\
	&  &  &  & \mathscr{E}_{D} \arrow[rrrr, "JL_p", hook] \arrow[lllld, "\omega_{D}\times  \psi_{D}(U_p)^{-1}"]                                           &  &  &  & \mathscr{E}_{\GL_2} \arrow[lllllllld, "\omega_{\GL_2}\times  \psi_{\GL_2}(U_p)^{-1}"', bend left, shift right] &  & {\,\mathbb{T}_{\mathbf{I}},} \arrow[uu, hook] \\
	\W\times \mathbb{A}^1_{\rm rig}                                                                                                                                                                                                                                                                                                        &  &  &  &                                                                                                                                            &  &  &  &                                                                                                                   &  &                            
    \end{tikzcd}
\end{equation}

\begin{equation}
    \begin{tikzcd}
	\O(\Etilde^{\,\epsilon}_{D})                                                                               &  &  &  & \O(\Etilde_{D}) \arrow[llll, "(\widetilde{\iota}^{\, \epsilon})^*"]      &  &  &  &                                                                                                                                       \\
	\widetilde{\mathbb{T}}_{\mathbf{I}} \arrow[rrrru, "\widetilde{\psi}_{D}"] \arrow[u, "\widetilde{\psi}^{\epsilon}_{D}"] &  &  &  &                                                                                      &  &  &  &                                                                                                                                       \\
	&  &  &  & \O(\E_{D}) \arrow[uu, "(\widetilde{\pi})^*"]                                        &  &  &  & {\O(\E_{\GL_2}).} \arrow[llll, "(JL_p)^*"'] \arrow[lllllllluu, "(\widetilde{JL}_p^{\epsilon})^*"', bend right, shift right] \\
	&  &  &  & \mathbb{T}_{\mathbf{I}} \arrow[lllluu, hook] \arrow[rrrru, "\psi_{\GL_2}"'] \arrow[u, "\psi_{D}"] &  &  &  &                                                                                                                                      
    \end{tikzcd}
\end{equation}

\subsubsection{The image of $\widetilde{JL}_p^{\epsilon}$}

We conclude this section with a couple of interesting observations which strengthen the comparison between our immersion and Chenevier's one. As explained in \cite[Section 2.1.1, Section 4.6, and Proposition 4.7.(b)]{Chenevier2005}, one can construct a reduced closed rigid analytic subvariety $\E_{\GL_2}^{\mathrm{tr}=0}\subseteq \E_{\GL_2}$ whose classical points correspond to forms which are new at the primes dividing the discriminant of $D$.

\begin{proposition}\label{prop: JL_p isomorphism onto tr = 0}
    The morphism $JL_p$ is an isomorphism onto $\E_{\GL_2}^{\mathrm{tr}=0}$.
\end{proposition}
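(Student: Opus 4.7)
The plan is to exploit the very Zariski-density of classical points on both $\E_D$ and $\E_{\GL_2}^{\mathrm{tr}=0}$, combined with the fact that $JL_p$ is already a closed immersion by Proposition~\ref{prop: p-adic extension of the JL correspondence}. It suffices to show that the image of $JL_p$, viewed as a reduced closed analytic subvariety of $\E_{\GL_2}$, coincides with $\E_{\GL_2}^{\mathrm{tr}=0}$. I would establish this by verifying the two containments separately on Zariski-dense sets of classical points, then invoking reducedness of both spaces.

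First, I would check that the image of $JL_p$ lies inside $\E_{\GL_2}^{\mathrm{tr}=0}$. For every classical point $z\in\E_D(\qpbar)$, the associated representation $\pi_z=JL(\pi_z^D)$ is a discrete series at each prime dividing $\disc(D)$, since these local components lie in the image of the local Jacquet--Langlands transfer. In particular, they are new at those primes, so by Chenevier's defining property of $\E_{\GL_2}^{\mathrm{tr}=0}$ we have $JL_p(z)\in\E_{\GL_2}^{\mathrm{tr}=0}$. Since such classical points form a very Zariski-dense subset of the reduced rigid space $\E_D$ and $\E_{\GL_2}^{\mathrm{tr}=0}$ is closed in $\E_{\GL_2}$, the whole image of $JL_p$ lands in $\E_{\GL_2}^{\mathrm{tr}=0}$. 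The morphism $JL_p$ therefore factors through a closed immersion $\iota\colon\E_D\hookrightarrow\E_{\GL_2}^{\mathrm{tr}=0}$.

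Next, I would show that $\iota$ is surjective. By Chenevier's construction, the classical points of $\E_{\GL_2}^{\mathrm{tr}=0}$---those corresponding to $\GL_2$-newforms which are new at each prime dividing $\disc(D)$---form a very Zariski-dense subset. For each such classical point $w$, the classical Jacquet--Langlands correspondence together with Strong Multiplicity One (as already exploited in the proof of Proposition~\ref{prop: p-adic extension of the JL correspondence}) produces a unique automorphic representation $\pi^D$ of $D^\times(\A)$ transferring to the representation underlying $w$. By Lemma~\ref{lem: points on eigenvariety are eigensystems} this gives a classical point of $\E_D$ mapping to $w$ under $\iota$. Hence the closed immersion $\iota$ has Zariski-dense image in the reduced space $\E_{\GL_2}^{\mathrm{tr}=0}$, forcing $\iota$ to be an isomorphism.

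The main subtlety is ensuring that the classical Jacquet--Langlands matching of eigensystems is consistent with the full eigenvariety structure, in particular with the vanishing of $U_\ell$ for $\ell\mid N_-$. On the $\E_D$ side, $U_\ell$ acts as zero on forms new at $\ell$, matching precisely the defining vanishing condition cutting out $\E_{\GL_2}^{\mathrm{tr}=0}$. This compatibility is built into the construction of $JL_p$ via Lemma~\ref{lem: points on eigenvariety are eigensystems} and the uniqueness clause for eigenvarieties attached to a given datum (see Remark~\ref{rmk: def e properties of eigenvarieties}), which together deliver the required isomorphism.
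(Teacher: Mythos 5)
Your argument follows the same route as the paper's: $JL_p$ is already known to be a closed immersion (Proposition~\ref{prop: p-adic extension of the JL correspondence}), classical points are very Zariski-dense on both sides, and the classical Jacquet--Langlands transfer together with Strong Multiplicity One gives surjectivity on classical points, from which the isomorphism follows by reducedness. You are in fact slightly more explicit than the paper in verifying the containment $\Image(JL_p) \subseteq \E_{\GL_2}^{\mathrm{tr}=0}$ via density of classical points before concluding surjectivity onto the closed subvariety.

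One caveat concerning your final paragraph: the claim that ``$U_\ell$ acts as zero on forms new at $\ell$'' for $\ell \mid N_-$ is only true at the supercuspidal primes ($\ell^2 \parallel N_-$); at the Steinberg primes ($\ell \parallel N_-$, i.e.\ $\ell \mid N_-^{\mathrm{sp}}$), the $U_\ell$-eigenvalue of the newform is $\pm\ell^{(k-2)/2}$, not zero. Moreover, Chenevier's locus $\E_{\GL_2}^{\mathrm{tr}=0}$ is cut out by a trace-vanishing condition rather than by ``$U_\ell = 0$.'' This misstatement is peripheral and does not affect your proof, since the compatibility of $JL_p$ with the eigenvariety data (including $U_\ell$ for $\ell \mid N_-^{\mathrm{sp}}$) was already established in Proposition~\ref{prop: p-adic extension of the JL correspondence}, and your main argument relies only on the closed-immersion property and density of classical points.
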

\begin{proof}
    By Proposition~\ref{prop: p-adic extension of the JL correspondence} and the above discussion, we know that $JL_p$ is a closed immersion. Note that classical points are dense both in $\E_{\GL_2}^{\mathrm{tr}=0}$ and in $\E_D$, and hence it is enough to show that
    $$JL_p(\E_D^{\rm cl}) \supseteq \E_{\GL_2}^{\mathrm{tr} = 0, {\rm cl}}$$ in order to prove that the morphism is surjective. Each point in $\E_{\GL_2}^{\mathrm{tr} = 0, {\rm cl}}$ corresponds to a Hecke eigenform which, under our assumptions, can be transferred to the quaternion algebra.
\end{proof}

\begin{proposition}\label{prop: pi restrict is iso onto its image}
    The morphism $\widetilde{\pi}_{|\widetilde{\mathscr{E}}_{D}^{\,\epsilon}}$ is an isomorphism onto its image.
\end{proposition}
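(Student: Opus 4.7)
The plan is to mimic the argument used in the proof of Proposition~\ref{prop: JL_p^epsilon is closed immersion}, exploiting that on $\widetilde{\mathscr{E}}_D^{\,\epsilon}$ the action of each operator $\UL{\ell}$ is forced to agree with $\epsilon_\ell\sqrt{\chi_{\A,\ell}(\ell)}\,\widetilde{\psi}_D(\IL{\ell})$, and that $\IL{\ell}\in\mathbf{I}\subseteq\mathbb{T}_\mathbf{I}$. As a consequence, the subalgebra of $\widetilde{\mathbb{T}}_\mathbf{I}$ generated over $\mathbb{T}_\mathbf{I}$ by the operators $\UL{\ell}$ already acts through $\mathbb{T}_\mathbf{I}$ on the structure sheaf of $\widetilde{\mathscr{E}}_D^{\,\epsilon}$. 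Our goal is therefore to upgrade this observation into a closed immersion $\widetilde{\pi}\big|_{\widetilde{\mathscr{E}}_D^{\,\epsilon}}\colon \widetilde{\mathscr{E}}_D^{\,\epsilon}\hookrightarrow \mathscr{E}_D$, which is equivalent to the claimed isomorphism onto its image.

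First I would observe that $\widetilde{\pi}\big|_{\widetilde{\mathscr{E}}_D^{\,\epsilon}}$ is finite, being the composition of the closed immersion $\widetilde{\iota}^{\,\epsilon}$ of Proposition~\ref{prop: closed embedding extended eigenvarieties} with the finite map $\widetilde{\pi}$ of Proposition~\ref{prop: finite map from E_D tilde to E_D}. Point-theoretic injectivity then follows from Lemma~\ref{lem: points on eigenvariety are eigensystems}: if two $\qpbar$-points of $\widetilde{\mathscr{E}}_D^{\,\epsilon}$ have the same image in $\mathscr{E}_D$, they share the same weight and the same $\mathbb{T}_\mathbf{I}$-eigensystem; by the defining relation for $\UL{\ell}$ on $\widetilde{\mathscr{E}}_D^{\,\epsilon}$ they also share the same $\widetilde{\mathbb{T}}_\mathbf{I}$-eigensystem, and another application of Lemma~\ref{lem: points on eigenvariety are eigensystems} forces them to coincide.

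To upgrade pointwise injectivity to an isomorphism onto the image, I would argue locally on the admissible cover by affinoids $X=(\widetilde{\omega}_D^{\,\epsilon}\times\widetilde{\psi}_D(U_p)^{-1})^{-1}(V)$ of $\widetilde{\mathscr{E}}_D^{\,\epsilon}$, for $V\subset \mathcal{W}\times\mathbb{A}^1_{\mathrm{rig}}$ an affinoid, along with the corresponding affinoids $Y\subset \mathscr{E}_D$. The defining surjection $\widetilde{\mathbb{T}}_\mathbf{I}\otimes_\mathcal{K}\mathcal{O}(V)\twoheadrightarrow \mathcal{O}(X)$ sends each $\UL{\ell}$ to an element lying in the image of the subalgebra $\mathbb{T}_\mathbf{I}\otimes_\mathcal{K}\mathcal{O}(V)$, so the induced map $\mathbb{T}_\mathbf{I}\otimes_\mathcal{K}\mathcal{O}(V)\to \mathcal{O}(X)$ is itself surjective. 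As this map factors through $\mathcal{O}(Y)$ via $\widetilde{\pi}^*$, we obtain a surjection $\mathcal{O}(Y)\twoheadrightarrow \mathcal{O}(X)$, which realises $X\hookrightarrow Y$ as a closed immersion. Gluing these local closed immersions using the uniqueness portion of Remark~\ref{rmk: def e properties of eigenvarieties} yields the global closed immersion $\widetilde{\pi}\big|_{\widetilde{\mathscr{E}}_D^{\,\epsilon}}\colon \widetilde{\mathscr{E}}_D^{\,\epsilon}\hookrightarrow \mathscr{E}_D$, identifying $\widetilde{\mathscr{E}}_D^{\,\epsilon}$ with an isomorphic closed analytic subspace of $\mathscr{E}_D$.

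The main technical subtlety I anticipate is the bookkeeping of the scheme-theoretic image during the gluing step: one needs the kernels of the local surjections $\mathcal{O}(Y)\twoheadrightarrow \mathcal{O}(X)$ to assemble into a coherent ideal sheaf of $\mathscr{E}_D$ cutting out a reduced closed subspace, compatibly with restriction over overlapping affinoids. All compatibility reduces to that of the structure maps $\widetilde{\pi}^*$ on intersections $V\cap V'$, which is precisely what is supplied by the eigenvariety formalism already used in the proof of Proposition~\ref{prop: finite map from E_D tilde to E_D}; once this is in place, the ring-theoretic surjectivity established above concludes the argument.
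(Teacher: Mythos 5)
Your proof is correct and follows essentially the same route as the paper: the paper invokes the proof of \cite[Proposition 7.2.8]{BellaicheChenevier2009} after modifying the finite sets $I_V$ (as in the proof of Proposition~\ref{prop: JL_p^epsilon is closed immersion}) to include the operators $\IL{\ell}$, whereas you unpack that argument explicitly, using the identical key input that $\widetilde{\psi}_{D}(\UL{\ell})=\epsilon_\ell\sqrt{\chi_{\A,\ell}(\ell)}\,\widetilde{\psi}_{D}(\IL{\ell})$ forces the extended Hecke algebra to act through $\mathbb{T}_{\mathbf{I}}$ on $\O(\Etilde_D^{\,\epsilon})$, so that $\widetilde{\pi}^*$ is locally surjective. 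Two minor remarks: the appeal to pointwise injectivity is redundant once the local surjectivity of $\O(Y)\twoheadrightarrow\O(X)$ is established (a surjection of affinoid algebras already gives a closed immersion), and the gluing step needs no separate argument, since $\widetilde{\pi}$ is already a globally defined morphism and being a closed immersion is a local property on the target.
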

\begin{proof}
    We would like to apply \cite[Proposition 7.2.8]{BellaicheChenevier2009}, but $\Image(\widetilde{\pi}_{|\widetilde{\mathscr{E}}_{D}^{\,\epsilon}})$ is not necessarily an eigenvariety. However, modifying the sets $I_V$ as in the proof of Proposition~\ref{prop: JL_p^epsilon is closed immersion}, the proof of~\cite[Proposition 7.2.8]{BellaicheChenevier2009} applies verbatim to our setup.
\end{proof}

\begin{corollary}
    The composition $\widetilde{\pi}\circ \widetilde{\iota}^{\,\epsilon}$ is an open and closed immersion. Therefore, $\widetilde{JL}_p^{\epsilon}$ is an open and closed immersion into $\E_{\GL_2}^{\mathrm{tr}=0}$.
\end{corollary}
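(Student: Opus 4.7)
The plan is to assemble the corollary from three facts already in place: (i) the disjoint-union decomposition $\Etilde_D = \bigsqcup_{\epsilon}\widetilde{\iota}^{\,\epsilon}(\Etilde_D^{\epsilon})$ in Proposition~\ref{prop: closed embedding extended eigenvarieties}, (ii) the ``isomorphism onto its image'' statement for $\widetilde{\pi}|_{\Etilde_D^{\epsilon}}$ in Proposition~\ref{prop: pi restrict is iso onto its image}, and (iii) the identification of $JL_p$ with an isomorphism onto $\E_{\GL_2}^{\mathrm{tr}=0}$ in Proposition~\ref{prop: JL_p isomorphism onto tr = 0}. The only missing input is that $\widetilde{\pi}$ is an open morphism.

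By (i), $\widetilde{\iota}^{\,\epsilon}$ is automatically an open-and-closed immersion, since its image is one of the finitely many pieces in a disjoint decomposition of $\Etilde_D$. To prove openness of $\widetilde{\pi}$, my plan is to show it is finite flat, working locally on the admissible cover $\mathscr{C}$ as in the proof of Proposition~\ref{prop: finite map from E_D tilde to E_D}. On each piece $U$, the extended Hecke algebra $\Ttilde_{\mathbf{I}}(U)$ is generated over $\T_{\mathbf{I}}(U)$ by the operators $\UL{\ell}$ for $\ell\mid N_{-}^{\mathrm{sc}}$, each subject to the monic quadratic relation
\[
    \UL{\ell}^{2} \ =\ \chi_{\A,N_+N_-/\ell^2}(\ell)^{-1}\,[\langle\ell\rangle]_{\mathbf{I}}
\]
from equation~\eqref{Eq: relation between UL^2 and Lambda-action}. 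The right-hand side $c_\ell$ is a unit in $\mathbf{I}$: the group-like element $[\langle\ell\rangle]_{\mathbf{I}}$ is invertible and the character value is a root of unity. By construction, $\mathbf{I}$ also contains a square root $s_\ell$ of $c_\ell$, built from $\IL{\ell}$ as in equation~\eqref{Eq: square root operators}. The factorization $X^{2}-c_\ell = (X-s_\ell)(X+s_\ell)$ with $s_\ell$ a unit then yields, via the Chinese Remainder Theorem, that $\Ttilde_{\mathbf{I}}(U)$ is finite free of rank $2^{\#\{\ell\mid N_{-}^{\mathrm{sc}}\}}$ over $\T_{\mathbf{I}}(U)$. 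Since finite flat morphisms of rigid analytic spaces are open and finite morphisms are closed, $\widetilde{\pi}$ is both open and closed.

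Combining these observations, the image $\widetilde{\pi}\bigl(\widetilde{\iota}^{\,\epsilon}(\Etilde_D^{\epsilon})\bigr)$ is an open and closed subvariety of $\E_D$, and by (ii) the restriction $\widetilde{\pi}\circ\widetilde{\iota}^{\,\epsilon}$ is already an isomorphism onto this image. Hence $\widetilde{\pi}\circ\widetilde{\iota}^{\,\epsilon}$ is an open and closed immersion into $\E_D$. Post-composing with $JL_p$ and invoking (iii) gives the analogous statement for $\widetilde{JL}_p^{\epsilon}$ as an open and closed immersion into $\E_{\GL_2}^{\mathrm{tr}=0}$, as claimed.

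The main subtlety of the plan is the freeness assertion: one has to confirm that the image of $X^{2}-c_\ell$ in the endomorphism ring of the relevant Banach module of overconvergent quaternionic forms is genuinely the minimal polynomial of $\UL{\ell}$, with no proper divisor causing collapse. This is guaranteed by Proposition~\ref{prop: idempotents dec}: both eigenvalues $\pm s_\ell$ occur on non-trivial subspaces $e_{\pm}^{D_\ell}\mathbb{S}_{N_+N_-,\chi}^{D,\dagger}$, so the CRT factorization is exact and the expected rank is achieved on each affinoid piece.
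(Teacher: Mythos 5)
Your reduction to facts (i)--(iii) matches the paper's structure exactly: the paper's proof is literally ``combine Proposition~\ref{prop: pi restrict is iso onto its image} with Proposition~\ref{prop: closed embedding extended eigenvarieties}'' for the first assertion, and ``apply Proposition~\ref{prop: JL_p isomorphism onto tr = 0}'' for the second. Where you diverge is in inserting a new step: proving $\widetilde{\pi}$ is open by exhibiting it as a finite \emph{flat} morphism via freeness of $\Ttilde_{\mathbf I}(U)$ over $\T_{\mathbf I}(U)$. That step has a genuine gap.

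The claim that $\Ttilde_{\mathbf I}(U)$ is free of rank $2^{\#\{\ell\mid N_-^{\mathrm{sc}}\}}$ over $\T_{\mathbf I}(U)$ does not follow from Proposition~\ref{prop: idempotents dec}. Via the CRT idempotents you correctly get $\Ttilde_{\mathbf I}(U)\cong\prod_\epsilon\T_\epsilon(U)$ where $\T_\epsilon(U)$ is the image of $\T_{\mathbf I}$ acting on $e_\epsilon^D\mathbb{S}^{D,\dagger}$, so the surjection $\T_{\mathbf I}(U)[X_\ell]_\ell/(X_\ell^2-c_\ell)\twoheadrightarrow\Ttilde_{\mathbf I}(U)$ is a product of the restriction maps $\T_{\mathbf I}(U)\twoheadrightarrow\T_\epsilon(U)$. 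Non-triviality of the two eigenspaces only tells you that $X^2-c_\ell$ is the minimal polynomial of $\UL{\ell}$ over the base field; it does \emph{not} force each $\T_{\mathbf I}(U)\to\T_\epsilon(U)$ to be injective, which is what freeness of the claimed rank would require. Indeed it typically is not injective: by Lemma~\ref{lem: Action of Uell on non-twist-minimal forms} and the rigidity of local types in families (Remark~\ref{rmk: W in families}), on any component of $\E_D$ whose forms are not twist-minimal at some $\ell$ the $\UL{\ell}$-eigenvalue is a single determined sign, so $\UL{\ell}$ is already a function of the Hecke data there and the corresponding $\T_\epsilon(U)$ is a \emph{proper} quotient of $\T_{\mathbf I}(U)$. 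Concretely, the geometric degree of $\widetilde{\pi}$ is $2^{\#\{\ell\}}$ over twist-minimal components and strictly smaller over components with non-twist-minimal behaviour, so the claimed global rank is simply wrong.

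Note also the structural issue: deducing that each $\T_\epsilon(U)$ is a \emph{direct factor} of $\T_{\mathbf I}(U)$ (which is what openness of the image amounts to) is essentially the content of the corollary itself, so trying to obtain it from a blanket flatness assertion without invoking the rigidity of automorphic types is close to circular. The paper sidesteps all of this: Proposition~\ref{prop: pi restrict is iso onto its image} is proved by the eigenvariety-interpolation method of Bella\"iche--Chenevier/Ludwig (not by computing ranks of Hecke algebras), and together with the clopen decomposition $\Etilde_D=\bigsqcup_\epsilon\Etilde_D^\epsilon$ from Proposition~\ref{prop: closed embedding extended eigenvarieties} it gives the open-and-closed immersion directly, with no flatness claim about $\widetilde{\pi}$ needed. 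You should replace the flatness argument by a direct appeal to those two propositions, as the paper does.
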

\begin{proof}
    The first assertion follows by combining Proposition~\ref{prop: pi restrict is iso onto its image} with Proposition~\ref{prop: closed embedding extended eigenvarieties}. The second one follows from Proposition~\ref{prop: JL_p isomorphism onto tr = 0}.
\end{proof}

\begin{remark}
    For any two distinct choices of signs, $\epsilon$ and $\epsilon'$, the images of the eigenvarieties $\Etilde_{D}^{\epsilon}$ and $\Etilde_{D}^{\epsilon'}$ are not disjoint in $\E_{\GL_2}$, since their intersection contains the twist-minimal forms.
\end{remark}

\subsection{Families of quaternionic modular forms}\label{Section: Families of quaternionic modular forms}

This section and the next one contain the main results we employ in the construction of the triple product $p$-adic $L$-function, namely the existence of families of quaternionic modular forms of finite slope. We follow the definition given in \cite[Section 6.2.6]{Chenevier2004}, and restrict our attention to the two eigenvarieties
\begin{align}
    \E_{\GL_2}\sim (\W,\,\T_{\mathbf{I}},\, U_p,\,\mathbb{S}_{N_+N_-,\chi}^\dagger,\, \psi_{\GL_2}) && \textrm{ and } && \Etilde^{\,\epsilon}_{D}\sim (\W,\,\Ttilde_{\mathbf{I}},\, U_p,\,e_{\epsilon}^{D}\mathbb{S}_{N_+N_-,\chi}^{D,\dagger},\,\widetilde{\psi}^{\epsilon}_{D}).    
\end{align}
To $\mathbb{S}_{N_+N_-,\chi}^\dagger$ (resp. $e_{\epsilon}^{D}\mathbb{S}_{N_+N_-,\chi}^{D,\dagger}$), one can associate a \emph{sheaf of Banach modules} on $\W$, which we denote by $\mathscr{S}_{N_+N_-,\chi}^\dagger$ (resp. $\mathscr{S}_{N_+N_-,\chi}^{D,\epsilon,\dagger}$); this geometric object is roughly obtained glueing the Banach modules of overconvergent forms compatibly. As we are only interested in its sheaf nature, we do not recall the precise definition here, but refer the reader to \cite[Section 3]{Chenevier2004} and \cite[Section 1]{Chenevier2005} for a thorough discussion. Let $\mathscr{T}_{\mathbf I}^{\GL_2}$ be the closure of $\T_{\mathbf{I}}$ in
\begin{equation}
    \mathcal{E}_{\mathbf{I}}^{\GL_2}=\left\{\, h \in \End(\mathscr{S}_{N_+N_-,\chi}^\dagger) \,\middle | \, h \textrm{ is integral and rational}\right\},
\end{equation}
where the topology is the coarsest topology such that, for any open affinoid $X\in\W$, the restriction map
\begin{equation}
    \mathcal{E}_{\mathbf{I}}^{\GL_2} \longrightarrow \End^{\rm cts}_{\O_X(X)}(\mathbb{S}_{N_+N_-,\chi}^{\dagger}(X))
\end{equation}
is continuous; here $\End^{\rm cts}_{\O_X(X)}(\mathbb{S}_{N_+N_-,\chi}^{\dagger}(X))$ is endowed with the topology induced by the supremum norm. Analogously (cf.\ \cite[Proposition 4.5.4]{Chenevier2004} and equation~\eqref{Eq: relation between UL^2 and Lambda-action}), we define $\widetilde{\mathscr{T}}_{\mathbf I}^{\,D,\epsilon}$ as the closure of $\Ttilde_{\mathbf{I}}$ in
\begin{equation}
    \mathcal{E}_{\mathbf{I}}^{D,\epsilon}=\left\{\, h \in \End(\mathscr{S}_{N_+N_-,\chi}^{D,\epsilon,\dagger}) \,\middle | \, h \textrm{ is integral and rational}\right\}.
\end{equation}

Let $(\E,\, \W,\,\mathbb{S}^\dagger,\,\psi,\,\omega,\,\mathscr{T})$ be either the tuple
\begin{align}
    (\E_{\GL_2},\, \W, \,\mathbb{S}_{N_+N_-,\chi}^\dagger, \,\psi_{\GL_2},\,\omega_{\GL_2},\mathscr{T}_{\mathbf I}^{\GL_2}) && \textrm{ or } && (\Etilde^{\,\epsilon}_{D}, \,\W,\, e_{\epsilon}^{D}\mathbb{S}_{N_+N_-,\chi}^{D,\dagger}, \,\widetilde{\psi}^{\epsilon}_{D},\,\widetilde{\omega}_{D}^{\epsilon},\,\widetilde{\mathscr{T}}_{\mathbf I}^{\,D,\epsilon}).
\end{align}
For any open affinoid $\mathcal{U}$ in $\W$, we denote the submodule of \emph{power-bounded elements} by
\begin{equation}
    \O(\mathcal{U})^0=\left\{s\in \O(\mathcal{U}) \,\big| \, |s(u)|\leq 1 \text{ for all } u\in \mathcal{U} \right\}.
\end{equation}
If $\mathcal{U}$ is reduced, which is always the case if $\mathcal{U}$ is small enough, $\O(\mathcal{U})^0$ is compact in $\O(\mathcal{U})$ (see \cite[Lemma 7.2.11]{BellaicheChenevier2009}). The same definition applies for any affinoid $\mathcal{X}\subset\E$.
\begin{definition}[Families of quaternionic cusp forms]\label{def: Families of (quaternionic) modular forms}
    Let $\kappa\in \W(\cp)$ and let $\varphi\in \omega^{-1}(\kappa)$ be a $p$-adic overconvergent cuspidal form. We define a \emph{family of quaternionic cuspidal modular forms passing through~$\varphi$} as the collection of
    \begin{itemize}
        \item an affinoid open $\mathcal{U}\subseteq \W$, $\kappa\in \mathcal{U}(\cp)$;
        \item an affinoid $\mathcal{X}\subset \E$, endowed with a finite morphism $\boldsymbol{\varphi}\colon\mathcal{X}\longrightarrow \mathcal{U}$, surjective when restricted to any irreducible component of $\mathcal{X}$;
        \item a $\cp$-point $x_0\in \mathcal{X}(\cp)$ such that $\boldsymbol{\varphi}(x_0)=\kappa$;
        \item a continuous ring homomorphism $\lambda\colon \mathscr{T}\longrightarrow \O_{\mathcal{X}}(\mathcal{X})^{0}$;
    \end{itemize}
    satisfying:
    \begin{itemize}
        \item for all $x\in \mathcal{X}(\cp)$, there exists a form $\varphi_x\in \mathcal{S}^{\dagger}(\mathcal{X})\cap \omega^{-1}(\omega(x))$, such that, for all $h\in \mathscr{T}$, 
        \begin{equation*}
            h(\varphi_x)=\lambda(h)(x)\, \varphi_x;
        \end{equation*}
        \item the form $\varphi$ is such that one can take $\varphi_{x_0}=\varphi$.
    \end{itemize}
    We say that the family is parameterized by $\mathcal{X}$, and that it has \emph{slope} $\alpha$ (resp.\ \emph{finite slope}) if every form $\varphi_x$ in the family has slope $v_p(\lambda(U_p)(x))=\alpha$ (resp.\ $v_p(\lambda(U_p)(x))$ is finite). We call a family of slope $0$ a {\em Hida family}, while we refer to families of finite slope bigger equal than $0$ as {\em Coleman families}.
\end{definition}

\begin{remark}
    \leavevmode
    \begin{enumerate}
        \item By \cite[Proposition 6.2.7]{Bellaiche2021}, if there exists a classical point in a family, then the classical points are dense in the family.
        \item All the eigenvarieties we consider are equidimensional of dimension one (\cite[Proposition 3.7.5]{Bellaiche2021}).
    \end{enumerate}
\end{remark}

\begin{theorem}\label{thm: multiplicity-1 for lifts of Coleman families}
    For any Coleman family $\mathbf{f}$ on $\E_{\GL_2}$, there exist at most a \emph{unique} Coleman family on $\Etilde_{D}^{\epsilon}$, of the same slope, lifting $\mathbf{f}$. Moreover, there exist a family $\boldsymbol{\varphi}$ on $\Etilde_{D}^{\epsilon}$, for each choice of signs $\epsilon$, corresponding to $\mathbf{f}$, if $\mathbf{f}$ passes through a classical point whose corresponding form is supercuspidal at each prime $\ell \mid N_{-}^{\mathrm{sc}}$.
\end{theorem}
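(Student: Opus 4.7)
The plan is to deduce both assertions from the structural results just established, in particular the fact that $\widetilde{JL}_p^{\epsilon}\colon\Etilde_{D}^{\epsilon}\hookrightarrow \E_{\GL_2}$ is a closed immersion (Proposition~\ref{prop: JL_p^epsilon is closed immersion}) and, more strongly, an open and closed immersion into $\E_{\GL_2}^{\mathrm{tr}=0}$ (the preceding Corollary). Throughout I represent a Coleman family on $\Etilde_{D}^{\epsilon}$ through $\mathbf{f}$ as an irreducible affinoid $\widetilde{\mathcal{X}}\subset \Etilde_{D}^{\epsilon}$, finite and surjective onto an admissible affinoid $\mathcal{U}\subset \W$, and likewise $\mathcal{X}\subset \E_{\GL_2}$ for $\mathbf{f}$ itself, shrinking $\mathcal{U}$ whenever convenient.

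\textbf{Uniqueness.} If $\widetilde{\mathcal{X}}_1$ and $\widetilde{\mathcal{X}}_2$ both lift $\mathbf{f}$, then after possibly shrinking $\mathcal{U}$ their images under $\widetilde{JL}_p^{\epsilon}$ both coincide with $\mathcal{X}$. Since $\widetilde{JL}_p^{\epsilon}$ is a closed immersion, it is in particular injective on the underlying reduced rigid spaces, so $\widetilde{\mathcal{X}}_1 = \widetilde{\mathcal{X}}_2$. The slopes automatically agree because $\widetilde{JL}_p^{\epsilon}$ intertwines $\psi_{\GL_2}(U_p)$ with $\widetilde{\psi}^{\epsilon}_{D}(U_p)$.

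\textbf{Existence.} Assume $\mathbf{f}$ specializes at a classical point $x_0\in \mathcal{X}(\cp)$ to a form $f_0$ supercuspidal at every $\ell\mid N_{-}^{\mathrm{sc}}$. For each such prime, Proposition~\ref{prop:reps_of_cond2} decomposes the Jacquet--Langlands transfer of the local representation as a direct sum of two one-dimensional $\UL{\ell}$-eigenspaces, one for each sign of $\sqrt{\chi_{\A,\ell}(\ell)^{-1}}$. Choosing the eigenspace determined by $\epsilon_\ell$ at every ramified prime produces a classical point $\widetilde{x}_0\in \Etilde_{D}^{\epsilon}(\cp)$ with $\widetilde{JL}_p^{\epsilon}(\widetilde{x}_0)=x_0$. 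By rigidity of the local automorphic type inside a Coleman family (Remark~\ref{rmk: W in families}), every classical specialization along $\mathcal{X}$ is still (twist-minimal) supercuspidal of conductor $\ell^{2}$ at those primes, hence $\mathcal{X}$ lies entirely inside $\E_{\GL_2}^{\mathrm{tr}=0}$. Being irreducible, $\mathcal{X}$ is connected; its intersection with the open and closed subvariety $\widetilde{JL}_p^{\epsilon}(\Etilde_{D}^{\epsilon})\subseteq \E_{\GL_2}^{\mathrm{tr}=0}$ contains $x_0$ and is therefore all of $\mathcal{X}$. Pulling back via the closed immersion, $\widetilde{\mathcal{X}}=(\widetilde{JL}_p^{\epsilon})^{-1}(\mathcal{X})$ maps isomorphically onto $\mathcal{X}$ and inherits from $\widetilde{x}_0$ and the slope-compatibility the structure of a Coleman family $\boldsymbol{\varphi}$ on $\Etilde_{D}^{\epsilon}$ lifting $\mathbf{f}$.

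\textbf{Main obstacle.} The delicate ingredient is the rigidity of the automorphic type at ramified primes for $p$-adic Coleman families of finite slope: one must know that ``twist-minimal supercuspidal of conductor $\ell^{2}$'' cannot degenerate at any classical specialization on the irreducible component through $x_0$, so that the family indeed lands in $\E_{\GL_2}^{\mathrm{tr}=0}$. Once this is granted the argument is a clean application of the rigid-geometric properties of the immersions $\widetilde{\iota}^{\,\epsilon}$, $\widetilde{\pi}$ and $JL_p$ recorded above.
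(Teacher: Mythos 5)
Your geometric skeleton is sound and partially overlaps with the paper's argument, but you take a more global route for existence and you leave a nontrivial algebraic step unaddressed.

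For uniqueness, your argument is essentially the same as the paper's: injectivity of the closed immersion $\widetilde{JL}_p^{\epsilon}$ and compatibility of the $U_p$-operators under $\psi$ and $\widetilde{\psi}^\epsilon_D$. This part is fine.

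For existence, the paper argues locally: starting from the fiber of $\widetilde{JL}_p^{\epsilon}$ over $x_0$, it shrinks $\mathcal{X}$ (hence $\mathcal{U}$) until $\mathcal{X}$ is either disjoint from or contained in the image, and then sets $\boldsymbol{\varphi}^{\epsilon}=\mathbf{f}\circ\widetilde{JL}_p^{\epsilon}$. You instead invoke the open-and-closedness of $\widetilde{JL}_p^\epsilon(\Etilde_D^\epsilon)$ inside $\E_{\GL_2}^{\mathrm{tr}=0}$, rigidity of the supercuspidal automorphic type in families, and connectedness of $\mathcal{X}$ to conclude $\mathcal{X}\subseteq\Image(\widetilde{JL}_p^\epsilon)$. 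Both routes work (the rigidity input you flag is exactly what the paper records in Remark~\ref{rmk: W in families}); yours is a bit cleaner conceptually, while the paper's avoids appealing to $\E_{\GL_2}^{\mathrm{tr}=0}$.

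The genuine gap is in the final sentence of your existence argument. A Coleman family in the sense of Definition~\ref{def: Families of (quaternionic) modular forms} is not just the affinoid $\widetilde{\mathcal{X}}$ finite over $\mathcal{U}$; it crucially includes a continuous ring homomorphism $\lambda\colon\widetilde{\mathscr{T}}_{\mathbf I}^{\,D,\epsilon}\to\O(\widetilde{\mathcal{X}})^{0}$ from the \emph{closure} of the extended Hecke algebra to the power-bounded functions. Saying that $\widetilde{\mathcal{X}}$ ``inherits the structure of a Coleman family'' papers over the construction of this $\lambda$. The paper devotes the bulk of its proof to precisely this: it defines $\widetilde{\lambda}^D$ by first choosing the surjection
\[
\Ttilde_{\mathbf{I}}\cong\frac{\T_{\mathbf{I}}[X_\ell]_{\ell\mid N_{-}^{\mathrm{sc}}}}{\bigl(X_\ell^2-\chi_{\A,\ell}(\ell)[\ell]_\mathbf{I}\bigr)_{\ell\mid N_{-}^{\mathrm{sc}}}}\longrightarrow\T_{\mathbf{I}},\qquad X_\ell\longmapsto\epsilon_\ell\sqrt{\chi_{\A,\ell}(\ell)}\,\IL{\ell},
\]
composing with $\lambda$ for the $\GL_2$-family, checking this is a continuous algebra map landing in $\O(\mathcal{V})^0$ on each affinoid $\mathcal{V}$, gluing, and then extending to the closure $\widetilde{\mathscr{T}}_{\mathbf I}^{\,D,\epsilon}$. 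Without this, you have identified a closed analytic subvariety of $\Etilde_D^\epsilon$ finite surjective over $\mathcal{U}$, but not verified that it is a Coleman family in the required sense. Spelling out that lift of $\lambda$ (or at least citing the relevant construction) would close the gap.
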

\begin{proof}
    Let $(\mathcal{U},\, \kappa,\,\mathcal{X},\,x_0,\,\mathbf{f}\colon\mathcal{X}\rightarrow \mathcal{U})$ be a Coleman family on $\E_{\GL_2}$ and $(\widetilde{JL_p}^{\epsilon})^{-1}(\{x_0\})$ be the preimage of $x_0$. It is either empty or it contains a unique point $x_{0}^{D,\epsilon}\in \Etilde_{D}^{\epsilon}$.
    If $(\widetilde{JL_p}^{\epsilon})^{-1}(\{x_0\})=\emptyset$, up to shrinking $\mathcal{X}$, and hence $\mathcal{U}$, we can assume that $\mathcal{X}\cap \Image(\widetilde{JL_p}^{\epsilon})=\emptyset$ (by Proposition~\ref{prop: JL_p^epsilon is closed immersion}). Therefore, there is no family on $\Etilde_{D}^{\epsilon}$ lifting $(\mathcal{U},\, \kappa,\,\mathcal{X},\,x_0,\,\mathbf{f}\colon\mathcal{X}\rightarrow \mathcal{U})$. Suppose now that $x_{0}^{D,\epsilon}=(\widetilde{JL_p}^{\epsilon})^{-1}(x_0)$. Since $\mathcal{X}$ can be taken to be an open affinoid and closed immersions are finite, $\mathcal{X}_D^{\epsilon}=(\widetilde{JL_p}^{\epsilon})^{-1}(\mathcal{X})$ is an open affinoid (see~\cite[discussion after Definition 4.5.7]{FresnelVanderPut2004}). 
    We hence define $\boldsymbol{\varphi}^{\epsilon}=\mathbf{f}\circ \widetilde{JL_p}^{\epsilon}$. As above, up to shrinking $\mathcal{X}$, and hence $\mathcal{U}$, we can assume that $\mathcal{X}$ is contained in $\Image(\widetilde{JL_p}^{\epsilon})$, therefore $\boldsymbol{\varphi}^{\epsilon}$ is a finite morphism, surjective when restricted to any irreducible component. As $\widetilde{JL_p}^{\epsilon}$ is an eigenvariety morphism, we obtain $\boldsymbol{\varphi}^{\epsilon}(x_{0}^{D,\epsilon})=\kappa$. It remains to lift the ring homomorphism $\lambda$. Consider now the quotient map
    \begin{equation*}
         \Ttilde_{\mathbf{I}}\cong  \frac{\T_{\mathbf{I}}[X_\ell]_{\ell \mid N_{-}^{\mathrm{sc}}}}{(X_\ell^2-\chi_{\A,\ell}(\ell)[\ell]_\mathbf{I})_{\ell \mid N_{-}^{\mathrm{sc}}}}\longrightarrow \T_{\mathbf{I}},
    \end{equation*}
    determined by the choice of signs $\epsilon$, obtained by sending
    \begin{equation*}
        X_\ell\longmapsto \epsilon_\ell\,\sqrt{\chi_{\A,\ell}(\ell)}\,\IL{\ell},\, \textrm{ for each }\, \ell \mid N_{-}^{\mathrm{sc}}.
    \end{equation*}
    This map defines, for any such affinoid open $\mathcal{V}\subseteq \W$, a diagram
    \begin{equation*}
        \begin{tikzcd}
            \widetilde{\mathbb{T}}_{\mathbf{I}} \arrow[dd,two heads]\arrow[rr]      &  & {\Image(\widetilde{\mathbb{T}}_{\mathbf{I}}\rightarrow\End^{\rm cts}_{\O_\mathcal{V}(\mathcal{V}))}(\mathbb{S}_{N_+N_-,\chi}^{D,\epsilon,\dagger}(\mathcal{V})))} \arrow[rrr, "\widetilde{\lambda}^{D}_{\mathcal{V}}", dashed]\arrow[dd] &  &  & {\O_{\mathcal{X}_D^{\epsilon}}(\mathcal{X}_D^{\epsilon})^0}       \\
            &  &                                                                                                                                                                                                                            &  &  &                                                                                             \\
            \mathbb{T}_{\mathbf{I}} \arrow[rr] &  & {\Image(\mathbb{T}_{\mathbf{I}}\rightarrow\End^{\rm cts}_{\O_\mathcal{V}(\mathcal{V})}(\mathbb{S}_{N_+N_-,\chi}^{\dagger}(\mathcal{V}))} \arrow[rrr, "\lambda_{\mathcal{V}}"]                                                  &  &  & {\O_{\mathcal{X}}(\mathcal{X})^0,} \arrow[uu, "(\widetilde{JL_p}^{\epsilon})^*",two heads]
        \end{tikzcd}
    \end{equation*}
    where the first square is a commutative square of continuous algebra homomorphisms, with the second vertical map obtained from the Jacquet--Langlands correspondence (cf.\ Section \ref{Section: Hecke operators}) and the chosen quotient map. We can define $\widetilde{\lambda}^{D}_{\mathcal{V}}$ as the composition of morphisms in the right hand side square, where $\lambda_{\mathcal{V}}$ is the restriction of $\lambda$. Let us remark that the function $\widetilde{\lambda}^{D}_{\mathcal{V}}(\UL{\ell})\in\O_{\mathcal{V}}(\mathcal{V})^0$ obtained by composition is the constant function $\widetilde{\lambda}^{D}(\UL{\ell})(x)=\epsilon_\ell\,\sqrt{\chi_{\A,\ell}(\ell)}\,\IL{\ell}$, for all $x\in\mathcal{X}$ (cf. Lemma~\ref{lem: Action of Uell on non-twist-minimal forms}). The morphism $\widetilde{\lambda}^{D}_{\mathcal{V}}$ is a continuous ring homomorphism.
    It remains to extend $\widetilde{\lambda}^{D}_{\mathcal{V}}$ to $\widetilde{\mathscr{T}}_{\mathbf I}^{\,D,\epsilon}$. We start noticing that the defined morphisms $\widetilde{\lambda}^{D}_{\mathcal{V}}$ glue compatibly, as the $\lambda_{\mathcal{V}}$ do. Moreover, $\widetilde{\mathscr{T}}_{\mathbf I}^{\,D,\epsilon}$ is a commutative $\mathbf{I}$-algebra, hence the translates of $\Image(\widetilde{\mathbb{T}}_{\mathbf{I}}\rightarrow\End^{\rm cts}_{\O_\mathcal{V}(\mathcal{V}))}(\mathbb{S}_{N_+N_-,\chi}^{D,\epsilon,\dagger}(\mathcal{V})))$ define an open covering. The uniqueness of the lift, as well as the invariance of the slope, follow now from the construction of $\boldsymbol{\varphi}^{\epsilon}$.
\end{proof}
\begin{remark}\label{rmk: convergence of the ordinary projector in E^D}
\leavevmode
    \begin{enumerate}
        \item In the above proof, we note that fixing the eigenvalue of each $\UL{\ell}$, shows that
            \begin{equation*}
                \Image(\widetilde{\mathbb{T}}_{\mathbf{I}}\rightarrow\End^{\rm cts}_{\O_\mathcal{V}(\mathcal{V})}(\mathbb{S}_{N_+N_-,\chi}^{D,\epsilon,\dagger}(\mathcal{V})))=\Image(\mathbb{T}_{\mathbf{I}}\rightarrow\End^{\rm cts}_{\O_\mathcal{V}(\mathcal{V})}(\mathbb{S}_{N_+N_-,\chi}^{D,\epsilon,\dagger}(\mathcal{V}))).
            \end{equation*}
        \item We must stress the fact that the sequence $\{U_p^{n!}\}_n$ still converges in $\widetilde{\mathscr{T}}_{\mathbf I}^{\,D,\epsilon}$, as it does so in each closure $\overline{\Image(\widetilde{\mathbb{T}}_{\mathbf{I}}\rightarrow\End^{\rm cts}_{\O_\mathcal{V}(\mathcal{V})}(\mathbb{S}_{N_+N_-,\chi}^{D,\epsilon,\dagger}(\mathcal{V})))}$.
        
    \end{enumerate}
\end{remark}

\subsection{$\Lambda$-adic quaternionic forms and Hida families}\label{Section: Lambda-adic quaternionic forms and Hida families}

Even though Theorem~\ref{thm: multiplicity-1 for lifts of Coleman families} guarantees the existence of Coleman families, we need a more explicit way to describe them. In particular we restrict our attention to Hida families. 

\subsubsection{$\Lambda$-adic quaternionic forms}

We keep the notation introduced in Section \ref{Section: Algebraic quaternionic modular forms} and fix a chain of inclusions of orders
\begin{equation}
    R_{N_+,N_-}\supset R_{pN_+,N_-}\supset R_{p^2N_+,N_-}\supset\cdots\supset R_{p^nN_+,N_-}\supset\cdots\supset R_{p^\infty N_+,N_-},
\end{equation}
for
\begin{equation}
    R_{p^\infty N_+,N_-}=\left\{r\in \widehat{R}_{N_+,N_-}\,\middle|\,\iota_p(r_p)=\mat{a}{b}{0}{d}\,a,d\in\zp^\times,\, b\in\zp\right\}.
\end{equation}
Considering the set
\begin{equation}
    X_\infty=D^\times\backslash \widehat{D}^\times/U_1(R_{p^\infty N_+,N_-}),
\end{equation}
for 
\begin{equation}
    U_1(R_{p^\infty N_+,N_-})=\left\{r\in U_1(R_{N_+,N_-})\,\middle|\,\iota_p(r_p)=\Mat{a}{b}{0}{1}\,a\in\zp^\times,\, b\in\zp\right\},
\end{equation}
we have the natural quotient maps
\begin{equation}
    X_\infty\longrightarrow X_m\longrightarrow X_n, 
\end{equation}
for any $m>n$. We also take $P_n=\left((1+T)^{p^n}-1\right)$, a height one
prime ideal in $\Lambda=\zp\llbracket 1+p\zp\rrbracket\cong\zp\llbracket T\rrbracket$, for $T=\langle 1 + p\rangle_{\Lambda} -1$.
Recall that we define the diamond operators as in \cite[Section 4.4]{Hsieh2021}. We extend the notion of $\Lambda$-adic forms provided in loc.\ cit.\ as follows.
\begin{definition}
    Let $\mathbf{S}^{D}(R_{N_+,N_-},\Lambda)$ be the space of functions $\mathbf{f} \colon X_\infty\longrightarrow \Lambda$, such that, for any $z\in 1+p\zp$,
    \begin{equation*}
        \mathbf{f}(xz)=\mathbf{f}(x)\langle z\rangle^2\langle z\rangle_\Lambda^{-1},
    \end{equation*}
    and, for any $n$ sufficiently large,
    \begin{equation*}
        \mathbf{f}\Mod{P_n} \colon X_\infty\longrightarrow \Lambda/P_n
    \end{equation*}
    factors through $X_n$. We call it the space of {\em $\Lambda$-adic quaternionic modular forms} of level $R_{N_+,N_-}$.
\end{definition}
By construction,
\begin{equation}\label{eq: Lambda-adic forms as profinite limit}
    \mathbf{S}^{D}(R_{N_+,N_-},\Lambda) = \varprojlim\limits_{n} \Hom_{\Lambda}(\zp[X_n],\Lambda/P_n)\otimes_{\Lambda,\iota_2}\Lambda,
\end{equation}
for $\iota_2\colon\Lambda\longrightarrow\Lambda$ the $\zp$-algebra morphism (twisting the action at $p$) defined by
\begin{equation}
    \iota_2\colon T\longmapsto (1+T)^{-2}(1+p)^2-1.
\end{equation}
Therefore, the $\Lambda$-module $\mathbf{S}^{D}(R_{N_+,N_-},\Lambda)$ is compact and endowed with the Hecke action defined by
\begin{equation}
    t \cdot \mathbf{f}(x) = \mathbf{f}(t\cdot x),\, \textrm{ for any }\,t\in\Ttilde_{\mathbf{I}}\, \textrm{ and }\, x\in X_\infty.
\end{equation}

Recall the notation of Section \ref{Section: Lambda-action and operators}. For any Dirichlet character $\chi$ modulo $N_+N_- p$, valued in $\zp$, we define
\begin{equation}
    \mathbf{S}^{D}(R_{N_+,N_-},\chi,\Lambda)=
    \left\{\mathbf{f}\in \mathbf{S}^{D}(R_{N_+,N_-},\Lambda)\}\, \middle|  \, \begin{array}{c}
       \mathbf{f}(xzr) = (\chi_\A)^{-1}(z)\widetilde{\chi}(r) f(x)\,\langle \cyc(z) \rangle^2\,[\langle \cyc(z) \rangle]_\Lambda   \\
         \textrm{for all }\, z\in\A_{f}^\times, \, r\in \widehat{R}_{p^\infty N_+,N_-}^\times 
    \end{array}
     \right\},
\end{equation}

and, for any finite flat extension $\Lambda'/\Lambda$ and any $\Lambda'$-valued Dirichlet character $\chi$, we set
\begin{equation}
    \mathbf{S}^{D}(R_{N_+,N_-},\Lambda')=\mathbf{S}^{D}(R_{N_+,N_-},\Lambda)\otimes_\Lambda\Lambda'\supseteq \mathbf{S}^{D}(R_{N_+,N_-},\chi,\Lambda').
\end{equation}

From equation~\eqref{eq: Lambda-adic forms as profinite limit}, we deduce the compactness of $\mathbf{S}^{D}(R_{N_+,N_-},\Lambda)$, hence the ordinary projector $e^{\rm ord}=\lim\limits_{n \to \infty}U_p^{n!}$ converges in $\End_\Lambda(\mathbf{S}^{D}(R_{N_+,N_-},\Lambda))$, as it converges in each $\End_\Lambda(\zp[X_n])$. We define
\begin{equation}
    e^{\rm ord}\mathbf{S}^{D}(R_{N_+,N_-},\chi,\Lambda')
\end{equation}
as the space of ordinary $\Lambda$-adic quaternionic forms.

\subsubsection{Hida families}

The ordinary projector $e^{\rm ord}$ determines an idempotent in endomorphism ring of quaternionic modular forms and this implies a decomposition into the ordinary and non-ordinary components of the eigenvarieties, which we will denote by the corresponding superscript. We obtain the following diagram of eigenvarieties:
\begin{equation}
    \begin{tikzcd}
	{\widetilde{\mathscr{E}}_{D}^{\,\epsilon,\rm ord}} \arrow[rr, "\widetilde{\iota}^{\,\epsilon}"', hook] \arrow[rrrrd, "\widetilde{JL}_p^{\epsilon}", hook, bend left, shift left=2] &  & \widetilde{\mathscr{E}}_{D}^{\rm ord} \arrow[d, "\widetilde{\pi}"] &  &                           & \widetilde{\mathbb{T}}_{\mathbf{I}}              \\
	&  & \mathscr{E}_{D}^{\rm ord} \arrow[rr, "JL_p", hook]                 &  & \mathscr{E}_{\GL_2}^{\rm ord} & {\ph{,}\mathbb{T}_{\mathbf{I}}.} \arrow[u, hook]
    \end{tikzcd}
\end{equation}
All the results obtained in Section \ref{Section: Eigenvarieties} descend to the ordinary eigenvarieties.

\begin{proposition}\label{prop: identification chenevier families and hsieh families}
    Let $(\mathcal{U},\mathcal{X},\boldsymbol{\varphi},\lambda)$ be a Hida family on $\Etilde^{\,\epsilon}_{D}$ (that is, a family in $\Etilde^{\,\epsilon,\rm ord}_{D}$), and suppose that it contains a classical point. Up to shrinking it, we can identify the family $(\mathcal{U},\mathcal{X},\boldsymbol{\varphi},\lambda)$ with the $\O(\mathcal{U})^0$-adic module 
    \begin{equation*}
        (e^{\rm ord}\mathbf{S}^{D}(R_{N_+,N_-}, \O(\mathcal{U})^0))[\boldsymbol{\varphi}]=\left\{\mathbf{f}\in \mathbf{S}^{D}(R_{N_+,N_-},\Lambda)\otimes_{\Lambda} \O(\mathcal{U})^0 \,\, \middle|\,\, t\cdot \mathbf{f} = \lambda_{\boldsymbol{\varphi}}(t) \,\mathbf{f},\,\textrm{ for } \,t\in \Ttilde_{\mathbf{I}}\,\right\},
    \end{equation*}
    where $\lambda_{\boldsymbol{\varphi}}=\omega_{\GL_2}^*\circ\lambda\colon\widetilde{\mathscr{T}}_{\mathbf I}^{\,D,\epsilon}\longrightarrow \O(\mathcal{U})^0$.
\end{proposition}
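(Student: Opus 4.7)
The plan is to identify the abstract Hida family $(\mathcal{U},\mathcal{X},\boldsymbol{\varphi},\lambda)$ with the concrete $\O(\mathcal{U})^0$-module
\[
M_{\boldsymbol{\varphi}}:=(e^{\rm ord}\mathbf{S}^{D}(R_{N_+,N_-},\O(\mathcal{U})^0))[\boldsymbol{\varphi}]
\]
by first proving a Hida-style control theorem, then using the multiplicity-one result of Proposition~\ref{prop: mult-1 with varpi_d} to show $M_{\boldsymbol{\varphi}}$ is free of rank one (after shrinking $\mathcal{U}$), and finally invoking the uniqueness statement of Theorem~\ref{thm: multiplicity-1 for lifts of Coleman families}.

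First, using the inverse-limit presentation~\eqref{eq: Lambda-adic forms as profinite limit}, the finiteness of each $\zp[X_n]$ as a $\Lambda/P_n$-module, and the convergence observation of Remark~\ref{rmk: convergence of the ordinary projector in E^D}, I would show that $e^{\rm ord}\mathbf{S}^{D}(R_{N_+,N_-},\chi,\Lambda)$ is finitely generated over $\Lambda$ and that its specialization at any arithmetic weight $\kappa=(k,\kappa_p)$, with $n=\max\{1,v_p(\cond(\kappa_p))\}$, recovers the ordinary subspace of the classical space $S^{D}_{k}(R_{p^nN_+,N_-},\chi\kappa_p)$. Since the idempotents $e^{D}_{\epsilon}$ of Definition~\ref{def: idempotents} are built from the $\UL{\ell}$-operators and the invertible elements $\IL{\ell}$ of $\mathbf{I}$, they act naturally on the $\Lambda$-adic module and commute with specialization, reproducing on the classical side the decomposition of Proposition~\ref{prop: idempotents dec}.

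Second, specializing $M_{\boldsymbol{\varphi}}$ at the given classical point $x_0\in\mathcal{X}(\cp)$, whose underlying classical form $\varphi_{x_0}$ is attached to a $\GL_2$-newform $f_{x_0}$, the resulting fiber embeds into the ambient one-dimensional space
\[
e^{D}_{\epsilon}\,e^{\rm ord}\,S^{D}_{k}(R_{p^nN_+,N_-},\chi\kappa_p)[f_{x_0}],
\]
whose one-dimensionality comes from Proposition~\ref{prop: mult-1 with varpi_d} together with Proposition~\ref{prop: idempotents dec}. After shrinking $\mathcal{U}$ so that $\lambda_{\boldsymbol{\varphi}}$ cuts $M_{\boldsymbol{\varphi}}$ out as a direct summand of $e^{D}_{\epsilon}e^{\rm ord}\mathbf{S}^{D}(R_{N_+,N_-},\chi,\O(\mathcal{U})^0)$, Nakayama's lemma yields that $M_{\boldsymbol{\varphi}}$ is free of rank one over $\O(\mathcal{U})^0$.

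Fixing a generator $\mathbf{f}$ of $M_{\boldsymbol{\varphi}}$, its specializations at the $\cp$-points of $\mathcal{X}$ produce a family of eigenforms realizing the eigensystem $\lambda$, giving a natural map from $\mathcal{X}$ into the eigenvariety built out of the $\Lambda$-adic module. Since $\mathbf{f}$ specializes at $x_0$ to a non-zero multiple of $\varphi_{x_0}$, the uniqueness in Theorem~\ref{thm: multiplicity-1 for lifts of Coleman families}, combined with reducedness of $\Etilde_{D}^{\,\epsilon,\rm ord}$, forces this $\Lambda$-adic incarnation to agree with the abstract family. The hardest part will be the precise control theorem --- showing that $e^{D}_{\epsilon}e^{\rm ord}\mathbf{S}^{D}(R_{N_+,N_-},\chi,\Lambda)$ is finitely generated and sufficiently well-behaved over $\Lambda$ and that its specializations at arithmetic weights of sufficiently large level genuinely recover the classical $\epsilon$-components --- together with justifying the shrinking step that turns $M_{\boldsymbol{\varphi}}$ into a direct summand of the ambient module in a neighborhood of the chosen classical point.
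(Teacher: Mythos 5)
Your proposal has the right flavor but leaves the crucial step unjustified, and it somewhat misidentifies where the difficulty lies.

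The decisive gap is the sentence ``After shrinking $\mathcal{U}$ so that $\lambda_{\boldsymbol{\varphi}}$ cuts $M_{\boldsymbol{\varphi}}$ out as a direct summand of $e^{D}_{\epsilon}e^{\rm ord}\mathbf{S}^{D}(R_{N_+,N_-},\chi,\O(\mathcal{U})^0)$, Nakayama's lemma yields that $M_{\boldsymbol{\varphi}}$ is free of rank one.'' That shrinking step is precisely the content of the proposition: knowing that an eigenspace becomes a direct summand of the ordinary $\Lambda$-adic module over a neighbourhood is not automatic and is where all the work sits. The ingredient that makes it true is the \'etaleness of the weight map $\omega_{\GL_2}$ at cuspidal ordinary classical points (Bella\"iche, combined with density of classical points in the family), which the paper invokes explicitly. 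Once one shrinks $\mathcal{U}$ so that $\widetilde{JL}_p^{\epsilon}(\mathcal{X})\to\mathcal{U}$ is an isomorphism, the localization of the Hecke algebra is unramified over $\O(\mathcal{U})^0$ and the eigenspace splits off; without that input your Nakayama argument has nothing to apply to. You should also be wary that the first two items in your plan risk circularity with the logical structure of the paper: the control theorem you propose to prove first appears in the paper as Corollary~\ref{cor: Hida's Control Theorem}, which is \emph{deduced} from this proposition via Theorem~\ref{thm: rank-1 hida families} and Proposition~\ref{prop: mult-1 with varpi_d}. The ``hardest part'' is not the control theorem but making the shrinking legitimate.

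By contrast, the paper's proof dispenses with the freeness-then-Nakayama route entirely. After using \'etaleness to shrink $\mathcal{U}$ so that $\omega_{\GL_2}$ restricted to $\mathcal{X}$ becomes an isomorphism (an application of a rigid-geometric lemma of Fresnel--van der Put), it characterizes a $\Lambda$-adic form $\mathbf{f}\in M_{\boldsymbol{\varphi}}$ by its compatible system of reductions $\mathbf{f}\bmod P_n$ via the profinite presentation~\eqref{eq: Lambda-adic forms as profinite limit}, and associates to each reduction the $\O(\mathcal{U})^0\otimes_\Lambda\Lambda/P_n$-point $x_n\in\mathcal{X}$ determined by the specialized eigensystem $\lambda_{\boldsymbol{\varphi},n}$. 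Compactness and finite generation of $\O(\mathcal{U})^0$ over $\Lambda$ are used here in place of Nakayama. This gives the identification directly, without needing to first establish freeness of the eigenspace or to invoke the uniqueness statement of Theorem~\ref{thm: multiplicity-1 for lifts of Coleman families}. If you want to salvage your route, you must supply the \'etaleness input to justify the direct-summand shrinking; once you do, the two arguments are essentially equivalent in content, but the paper's version avoids the circularity risk of using the control theorem upstream.
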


\begin{proof}
    The weight map $\omega_{\GL_2}$ is étale at cuspidal ordinary classical points (\cite[Theorem 7.6.4 and Remark 7.6.6]{Bellaiche2021}, where the weight is shifted by $2$) and, by \cite[Proposition 6.2.7]{Chenevier2004}, these points are dense in the family. We deduce that $\O(\mathcal{X})\cong \O(\widetilde{JL}_p^{\epsilon}(\mathcal{X}))$ is a finite flat algebra over $\O(\mathcal{U})$, hence finite flat over $\mathbf{I}$. Up to shrinking $\mathcal{U}$ to a subaffinoid in the admissible covering $\mathscr{C}$, and intersecting its preimage with $\mathcal{X}$, we can identify $\widetilde{JL}_p^{\epsilon}(\mathcal{X})=\mathcal{U}$ by \cite[Lemma 8.1.3]{FresnelVanderPut2004} (restrict $\omega_{\GL_2}$ to a suitable wide open affinoid neighborhood in $\mathcal{X}$, hence shrink $\mathcal{U}$ to a be contained in the wide open neighborhood of the target affinoid). Therefore, $\omega_{\GL_2}$ becomes an isomorphism. Let now $\mathbf{f}$ be an element in $(e^{\rm ord}\mathbf{S}^{D}(R_{N_+,N_-},\Lambda)\otimes_{\Lambda} \O(\mathcal{U})^0)[\boldsymbol{\varphi}]$; by equation~\eqref{eq: Lambda-adic forms as profinite limit}, it is uniquely characterized by the sequence
    \begin{equation*}
        \left\{\mathbf{f}\Mod{P_{n}}\colon X_n\longrightarrow \O(\mathcal{U})^0\otimes_\Lambda\Lambda/P_{n} \right\}_{n\gg 1}.
    \end{equation*}
    For $n$ big enough we consider the specialization map $\lambda_{\boldsymbol{\varphi},n}$ defined as
    \begin{equation*}
        \lambda_{\boldsymbol{\varphi},n}\colon\Ttilde_{\mathbf{I}}\longrightarrow\widetilde{\mathscr{T}}_{\mathbf I}^{\,D,\epsilon}\overset{\lambda_{\boldsymbol{\varphi}}}{\longrightarrow}\O(\mathcal{U})^0\longrightarrow \O(\mathcal{U})^0\otimes_\Lambda\Lambda/P_{n}.
    \end{equation*} 
    Up to $\O(\mathcal{U})^0$-constants, we can then associate each $\mathbf{f}\Mod{P_{n}}$ to
    \begin{equation*}
        x_n=(\lambda_{\boldsymbol{\varphi},n}(t))_{t\in\Ttilde_{\mathbf{I}}}\,\in\, \mathcal{X}(\O(\mathcal{U})^0\otimes_\Lambda\Lambda/P_{n}),
    \end{equation*} where $x_n$ corresponds to the system of eigenvalues associated with the quaternionic modular form $\mathbf{f}\Mod{P_{n}}$. Note that here we are using the fact that, up to further shrinking $\mathcal{U}$, $\O(\mathcal{U})^0$ is a finitely generated compact $\Lambda$-module. We have then proved the sought-for identification.
\end{proof}

Let now $(\mathcal{U},\mathcal{X},\boldsymbol{\varphi},\lambda) $ be a Hida family on $\Etilde^{\,\epsilon}_{D}$ and let $\boldsymbol{\varphi}^{\GL_2}$ be the ordinary family corresponding to it. We consider the space of $\Lambda$-adic classical Hida families and its eigenspace
\begin{equation}
    (e^{\rm ord}\mathbf{S}^{\GL_2}(\Gamma_1(N_+N_-),\Lambda)\otimes_{\Lambda} \O(\mathcal{U})^0)[\boldsymbol{\varphi}^{\GL_2}]=\left\{\mathbf{f}\in \O(\mathcal{U})^0\llbracket T\rrbracket \, \middle|\, t\cdot \mathbf{f} = \lambda_{\boldsymbol{\varphi}^{\GL_2}}(t) \mathbf{f},\,\textrm{ for } \,t\in \T_{\mathbf{I}}\,\right\}.
\end{equation}

\begin{theorem}\label{thm: rank-1 hida families}
    Suppose that $\boldsymbol{\varphi}^{\GL_2}$ contains a classical point. Up to shrinking $\mathcal{U}$, there exists an isomorphism of rank one $\O(\mathcal{U})^0$-modules
    \begin{equation*}
        (e^{\rm ord}\mathbf{S}^{D}(R_{N_+,N_-},\Lambda)\otimes_{\Lambda} \O(\mathcal{U})^0)[\boldsymbol{\varphi}] \cong (e^{\rm ord}\mathbf{S}^{\GL_2}(\Gamma_1(N_+N_-),\Lambda)\otimes_{\Lambda} \O(\mathcal{U})^0)[\boldsymbol{\varphi}^{\GL_2}].
    \end{equation*}
\end{theorem}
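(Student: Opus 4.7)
The plan is to show that both sides are free $\O(\mathcal{U})^0$-modules of rank one after a suitable shrinking of $\mathcal{U}$; the desired isomorphism is then automatic. Indeed, any $\O(\mathcal{U})^0$-linear bijection between the two will automatically intertwine the Hecke actions, because these actions factor through scalar multiplication via $\lambda_{\boldsymbol{\varphi}}$ and $\lambda_{\boldsymbol{\varphi}^{\GL_2}}$ respectively, and these two homomorphisms agree on $\T_{\mathbf I}$ by the construction of $\boldsymbol{\varphi}^{\GL_2}$ from $\boldsymbol{\varphi}$ via $\widetilde{JL}_p^{\epsilon}$.

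For the left-hand side, I apply Proposition~\ref{prop: identification chenevier families and hsieh families} directly. After shrinking $\mathcal{U}$ so that the weight map $\widetilde{\omega}^{\epsilon}_D\colon\mathcal{X}\to\mathcal{U}$ restricts to an isomorphism, the proposition identifies the LHS with $\O(\mathcal{X})\cong\O(\mathcal{U})$, making it a free rank-one $\O(\mathcal{U})^0$-module. The étaleness input is \cite[Theorem 7.6.4]{Bellaiche2021}, which applies on $\Etilde_D^{\,\epsilon,\rm ord}$ through the closed immersion of Proposition~\ref{prop: JL_p^epsilon is closed immersion}.

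For the right-hand side, the same argument goes through \emph{mutatis mutandis}, with ordinary $\Lambda$-adic classical forms replacing the $\Lambda$-adic quaternionic forms. The input is once again étaleness of $\omega_{\GL_2}$ at the cuspidal ordinary classical point $\widetilde{JL}_p^{\epsilon}(x_0)\in\E_{\GL_2}^{\rm ord}$ (equivalently, this is Hida's classical control theorem for the ordinary Hecke algebra). After a further shrinking of $\mathcal{U}$ so that $\omega_{\GL_2}\colon \widetilde{JL}_p^{\epsilon}(\mathcal{X})\to\mathcal{U}$ is an isomorphism, one identifies the RHS with $\O(\widetilde{JL}_p^{\epsilon}(\mathcal{X}))\cong\O(\mathcal{U})$, which is free of rank one over $\O(\mathcal{U})^0$. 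Compatibility of this identification with the classical isomorphisms provided pointwise by Proposition~\ref{prop: mult-1 with varpi_d} can be used as a consistency check but is not strictly needed for the bare isomorphism of modules.

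The main technical obstacle is coordinating the two shrinkings of $\mathcal{U}$ so that both weight-map isomorphisms, and the finite-generation/compactness properties underlying the $\O(\mathcal{U})^0$-module structure on each side, hold simultaneously. This is a standard manoeuvre: one works within the admissible covering $\mathscr{C}$ of $\W$, uses the finiteness of the eigenvariety morphisms over $\W$, and invokes \cite[Lemma 8.1.3]{FresnelVanderPut2004} exactly as in the proof of Proposition~\ref{prop: identification chenevier families and hsieh families}. Because $\widetilde{JL}_p^{\epsilon}$ is a closed immersion, after this common shrinking the two affinoids $\mathcal{X}$ and $\widetilde{JL}_p^{\epsilon}(\mathcal{X})$ become isomorphic, and pulling back functions along $\widetilde{JL}_p^{\epsilon}$ gives the desired explicit isomorphism.
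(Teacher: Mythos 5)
Your overall strategy --- prove both sides are free of rank one over $\O(\mathcal{U})^0$ and then observe that any $\O(\mathcal{U})^0$-linear bijection automatically intertwines the Hecke actions, since these factor through scalar multiplication via $\lambda_{\boldsymbol{\varphi}}$ and $\lambda_{\boldsymbol{\varphi}^{\GL_2}}$ --- is reasonable and close in spirit to the paper's argument, and your treatment of the right-hand side via étaleness of $\omega_{\GL_2}$ at ordinary cuspidal classical points (Hida's control theorem) is fine.

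The gap is in the assertion that Proposition~\ref{prop: identification chenevier families and hsieh families} ``identifies the LHS with $\O(\mathcal{X})\cong\O(\mathcal{U})$, making it a free rank-one $\O(\mathcal{U})^0$-module.'' What that proposition actually shows is that each $\mathbf f$ in the eigenspace is determined up to an $\O(\mathcal{U})^0$-constant by the system of Hecke eigenvalues; in other words, the module \emph{embeds} into a free rank-one module. It does not establish that the module is nonzero and that the embedding is surjective, i.e.\ that it is free of rank exactly one over $\O(\mathcal{U})^0$. That is precisely the quaternionic control statement which Theorem~\ref{thm: rank-1 hida families} is asserting; deducing it from the identification proposition alone is circular. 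On the quaternionic side there is no a priori control theorem --- indeed without the $\langle\varpi_{D_\ell}\rangle$-operators the fiber rank is two by the Pizer phenomenon (Proposition~\ref{prop: multiplicity-2}), so the rank must be pinned down by an extra input. The paper supplies this input by transfer from $\GL_2$: Strong Multiplicity One for $\GL_2$ together with finite flatness of $\O(\mathcal{U})^0$ over $\mathbf{I}$ shows the rank over $\Frac\,\O(\mathcal{U})^0$ is one, and the freeness over $\O(\mathcal{U})^0$ (not merely torsion-free rank one) is then imported from the classical control theorem on the $\GL_2$ side through the map constructed by matching constants. To repair your argument, either follow this transfer, or identify the $\Lambda$-adic eigenspace with the $\O(\mathcal{U})^0$-linear dual of (a localization of) the Hecke algebra $\O(\mathcal{X})$ via the quaternionic Petersson duality of Proposition~\ref{prop: quaternionic Petersson pairing is perfect and equivariant}, after which étaleness of the weight map does yield freeness. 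As written, the claim about the left-hand side is unjustified.
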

\begin{proof}
    Proceeding as in the proof of Proposition~\ref{prop: identification chenevier families and hsieh families}, up to shrink the family, we can assume it to be étale over the weight space and identify it with the corresponding neighborhood $\mathcal{U}$. Moreover, we can assume that all the points are minimal \cite[Lemma 7.4.8]{Bellaiche2021}, therefore the corresponding eigespaces are one-dimensional. The isomorphism in the statement is then obtained keeping track of the constants in $\O(\mathcal{U})^0$. By Strong Multiplicity One on $\GL_2$ and finite flatness of $\O(\mathcal{U})^0$ over $\mathbf{I}$, we deduce that
    \begin{equation*}
        (e^{\rm ord}\mathbf{S}^{D}(R_{N_+,N_-},\Lambda)\otimes_{\Lambda} \O(\mathcal{U})^0)[\boldsymbol{\varphi}]\otimes_{\O(\mathcal{U})^0} \Frac(\O(\mathcal{U})^0)\cong \Frac(\O(\mathcal{U})^0).
    \end{equation*}
    The rank-$1$ statement follows now from the $\GL_2$-case. An alternative approach can be obtained combining \cite[Theorem 7.6.4, Theorem 8.1.5 and Lemma 8.1.1]{Bellaiche2021}, keeping in mind that we are considering cuspidal eigenvarieties and the closed immersions of eigenvarieties of \cite[Theorem 7.2.3]{Bellaiche2021} hold true.
\end{proof}
We are now ready to state the control theorem generalizing \cite[Theorem 4.10]{DallAva2021Hida}. Notice that the specialization morphism can be made explicit as in \cite[Theorem 4.2]{Hsieh2021}.
\begin{corollary}[Hida's Control Theorem]\label{cor: Hida's Control Theorem}
    For any arithmetic weight $(k,\varepsilon_n)\in \W(\cp)\cap \mathcal{U}$,
    \begin{equation*}
        (e^{\rm ord}\mathbf{S}^{D}(R_{N_+,N_-},\O(\mathcal{U})^0))[\boldsymbol{\varphi}]\otimes_{\Lambda}\Lambda/P_{(k,\varepsilon_n)} \cong (e^{\rm ord}S_k^{D}(R_{p^nN_+,N_-}))[\varphi_{(k,\varepsilon_n)}].
    \end{equation*}
\end{corollary}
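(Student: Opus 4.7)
The plan is to reduce the Control Theorem to the rank-one statement in Theorem~\ref{thm: rank-1 hida families} by constructing an explicit specialization map and verifying that both source and target are one-dimensional over $\Lambda/P_{(k,\varepsilon_n)}$.

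First, by Proposition~\ref{prop: identification chenevier families and hsieh families} we identify the Hida family $(\mathcal U, \mathcal X, \boldsymbol{\varphi}, \lambda)$ with the $\O(\mathcal U)^0$-module $(e^{\rm ord}\mathbf{S}^{D}(R_{N_+,N_-},\O(\mathcal{U})^0))[\boldsymbol{\varphi}]$, and by Theorem~\ref{thm: rank-1 hida families} this module is free of rank one over $\O(\mathcal U)^0$. Consequently, its fiber at $P_{(k,\varepsilon_n)}$, which is the left-hand side of the asserted isomorphism, is one-dimensional over the residue field $\Lambda/P_{(k,\varepsilon_n)}$. The specialization map is defined via equation~\eqref{eq: Lambda-adic forms as profinite limit}: an element $\mathbf f$ is represented by a compatible system of functions $\mathbf f \bmod P_m \colon X_m \to \O(\mathcal U)^0 \otimes_\Lambda \Lambda/P_m$; for $m$ sufficiently large (depending on $n$), reduction modulo $P_{(k,\varepsilon_n)}$ produces a function on $X_m$ satisfying exactly the weight-$k$ transformation rule at $p$ and twisting by $\varepsilon_n$ on $(\Z/p^n)^\times$, hence descends to a classical ordinary quaternionic form in $e^{\rm ord}S_k^{D}(R_{p^nN_+,N_-})[\varphi_{(k,\varepsilon_n)}]$. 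The twist $\iota_2$ built into the definition of $\mathbf S^D(R_{N_+,N_-},\Lambda)$ ensures that the transformation law for $\varphi_{(k,\varepsilon_n)}$ comes out correctly.

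Next, I would argue that the target space is also one-dimensional. The family $\boldsymbol{\varphi}$ lives on the idempotent-type eigenvariety $\Etilde_D^\epsilon$ for some fixed tuple $\epsilon = (\epsilon_\ell)_{\ell \mid N_{-}^{\mathrm{sc}}}$, so under the morphism $\widetilde{JL}_p^\epsilon$ from Proposition~\ref{prop: JL_p^epsilon is closed immersion}, the classical specialization $\varphi_{(k,\varepsilon_n)}$ corresponds to a newform $f_{(k,\varepsilon_n)}$ on $\GL_2$ with prescribed $\UL{\ell}$-eigenvalues. By Proposition~\ref{prop: mult-1 with varpi_d} (together with Lemma~\ref{lem: Action of Uell on non-twist-minimal forms} at primes where the specialization may no longer be twist-minimal and the ordinary projector at $p$), the full Hecke plus $\UL{\ell}$ eigenspace inside $e^{\rm ord}S_k^{D}(R_{p^nN_+,N_-})$ is one-dimensional. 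Since the family $\boldsymbol{\varphi}$ prescribes these eigenvalues, the right-hand side has dimension one.

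Finally, to upgrade dimension equality to an isomorphism, I must show the specialization map is non-zero. This is where the hypothesis that $\boldsymbol{\varphi}$ passes through a classical point is crucial: at such a point the map is non-zero by construction, and by Zariski density of classical points in a Hida family (\cite[Proposition~6.2.7]{Chenevier2004}) together with the freeness of rank one of the source over $\O(\mathcal U)^0$, a nowhere-zero generator of the source must specialize to a nonzero element at \emph{every} arithmetic weight in $\mathcal U$. Concretely, if the generator $\mathbf f$ of the source specialized to zero at $(k,\varepsilon_n)$, then $\mathbf f$ would be divisible by $P_{(k,\varepsilon_n)}$ in the free rank-one module, contradicting the fact that another specialization is nonzero. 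The main obstacle I anticipate is the careful bookkeeping of characters and central action under $\iota_2$, to verify that the image of the specialization map really lands in the eigenspace with the correct character $\chi\varepsilon_n$ and the correct $\UL{\ell}$-eigenvalues; once this is in place the argument closes via the dimension count.
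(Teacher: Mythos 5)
Your proof is correct and follows essentially the same route as the paper's one-sentence argument, which simply invokes Theorem~\ref{thm: rank-1 hida families}, Strong Multiplicity One, and Proposition~\ref{prop: mult-1 with varpi_d} (after remarking, just before the statement, that the explicit specialization morphism can be constructed as in Hsieh's Theorem 4.2). You have merely made explicit the details — the construction of the specialization map from the profinite limit description, the dimension count on the target via multiplicity one on $\GL_2$ combined with the $\UL{\ell}$-decomposition, and the Nakayama-style non-vanishing argument for a generator of a free rank-one module — that the paper compresses into its single citation.
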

\begin{proof}
    The above Theorem~\ref{thm: rank-1 hida families} together with Strong Multiplicity One and Proposition~\ref{prop: mult-1 with varpi_d} imply the isomorphism between the rank $1$ modules in the statement.
\end{proof}
\begin{remark}
    In the introduction (Section \ref{section: introduction}) we already pointed out that the condition for having weight one classical specializations in a family $\boldsymbol{\varphi}^{\GL_2}$ are rather strict. In the setting of this section, this condition can be read easily from the level $N_-$: a family will not contain classical weight one specializations unless $N_{-}^{\mathrm{sp}}=1$, i.e.\ $N_-=N_{-}^{\mathrm{sc}}$. Therefore, throughout the rest of the paper, we will work under this assumption.
\end{remark}

\section{Balanced triple product $p$-adic $L$-function}\label{Section: Balanced triple product $p$-adic $L$-function}

In this section, we prove our main theorem about the existence of balanced $p$-adic $L$-functions and their interpolation property.

\subsection{Definition of the $p$-adic $L$-function}\label{section: Definition of the $p$-adic $L$-function}

 Let $\mathbf F = (\mathbf f, \mathbf g, \mathbf h)$ be the triple product of primitive Hida families of tame conductors $(N_1, N_2, N_3) \in \N^3$ and characters $(\chi_1, \chi_2, \chi_3)$. Let $N = \lcm(N_1, N_2, N_3)$. For any classical weight $(k_1, k_2, k_3)$, we write $(f_{k_1}, g_{k_2}, h_{k_3}) = \mathbf F(k_1, k_2, k_3)$; moreover, we let:
$$\Sigma^- = \{ \ell \text{ finite} \ | \ \epsilon_\ell ( f_{k_1} \times g_{k_2} \times h_{k_3}) = -1 \},$$
which is independent of the choice of $(k_1, k_2, k_3)$ by the rigidity of automorphic types. 

We make the following assumptions: 
\begin{itemize}
    \item $|\Sigma^-|$ is odd; then there exists a definite quaternion algebra $D$ over $\Q$ ramified exactly at the places in $\Sigma^-$ (and infinity),
    \item if $\ell \in \Sigma^-$, then $v_\ell(N) \leq 2$ (this is an improvement on Hsieh's~\cite{Hsieh2021} assumption that $v_\ell(N) = 1$).
\end{itemize}

We now choose test vectors on the quaternionic group $D^\times(\A)$. First, we write
\begin{align}
    \Sigma_{\ast}^{-, \mathrm{sc}} & = \{ \ell \in \Sigma^- \ | \ \ast \text{ is supercuspidal at }\ell\} & \text{for } \ast \in \{f, g, h \}, \\
    \Sigma^{-, \mathrm{sc}} & = \Sigma^{-, \mathrm{sc}}_f \cup \Sigma^{-, \mathrm{sc}}_g \cup \Sigma^{-, \mathrm{sc}}_h \subseteq \Sigma^{-}, \\
    \Sigma^{-, n} & = \{\ell \in \Sigma^- \ | \ n\text{ of $\pi_{f, \ell}$, $\pi_{g, \ell}$, $\pi_{h, \ell}$ are supercuspidal} \} & \text{for $n \in \{0,1,2,3\}$}.\label{eqn:Sigma-n}
\end{align}
Once again, these sets only depend on $\mathbf f$, $\mathbf g$, $\mathbf h$.

Recall from Proposition~\ref{prop:epsilon=-1} that $\Sigma^- = \Sigma^{-,0} \sqcup \Sigma^{-, 2} \sqcup \Sigma^{-,3}$. The computations of the local integrals at $\ell \in \Sigma^-$ are in Propositions~\ref{prop:local_int_spspsp}, \ref{prop:local_int_scscsp}, ~\ref{prop:local_integral_scscsc}, respectively.

Next, we choose the signs which determine the Hida families on $D^\times$:
\begin{align*}
\epsilon_\ast & \in \{\pm1\}^{\Sigma^{-, \mathrm{sc}}_{\ast}} & \text{for }\ast \in \{f,g,h\}, \\
\epsilon & = (\epsilon_f, \epsilon_g, \epsilon_h).
\end{align*}
To shorten the notation from Section~\ref{Section: Lambda-adic quaternionic forms and Hida families} we will write $eS^{D, \epsilon}(N, \psi, \mathcal U)$ for the space of $\Lambda$-adic quaternionic forms $e^{\mathrm{ord}}\mathbf{S}^{D}(R_{N_+,N_-},\psi_{\A}^{-1},\mathcal{O}(\mathcal{U})^{0})$ where the extra Hecke operators act accordingly to the choice of signs~$\epsilon$.
Then by Theorem~\ref{thm: multiplicity-1 for lifts of Coleman families} and Proposition~\ref{prop: identification chenevier families and hsieh families}, for any classical point of the weight space there exist an open admissible affinoid neighborhood $\mathcal U = \mathcal U_1 \times \mathcal U_2 \times \mathcal U_3$ of this point, and elements:
    $$(\mathbf f^{D, \epsilon_f}, \mathbf g^{D, \epsilon_g}, \mathbf h^{D, \epsilon_h}) \in eS^{D, \epsilon_f}(N_1, \psi_1, \mathcal U_1)[\mathbf f] \times eS^{D, \epsilon_g}(N_2, \psi_2, \mathcal U_2)[\mathbf g] \times eS^{D, \epsilon_h}(N_2, \psi_2, \mathcal U_3)[\mathbf h].$$
Note that these choices are only well-defined up to elements in $\O(\mathcal U)^\times = \O(\mathcal U_1)^\times \hat\otimes \O(\mathcal U_2)^\times \hat\otimes \O(\mathcal U_3)^\times$.
    
Finally, we bring these forms to the common level $N$, following~\cite[Definition 4.8]{Hsieh2021}.

\begin{definition}\label{def: addjustment levels and test families}
    \leavevmode
    \begin{enumerate}
        \item Define the adjustments of levels $\mathbf d_f$, $\mathbf d_g$, $\mathbf d_h$ as in \cite[Section 3.4]{Hsieh2021}. At $\ell \in \Sigma^{-, \mathrm{sc}}$, we make no additional adjustment.
        \item  Consider the sets $\Sigma_{\ast, 0}^{\rm IIb}$ as in \cite[Section 3.4]{Hsieh2021} and define:
    $$(\mathbf f^{D\star, \epsilon_f}, \mathbf g^{D\star, \epsilon_g}, \mathbf h^{D\star, \epsilon_h}) \in eS^{D, \epsilon_f}(N_1, \psi_1, \mathcal U_1)[\mathbf f] \times eS^{D, \epsilon_g}(N_2, \psi_2, \mathcal U_2)[\mathbf g] \times eS^{D, \epsilon_h}(N_3, \psi_3, \mathcal U_2)[\mathbf h]$$
    by
    \begin{align*}
        \mathbf f^{D\star, \epsilon_f} & = \sum_{I \subseteq \Sigma_{f, 0}^{\rm IIb}} (-1)^{|I|} \beta_I(f)^{-1} V_{\mathbf d_f/n_f}  \mathbf f^{D, \epsilon_f}, \\
        \mathbf g^{D\star, \epsilon_f} & = \sum_{I \subseteq \Sigma_{g, 0}^{\rm IIb}} (-1)^{|I|} \beta_I(g)^{-1} V_{\mathbf d_g/n_g}  \mathbf g^{D, \epsilon_g}, \\
        \mathbf h^{D\star, \epsilon_h} & = \sum_{I \subseteq \Sigma_{h, 0}^{\rm IIb}} (-1)^{|I|} \beta_I(h)^{-1} V_{\mathbf d_h/n_h}  \mathbf f^{D, \epsilon_h}.
    \end{align*}
    \end{enumerate}

\end{definition}

We now define an unnormalized version of the triple product $p$-adic $L$-function.

\begin{definition}
    \leavevmode
    Let $\mathcal R = \O(\mathcal U) = \O(\mathcal U_1) \hat \otimes_{\O} \O(\mathcal U_2) \hat \otimes_{\O} \O(\mathcal U_3)$.
    \begin{enumerate}
        \item Define the {\em triple product} $\mathbf F^{D\star, \epsilon} \colon (D^\times \backslash \widehat D^\times)^3 \to \mathcal R$ by $\mathbf F^{D\star, \epsilon} = \mathbf f^{D\star, \epsilon_f} \boxtimes \mathbf g^{D\star, \epsilon_g} \boxtimes \mathbf h^{D\star, \epsilon_h}$.
        \item The associated {\em theta element} is:
    $$\Theta_{\mathbf F^{D\star, \epsilon}} = (\mathbf F^{D\star, \epsilon})^\ast(\Delta_\infty^\dagger) \in \mathcal R,$$
    where $\Delta_\infty^\dagger$ is the {\em regularized diagonal cycle} from \cite[Definition\ 4.6]{Hsieh2021}.
    \end{enumerate}
\end{definition}

Since $\mathbf F^{D\star, \epsilon}$ is only well-defined up to scalars, so is $\Theta_{\mathbf F^{D \star, \epsilon}}$. To define the genuine $p$-adic $L$-function, we will divide $\Theta_{\mathbf F^{D \star, \epsilon}}$ by the Petersson norm of $\mathbf F^{D \star, \epsilon}$.

Recall the quaternionic Petersson product introduced in Section \ref{Section: The quaternionic Petersson product}. Considering such pairing, for $\mathbf f$ and $\mathbf f'\in \mathbf S^D(N_+N_-, \chi, \mathbf I)$ we define
\begin{equation}
    \mathbf B_{N_+,N_-,\alpha}(\mathbf f, \mathbf f') = \sum\limits_{[x]\in D^\times\backslash\widehat{D}^\times/\widehat{R}_{N_+p^\alpha,N_-}^\times}\frac{\chi_\A(\nu_{\A_f}(x))\,\langle\Np(x)\rangle\, [\langle\Np(x)\rangle]_{\mathbf I}}{\#\Gamma_{N_+p^\alpha,N_-}(x)}\,\mathbf{f}\left(x\, \AL{N_+p^\alpha,N_-}\right) \mathbf{f}'\left(x\right).
\end{equation}
As in~\cite[Definition 4.3]{Hsieh2021}, we obtain a Hecke-equivariant $\mathbf I$-bilinear pairing
\begin{equation}
    \mathbf B_N \colon e \mathbf S^D(N, \chi, \mathbf I) \times e \mathbf S^D(N, \chi, \mathbf I) \to \mathbf I,
\end{equation}
\begin{equation}
    \mathbf B_N(\mathbf f, \mathbf f') = \varprojlim_\alpha \mathbf B_{N, \alpha}(\mathbf f, \mathbf f') \in \varprojlim_\alpha \mathbf I/P_\alpha = \mathbf I
\end{equation}
such that for every arithmetic point $\kappa = (k, \epsilon)$ and integer $\alpha \geq \max\{1,  v_p(\cond(\epsilon))\}$, we have (cf.~\cite[Proof of Lemma 4.4]{Hsieh2021}) 
\begin{align}
    \mathbf B_N(\mathbf f, \mathbf f')(\kappa) = (-1)^k \langle \mathbf U_p^{-\alpha} \mathbf f_{\kappa}^{}, \mathbf f_{\kappa}' \rangle_{N_+p^\alpha,N_-}.
\end{align}

\begin{definition}
    For $\mathbf f^D \in e\mathbf S^D(N, \chi, \O(\mathcal U))$, the {\em Petersson inner product} of $\mathbf f^D$ is
    $$\eta_{\mathbf f^D} = \mathbf B_N(\mathbf f^D, \mathbf f^D) \in \O(\mathcal U)$$
    and the {\em Petersson norm} of $\mathbf f^D$ is:
    $$\| \mathbf f^{D} \| =  \eta_{\mathbf f^D}^{1/2} \in \Frac \O(\mathcal U).$$
    Similarly, for a triple product $\mathbf F^D =  \mathbf f^D \boxtimes \mathbf g^D \boxtimes \mathbf h^D$, its {\em Petersson norm} is:
    $$ \| \mathbf F^{D} \| = \| \mathbf f^D \| \cdot \| \mathbf g^D \| \cdot \| \mathbf h^D \| \in \Frac \mathcal O(\mathcal U).$$
\end{definition}

Before defining the $p$-adic $L$-function we recall two more technical points from~\cite{Hsieh2021}, related to the choice of test vectors at primes dividing $N^+$:
\begin{itemize}
    \item There is a twist $\mathbf F' = (\mathbf f \otimes \chi_1, \mathbf g \otimes \chi_2, \mathbf h \otimes \chi_3)$ by Dirichlet characters $\chi_1, \chi_2, \chi_3$ modulo $M$ with $M^2 | N^+$ such that $\chi_1 \chi_2 \chi_3 = 1$ and $\mathbf F'$ satisfies Hypothesis 6.1 of loc. cit. (see also Remark~6.2). 
    \item There is a {\em fudge factor} $\mathfrak f_{\mathbf F'} = \prod\limits_{q | N^+}  f_{\mathbf F', q}  \in \mathcal R^\times$ defined in Proposition 6.12 of loc.\ cit., and, enlarging $\mathcal O$ if necessary, we have that $\sqrt{\mathfrak f_{\mathbf F'}} \in \mathcal R^\times$.
\end{itemize}

By~\cite[Lemma 6.11]{Hsieh2021}, there exists $\epsilon^{\Sigma^-}(\mathbf F) \in \mathcal R^\times$ such that 
$$\epsilon^{\Sigma^-}(\mathbf F)(\kappa) = \epsilon^{\Sigma^-}(f_{\kappa_1}) \epsilon^{\Sigma^-}(g_{\kappa_2}) \epsilon^{\Sigma^-}(h_{\kappa_3}) $$ is the product of the away-from-$\Sigma^-$ parts of the root numbers. 

\begin{definition}\label{def:p-adic_L-fun}
    The {\em (genuine) square root balanced triple product $p$-adic $L$-function} associated with $\mathbf F$ and~$\epsilon$ is:
        $$\mathcal L_{\mathbf F, \epsilon}^{\rm bal}(\kappa) = \frac{\Theta_{\mathbf F'^{D\star, \epsilon}}}{\| \mathbf F^{D \star, \epsilon} \|} \cdot 2^{- \frac{|\Sigma^-| + 1 -k_1-k_2-k_3}{2}} \cdot (N^{-})^{-1/2} \cdot \epsilon^{\Sigma^-}(\mathbf F)^{-1/2} \cdot \sqrt{\mathfrak f_{\mathbf F'}}^{-1} \prod_{\ell \in \Sigma^{-, \mathrm{sc}}} \frac{\ell^{k_1 + k_2 + k_3 - 13/2}}{\sqrt{\zeta_\ell(2)}}\in \Frac \mathcal R.$$
\end{definition}

\subsection{The interpolation property}

Consider the subset of {\em arithmetic points} in $\mathcal U$:
\begin{equation}
    \mathcal U^{\rm arith} = \left\{\kappa = (k_1, k_2, k_3, \chi_1, \chi_2, \chi_3) \in \mathcal{U}_1^{\rm arith} \times \mathcal{U}_2^{\rm arith} \times \mathcal{U}_3^{\rm arith}\ \middle|\ k_1+k_2+k_3 \equiv 0 \pmod 2 \right\},
\end{equation}
and let $\mathcal{U}^{\rm bal}$ be the subset of {\em balanced arithmetic points}:
\begin{equation}
    \mathcal{U}^{\rm bal}=\left\{\kappa = (k_1, k_2, k_3, \chi_1, \chi_2, \chi_3) \in \mathcal{U}^{\rm arith}\ \middle|\ k_1+k_2+k_3 > 2k_i\  \text{for all $i=1,2,3$}\right\}.
\end{equation}

\begin{theorem}\label{thm:interpolation}
    For an arithmetic point $\kappa = (k_1, k_2, k_3, \chi_1, \chi_2, \chi_3) \in \mathcal U^{\bal}$ in the balanced range we have:
    $$ (\mathcal L_{\mathbf F, \epsilon}^{\bal}(\kappa))^2 = \frac{ \Gamma_{\mathbf V_\kappa^\dagger}(0) \cdot L(\mathbf V_\kappa^\dagger, 0)}{(\sqrt{-1})^{k_1 + k_2 + k_3-1} \langle \mathbf F_\kappa, \mathbf F_\kappa \rangle} \cdot \mathcal E_p^{\bal}(\mathbf V_\kappa) \cdot \prod_{q \in \Sigma_{\mathrm{exc}}} (1 - q^{-1})^2 \cdot \prod_{\ell \in \Sigma^{-, 2}} \frac{1 + \epsilon_{\ell, 1} \epsilon_{\ell, 2}\sqrt{\omega_3(\ell)}}{2} \prod_{\ell \in \Sigma^{-, 3}} \frac{1 + \epsilon_{\ell, 1} \epsilon_{\ell, 2} \epsilon_{\ell, 3}}{4},$$
    where $\mathcal E_p(V_\kappa)$ is an Euler factor defined by:
    $$\mathcal E_p^{\bal}(\mathbf V_\kappa) = \frac{\mathcal E_p(\mathrm{Fil}^+_{\bal} \mathbf V_\kappa)}{\mathcal E_p(\mathbf F_\kappa, \Ad) }$$
    and $\mathcal E_p(\mathbf F_\kappa, \Ad)$ is a modified adjoint Euler factor~\eqref{eqn:modified_adjoint_Euler_factor}.
    
    In particular, $\mathcal L_{\mathbf F, \epsilon}^{\bal}\neq 0$ only if $\epsilon$ satisfies:
    \begin{enumerate}
        \item if $\ell \in \Sigma^{-, 2}$, then $\epsilon_\ell = (\epsilon_{\ell, 1}, \epsilon_{\ell, 2})$: if the special representation is a twist of Steinberg by $\omega_3$ such that $\omega_3(\ell) = 1$, then $\epsilon_{\ell, 1} \cdot \epsilon_{\ell, 2} = 1$,
        \item if $\ell \in \Sigma^{-, 3}$, then $\epsilon_{\ell, 1} \cdot \epsilon_{\ell, 2} \cdot \epsilon_{\ell, 3} = 1$.
    \end{enumerate}
\end{theorem}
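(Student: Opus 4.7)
The plan is to adapt Hsieh's proof of the interpolation property \cite[Theorem B]{Hsieh2021} to our setting with residually inert level structure at primes in $\Sigma^{-,\mathrm{sc}}$. The core strategy is to specialize the theta element $\Theta_{\mathbf F'^{D\star,\epsilon}}$ at a balanced arithmetic point $\kappa$, relate the resulting classical expression to a global trilinear period via Ichino's formula, and then factorize this period as a product of local integrals that we evaluate place by place.

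First I would carry out the specialization step. For $\kappa = (k_1,k_2,k_3,\chi_1,\chi_2,\chi_3) \in \mathcal U^{\bal}$, Corollary~\ref{cor: Hida's Control Theorem} identifies the specialization of $\mathbf f^{D,\epsilon_f}$ (and analogously of $\mathbf g^{D,\epsilon_g}$, $\mathbf h^{D,\epsilon_h}$) with a classical $\UL{\ell}$-eigenform $f^{D,\epsilon_f}_{\kappa_1}$ lying in the one-dimensional $\epsilon_f$-eigenspace singled out by Proposition~\ref{prop: mult-1 with varpi_d}. Evaluating $\Theta_{\mathbf F'^{D\star,\epsilon}}$ at the regularized diagonal cycle $\Delta_\infty^\dagger$ and squaring, using the Hecke-equivariant pairing of Proposition~\ref{prop: quaternionic Petersson pairing is perfect and equivariant}, writes $\mathcal L^{\bal}_{\mathbf F,\epsilon}(\kappa)^2$ as the ratio of a global trilinear period on $D^\times(\A)$ to the product of Petersson norms $\|\mathbf F^{D\star,\epsilon}\|^2$.

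Next I would invoke Ichino's formula \cite{Ichino2008}, transported to the automorphic setting via Lemma~\ref{lemma:Hsieh4.5}, to express this ratio as the product of local integrals $I'_v(\phi_v)/\langle \phi_v,\widetilde{\phi_v}\rangle$ multiplied by the normalized central value $L(\mathbf V^\dagger_\kappa,0)/L(\mathrm{Ad}(\mathbf F_\kappa),1)$ together with the relevant Gamma and zeta factors. The bulk of the work is then the evaluation of these local integrals: at archimedean and good finite primes the computations follow Hsieh unchanged; at $p$ the ordinary test vectors produce the Euler factor $\mathcal E_p(\mathrm{Fil}^+_{\bal}\mathbf V_\kappa)$, while the adjoint normalization that comes from working with plain Petersson norms (rather than Gross periods) accounts for the remaining factor $\mathcal E_p(\mathbf F_\kappa,\Ad)^{-1}$, assembling into $\mathcal E_p^{\bal}(\mathbf V_\kappa)$; at auxiliary primes $q\in\Sigma_{\mathrm{exc}}$ and $q\mid N^+$ Hsieh's fudge factor analysis supplies the $(1-q^{-1})^2$ terms; at primes $\ell\in\Sigma^{-,0}$ Proposition~\ref{prop:local_int_spspsp} gives a nonzero constant absorbed into the overall normalization; and at $\ell\in\Sigma^{-,2}$, $\ell\in\Sigma^{-,3}$ Propositions~\ref{prop:local_int_scscsp} and~\ref{prop:local_integral_scscsc} produce exactly the factors $(1+\epsilon_{\ell,1}\epsilon_{\ell,2}\sqrt{\omega_3(\ell)})/2$ and $(1+\epsilon_{\ell,1}\epsilon_{\ell,2}\epsilon_{\ell,3})/4$ appearing in the statement.

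The main obstacle will be bookkeeping the many normalization constants occurring in Definition~\ref{def:p-adic_L-fun}—the factor $2^{-(|\Sigma^-|+1-k_1-k_2-k_3)/2}$, the power $(N^-)^{-1/2}$, the root number adjustment $\epsilon^{\Sigma^-}(\mathbf F)^{-1/2}$, the $\ell$-power contributions at $\Sigma^{-,\mathrm{sc}}$, and the $\sqrt{\zeta_\ell(2)}$ denominators—and verifying that their combined effect reproduces the Gamma factor $\Gamma_{\mathbf V^\dagger_\kappa}(0)$ together with the sign $(\sqrt{-1})^{k_1+k_2+k_3-1}$. The cleanest organizational device is to perform the comparison \emph{relative} to Hsieh: in the case $\Sigma^{-,\mathrm{sc}}=\emptyset$ all test vectors have conductor at most one at every prime of $\Sigma^-$, the extra operators $\UL{\ell}$ drop out, and our formula must reduce to~\cite[Theorem B]{Hsieh2021} up to the adjoint Euler factors absorbed by the switch from Gross periods to Petersson norms; the only genuinely new local computations are thus the conductor-two integrals of Section~\ref{Section: Local JL correspondence and test vectors}, and the consistency conditions on $\epsilon$ in (1) and (2) emerge transparently as the nonvanishing criteria for those integrals.
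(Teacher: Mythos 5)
Your proposal follows essentially the same route as the paper's proof: the paper establishes an intermediate interpolation formula (Proposition~\ref{prop:interpolation_theta}) by applying Ichino's formula together with Lemma~\ref{lemma:Hsieh4.5} and volume computations for Pizer orders, then substitutes the local zeta integrals from Propositions~\ref{prop:local_Hsieh}, \ref{prop:local_two_sc}, and \ref{prop:local_three_sc} (which in turn rest on Propositions~\ref{prop:local_int_scscsp} and~\ref{prop:local_integral_scscsc}), with the adjoint Euler factors absorbed by the choice of Petersson norms rather than Gross periods, exactly as you describe. Your observation that the only genuinely new input is the conductor-two local computations, and that the statement should reduce to Hsieh's when $\Sigma^{-,\mathrm{sc}}=\emptyset$, correctly identifies both the novel content and the sanity check organizing the paper's argument.
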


\begin{remark}\label{rmk:difference_to_Hsieh}
    In this extended remark, we compare our $p$-adic $L$-function to the one defined by Hsieh~\cite[Theorem B]{Hsieh2021}. The main difference is that his $p$-adic $L$-function is defined without normalizing by the Petersson norm $\| \mathbf F^{D \ast, \epsilon} \|$ on the quaternion algebra. As a result, note that Gross periods (\cite[Definition 4.12]{Hsieh2021}) do not feature in the interpolation formula for our $p$-adic $L$-function.  
    
    We instead use the above definition for two reasons:
    \begin{enumerate}
        \item It seems that this is the correct $p$-adic $L$-function to state an Elliptic Stark Conjecture~\ref{conj:ES} for. For example, we will soon see in Theorem~\ref{thm:CM} that it is this $p$-adic $L$-function that admits a natural factorization in the CM case.
        \item We did not prove the analogue of Theorem 4.5 in loc. cit.: that the $\mathbf I$-modules $e S^{D, \pm}(N, \mathbf I)[\lambda_{\mathbf f}^D]$ are free of rank one over the whole algebra $\mathbf I$. Instead, we satisfy ourselves with the local statement in Theorems~\ref{thm: multiplicity-1 for lifts of Coleman families} and~\ref{thm: rank-1 hida families}, because it is enough for our arithmetic applications. However, this means that our choices of vectors $\mathbf F^{D\star, \epsilon}$ are only well-defined up to scalars, and hence only the quotient by the Petersson norm is well-defined.
    \end{enumerate}
    The terminology {\em genuine} $p$-adic $L$-function is inspired by the discussion after \cite[Theorem A]{Hsieh2021}. The advantage is that it is independent of choices, but the disadvantage is that it is only an element of $\Frac \mathcal R$ and not $\mathcal R$. 

    The denominators of $\mathcal L_{\mathbf F, \epsilon}^{\bal}$ should be captured by the congruence module of $\mathbf F^{D\star, \epsilon}$, which in turn should be related to the congruence module for $\mathbf F$ (see \cite[Remark 7.8]{Hsieh2021}). Indeed, if one could choose vectors $\mathbf f^{D, \pm} \in eS^{D, \pm}(N, \mathbf I)[\lambda_{\mathbf f}^D]$ as in \cite[Theorem 4.5]{Hsieh2021}, then one could also define a $p$-adic $L$-function in $\mathbf I \hat \otimes \mathbf I \hat \otimes \mathbf I$, generalizing the one constructed in loc.\ cit. It would then differ from our genuine $p$-adic $L$-function by $\sqrt{\eta_{\mathbf F'^{D\star, \epsilon}}}$ and one could presumably show that $\eta_{\mathbf F'^{D\star, \epsilon}}$ is a generator for the congruence module as in \cite[Section 7.2]{Hsieh2021}. We decided to defer these questions to future work.
\end{remark}

\begin{remark}
    Note that the Euler factor at $p$: 
    $$\mathcal E_p(\mathbf V_\kappa) = \frac{\mathcal E_p(\mathrm{Fil}^+_{\bal} \mathbf V_\kappa)}{\mathcal E_p(\mathbf F_\kappa, \Ad) }$$
    is analogous to the Euler factor for the unbalanced $p$-adic $L$-function constructed by Darmon--Rotger~\cite[Theorem 1.3]{Darmon_Rotger}:
    $$\frac{\mathcal E(f,g,h)}{\mathcal E_0(f) \mathcal E_1(f)} = \frac{(1 - \beta_f \alpha_g \alpha_h p^{-c})(1 - \beta_f \alpha_g \beta_h p^{-c})(1 - \beta_f \beta_g \alpha_hp^{-c}) (1 - \beta_f \beta_g \beta_h p^{-c})}{(1 - \beta_f^2 \chi_f^{-1}(p) p^{1-k})(1 - \beta_f^2 \chi_f^{-1}(p) p^{-k})}.$$
\end{remark}

The proof of Theorem~\ref{thm:interpolation} will occupy the rest of this section and amounts to generalizing the results of~\cite[Section 4]{Hsieh2021}. We split it into two parts:
\begin{itemize}
    \item an intermediate interpolation property obtained from Ichino's formula~\cite{Ichino2008}, with factors coming from certain normalized local zeta integrals,
    \item the evaluation of the local zeta integrals.
\end{itemize}

\subsection{An intermediate interpolation property}

We start by proving the analogue of \cite[Corollary 4.13]{Hsieh2021}, which gives an interpolation property up to certain local factors.

\begin{proposition}\label{prop:interpolation_theta}
    Let:
    \begin{align}\label{eqn:script_I}
        \mathscr I_{\Pi_{\kappa, q}}^\star & = \begin{cases} \displaystyle
            I_q(\phi_q^\star \otimes\widetilde \phi_q^\star) \cdot B_{\Pi_q} \cdot \frac{\zeta_q(1)^2}{|N|_q^2 \zeta_q(2)^2}  \cdot \omega_{F, q}^{-1}(\mathbf d_F) |\mathbf d_F^{\underline{k} - 2}|_q & \text{for }q|N^+, \\[0.3cm]
            \displaystyle I_q(\phi_q \otimes\widetilde \phi_q) \cdot B_{\Pi_q} \cdot q \frac{\zeta_q(1)^5}{\zeta_q(2)^3} \cdot \omega_{F, q}^{-1}(\mathbf d_F) |\mathbf d_F^{\underline{k}-2}|_q & \text{for }q \in \Sigma^{-, 2}, \\[0.3cm] 
            \displaystyle I_q(\phi_q \otimes\widetilde \phi_q) \cdot B_{\Pi_q} \cdot q \frac{\zeta_q(1)^4}{\zeta_q(2)^2} \cdot \omega_{F, q}^{-1}(\mathbf d_F) |\mathbf d_F^{\underline{k}-2}|_q & \text{for }q \in \Sigma^{-, 3}
        \end{cases}
    \end{align}
    be the normalized local zeta integral at $q|(N/d^-)$, with $I_q(\phi_q^\star \otimes\widetilde \phi_q^\star)$ defined in equation~\eqref{eqn:Ichino_local_int}, and let:
    \begin{align}
        \mathscr I_{\Pi_{\kappa, q}}^{\bal} & = I_p^{\mathrm{ord}}(\phi_p, \breve{\mathbf t}_n) \cdot B_{\Pi_p^{\rm ord}}^{[n]} \frac{\omega_{F,p}^{1/2}(-p^{2n}) |p|_p^{-n(k_1+k_2+k_3)}}{\alpha_p(F)^{2n} \zeta_p(2)^2}.
    \end{align}
    be the normalized $p$-adic zeta integral defined in \cite[(4.21)]{Hsieh2021} (see loc.\ cit.\ for details).
    
    Then for $\kappa = (\kappa_1, \kappa_2, \kappa_3) \in \mathcal{U}^{\bal}$ in the balanced range, we have the interpolation formula:
    $$\frac{\Theta_{\mathbf F^{D \star}}(\kappa)^2}{\langle F^D, F^D \rangle} = 2^{\# \Sigma^{-,0} +1 -  k_1 - k_2 - k_3} \cdot N^- \cdot \frac{L(1/2, \Pi_\kappa) \cdot \epsilon^{\Sigma^-}(f_{k_1}) \epsilon^{\Sigma^-}(g_{k_2}) \epsilon^{\Sigma^-}(h_{k_3})}{\langle f_{k_1}, f_{k_1} \rangle \langle g_{k_2}, g_{k_2} \rangle \langle h_{k_3}, h_{k_3} \rangle} \cdot \frac{\mathscr I_{\Pi_{\kappa, p}}^{\bal}}{\mathcal E_p(\mathbf F_\kappa, \Ad)} \cdot \prod_{q | (N/d^-)}\mathscr I_{\Pi_{\kappa, q}}^\star.$$
    where $f_{k_1}$, $g_{k_2}$, $h_{k_3}$ are the newforms associated with the specializations of $\mathbf F$ at $\kappa$, 
    $$\epsilon^{\Sigma^-}(F) = \prod_{\ell | N/d^-} \epsilon(1/2, \pi_{F, \ell}) |N_F|_\ell^{(2 - k_i)/2} \in \widehat \Z_{(p)}^\times$$
    is the away-from-$\Sigma^-$ part of the root number of $F \in \{f_{k_1}, g_{k_2}, h_{k_3}\}$, and
    \begin{equation}\label{eqn:modified_adjoint_Euler_factor}
        \mathcal E_p(\mathbf F_\kappa, \Ad) = \mathcal E_p(f_{k_1} \Ad) \mathcal E_p(g_{k_2}, \Ad) \mathcal E_p(h_{k_3}, \Ad).
    \end{equation}
    is the modified Euler factor in \cite[(3.10)]{Hsieh2021}.
\end{proposition}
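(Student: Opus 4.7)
The proof adapts~\cite[Corollary 4.13]{Hsieh2021} to our setting: the only new feature is the choice of test vectors on $D^\times$ at primes $\ell \in \Sigma^{-,\mathrm{sc}}$, which are the $\UL{\ell}$-eigenvectors selected by $\epsilon$ via the idempotents $e^D_\epsilon$ of Definition~\ref{def: idempotents}.

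I would begin by specializing $\mathbf F^{D\star,\epsilon}$ at $\kappa$ to obtain a triple of classical quaternionic forms $(f^{D\star}, g^{D\star}, h^{D\star})$; the associated automorphic forms produced by~\eqref{eq: lift from QMFs to AF on D} provide a test vector $\phi = \phi_f \otimes \phi_g \otimes \phi_h \in \pi^D_{f_{k_1}} \otimes \pi^D_{g_{k_2}} \otimes \pi^D_{h_{k_3}}$. Applying Lemma~\ref{lemma:Hsieh4.5} to each factor and unpacking the definition of $\Theta_{\mathbf F^{D\star}}(\kappa)$ via the regularized diagonal cycle $\Delta_\infty^\dagger$, I would express $\Theta_{\mathbf F^{D\star}}(\kappa)^2 / \langle F^D, F^D\rangle$ in terms of $I(\phi) \cdot I(\widetilde\phi)$, where $I(\phi)$ is the global triple product period on $D^\times$.

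Next, Ichino's formula~\cite{Ichino2008} decomposes $I(\phi) \cdot I(\widetilde\phi)/\prod_v \langle \phi_v,\widetilde\phi_v\rangle$ as the central $L$-value $L(1/2,\Pi_\kappa)$ divided by the adjoint $L$-value $L(1, \Pi_\kappa, \Ad)$, times a product of normalized local integrals $I'_v(\phi_v\otimes\widetilde\phi_v)/\langle\phi_v,\widetilde\phi_v\rangle$. The adjoint $L$-value, combined with the Petersson-norm comparison of Lemma~\ref{lemma:Hsieh4.5}, yields the denominator $\langle f_{k_1}, f_{k_1}\rangle \langle g_{k_2}, g_{k_2}\rangle \langle h_{k_3}, h_{k_3}\rangle$ and the modified Euler factor $\mathcal E_p(\mathbf F_\kappa, \Ad)$; the constants $2^{\#\Sigma^{-, 0} + 1 - k_1 - k_2 - k_3} \cdot N^-$ and the away-from-$\Sigma^-$ root-number contribution $\epsilon^{\Sigma^-}$ account for the archimedean and residual global normalizations. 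At primes $q | N^+$, the local integrals coincide with those computed in~\cite{Hsieh2021}, since the level adjustments of Definition~\ref{def: addjustment levels and test families} and the twist from $\mathbf F$ to $\mathbf F'$ are chosen to reproduce his setup, giving the first line of~\eqref{eqn:script_I}; at $p$ one obtains $\mathscr I^{\bal}_{\Pi_{\kappa,p}}$ verbatim as in~\cite[(4.21)]{Hsieh2021}; at $\ell \in \Sigma^{-,0}$ the local integral evaluates via Proposition~\ref{prop:local_int_spspsp} and is absorbed into the universal power-of-$2$ factor.

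The genuinely new ingredient appears at $\ell \in \Sigma^{-,2} \cup \Sigma^{-,3}$: Propositions~\ref{prop:local_int_scscsp} and~\ref{prop:local_integral_scscsc} evaluate the normalized local integral on our $\UL{\ell}$-eigenvector test vectors to $(1 + \epsilon_{\ell,1}\epsilon_{\ell,2}\sqrt{\omega_3(\ell)})\mu(\O_D^\times)/2$ and $(1+\epsilon_{\ell,1}\epsilon_{\ell,2}\epsilon_{\ell,3})\mu(\O_D^\times)/4$, respectively. Multiplying by the local $L$- and zeta-factors extracted from the triple-product and adjoint $L$-factors of Propositions~\ref{prop:local_L-factors_triple} and~\ref{prop:local_L-factors_adjoint} produces exactly the second and third lines of~\eqref{eqn:script_I}, with the ratios $\zeta_\ell(1)^5/\zeta_\ell(2)^3$ and $\zeta_\ell(1)^4/\zeta_\ell(2)^2$ arising from this comparison. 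The vanishing statement for $\mathcal L^{\bal}_{\mathbf F,\epsilon}$ outside the prescribed sign conditions is then immediate from the vanishing of these factors when the signs do not align. The main difficulty in the argument is a careful bookkeeping of local measures and normalizations, and the verification that the square-root extraction relating $\Theta^2$ to $I(\phi)\cdot I(\widetilde\phi)$ is compatible with Ichino's normalizations; no conceptual input is required beyond the results of Section~\ref{Section: Local JL correspondence and test vectors} together with the rank-one families constructed in Sections~\ref{Section: Global JL correspondence and test vectors}--\ref{Section: The JL correspondence in families}.
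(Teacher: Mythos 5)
Your overall strategy is the paper's: apply Ichino's formula via Lemma~\ref{lemma:Hsieh4.5} and Hsieh's Propositions~4.9--4.10, relate the adjoint $L$-value to the $\GL_2$ Petersson norms, and bundle all residual normalizations into $\mathscr I^{\bal}_{\Pi_{\kappa,p}}$ and $\mathscr I^\star_{\Pi_{\kappa,q}}$. So the proposal is correct in spirit. Two points of drift are worth flagging, though.

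First, you are reaching ahead of this proposition's scope. Proposition~\ref{prop:interpolation_theta} deliberately leaves the local integral $I_q(\phi_q\otimes\widetilde\phi_q)$ at $q\in\Sigma^{-,2}\cup\Sigma^{-,3}$ \emph{unevaluated} inside the definition of $\mathscr I^\star_{\Pi_{\kappa,q}}$ in~\eqref{eqn:script_I}; the explicit numbers $(1+\epsilon_1\epsilon_2\sqrt{\omega_3(\ell)})/2$ and $(1+\epsilon_1\epsilon_2\epsilon_3)/4$ from Propositions~\ref{prop:local_int_scscsp} and~\ref{prop:local_integral_scscsc} are substituted only afterwards, in Propositions~\ref{prop:local_two_sc} and~\ref{prop:local_three_sc}, on the way to Theorem~\ref{thm:interpolation}. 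Likewise the twist to $\mathbf F'$ and the vanishing statement for $\mathcal L_{\mathbf F,\epsilon}^{\bal}$ belong to Definition~\ref{def:p-adic_L-fun} and Theorem~\ref{thm:interpolation}, not to this proposition, whose statement is purely about $\Theta_{\mathbf F^{D\star}}$.

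Second, you gloss over a nontrivial technical input: Pizer's volume formula for the Pizer order $R_N$ (i.e.\ the formula for $\mathrm{vol}(\widehat R_N^\times)$ with its $\zeta_q$ factors at $q\,\|\,N^-$ and $q^2\,\|\,N^-$). It is precisely this formula, and the ratio $\vol(\widehat\O_D^\times)/\vol(\widehat R_N^\times)^2$, that produces the various powers $\zeta_q(1)^a/\zeta_q(2)^b$ distinguishing the three cases of~\eqref{eqn:script_I} and the prefactor $N^-$. Attributing these to ``global normalizations'' elides where the real bookkeeping happens. With that caveat, the route you describe is the one the paper takes.
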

\begin{proof}
    Let $\kappa = (\kappa_1, \kappa_2, \kappa_3)$ for $\kappa_i = (k_i, \chi_i)$. We write
    \begin{equation}\label{eqn:Ichino_local_int}
        I_\ell(\phi_\ell \otimes\widetilde{\phi_\ell}) = \frac{L(1, \Pi_v, \Ad)}{\zeta_\ell(2)^2  L(1/2, \Pi_v)}  \cdot \frac{I_{\ell}'(\phi_\ell \otimes\widetilde{\phi_\ell})}{\langle \phi_\ell,\widetilde{\phi_\ell} \rangle}
    \end{equation}
    for the normalized local integrals. Then Ichino's Formula~\cite[Theorem 1.1, Remark 1.3]{Ichino2008} gives the following expression (by combining \cite[p. 473, proof of Proposition 4.10]{Hsieh2021} with Lemma~\ref{lemma:Hsieh4.5}):
    \begin{align}
        \frac{I(\varrho(\breve{\mathbf t}_n) \phi_F^{D\star})^2}{\langle F^D, F^D \rangle} & = \frac{\mathrm{vol}(\widehat \O_D^\times)}{8} \cdot \frac{\zeta_\Q(2)^2 L(1/2, \Pi)}{L(1, \Pi, \Ad)} \label{eqn:Ichinos_formula}  \\
        & \hspace{1cm} \cdot I_p^{\mathrm{ord}}(\phi_p \otimes \widetilde{\phi_p}, \breve{\mathbf t}_n)
        \prod_{q \in \Sigma^{-} \cup \{\infty \}} I_q(\phi_q \otimes \widetilde \phi_q) 
        \prod_{q \not\in p \cup \Sigma^-} I_q(\phi_q^\star \otimes \widetilde \phi_q^\star) \nonumber \\
        & \hspace{1cm} \cdot \omega_F^{-1/2}(\hat N_1^+) \omega_{F, p}^{-1}(p^n) \alpha_p(F)^{2n} \prod_{i=1}^3 \frac{\mathrm{vol}(\widehat R^\times_{N_i p^{2n}})}{(N_i^+ N_i^{-, {\rm sc}} p^{2n})^{(k_i - 2)/2}(k_i - 1)}, \nonumber
    \end{align}
    where 
    $$\langle F^D, F^D \rangle = \langle U_p^{-n} f^D, f^D \rangle_{N_1 p^n} \langle U_p^{-n} g^D, g^D \rangle_{N_2 p^n} \langle U_p^{-n} h^D, h^D \rangle_{N_3 p^n}.$$
    Next, we use the volume formula from \cite[Theorem 3.4]{Pizer80p2}:
    \begin{align*}
        \mathrm{vol}(\widehat R_N^\times) & = \frac{48}{N} \prod_{q || N^-} \zeta_q(1) \prod_{q^2 || N^-} \zeta_q(2) \prod_{q | N^+} \frac{\zeta_q(2)}{\zeta_q(1)} \\
        & = \frac{48}{[\SL_2(\Z) : \Gamma_0(N)]} \prod_{q || N^-} \frac{1 + q^{-1}}{1 - q^{-1}} \prod_{q^2 || N^-} \frac{1 + q^{-1}}{1 - q^{-2}} \\
        & = \frac{48}{[\SL_2(\Z) : \Gamma_0(N)]} \prod_{q || N^-} \frac{\zeta_q(1)^2}{\zeta_q(2)} \prod_{q^2 || N^-} \zeta_q(1). 
    \end{align*}
    Using this, the final factor in equation~\eqref{eqn:Ichinos_formula} becomes:
    \begin{equation}\label{eqn:volume_term}
        \prod_{i=1}^{3} \frac{48}{(N_i^+ N_i^{-, {\rm sc}} p^{2n})^{(k_i-2)/2} [\SL_2(\Z) : \Gamma_0(N_i p^{2n})] (k_i - 1)} \prod_{q || N_i^-} \frac{\zeta_q(1)^2}{\zeta_q(2)} \prod_{q^2 || N_i^-} \zeta_q(1).
    \end{equation}
    As we will need it shortly, we also record that:
    \begin{equation}\label{eqn:volume_by_volume_squared}
        \frac{\vol(\widehat \O_D^\times)}{\vol(\widehat R_N^\times)^2} = \frac{1}{48} \prod_{q || N^-} q \frac{1}{\zeta_q(1)} \prod_{q^2 || N^-} q^3 \frac{\zeta_q(1)}{\zeta_q(2)^2} \prod_{q | N^+} \frac{\zeta_q(1)^2}{|N|_q^2\zeta_q(2)^2}.
    \end{equation}

    Next, we recall the relationship between the Petersson norm of a newform $f$ and the adjoint $L$-value~\cite[(2.18)]{Hsieh2021}:
    \begin{equation}\label{eqn:Pet_and_adjoint}
        \langle f, f \rangle_{\Gamma_0(N)} = \frac{[\SL_2(\Z) : \Gamma_0(N)]}{2^k w(f)} \cdot L(1, \pi, \Ad) \cdot \prod_{q | N} B_{\pi_q},
    \end{equation}
    where $B_{\pi_q}$ are given by~\cite[(2.18)]{Hsieh2021}. In particular:
    \begin{equation}\label{eqn:Pet_and_adjoint_triple}
        L(1, \Pi, \Ad) = \prod_{i=1}^3 \langle f_i^\circ, f_i^\circ \rangle  \frac{2^{k_i} w(f_i^\circ)}{[\SL_2(\Z) : \Gamma_0(N_i p^{c_i})]  \prod\limits_{q|N} B_{(\pi_i)_q}},
    \end{equation}
    where $f_i^\circ$ is the newform associated with $f_i$ and its level is $Np^{c_i}$. We will write $B_{\Pi_{F, q}} = B_{\pi_{1, q}} B_{\pi_{2, q}} B_{\pi_{3, q}}$, as in loc.\ cit..

    Finally, Hsieh~\cite[Proposition 4.9]{Hsieh2021} gives a relationship between $I(\varrho(\breve{\mathbf t}_n) \phi_F^{D\star})$ and $\Theta_{\mathbf F^{D\star}}(\underline Q)$:
    \begin{equation}\label{eqn:Theta_and_I}
        \Theta_{\mathbf F^{D\star}}(\kappa) = \frac{1}{\mathrm{vol}(\widehat R_N^\times)} I(\varrho(\breve{\mathbf t}_n) \phi_F^{D\star}) \cdot \frac{\omega_{F, p}^{1/2}(p^n) |p|^{-\frac{k_1 + k_2 + k_3}{2}}}{\alpha_p(F)^n \zeta_p(2)} \frac{1}{\omega_{F}^{1/2}(\hat{\mathbf{d}_f}) \mathbf d_{F}^{(\underline k - 2)/2}}.
    \end{equation}

    Next, we put all of these facts together to get:
    \begin{align*}
        \frac{\Theta_{\mathbf F^{D \star}}(\kappa)^2}{\langle F^D, F^D \rangle} &  = \frac{1}{\mathrm{vol}(\widehat R_N^\times)^2} \frac{I(\varrho(\breve{\mathbf t}_n) \phi_F^{D\star})^2}{\langle F^D, F^D \rangle} \cdot \frac{\omega_{F, p}(p^n) |p^n|^{-(k_1 + k_2 + k_3)}}{\alpha_p(F)^{2n} \zeta_p(2)^2} \frac{1}{\omega_{F}(\hat{\mathbf{d}_f}) \mathbf d_{F}^{\underline k - 2}} & \text{\eqref{eqn:Theta_and_I}} \\
        & = \frac{\vol(\hat\O_D^\times)}{8\mathrm{vol}(\widehat R_N^\times)^2}
        \cdot \frac{\zeta_\Q(2)^2 L(1/2, \Pi)}{L(1, \Pi, \Ad)} & \text{\eqref{eqn:Ichinos_formula}} \\
        & \hspace{1cm} \cdot I_p^{\mathrm{ord}}(\phi_p \otimes \widetilde{\phi_p}, \breve{\mathbf t}_n)
        \prod_{q \in \Sigma^{-} \cup \{\infty \}} I_q(\phi_q \otimes \widetilde \phi_q) 
        \prod_{q \not\in p \cup \Sigma^-} I_q(\phi_q^\star \otimes \widetilde \phi_q^\star)  \\
        & \hspace{1cm} \cdot \omega_F^{-1/2}(\hat N_1^+) \omega_{F, p}^{-1}(p^n) \alpha_p(F)^{2n} \prod_{i=1}^3 \frac{\mathrm{vol}(\widehat R^\times_{N_i p^{2n}})}{(N_i^+ N_i^{-, {\rm sc}} p^{2n})^{k_i - 2/2}(k_i - 1)} \\
        & \hspace{1cm} \cdot \frac{\omega_{F, p}(p^n) |p^n|^{-(k_1 + k_2 + k_3)}}{\alpha_p(F)^{2n} \zeta_p(2)^2} \frac{1}{\omega_{F}(\hat{\mathbf{d}_f}) \mathbf d_{F}^{\underline k - 2}}  \\
        & = \frac{\vol(\hat\O_D^\times)}{8\mathrm{vol}(\widehat R_N^\times)^2}
        \cdot \frac{\zeta_\Q(2)^2 L(1/2, \Pi)}{L(1, \Pi, \Ad)} & \text{\eqref{eqn:volume_term}} \\
        & \hspace{1cm} \cdot I_p^{\mathrm{ord}}(\phi_p \otimes \widetilde{\phi_p}, \breve{\mathbf t}_n)
        \prod_{q \in \Sigma^{-} \cup \{\infty \}} I_q(\phi_q \otimes \widetilde \phi_q) 
        \prod_{q \not\in p \cup \Sigma^-} I_q(\phi_q^\star \otimes \widetilde \phi_q^\star)  \\
        & \hspace{1cm} \cdot  \left(\prod_{i=1}^{3} \frac{48}{(N_i^+ N_i^{-, {\rm sc}} p^{2n})^{(k_i-2)/2} [\SL_2(\Z) : \Gamma_0(N_i p^{2n})] (k_i - 1)} \prod_{q || N_i^-} \frac{\zeta_q(1)^2}{\zeta_q(2)} \prod_{q^2 || N_i^-} \zeta_q(1) \right)  \\
        & \hspace{1cm} \cdot \frac{p^{6n}}{\zeta_p(2)^2} \frac{\omega_F^{-1/2}(\hat N_1^+)}{\omega_{F}(\hat{\mathbf{d}_f}) \mathbf d_{F}^{\underline k - 2}} \\
        & = \frac{48^3 \vol(\hat\O_D^\times)}{8\mathrm{vol}(\widehat R_N^\times)^2}
        \cdot \frac{\zeta_\Q(2)^2 L(1/2, \Pi)}{\langle f^\circ, f^\circ \rangle \langle g^\circ, g^\circ \rangle \langle h^\circ, h^\circ \rangle} \cdot \left( \prod\limits_{q | N} B_{\Pi_{F,q}} \right) \cdot 2^{-k_1-k_2-k_3} w(f^\circ)w(g^\circ)w(h^\circ)  & \text{\eqref{eqn:Pet_and_adjoint_triple}} \\
        & \hspace{1cm} \cdot I_p^{\mathrm{ord}}(\phi_p \otimes \widetilde{\phi_p}, \breve{\mathbf t}_n)
        \prod_{q \in \Sigma^{-} \cup \{\infty \}} I_q(\phi_q \otimes \widetilde \phi_q) 
        \prod_{q \not\in p \cup \Sigma^-} I_q(\phi_q^\star \otimes \widetilde \phi_q^\star)  \\
        & \hspace{1cm} \cdot \left(\prod_{i=1}^{3} \frac{1}{(N_i^+ N_i^{-, {\rm sc}})^{\frac{k_i-2}{2}} (k_i - 1)} \right) \left( \prod_{q || N^-} \frac{\zeta_q(1)^6}{\zeta_q(2)^3} \right) \left( \prod_{q \in \Sigma^{-, 2}} \frac{\zeta_q(1)^4}{\zeta_q(2)} \right) \left( \prod_{q \in \Sigma^{-, 3}} \zeta_q(1)^3 \right)  \\
        & \hspace{1cm} \cdot \left( \prod_{i=1}^3 [\SL_2(\Z):\Gamma_0(p^{c_i})] p^{-2n} (1 + p^{-1})^{-1} \right)  \frac{p^{6n}}{\zeta_p(2)^2} \frac{\omega_F^{-1/2}(\hat N_1^+)}{\omega_{F}(\hat{\mathbf{d}_f}) \mathbf d_{F}^{\underline k - 2}} \\
        & = 2^5 3^2
        \cdot \frac{\zeta_\Q(2)^2 L(1/2, \Pi)}{\langle f^\circ, f^\circ \rangle \langle g^\circ, g^\circ \rangle \langle h^\circ, h^\circ \rangle} \cdot \left( \prod\limits_{q | N} B_{\Pi_{F, q}} \right) \cdot 2^{-k_1-k_2-k_3} w(f^\circ)w(g^\circ)w(h^\circ)  & \text{\eqref{eqn:volume_by_volume_squared}} \\
        & \hspace{1cm} \cdot I_p^{\mathrm{ord}}(\phi_p \otimes \widetilde{\phi_p}, \breve{\mathbf t}_n)
        \prod_{q \in \Sigma^{-} \cup \{\infty \}} I_q(\phi_q \otimes \widetilde \phi_q) 
        \prod_{q \not\in p \cup \Sigma^-} I_q(\phi_q^\star \otimes \widetilde \phi_q^\star)  \\
        & \hspace{1cm} \cdot \left(\prod_{i=1}^{3} \frac{1}{(N_i^+ N_i^{-, {\rm sc}} )^{\frac{k_i-2}{2}} (k_i - 1)} \right) \left( \prod_{q || N^-} q \frac{\zeta_q(1)^5}{\zeta_q(2)^3} \right) \\
        & \hspace{1cm} \cdot \left( \prod_{q \in \Sigma^{-, 2} } q^3 \frac{\zeta_q(1)^5}{\zeta_q(2)^3} \right) \left( \prod_{q \in \Sigma^{-, 3}} q^3 \frac{\zeta_q(1)^4}{\zeta_q(2)^2} \right) \left( \prod_{q | N^+} \frac{\zeta_q(1)^2}{|N|_q^2 \zeta_q(2)^2} \right )  \\
        & \hspace{1cm} \cdot \left( \prod_{i=1}^3 [\SL_2(\Z):\Gamma_0(p^{c_i})] p^{-2n} (1 + p^{-1})^{-1} \right)  \frac{p^{6n}}{\zeta_p(2)^2} \frac{\omega_F^{-1/2}(\hat N_1^+)}{\omega_{F}(\hat{\mathbf{d}_f}) \mathbf d_{F}^{\underline k - 2}}.
    \end{align*}
    Next, we recall the computation of some of factors from~\cite[p.\ 474, p.\ 478]{Hsieh2021}:
    \begin{align}
        I_\infty(\phi_\infty \otimes \widetilde \phi_\infty) & = (4 \pi^2)^{-1} (k_1 - 1)(k_2 - 1)(k_3 - 1) \\
        I_q(\phi_q \otimes \widetilde \phi_q) & = 2 \zeta_q(1)^{-2} & \text{for $q||N^-$}, \\
        B_{\Pi_q} & = (-1) \frac{\zeta_q(2)^3}{\zeta_q(1)^3} & \text{for $q||N^-$}.
    \end{align}
    Plugging these into the above equations and recalling that $\zeta_\Q(2) = \frac{\pi}{6}$ and $\omega_{F,q}^{1/2}(N_f^+) B_{\Pi_q} = B_{\Pi_{F,q}}$ for $q \neq p$ by definition (\cite[p.\ 477]{Hsieh2021}), we can simplify the final expression in the chain of equalities to:
    \begin{align*}
        \frac{\Theta_{\mathbf F^{D \star}}(\kappa)^2}{\langle F^D, F^D \rangle} & = (-2)^{\# \Sigma^{-,0}} 2^{1-k_1-k_2-k_3} \cdot N^-
        \cdot \frac{L(1/2, \Pi)}{ \langle f^\circ, f^\circ \rangle \langle g^\circ, g^\circ \rangle \langle h^\circ, h^\circ \rangle} \cdot \left( \prod\limits_{q | N/d^-} B_{\Pi_{q}} \right) \cdot w(f^\circ)w(g^\circ)w(h^\circ)  \\
        & \hspace{1cm} \cdot I_p^{\mathrm{ord}}(\phi_p \otimes \widetilde{\phi_p}, \breve{\mathbf t}_n)
        \prod_{q^2 || N^-} I_q(\phi_q \otimes \widetilde \phi_q) 
        \prod_{q | N^+} I_q(\phi_q^\star \otimes \widetilde \phi_q^\star)  \\
        & \hspace{1cm} \cdot \left(\prod_{i=1}^{3} \frac{1}{(N_i^+ N_i^{-, {\rm sc}} )^{\frac{k_i-2}{2}}} \right) \left( \prod_{q \in \Sigma^{-, 2} } q^3 \frac{\zeta_q(1)^5}{\zeta_q(2)^3} \right) \left( \prod_{q \in \Sigma^{-, 3}} q^3 \frac{\zeta_q(1)^4}{\zeta_q(2)^2} \right) \left( \prod_{q | N^+} \frac{\zeta_q(1)^2}{|N|_q^2 \zeta_q(2)^2} \right )  \\
        & \hspace{1cm} \cdot \left( \frac{[\SL_2(\Z) : \Gamma_0(p^{c_i})]}{1 + p^{-1}} \right) \zeta_p(2)^{-2} \frac{\omega_F^{-1/2}(\hat N^-)}{\omega_{F}(\hat{\mathbf{d}_f}) \mathbf d_{F}^{\underline k - 2}}.
    \end{align*}
    Next, we recall that for $F \in \{f, g, h\}$:
    \begin{align*}
        w(F) & = \prod_{\ell < \infty} \epsilon(1/2, \pi_{F, \ell}) \\
        & = \epsilon(1/2, \pi_{F, p}) \prod_{q || N^-} \epsilon(1/2, \pi_{F, q}) \prod_{q| N/d^-} \epsilon(1/2, \pi_{F,q})
    \end{align*}
    and for $q || N^-$:
    $$-1 = \epsilon(1/2, \Pi_q) = \omega_{F, q}^{-1/2}(q) \epsilon(1/2, \pi_{f_1, q})\epsilon(1/2, \pi_{f_2, q})\epsilon(1/2, \pi_{f_3, q}).$$
    We define the away-from-$\Sigma^-$ part of the conductor to be:
    $$\epsilon^{\Sigma^-}(F) = \prod_{q | N/d^-} \epsilon(1/2, \pi_{F, q}) |N_F|_q^{(2 - \underline k)/2} \in \widehat \Z_{(p)}^\times.$$
    This gives:
    \begin{align*}
        \frac{\Theta_{\mathbf F^{D \star}}(\kappa)^2}{\langle F^D, F^D \rangle} & = 2^{\# \Sigma^{-,0} + 1-k_1-k_2-k_3} \cdot N^-
        \cdot \frac{L(1/2, \Pi)}{ \langle f^\circ, f^\circ \rangle \langle g^\circ, g^\circ \rangle \langle h^\circ, h^\circ \rangle} \cdot \epsilon^{\Sigma^-}(f^\circ) \cdot \epsilon^{\Sigma^-}(g^\circ) \cdot \epsilon^{\Sigma^-}(h^\circ)  \\
        & \hspace{1cm} \cdot I_p^{\mathrm{ord}}(\phi_p \otimes \widetilde{\phi_p}, \breve{\mathbf t}_n)  B_{\Pi_{F, p}} \left( \prod_{i=1}^3 \frac{[\SL_2(\Z) : \Gamma_0(p^{c_i})]  \epsilon(1/2, \pi_{f_i, p})}{1 + p^{-1}} \right) \cdot \zeta_p(2)^{-2} \\
        & \hspace{1cm} \cdot \prod_{q \in \Sigma^{-, 2}}  I_q(\phi_q \otimes \widetilde \phi_q) B_{\Pi_q} q \frac{\zeta_q(1)^5}{\zeta_q(2)^3} \omega_{F, q}^{-1}(\hat{\mathbf{d}_f})   |\mathbf d_F^{\underline k - 2}|_q  \\
        & \hspace{1cm} \cdot \prod_{q \in \Sigma^{-, 3}}  I_q(\phi_q \otimes \widetilde \phi_q) B_{\Pi_q} q \frac{\zeta_q(1)^4}{\zeta_q(2)^2} \omega_{F, q}^{-1}(\hat{\mathbf{d}_f})   |\mathbf d_F^{\underline k - 2}|_q  \\
        & \hspace{1cm} \cdot \prod_{q | N^+} I_q(\phi_q^\star \otimes \widetilde \phi_q^\star) B_{\Pi_{q}} \frac{\zeta_q(1)^2}{|N|_q^2\zeta_q(2)^2} \omega_{F, q}^{-1}(\hat{\mathbf{d}_f}) |\mathbf d_{F}^{\underline k - 2}|_{q}.
    \end{align*}
    
    To finish the proof, we recall from \cite[p.\ 477]{Hsieh2021} that:
    \begin{equation}
        \frac{B_{\Pi_p^{\rm ord}}^{[n]}}{B_{\Pi_{F,p}}} = \omega_{f,p}^{1/2}(-p^{-2n}) \prod_{i=1}^3 \frac{\alpha_{f_i, p} |\cdot|_p^{1/2}(p^{2n})}{\epsilon(1/2, \pi_{f_i, p})} \cdot \frac{[\SL_2(\Z): \Gamma_0(p^{c_i})]}{1 + p^{-1}} \cdot \mathcal E_p(f_i, \Ad). \qedhere
    \end{equation}
\end{proof}

\subsection{Computation of local factors}

By Proposition~\ref{prop:interpolation_theta}, proving Theorem~\ref{thm:interpolation} amounts to computing the local factors. For $p$ and $q | N^+$, they were already computed by Hsieh.

\begin{proposition}[Hsieh]\label{prop:local_Hsieh}
    \leavevmode
    \begin{enumerate}
        \item We have that:
        $$\mathscr I_{\Pi_{\kappa, p}}^{\bal} = \mathcal E_{\bal}(\Pi_{\kappa, p}) \cdot \frac{1}{L(1/2, \Pi_{\kappa, p})}.$$
        
        \item For $q | N^+$: $$\mathscr I_{\Pi_{\kappa, q}}^\star = \mathfrak f_{\mathbf F, q}(\kappa) \cdot \begin{cases}
            (1 + q^{-1})^2 & \text{if }q \in \Sigma_{\mathrm{exc}}, \\
            1 & \text{otherwise}
        \end{cases} $$ 
    \end{enumerate}
\end{proposition}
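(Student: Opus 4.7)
Both assertions reduce to Hsieh's computations in~\cite{Hsieh2021}, so the plan is to verify that our definitions of the normalized local zeta integrals $\mathscr I_{\Pi_{\kappa,p}}^{\bal}$ and $\mathscr I_{\Pi_{\kappa,q}}^\star$ (for $q\mid N^+$) agree with his, and then quote the relevant propositions. The key observation is that at $p$ and at every prime $q\mid N^+$ we have $q \notin \Sigma^-$, so the local quaternion algebra $D_q$ is split and the relevant local integrals live on $\GL_2(\Q_q)$, exactly as in~\cite{Hsieh2021}. The ramification changes we have introduced at primes in $\Sigma^{-,\mathrm{sc}}$ are therefore invisible at these places.

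For part (1), I would compare our definition of $\mathscr I_{\Pi_{\kappa,p}}^{\bal}$ in~\eqref{eqn:script_I} with the quantity appearing in~\cite[(4.21)]{Hsieh2021}: they are by construction identical, since both involve the same $p$-ordinary local integral $I_p^{\mathrm{ord}}(\phi_p,\breve{\mathbf t}_n)$, the same factor $B_{\Pi_p^{\rm ord}}^{[n]}$, and the same normalization. The statement
$$\mathscr I_{\Pi_{\kappa,p}}^{\bal} = \frac{\mathcal E_{\bal}(\Pi_{\kappa,p})}{L(1/2,\Pi_{\kappa,p})}$$
is then precisely the content of \cite[Proposition~7.3]{Hsieh2021} (the explicit evaluation of the $p$-adic local integral against the ordinary test vector), after dividing through by the adjoint Euler factor absorbed into~\eqref{eqn:modified_adjoint_Euler_factor}. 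I would record the explicit form of $\mathcal E_{\bal}(\Pi_{\kappa,p})$ from \cite[(1.2)]{Hsieh2021} as a sanity check but not redo the calculation.

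For part (2), each $q\mid N^+$ again lies outside $\Sigma^-$, so the local representations $(\pi_{f,q},\pi_{g,q},\pi_{h,q})$ and the local test vectors $(\phi^\star_{f,q},\phi^\star_{g,q},\phi^\star_{h,q})$ are exactly those of~\cite[Section~3.4]{Hsieh2021}, including the level adjustments $\mathbf d_f,\mathbf d_g,\mathbf d_h$ which we inherit unchanged (cf.\ Definition~\ref{def: addjustment levels and test families}(1)). Hence $\mathscr I_{\Pi_{\kappa,q}}^\star$ coincides with Hsieh's normalized local integral at $q$, and his explicit computation in \cite[Propositions~6.6--6.12]{Hsieh2021} gives the fudge factor $\mathfrak f_{\mathbf F,q}(\kappa)$, multiplied by the extra $(1+q^{-1})^2$ at the exceptional primes $q \in \Sigma_{\mathrm{exc}}$ where both $\pi_{g,q}$ and $\pi_{h,q}$ are unramified principal series but the choice of test vector forces an additional volume factor.

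The only point where care is needed is to confirm that dividing by $N^+$ in the definition~\eqref{eqn:script_I} (through the factor $\zeta_q(1)^2/|N|_q^2\zeta_q(2)^2$) matches the bookkeeping of the volume and Petersson-norm terms in our derivation of Proposition~\ref{prop:interpolation_theta}; this is just tracking normalizations and is routine rather than a genuine obstacle. The substantive input is entirely Hsieh's, and the main step of the write-up is simply to display the dictionary between our Definition~\ref{def: addjustment levels and test families} and \cite[Section~3.4]{Hsieh2021}, then cite the corresponding propositions.
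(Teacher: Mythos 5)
Your overall strategy is correct and is exactly what the paper does: since $p$ and the primes $q\mid N^+$ lie outside $\Sigma^-$, the local quaternion algebra is split there, so the local integrals are unchanged from Hsieh's setting and one simply cites his computations. However, two details need fixing. First, the reference for part (1) should be \cite[Proposition~5.6]{Hsieh2021} (the evaluation of the $p$-adic local zeta integral), not Proposition~7.3; part (2) is \cite[Proposition~6.12]{Hsieh2021}. Second, your remark about part (1) being obtained ``after dividing through by the adjoint Euler factor absorbed into~\eqref{eqn:modified_adjoint_Euler_factor}'' is misplaced: the division by $\mathcal E_p(\mathbf F_\kappa, \Ad)$ is performed in the statement of Proposition~\ref{prop:interpolation_theta}, where the factor $\mathscr I_{\Pi_{\kappa,p}}^{\bal}/\mathcal E_p(\mathbf F_\kappa, \Ad)$ appears explicitly. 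Part (1) of Proposition~\ref{prop:local_Hsieh} is the clean identity $\mathscr I_{\Pi_{\kappa,p}}^{\bal} = \mathcal E_{\bal}(\Pi_{\kappa,p})/L(1/2,\Pi_{\kappa,p})$ with no adjoint factor involved; it is literally Hsieh's computation, not a renormalized version of it.
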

\begin{proof}
    Part (1) is \cite[Proposition\ 5.6]{Hsieh2021} and part (2) is \cite[Proposition\ 6.12]{Hsieh2021}.
\end{proof}

Therefore, it remains to compute $\mathscr I_{\Pi_{\kappa, \ell}}$ for $\ell \in \Sigma^-$ such that $\ell^2 || N$, i.e.\ for $\ell \in \Sigma^{-, 2}$ and $\ell \in \Sigma^{-, 3}$. 

\begin{proposition}\label{prop:local_two_sc}
    Suppose $\ell \in \Sigma^{-,2}$ and $\omega_{F, \ell} = 1$. Then:
    $$\mathscr I_{\Pi_{\kappa, \ell}}^\star = - \frac{1 + \epsilon_1 \epsilon_2 \sqrt{\omega_3(\ell)}}{2} \frac{\ell^{-2(k_1 + k_2 + k_2) + 13}}{\zeta_\ell(2)}.$$
\end{proposition}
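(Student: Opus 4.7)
The plan is to expand the right-hand side of (\ref{eqn:script_I}) factor by factor and simplify, using the hypothesis $\omega_{F,\ell} = 1$ together with the classification of local representations in case~(2) of Proposition~\ref{prop:epsilon=-1}. The key input is the local trilinear integral: by case~(2), after possibly reordering, $\pi_{f,\ell}$ is a twist of Steinberg (unramified twist, since $\omega_{F,\ell}=1$), while $\pi_{g,\ell}$ and $\pi_{h,\ell}$ are supercuspidals of conductor $2$ satisfying $\pi_{h,\ell}^{\vee} \cong \pi_{g,\ell} \otimes \omega_{3}^{-1}$. Proposition~\ref{prop:local_int_scscsp} then produces
\[
\frac{I'_\ell(\phi_\ell^{\epsilon} \otimes \widetilde{\phi_\ell^{\epsilon}})}{\langle \phi_\ell^{\epsilon}, \widetilde{\phi_\ell^{\epsilon}}\rangle} \;=\; \bigl(1 + \epsilon_1\epsilon_2\sqrt{\omega_3(\ell)}\bigr)\,\frac{\mu(\O_D^\times)}{2},
\]
which, after normalization via (\ref{eqn:Ichino_local_int}), yields the sign factor appearing in the claim.

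Next, I will evaluate the remaining automorphic/arithmetic factors. The triple product and adjoint $L$-factors are computed from Propositions~\ref{prop:local_L-factors_triple}(2) and~\ref{prop:local_L-factors_adjoint}: the Steinberg component contributes $L(1, \pi_{f,\ell}, \Ad) = \zeta_\ell(2)$, each supercuspidal of depth zero contributes $L(1, \Ad) = \zeta_\ell(2)/\zeta_\ell(1)$, and the triple-product $L$-factor at the centre is read off from $\zeta_\ell(2s+2)$. The local $B$-factor $B_{\Pi_\ell}$ is then substituted using the explicit formulas of \cite[(2.18)]{Hsieh2021}, specialized to a Steinberg twist at level $\ell$ and to two supercuspidals of conductor $\ell^2$ with trivial central character; these are the source of the weight-dependent power of~$\ell$. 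Finally, the factor $\omega_{F,\ell}^{-1}(\mathbf d_F)\,|\mathbf d_F^{\underline k - 2}|_\ell$ is trivial: under $\omega_{F,\ell}=1$ the first term vanishes, and by Definition~\ref{def: addjustment levels and test families}(1) no level adjustment is made at $\ell \in \Sigma^{-, \mathrm{sc}}$, so $\mathbf d_F$ has trivial $\ell$-component.

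The main obstacle is not conceptual but combinatorial bookkeeping: one must carefully combine the powers of $\zeta_\ell(1)$, $\zeta_\ell(2)$, and $\ell$ arising from the $L$-factor ratio $L(1,\Pi_\ell,\Ad)/L(1/2,\Pi_\ell)$, the $B$-factor, the explicit normalization factor $\ell\,\zeta_\ell(1)^{5}/\zeta_\ell(2)^{3}$ in (\ref{eqn:script_I}), and the Haar volume $\mu(\O_D^\times)$ (normalized as in Section~\ref{section: Pairings}). Tracking the weight dependence through $B_{\Pi_\ell}$ should yield the exponent $-2(k_1+k_2+k_3)+13$ on $\ell$, while the global sign $-1$ in the statement should come from the Steinberg contribution to $B_{\pi_{f,\ell}}$, parallel to the value $B_{\pi_q} = -\zeta_q(2)^{3}/\zeta_q(1)^{3}$ used in the proof of Proposition~\ref{prop:interpolation_theta} for $q\,||\,N^-$. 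All remaining $\zeta_\ell$-factors should cancel except for a single $\zeta_\ell(2)^{-1}$, giving the stated closed form.
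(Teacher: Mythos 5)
Your overall route is the right one (and the same as the paper's): expand the explicit normalized local zeta integral $\mathscr I_{\Pi_{\kappa,\ell}}^\star$ from~(\ref{eqn:script_I}) factor by factor, feed in the trilinear integral from Proposition~\ref{prop:local_int_scscsp}, the $L$-factors from Propositions~\ref{prop:local_L-factors_triple} and~\ref{prop:local_L-factors_adjoint}, and the $B$-factors from \cite[(2.18)]{Hsieh2021}. However, two of your intermediate claims are wrong in a way that means the bookkeeping would not actually close up.

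First, you claim that ``$\mathbf d_F$ has trivial $\ell$-component'' so that $|\mathbf d_F^{\underline k - 2}|_\ell = 1$. This is incorrect: Definition~\ref{def: addjustment levels and test families}(1) says only that no \emph{additional} adjustment beyond Hsieh's is made at $\ell\in\Sigma^{-,\mathrm{sc}}$, not that the $\ell$-component of $\mathbf d_F$ vanishes. In the paper's computation one has $|\mathbf d_F^{\underline k - 2}|_\ell = |\ell|_\ell^{2(k_1+k_2+k_3-6)}$, and this factor is precisely the source of the weight-dependent power $\ell^{-2(k_1+k_2+k_3)+12}$. Dropping it destroys the answer.

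Second, and correlatively, you write ``tracking the weight dependence through $B_{\Pi_\ell}$ should yield the exponent $-2(k_1+k_2+k_3)+13$ on $\ell$.'' This misattributes the weight dependence: the local $B$-factors carry no weight dependence at all. At $\ell\in\Sigma^{-,2}$, the unramified Steinberg twist contributes $B_{\pi_{f,\ell}} = -\zeta_\ell(2)/\zeta_\ell(1)$ and each twist-minimal depth-zero supercuspidal with $\pi_\ell\otimes\xi\cong\pi_\ell$ contributes $B_{\pi_\ell}=1$ (cf.\ \cite[Section~4.2]{collins2018numerical}), so $B_{\Pi_\ell} = -\zeta_\ell(2)/\zeta_\ell(1)$ — weight-independent, supplying the global sign $(-1)$ and one ratio of $\zeta$-factors, nothing more. (Your parenthetical comparison to ``$B_{\pi_q} = -\zeta_q(2)^3/\zeta_q(1)^3$'' also conflates the single factor $B_{\pi_q}$ with the product $B_{\Pi_q}$; the cube only arises when all three components at $q$ are Steinberg twists.) Once both corrections are made, the remaining $\zeta$-factors cancel to a single $\zeta_\ell(2)^{-1}$, the $(-1)$ comes out of $B_{\Pi_\ell}$, the sign factor $\frac{1+\epsilon_1\epsilon_2\sqrt{\omega_3(\ell)}}{2}$ comes from Proposition~\ref{prop:local_int_scscsp}, and the stray $\ell$ from the explicit normalization $\ell\,\zeta_\ell(1)^5/\zeta_\ell(2)^3$ combines with $|\mathbf d_F^{\underline k-2}|_\ell$ to give $\ell^{-2(k_1+k_2+k_3)+13}$.
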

\begin{proof}
    We use:
    \begin{itemize}
        \item Proposition~\ref{prop:local_int_scscsp}:
    $$\frac{I_{\ell}'(\phi_\ell^{\epsilon_\ell} \otimes\widetilde{\phi_\ell^{\epsilon_\ell}})}{\langle \phi_\ell^{\epsilon_\ell},\widetilde{\phi_\ell^{\epsilon_\ell}} \rangle} = \frac{1 + \epsilon_1 \epsilon_2(\sqrt{\omega_3(\ell)})}{2},$$
        \item Propositions~\ref{prop:local_L-factors_triple},~\ref{prop:local_L-factors_adjoint}:
        $$L(s, \Pi_{\ell}, \Ad) = \zeta_\ell(s+1) (\zeta_\ell(2s)/\zeta_\ell(s))^2, \qquad L(s, \Pi_{\ell}) = \zeta_\ell(s + 1/2)^2.$$
    \end{itemize}
    Therefore:
    $$ I_\ell(\phi_\ell^{\epsilon_\ell} \otimes\widetilde{\phi_\ell^{\epsilon_\ell}})  = \frac{\zeta_\ell(2) \zeta_\ell(2)^2}{\zeta_\ell(2)^2 \zeta_\ell(1)^2 \zeta_\ell(1)^2} \frac{1 + \epsilon_1 \epsilon_2 \sqrt{\omega_3(\ell)}}{2} = \frac{\zeta_\ell(2)}{\zeta_\ell(1)^4} \frac{1 + \epsilon_1 \epsilon_2 \sqrt{\omega_3(\ell)}}{2} .$$
    Note that $B_{\pi_\ell} = 1$ if $\pi_\ell$ is supercuspidal and $\pi_\ell \otimes \xi \iso \pi_\ell$ for the unramified quadratic character $\xi$ of $\Q_\ell^\times$ (cf.\ \cite[Section 4.2]{collins2018numerical}). 
    Therefore, $B_{\Pi_\ell} = B_{\pi_1} B_{\pi_2} B_{\pi_3} = (-1) \frac{\zeta_\ell(2)}{\zeta_\ell(1)}$ (similarly to \cite[p. 478]{Hsieh2021}).
    Altoghether, we have that:
    \begin{align*}
        \mathscr I_{\Pi_{\kappa, \ell}}^\star & = - \frac{\zeta_\ell(2)}{\zeta_\ell(1)^4} \frac{1 + \epsilon_1 \epsilon_2 \sqrt{\omega_3(\ell)}}{2} \frac{\zeta_\ell(2)}{\zeta_\ell(1)} \ell \frac{\zeta_\ell(1)^5}{\zeta_\ell(2)^3} |\mathbf d_F^{\underline k-2}|_\ell \\
        & = - \frac{1 + \epsilon_1 \epsilon_2 \sqrt{\omega_3(\ell)}}{2} \frac{  \ell |\ell|_\ell^{2(k_1 + k_2 + k_2 - 6)}}{\zeta_\ell(2)},
    \end{align*}
    as claimed.
\end{proof}

\begin{proposition}\label{prop:local_three_sc}
    Suppose $\ell \in \Sigma^{-,3}$ and $\omega_{F, \ell} = 1$. Then:
    $$\mathscr I_{\Pi_{\kappa, \ell}}^\star = \frac{1 + \epsilon_1 \epsilon_2 \epsilon_3}{4} \frac{\ell^{-2(k_1 + k_2 + k_3) + 13}}{\zeta_\ell(2)}.$$
\end{proposition}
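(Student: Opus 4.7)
The plan is to mirror the proof of Proposition~\ref{prop:local_two_sc} verbatim, substituting the three-supercuspidal local inputs from Section~\ref{Section: Local JL correspondence and test vectors} in place of the one-Steinberg-two-supercuspidal inputs used there.

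First, I would apply Proposition~\ref{prop:local_integral_scscsc} to obtain
$$\frac{I_\ell'(\phi_\ell^\epsilon \otimes \widetilde{\phi_\ell^\epsilon})}{\langle \phi_\ell^\epsilon, \widetilde{\phi_\ell^\epsilon}\rangle} = \frac{1+\epsilon_1\epsilon_2\epsilon_3}{4},$$
and then read off the local $L$-factors from case~(3) of Proposition~\ref{prop:local_L-factors_triple} and from the supercuspidal case of Proposition~\ref{prop:local_L-factors_adjoint} applied to each of the three factors: $L(1/2, \Pi_\ell) = \zeta_\ell(1)$ and $L(1, \Pi_\ell, \Ad) = \zeta_\ell(2)^3/\zeta_\ell(1)^3$. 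Substituting these into the definition~\eqref{eqn:Ichino_local_int} yields
$$I_\ell(\phi_\ell^\epsilon \otimes \widetilde{\phi_\ell^\epsilon}) = \frac{\zeta_\ell(2)^3/\zeta_\ell(1)^3}{\zeta_\ell(2)^2\,\zeta_\ell(1)} \cdot \frac{1+\epsilon_1\epsilon_2\epsilon_3}{4} = \frac{\zeta_\ell(2)}{\zeta_\ell(1)^4} \cdot \frac{1+\epsilon_1\epsilon_2\epsilon_3}{4}.$$
Notably, the zeta-factor pattern here is \emph{identical} to the one appearing in the two-supercuspidal computation of Proposition~\ref{prop:local_two_sc}, which will make the rest of the bookkeeping cosmetic.

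The remaining local datum is $B_{\Pi_\ell}$. Since each $\pi_{i, \ell}$ is a twist-minimal depth-zero supercuspidal induced from the unramified quadratic extension of $\ql$, it satisfies $\pi_{i, \ell} \otimes \xi \iso \pi_{i, \ell}$ for $\xi$ the unramified quadratic character of $\ql^\times$. By the same argument already used in the proof of Proposition~\ref{prop:local_two_sc} (where only the single Steinberg factor contributed non-trivially), each $B_{\pi_i} = 1$, hence $B_{\Pi_\ell} = 1$.

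Finally, I would substitute these data into the definition~\eqref{eqn:script_I} of $\mathscr I_{\Pi_{\kappa, \ell}}^\star$ in the $\Sigma^{-, 3}$ case:
$$\mathscr I_{\Pi_{\kappa, \ell}}^\star = \frac{\zeta_\ell(2)}{\zeta_\ell(1)^4} \cdot \frac{1+\epsilon_1\epsilon_2\epsilon_3}{4} \cdot 1 \cdot \ell\,\frac{\zeta_\ell(1)^4}{\zeta_\ell(2)^2} \cdot \omega_{F, \ell}^{-1}(\mathbf d_F)\,|\mathbf d_F^{\underline k - 2}|_\ell.$$
All zeta factors telescope to $1/\zeta_\ell(2)$, and the factor $\omega_{F, \ell}^{-1}(\mathbf d_F)|\mathbf d_F^{\underline k - 2}|_\ell$ contributes $|\ell|_\ell^{2(k_1+k_2+k_3-6)} = \ell^{-2(k_1+k_2+k_3)+12}$ in exactly the same way as in the two-supercuspidal case. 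Combined with the leading $\ell$, this produces $\ell^{-2(k_1+k_2+k_3)+13}/\zeta_\ell(2)$, matching the claim. Given Propositions~\ref{prop:local_integral_scscsc}, \ref{prop:local_L-factors_triple}, and~\ref{prop:local_L-factors_adjoint}, the argument is entirely routine; I do not foresee any genuine obstacle beyond the symbolic simplification.
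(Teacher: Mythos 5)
Your proposal matches the paper's proof essentially line for line: same use of Proposition~\ref{prop:local_integral_scscsc} for the local integral, same $L$-factors from Propositions~\ref{prop:local_L-factors_triple} and~\ref{prop:local_L-factors_adjoint}, same observation that $B_{\Pi_\ell} = 1$ (for which you supply the justification the paper only sketched in the two-supercuspidal case), and the same telescoping of the zeta factors into $1/\zeta_\ell(2)$. No substantive difference.
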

\begin{proof}
    We use:
    \begin{itemize}
        \item Proposition~\ref{prop:local_integral_scscsc}:
    $$\frac{I_{\ell}'(\phi_\ell^{\epsilon_\ell} \otimes\widetilde{\phi_\ell^{\epsilon_\ell}})}{\langle \phi_\ell^{\epsilon_\ell},\widetilde{\phi_\ell^{\epsilon_\ell}} \rangle} = \frac{1 + \epsilon_1 \epsilon_2 \epsilon_3}{4},$$
        \item Propositions~\ref{prop:local_L-factors_triple},~\ref{prop:local_L-factors_adjoint}:
        $$L(s, \Pi_{\ell}, \Ad) = (\zeta_\ell(2s)/\zeta_\ell(s))^3, \qquad L(s, \Pi_{\ell}) = \zeta_\ell(2s).$$
    \end{itemize}
    Therefore:
    $$I_\ell( \phi_\ell^{\epsilon_\ell} \otimes\widetilde{\phi_\ell^{\epsilon_\ell}}) = \frac{\zeta_\ell(2)^3 }{\zeta_\ell(2)^2 \zeta_\ell(1)^3 \zeta_\ell(1)} \frac{1 + \epsilon_1 \epsilon_2 \epsilon_3}{4} = \frac{\zeta_\ell(2)}{\zeta_\ell(1)^4} \frac{1 + \epsilon_1 \epsilon_2 \epsilon_3}{4}.$$
    Moreover, $B_{\Pi_\ell} = 1$. Altogether, we have that:
    \begin{align*}
        \mathscr I_{\Pi_{\kappa, \ell}}^\star = & \frac{\zeta_\ell(2)}{\zeta_\ell(1)^4} \frac{1 + \epsilon_1 \epsilon_2 \epsilon_3}{4} \cdot \ell \frac{\zeta_\ell(1)^4}{\zeta_\ell(2)^2} \omega_{F, \ell}^{-1}(\mathbf d_f^{\underline k-2}) |\mathbf d_F^{\underline{k} - 2}|_\ell \\
        & = \frac{1 + \epsilon_1 \epsilon_2 \epsilon_3}{4} \frac{\ell |\ell|_\ell^{2(k_1 + k_2 + k_3-6)}}{\zeta_\ell(2)},
    \end{align*}
    as claimed.
\end{proof}

\subsection{Finishing the proof of Theorem~\ref{thm:interpolation}}

Putting everything together gives the interpolation property.

\begin{proof}[Proof of Theorem~\ref{thm:interpolation}]
    We compute using the definition of $\mathcal L_{\mathbf F, \epsilon}^{\bal}$:
    \begin{align*}
        (\mathcal L_{\mathbf F, \epsilon}^{\rm bal}(\kappa))^2 & = \frac{\Theta_{\mathbf F'^{D\star, \epsilon}}^2}{\| \mathbf F^{D \star, \epsilon} \|^2} \cdot 2^{- (|\Sigma^-| + 1 -k_1-k_2-k_3)} \cdot \epsilon^{\Sigma^-}(\mathbf F)(\kappa)^{-1} \cdot \mathfrak f_{\mathbf F'}(\kappa)^{-1} \prod_{\ell \in \Sigma^{-, \mathrm{sc}}} \frac{\ell^{2(k_1 + k_2 + k_3) - 13}}{\zeta_\ell(2)} \\
        & = \frac{L(1/2, \Pi_{\kappa})}{\langle \mathbf F_{\kappa}, \mathbf F_{\kappa} \rangle}  \cdot \frac{\mathscr I_{\Pi_{\kappa, p}}^{\bal}}{\mathcal E_p(\mathbf F_\kappa, \Ad)} \cdot \prod_{\ell^2 || N^-} \mathscr I_{\Pi_{\kappa, \ell}}^\star \frac{\ell^{2(k_1 + k_2 + k_3) -13}}{\zeta_\ell(2)} \cdot \prod_{q | N^+} I_{\Pi_{\kappa, q}}^\star \cdot f_{\mathbf F'}(\kappa)^{-1} \\
        & = \frac{L(1/2, \Pi_{\kappa})}{\langle \mathbf F_{\kappa}, \mathbf F_{\kappa} \rangle}  \cdot \frac{\mathcal E_{\bal}(\Pi_{\kappa, p})}{L(1/2, \Pi_{\kappa, p}) \mathcal E_p(\mathbf F_\kappa, \Ad)} \cdot \prod_{\ell \in \Sigma^{-,2}} \frac{1 + \epsilon_1 \epsilon_2 \sqrt{\omega_3(\ell)}}{2} \prod_{\ell \in \Sigma^{-,3}} \frac{1 + \epsilon_1 \epsilon_2 \epsilon_3}{4} \cdot \prod_{q \in \Sigma_{\rm exc}} \frac{1}{(1 + q^{-1})^2} 
    \end{align*}
    using Proposition~\ref{prop:interpolation_theta} for the second equality and Propositions \ref{prop:local_Hsieh}, \ref{prop:local_two_sc}, \ref{prop:local_three_sc} for the final one. This proves the theorem.
\end{proof}

\section{Application: Elliptic Stark Conjecture in rank one}\label{section: Application: Elliptic Stark Conjecture in rank one}

We are now ready to discuss the analogue of the Elliptic Stark Conjecture of Darmon--Lauder--Rotger~\cite{Darmon_Lauder_Rotger_Stark_points} in this setting. We will consider the restriction of $\mathcal L_p^{\bal}$ to the weights $(k_1, k_2, k_3) = (2, \ell, m)$ with the goal of studying the point $(2,1,1)$ which is not balanced and hence lies outside of the interpolation range.\footnote{We hope that using $\ell$ for the weight instead of the prime of supersingular type will cause no confusion to the reader.} We state a conjecture expressing this value in terms of arithmetic data and prove it in some special cases.

\subsection{Statement of the conjecture}

Let $E$ be an elliptic curve over $\Q$ and let $V_p(E)$ be the associated $p$-adic Galois representation. Let $\varrho \colon G_\Q \to \GL(V_\varrho)$ be an Artin representation, i.e. a complex continuous Galois representation, which factors through a finite quotient $\Gal(H/\Q)$ of $G_\Q$, with coefficients in a finite extension $L$ of $\Q$. We consider the Hasse--Weil--Artin $L$-series
$L(E, \varrho, s)$ associated with the $p$-adic Galois repersentation $V_p(E) \otimes V_\varrho$ of $G_\Q$. The equivariant version of the Birch--Swinnerton-Dyer Conjecture asserts that the analytic rank of $L(E, \varrho, s)$ at $s = 1$ is equal to the rank of the $V_\varrho$-isotypic component of $E(H) \otimes L$:
$$\mathrm{ord}_{s=1} L(E, \varrho, s) = \dim_L \Hom_{G_\Q}(V_\varrho, E(H) \otimes L).$$

Note that the value $L(E, \varrho, s)$ at $s = 1$ is outside of the convergence region of the Euler product defining the $L$-function. We put ourselves in a situation where the analytic continuation and functional equation for the $L$-function $L(E, \varrho, s)$ is known:
\begin{itemize}
    \item let $f$ be the modular form of level $N_1$ associated with the elliptic curve $E$ of conductor $N_1$,
    \item suppose that $\varrho = \varrho_{gh} = \varrho_g \otimes \varrho_h$ for two odd irreducible two-dimensional Artin representations $\varrho_g$ and $\varrho_h$; let $g$ and $h$ be the modular forms of conductors $N_2$ and $N_3$ corresponding to $\varrho_g$ and $\varrho_h$, respectively.
\end{itemize}
Then
$$L(E, \varrho_{gh}, s) = L(f \times g \times h, s)$$
is the triple product $L$-function studied by Garret. We assume that:
$$\det \varrho_g  \det \varrho_h = 1,$$
i.e. if $\chi$ is the character of $g$, then $\chi^{-1}$ is the character of $h$. Then the representation $V_E \otimes V_g \otimes V_h$ is self-dual, and hence there is a functional equation with the root number $\epsilon(E, \varrho_{gh}) = \pm 1$. Moreover, the global root number is a product of finite local root numbers:
$$\epsilon(E, \varrho_{gh}) = \prod_{v | \lcm(N_1, N_2, N_3)} \epsilon_v(E, \varrho_{gh}),$$
because $\epsilon_\infty(E, \varrho_{gh}) = 1$.

In the seminal work~\cite{Darmon_Lauder_Rotger_Stark_points}, Darmon--Lauder--Rotger studied the $L$-value $L(E, \varrho_{gh}, s)$ when the analytic rank is even and at least two, using $p$-adic analytic method. Therefore, they assume that $\epsilon(E, \varrho_{gh}) = 1$, and in fact that $\epsilon_v(E, \varrho_{gh}) = 1$ for all $v$.

In rank two, they proposed the Elliptic Stark Conjecture: a formula relating the value of a triple product $p$-adic $L$-function associated with $f$, $g$, $h$ to a regulator of $p$-adic logarithms of points on $E(H) \otimes L$ and a $p$-adic logarithm of a Stark unit.

The main motivation for our work is to develop a rank one version of this conjecture. Our first hypothesis is therefore.

\begin{hypothesisalph}\label{hyp:epsilon-1}
    The global root number $\epsilon(E, \varrho_{gh})$ is $-1$. Therefore, there is an odd number of finite places $v$ such that 
    $$\epsilon_v(E, \varrho_{gh}) = -1.$$
\end{hypothesisalph}

Next, we want to construct a balanced $p$-adic $L$-function associated with the $L$-function $L(E, \varrho_{gh}, s) = L(f \times g \times h, s)$. Under Hypothesis~\ref{hyp:epsilon-1}, this $L$-function vanishes at $s = 1$. However, if we consider Hida families associated with $g$ and $h$, and the $L$-value $L(f \times g_\ell \times h_\ell, s)$ for $\ell \geq 2$, then
$$\epsilon_\infty(f \times g_\ell \times h_\ell) = -1, \qquad \epsilon_v(f \times g_\ell \times h_\ell) = \epsilon_v(E, \varrho_{gh}) \text{ for $v$ finite},$$
$$\epsilon(f \times g_\ell \times h_\ell) = +1.$$
In particular, we expect that the central $L$-value for $L(f \times g_\ell \times h_\ell, s)$ is generically non-vanishing and hence there should be a non-vanishing $p$-adic $L$-function interpolating these values.

To define the Hida families, assume that $g$ and $h$ are ordinary at $p$ and consider $p$-stabilizations of $g$ and $h$. For $F \in \{g, h\}$, $F \in S_1(N, \chi_F)$, suppose the Hecke polynomial is:
$$x^2 - a_p(F)x + \chi_F(p) = (x - \alpha_p(F))(x - \beta_p(F)) $$
with roots $\alpha_p(F)$ and $\beta_p(F)$. We then have $p$-stabilizations $F_\alpha, F_\beta \in S_1(Np, \chi_F)$ such that:
$$U_p F_\alpha = \alpha F_\alpha, \quad U_p F_\beta = \beta F_\beta.$$
We will assume that $F$ is {\em regular}, i.e.\ $\alpha_p(F) \neq \beta_p(F)$, and hence it has two distinct $p$-stabilizations. 

We then make the following classicality hypothesis.\medskip

\begin{hypothesisalph}~\label{hyp:classicality}
    Assume that $f$ is ordinary at $p$. For $F \in \{g, h\}$, $F$ is ordinary and regular at $p$ (i.e. $\varrho_g$ is irreducible and $\varrho_g(\sigma_p)$ has two distinct eigenvalues for the Frobenius $\sigma_p$), and it is not the theta series of a character of a real quadratic field in which $p$ splits. 
    
\end{hypothesisalph}

Under this hypothesis, Darmon--Lauder--Rotger, based on results of Cho and Vatsal and of Bella\"iche and Dimitrov, proved that there are no non-classical $p$-adic modular forms in the generalized eigenspace of $F_\alpha$. 

\begin{proposition}[{\cite[Proposition 1.1]{Darmon_Lauder_Rotger_Stark_points}}]\label{prop:classicality}
    Under Hypothesis~\ref{hyp:classicality}, the natural inclusion:
    $$S_1(Np, \chi_F)_{\C_p}[F_\alpha] \hookrightarrow S_1^{\mathrm{oc}, \mathrm{ord}}(N, \chi_F)\llbracket F_\alpha\rrbracket$$
    is an isomorphism of $\C_p$-vector spaces. 
\end{proposition}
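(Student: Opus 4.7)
The statement is essentially a known classicality theorem for weight one overconvergent ordinary $p$-adic modular forms, and my plan is to reduce it to the work of Bella\"iche--Dimitrov, with the CM case handled via Cho--Vatsal. The natural inclusion is obviously injective, so the content is the reverse inclusion, i.e.\ every generalized eigenvector in $S_1^{\mathrm{oc}, \mathrm{ord}}(N, \chi_F)$ with the same system of Hecke eigenvalues as $F_\alpha$ is in fact classical.

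First, I would approach this via the geometry of the Coleman--Mazur eigencurve $\mathscr{C}_{\mathrm{ord}}$, or more precisely its ordinary component, near the point $x_\alpha$ associated with $F_\alpha$. Under Hypothesis~\ref{hyp:classicality}, the regularity condition $\alpha_p(F) \neq \beta_p(F)$ together with the assumption that $\varrho_F$ is not induced from a real quadratic field in which $p$ splits are precisely the hypotheses of the main theorem of Bella\"iche--Dimitrov on the smoothness of the eigencurve at classical weight one points. Their theorem asserts that the weight map $\omega \colon \mathscr{C}_{\mathrm{ord}} \to \mathscr{W}$ is \'etale at $x_\alpha$. In the CM case, where $\varrho_F$ is induced from an imaginary quadratic field, one invokes instead the earlier result of Cho--Vatsal (in the split case) which gives the same \'etaleness conclusion. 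The case where $\varrho_F$ is induced from a real quadratic field in which $p$ splits is genuinely different and is explicitly excluded by Hypothesis~\ref{hyp:classicality}.

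Once \'etaleness of $\omega$ at $x_\alpha$ is established, the cotangent space argument gives that the generalized eigenspace in the fiber is one-dimensional: if $\mathscr{T}$ denotes the local ring of the eigencurve at $x_\alpha$ and $\Lambda_1$ the local ring of the weight space at weight one, then $\mathscr{T} \cong \Lambda_1$, and the fiber $\mathscr{T} \otimes_{\Lambda_1} \C_p$ is one-dimensional. Translating this back, $S_1^{\mathrm{oc}, \mathrm{ord}}(N, \chi_F)\llbracket F_\alpha \rrbracket$ is one-dimensional over $\C_p$, hence equals $\C_p \cdot F_\alpha \subseteq S_1(Np, \chi_F)_{\C_p}[F_\alpha]$, which forces equality.

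The main obstacle, which is already resolved in the literature cited above, is the proof of \'etaleness itself: this requires controlling the deformation theory of the Galois representation $\varrho_F$ in the ordinary locus and ruling out non-trivial infinitesimal deformations that remain ordinary at $p$. This is where the two excluded cases (RM split at $p$, which genuinely gives non-classical deformations, and irregular $p$) enter essentially. Since all these inputs are by now standard, my proof would simply consist of citing~\cite{Darmon_Lauder_Rotger_Stark_points} (where this exact statement, Proposition 1.1, is proved by combining these results), without reproving the deformation-theoretic input.
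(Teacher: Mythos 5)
Your proposal is correct and matches the paper's treatment exactly: the paper does not reprove this statement but cites it as Proposition 1.1 of Darmon--Lauder--Rotger, whose proof indeed rests on the \'etaleness of the eigencurve at $x_\alpha$ established by Bella\"iche--Dimitrov (with Cho--Vatsal covering the imaginary CM case), precisely as you outline.
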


By Proposition~\ref{prop:classicality}, there exist Hida families $\mathbf g_\alpha$ and $\mathbf h_\alpha$ whose specializations at weight one are $g_\alpha$ and $h_\alpha$, respectively.

We want to consider a balanced $p$-adic $L$-function $\mathcal L_p^{\bal}$ associated with $f$ and the Hida families $\mathbf g_\alpha$ and $\mathbf h_\alpha$. Therefore, we need to make another hypothesis which will guarantee that Hypothesis~\ref{hyp:cond_at_most_2} needed to construct the balanced $p$-adic $L$-function is satisfied.

\begin{hypothesisalph}\label{hyp:Ccond_at_most_2}
    For each prime $q$ such that $\epsilon_q(E, \varrho_{gh}) = -1$, $v_q(N_i) \leq 2$ for $i = 1, 2, 3$.
\end{hypothesisalph}

Under this hypothesis, we have constructed a balanced triple product $p$-adic $L$-function associated with the triple $f$, $g$, $h$, and the Elliptic Stark Conjecture concerns its value at $(2,1,1)$, which is outside of the range of interpolation. 

\begin{definition}
    Let $f$, $g$, $h$ be three modular forms of weights $2, 1, 1$ satisfying Hypotheses~\ref{hyp:epsilon-1}, \ref{hyp:classicality}, \ref{hyp:Ccond_at_most_2}. Let $f_\alpha$, $g_\alpha$, $h_\beta$ be ordinary $p$-stabilizations of $f$, $g$, and $h$, and consider Hida families $\mathbf f$, $\mathbf g$, $\mathbf h$ specializing to $p$-stabilizations $f_\alpha$, $g_\alpha$, $h_\alpha$ of $f$, $g$, $h$, respectively. Define the {\em 2-variable triple product $p$-adic $L$-function} by:
    $$\mathcal L_p^{\bal, f} = 2^{-\#\Sigma^{-,3}} \cdot \prod_{q \in \Sigma_{\mathrm{exc}}}  (1 - q^{-1})^{-2}  \cdot \mathcal E_p(f_\alpha, \Ad) \cdot \mathcal L_{\mathbf f \times \mathbf g \times \mathbf h, \underline{+1}}^{\bal}$$
    using Definition~\ref{def:p-adic_L-fun} for $\epsilon_q = \begin{cases}
        (+1, +1) & q \in \Sigma^{-, 2}, \\
        (+1, +1, +1) & q \in \Sigma^{-, 2}. \\
    \end{cases}$
\end{definition}

\begin{remark}
    One could presumably weaken or even remove the assumption that $f$ is ordinary at~$p$ by defining the 2-variable $p$-adic $L$-function directly, instead of considering a Hida family through $f_\alpha$ and referring to the 3-variable $p$-adic $L$-function. However, we decided not to pursue this point here.
\end{remark}

As a corollary to Theorem~\ref{thm:interpolation}, we get the following interpolation property.

\begin{corollary}\label{cor:interpolation}
    For $\ell, m \geq 2$ such that $2 - \ell \leq m \leq 2 + \ell$ and $\ell + m \equiv 0 \pmod 2$:
    $$\mathcal L_p^{\bal, f}(\ell, m)^2 = \frac{\Lambda(f \times g_\ell \times h_m, (\ell + m)/2)}{\langle f, f \rangle \langle g_\ell, g_\ell \rangle \langle h_m, h_m \rangle} \cdot \mathcal E_p^{\bal}(\mathbf V_\kappa) \cdot \mathcal E_p(f_\alpha, \Ad),$$
    where $\Lambda(f \times g_\ell \times h_m, s)$ is the completed triple product $L$-function.
\end{corollary}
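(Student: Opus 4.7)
The plan is to deduce this statement directly from Theorem~\ref{thm:interpolation} applied at $\kappa = (2, \ell, m)$ with the trivial choice of signs $\epsilon = \underline{+1}$. The hypotheses $\ell, m \geq 2$, $2 - \ell \leq m \leq 2 + \ell$, and $\ell + m \equiv 0 \pmod 2$, combined with $k_1 = 2$, place $\kappa$ in the balanced region $\mathcal{U}^{\bal}$, so Theorem~\ref{thm:interpolation} evaluates $(\mathcal L_{\mathbf f \times \mathbf g \times \mathbf h, \underline{+1}}^{\bal}(\kappa))^2$ at $\kappa$. Squaring the defining expression for $\mathcal L_p^{\bal, f}$ then gives $\mathcal L_p^{\bal, f}(\ell, m)^2$ as an explicit constant times this square, and the rest of the argument is a bookkeeping of the constants.

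The key computation is the simplification of the sign-dependent local factors at primes in $\Sigma^{-} \setminus \Sigma^{-,0}$ for the choice $\underline{+1}$. For $q \in \Sigma^{-,3}$ the factor $\tfrac{1 + \epsilon_{q,1}\epsilon_{q,2}\epsilon_{q,3}}{4}$ equals $\tfrac{1}{2}$, producing a total contribution of $2^{-\#\Sigma^{-,3}}$ that cancels the $2^{-\#\Sigma^{-,3}}$ in the definition of $\mathcal L_p^{\bal, f}$. For $q \in \Sigma^{-,2}$, one must identify which of $\pi_f$, $\pi_g$, $\pi_h$ is the special (Steinberg) component: since $g$ and $h$ are weight one eigenforms, they correspond to odd Artin representations of $G_\Q$ whose local components are never of special type, so the Steinberg slot must be occupied by $\pi_f$. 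As $f$ is the newform associated with an elliptic curve over $\Q$, it has trivial Nebentypus, so the relevant twisting character $\omega_3$ is trivial at $q$ and the factor $\tfrac{1 + \epsilon_{q,1}\epsilon_{q,2}\sqrt{\omega_3(q)}}{2}$ collapses to $1$. This is the only point in the argument that uses something beyond pure bookkeeping, and I expect it to be the main (though mild) obstacle.

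It remains to match the archimedean and Euler factors. The combination $\Gamma_{\mathbf V_\kappa^\dagger}(0) \cdot L(\mathbf V_\kappa^\dagger, 0) / (\sqrt{-1})^{k_1+k_2+k_3-1}$ packages into the completed triple product $L$-value $\Lambda(f \times g_\ell \times h_m, (\ell + m)/2)$ at the centre of the functional equation via the standard identification of the motivic twist $\mathbf V_\kappa^\dagger$, while the Petersson norm $\langle \mathbf F_\kappa, \mathbf F_\kappa \rangle$ factors as $\langle f, f \rangle \langle g_\ell, g_\ell \rangle \langle h_m, h_m \rangle$ because $\mathbf F_\kappa = f \boxtimes g_\ell \boxtimes h_m$. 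Finally, the prefactor $\prod_{q \in \Sigma_{\mathrm{exc}}} (1 - q^{-1})^{-2}$ in the definition cancels the matching factor from Theorem~\ref{thm:interpolation}, and the prefactor $\mathcal E_p(f_\alpha, \Ad)$ removes the $\mathcal E_p(f_{k_1}, \Ad)$ appearing in the denominator of
\[
\mathcal E_p^{\bal}(\mathbf V_\kappa) \;=\; \frac{\mathcal E_p(\mathrm{Fil}^{+}_{\bal}\mathbf V_\kappa)}{\mathcal E_p(f_\alpha, \Ad)\,\mathcal E_p(g_\ell, \Ad)\,\mathcal E_p(h_m, \Ad)},
\]
yielding precisely the expression $\mathcal E_p^{\bal}(\mathbf V_\kappa) \cdot \mathcal E_p(f_\alpha, \Ad)$ displayed in the corollary. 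Collecting all of these simplifications gives the claimed formula.
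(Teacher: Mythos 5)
Your overall strategy — specialize Theorem~\ref{thm:interpolation} at $\kappa=(2,\ell,m)$ with $\epsilon = \underline{+1}$ and match constants against the definition of $\mathcal L_p^{\bal,f}$ — is exactly the route the paper takes. The identification of $\mathcal{U}^{\bal}$ with the stated range, the $\Sigma^{-,3}$ factor collapsing to $\tfrac12$ per prime, the $\Sigma_{\mathrm{exc}}$ cancellation, the repackaging of $\Gamma_{\mathbf V_\kappa^\dagger}(0)\cdot L(\mathbf V_\kappa^\dagger,0)$ into $\Lambda(f\times g_\ell\times h_m,(\ell+m)/2)$, and the reorganization of the $\mathrm{Ad}$--Euler factors are all correct and are what the paper does.

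The substantive gap is your treatment of the $\Sigma^{-,2}$ factor. You correctly argue that the Steinberg slot must be $\pi_{f,q}$ (weight-one Artin representations are never special). But you then claim that trivial Nebentypus of $f$ forces the \emph{twisting character} $\omega_3$ to be trivial at $q$. That inference is wrong: trivial Nebentypus forces the \emph{central} character of $\pi_{f,q}$ to be trivial, i.e.\ $\omega_3^2 = 1$, so $\omega_3$ is only constrained to be quadratic unramified. For $E/\Q$ with split multiplicative reduction at $q$ one indeed gets $\omega_3 = 1$, but for non-split multiplicative reduction $\omega_3$ is the unramified quadratic character of $\Q_q^\times$, hence $\omega_3(q) = -1$, and $\tfrac{1 + \epsilon_{q,1}\epsilon_{q,2}\sqrt{\omega_3(q)}}{2}$ is $\tfrac{1 \pm i}{2} \neq 1$ for $\epsilon_{q,1}=\epsilon_{q,2}=1$. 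You need either to show that $\omega_3(q)=1$ is forced in the present context, or to explain how the $\Sigma^{-,2}$ term is absorbed elsewhere; merely appealing to trivial Nebentypus does not settle it. (This confusion is arguably latent in the paper's own conflation of the twisting and central characters in Proposition~\ref{prop:local_int_scscsp}, but your justification as written does not close the gap.) Finally, you should verify the cancellation you assert between the $2^{-\#\Sigma^{-,3}}$ from the theorem and the normalizing constant in the definition of $\mathcal L_p^{\bal,f}$ \emph{after} squaring the latter: squaring a multiplicative prefactor of the form $2^{-\#\Sigma^{-,3}}\cdot\prod_{q\in\Sigma_{\mathrm{exc}}}(1-q^{-1})^{-2}\cdot\mathcal E_p(f_\alpha,\Ad)$ does not yield exact cancellation with what Theorem~\ref{thm:interpolation} produces, and this power-of-two/$\Sigma_{\mathrm{exc}}$ accounting deserves an explicit check rather than the phrase ``cancels the $2^{-\#\Sigma^{-,3}}$ in the definition''.
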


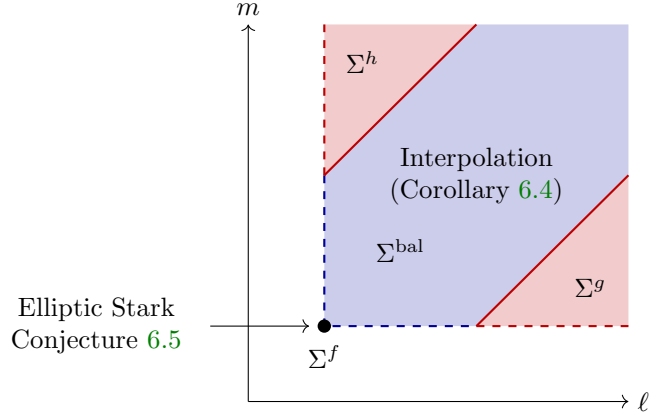
\begin{figure}
	\begin{tikzpicture}
 	\draw[->] (0,0) -- (5,0) node[right] {$\ell$};
	\draw[->] (0,0) -- (0,5) node[above] {$m$};

	\fill[color=darkblue!20]    (1,1) -- (1,3) -- (3,5) -- (5,5) -- (5,3)  -- (3,1);
        \draw (2, 2) node {$\Sigma^{\bal}$};
        \draw (3, 3) node {\begin{tabular}{c} Interpolation \\ (Corollary~\ref{cor:interpolation}) \end{tabular}};
        \draw[dashed, thick, color=darkblue] (1,3) -- (1,1) -- (3,1);
        
        \fill[color=darkred!20]    (3,1) -- (5,1) -- (5,3);
        \draw[color=darkred, thick, dashed]  (3,1) -- (5,1);
        \draw[color=darkred, thick]  (3,1) -- (5,3);
        \draw (4.5, 1.5) node {$\Sigma^g$};

        \fill[color=darkred!20]    (1,3) -- (1,5) -- (3, 5);
        \draw[color=darkred, thick, dashed]  (1,3) -- (1, 5);
        \draw[color=darkred, thick]  (1, 3) -- (3, 5);
        \draw (1.5, 4.5) node {$\Sigma^h$};
        
        \fill (1,1) circle(2.5pt);
        \draw (1, 0.6) node {$\Sigma^f$};
        \draw (-2, 1) node {\begin{tabular}{c} Elliptic Stark \\ Conjecture~\ref{conj:ES} \end{tabular}};
        \draw[->](-0.5, 1) -- (0.8, 1);

	\end{tikzpicture}
    
    \caption{We fix $k = 2$, and consider two weights $\ell, m$ with $\ell + m \equiv 0 \pmod 2$. We indicate the four regions for the weights $\ell$, $m$; $\Sigma^F$ is the region where $F$ is dominant, and $\Sigma^{\bal}$ is the region where the weights are balanced and $\ell, m \geq 2$. We also indicate the point $(\ell, m) = (1,1)$ where our Elliptic Stark Conjecture~~\ref{conj:ES} applies.}
\end{figure}

The value $\mathcal L_p^{\bal, f}(1,1) = \mathcal L_{\mathbf f \times \mathbf g \times \mathbf h, \epsilon}^{\bal}(2,1,1)$ will be expressed in terms of arithmetic data associated with the triple $(E, \varrho_g, \varrho_h)$. We introduce this next.

Let $\sigma_p \in \Gal(H/\Q)$ be the Frobenius at $p$ associated with an embedding $H \to \Q_p^{\mathrm{ur}}$. Under Hypothesis~\ref{hyp:classicality}, for $F \in \{g, h\}$, we have that
$$\varrho_F(\sigma_p) = \begin{pmatrix}
    \alpha_p(F) & 0 \\
    0 & \beta_p(F)
\end{pmatrix}$$
and we may hence consider the one-dimensional eigenspace $V_F^\alpha \subseteq V_F$ for $\sigma_p$ associated with the eigenvalue $\alpha(F)$. This determines a one-dimensional $L$-subspace:
$$V_{\alpha \alpha} = V_g^\alpha \otimes V_h^\alpha \subseteq V_g \otimes V_h$$
and we fix an element $v_{\alpha \alpha} \in V_{\alpha \alpha}$. On the other hand, under Hypothesis~\ref{hyp:epsilon-1}, we expect that
$r(E, \varrho_{gh}) = \dim_L \Hom_{G_\Q}(V_{gh}, E(H) \otimes L) \geq 1$. 
If $\dim_L \Hom_{G_\Q}(V_{gh}, E(H) \otimes L) = 1$, we choose its basis
$$\Phi \colon V_{gh} \to E(H) \otimes L,$$
and let
$$E(H)_L^{V_{gh}} = \Phi(V_{gh}) \subseteq E(H) \otimes L.$$
We will consider the point:
$$\Phi(v_{\alpha \alpha}) \in E(H)_L^{V_{gh}} \subseteq E(H) \otimes L.$$

Next, associated with $F \in \{g, h\}$, we have an adjoint representation $\Ad_F = \Hom^0(V_F, V_F)$, with Frobenius eigenvalues $1$, $\frac{\alpha_p(F)}{\beta_p(F)}$,  $\frac{\beta_p(F)}{\alpha_p(F)}$. By~\cite[Proposition 1.5]{Darmon_Lauder_Rotger_Stark_points}, we have that
$$\Hom_{G_\Q}(\Ad_F, \O_{H_F}^\times \otimes L)$$
is one-dimensional. Let $\varphi$ be its basis and consider
$$(\O_{H_F}^\times)_L^{\Ad_F} = \varphi(\Ad_F) \subseteq \O_{H_F}^\times \otimes L.$$
Under the extra assumption that $\alpha_p(F) \neq -\beta_p(F)$ or $\varrho_F$ is induced from a character of an imaginary quadratic field in Hypothesis~\ref{hyp:classicality}, the subspace
$$U_{F_\alpha} = \left\{ u \in (\O_{H_F}^\times)_L^{\Ad_F} \ \middle| \ \sigma_p(u) = \frac{\alpha_p(F)}{\beta_p(F)} \right\}$$
is one-dimensional~\cite[Lemma 1.6]{Darmon_Lauder_Rotger_Stark_points}. In any case, we let $u_{F_\alpha} \in U_{F_\alpha}$ be a non-torsion element.

\begin{conjecture}[Elliptic Stark Conjecture, rank one case]\label{conj:ES}
Assume Hypotheses~\ref{hyp:epsilon-1}, \ref{hyp:classicality}, \ref{hyp:Ccond_at_most_2} and that $r(E, \varrho_{gh}) \geq 1$. If $r(E, \varrho_{gh}) > 1$, then $\mathcal L_p^{\bal, f}(1,1) = 0$. If $r(E, \varrho_{gh}) = 1$, then:
$$\mathcal L_p^{\bal, f}(1,1) \sim_{\sqrt{L^\times}} \frac{\log_{E, p}(\Phi(v_{\alpha \alpha}))}{\log_p(u_{g_\alpha})^{1/2}\log_p(u_{h_\alpha})^{1/2}},$$
where
\begin{align*}
    \log_p \colon (\O_{H}^\times)_L & \to H_p \otimes L & \text{$p$-adic logarithm}, \\
    \log_{E, p} \colon  E(H)_L & \to H_p \otimes L & \text{$p$-adic formal group logarithm for $E$}.
\end{align*}
\end{conjecture}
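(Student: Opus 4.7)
The plan is to tackle the conjecture in the tractable CM regime described at the end of the introduction, where it becomes Theorem~\ref{thm:D}; the general case seems out of reach with present $p$-adic techniques. Assume $g = \theta_{\psi_g}$ and $h = \theta_{\psi_h}$ are theta series of finite-order Hecke characters of an imaginary quadratic field $K$ in which $p$ splits, with $(\psi_g \psi_h) \circ N_{K/\Q}=1$, and suppose conditions (1)--(3) of Theorem~\ref{thm:D} hold. Setting $\psi_1 = \psi_g \psi_h$ and $\psi_2 = \psi_g \psi_h^\sigma$, the Artin decomposition $V_g \otimes V_h \cong \Ind_K^\Q \psi_1 \oplus \Ind_K^\Q \psi_2$ gives the complex factorization $L(f \times g \times h, s) = L(f_K, \psi_1, s)\, L(f_K, \psi_2, s)$, with the sign hypothesis~\ref{hyp:epsilon-1} and condition~(1) forcing $\epsilon(f_K, \psi_1) = -1$ and $\epsilon(f_K, \psi_2) = +1$ after relabeling.

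The central step is to promote this to a factorization of $p$-adic $L$-functions along the two-variable family. Letting $\boldsymbol{\psi}_1, \boldsymbol{\psi}_2$ be the $p$-adic families obtained by varying $\psi_g, \psi_h$, the goal is an identity of the form
$$\mathcal L_p^{\bal, f}(\ell, m)^2 \;\sim_{\sqrt{L^\times}}\; \mathcal L_p^{\mathrm{BDP}}(f_K, \boldsymbol{\psi}_1)(\ell, m) \cdot \mathcal L_p^{\mathrm{Katz}}(\boldsymbol{\psi}_2)(\ell, m),$$
where the first factor is the Rankin--Selberg $p$-adic $L$-function of~\cite{BDP} in the generalization of Appendix~\ref{appendix BDP}, and the second is a Katz $p$-adic $L$-function attached to $\psi_2$. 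I would establish it by matching both sides on the dense set of balanced arithmetic $(\ell, m)$: Corollary~\ref{cor:interpolation} expresses the left-hand side in terms of $\Lambda(f \times g_\ell \times h_m, (\ell+m)/2)$ divided by Petersson norms times an explicit $p$-Euler factor, while BDP and Katz yield analogous interpolation formulas for the right. The complex factorization of $L$-values combined with the Rankin--Selberg factorization of the Petersson norm of a CM theta series, plus careful bookkeeping of Euler factors and $p$-adic periods, should match them up to an element of $\sqrt{L^\times}$.

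Evaluating at $(1,1)$ then yields Conjecture~\ref{conj:ES}. The BDP factor specializes, via the main formula of~\cite{BDP} in the form proved in Appendix~\ref{appendix BDP}, to $\log_{E,p}(P)^2$ up to $L^\times$, where $P \in E(H)_L$ is a Heegner-type point spanning the $\varrho_{gh}$-isotypic component of $E(H)_L$; and the Katz factor specializes, via the $p$-adic Kronecker limit formula, to a product $\log_p(u_1)\log_p(u_2)$ up to $L^\times$, with $u_i$ an elliptic unit attached to the Hecke character of $K$. Identifying $u_1, u_2$ with the units $u_{g_\alpha}, u_{h_\alpha}$ of the conjecture (up to $L^\times$) via the CM description of the adjoint representations $\Ad^0 \varrho_g, \Ad^0 \varrho_h$ then finishes the proof after taking a square root.

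The main obstacle will be proving the $p$-adic factorization in the generality required by (3). Three technical difficulties stand out. First, the classical Heegner hypothesis underlying~\cite{BDP} fails here: primes $q$ with $\epsilon_q(f \times g \times h) = -1$ force $q^2 \mid N_2, N_3$, so the statement of BDP must be loosened, which is exactly the point of Appendix~\ref{appendix BDP}. Second, our construction of $\mathcal L_p^{\bal, f}$ goes through Ichino's formula on the quaternion algebra $D$ ramified at $\Sigma^-$, with test vectors chosen via the operators $\UL{\ell}$ on quaternionic forms, while the BDP/Katz side is built from CM-theoretic inputs; matching the local integrals at primes in $\Sigma^{-,2}$ and $\Sigma^{-,3}$ will require carefully tracking the signs $\epsilon$, the fudge factor $\mathfrak f_{\mathbf F'}$, and verifying that the distinguished choice $\underline{+1}$ made in defining $\mathcal L_p^{\bal, f}$ is the one that makes the factorization hold. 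Third, the identification is only up to $\sqrt{L^\times}$ rather than $L^\times$, a feature of our square-root construction, so obtaining correct signs will require choosing compatible square roots of CM periods on both sides --- presumably this is where Gross periods enter implicitly through the comparison.
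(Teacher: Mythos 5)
This statement is a \emph{conjecture}, so it cannot be proved in full generality; the paper only establishes it in the CM setting (Theorem~\ref{thm:CM}), which is the case you target. Your high-level strategy --- factor the triple product $p$-adic $L$-function into a BDP Rankin--Selberg piece and Katz pieces, then evaluate at the BSD point using the $p$-adic Gross--Zagier formula~\eqref{eqn:BDP_Gross_Zagier} and the $p$-adic Kronecker limit formula~\eqref{eqn:p-adic_Kronecker}, and match things up via the appendix generalization of~\cite{BDP} --- is the right one and agrees in spirit with the paper. But the factorization you write down is structurally wrong, and the error is not cosmetic.

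Your proposed identity
\begin{equation*}
\mathcal L_p^{\bal, f}(\ell, m)^2 \sim \mathcal L_p^{\mathrm{BDP}}(f_K, \boldsymbol{\psi}_1)(\ell, m)\cdot \mathcal L_p^{\mathrm{Katz}}(\boldsymbol{\psi}_2)(\ell, m)
\end{equation*}
has one Katz factor ``attached to $\psi_2$''. But the Katz $p$-adic $L$-functions do not arise from the Artin decomposition $V_g\otimes V_h \cong \Ind\psi_1\oplus \Ind\psi_2$; they come from the Petersson norms $\langle g_\ell, g_\ell\rangle$, $\langle h_\ell, h_\ell\rangle$ in the interpolation property of $\mathcal L_p^{\bal, f}$, via Hida's formula identifying the Petersson norm of a CM theta series with a Katz value for the \emph{adjoint} character (equation~\eqref{eqn:Katz_and_Petersson}). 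Consequently there must be \emph{two} Katz factors, one for $\psi_g/\psi_g^\sigma$ and one for $\psi_h/\psi_h^\sigma$ --- exactly matching the two units $u_{g_\alpha}, u_{h_\alpha}$ in the conjecture --- and they enter on the left-hand side (multiplying $\mathcal L_p^{\bal, f}(\ell,\ell)^2$), not as a single factor on the right; compare Theorem~\ref{thm:CM_factorization}. You do notice at the end that the units come from $\Ad^0\varrho_g$ and $\Ad^0\varrho_h$, which is inconsistent with labeling the factor by $\psi_2$ and with having only one of them.

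The second piece your factorization omits entirely is the contribution of the character with root number $+1$. After the Artin factorization of $L(f\times g_\ell\times h_\ell, \cdot)$ and after replacing one factor by a BDP $p$-adic $L$-function, a stubborn term $L(f_K\times\psi_1, 1)/\langle f, f\rangle$ remains, and it does not interpolate into any $p$-adic family. It is dealt with by Gross's formula~\eqref{eqn:Gross}, turning it into $\langle c_{f,\psi_1}, c_{f,\psi_1}\rangle$ up to $L^\times$. Without this term the bookkeeping cannot close, since the left side involves $\langle f,f\rangle$ in its periods and only one of the two Rankin--Selberg factors is absorbed into BDP. So a correct proof needs all three ingredients --- BDP, two Katz, and Gross --- whereas your plan has only the first and a single conflated version of the second.
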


\begin{remark}\label{rmk:sanity_check}
    We would like to thank Alan Lauder for suggesting the following sanity check. The left hand side in the conjecture is independent of the number field $H$, whereas the right hand side seems to depend on the Frobenius $\sigma_p \in \Gal(H/\Q)$ and its eigenvalues for the Artin representations $\varrho_g$ and $\varrho_h$. However, the Frobenius $\sigma_p$ acts on the right hand side as:
    \begin{align*}
        \frac{(\alpha_p(g) \alpha_p(h))^2}{\frac{\alpha_p(g)}{\beta_p(g)} \frac{\alpha_p(h)}{\beta_p(h)}} & = \alpha_p(g) \alpha_p(h) \beta_p(g) \beta_p(h) \\
        & = \chi_g(p) \chi_h(p) \\
        & = 1.
    \end{align*}
\end{remark}

\begin{remark}
    Taking the formal group law exponential $\exp_{E, p}$, we get an interesting formula for a $p$-adic point on the elliptic curve $E$ which is conjecturally in $E(H) \otimes L$:
    $$\exp_p( \mathcal L_p^{f, \bal}(1,1) \log_p(u_{g_\alpha})^{1/2} \log_p(u_{h_\alpha})^{1/2}) \overset?\in E(H)_L^{V_{gh}} \subseteq E(H) \otimes L.$$
\end{remark}

\subsection{Proof in the CM case}

Let $K/\Q$ be an imaginary quadratic field of discriminant $-D_K$ in which $p$ splits. The next goal of the paper is to prove the Elliptic Stark Conjecture~\ref{conj:ES} in the case when $\varrho_g$ and $\varrho_h$ are representations induced from Dirichlet characters of $K$.

\subsubsection{Statement of the theorem}\label{section: Statement of the theorem}

Let $\psi \colon G_K \to \C^\times$ be a finite order character of conductor $\mathfrak c \subseteq \O_K$. We can then consider the associated Artin representation representation 
$$V_\psi = \Ind_{G_K}^{G_\Q} \psi \colon G_\Q \to \GL_2(\C)$$ 
with determinant $\chi = \det V_\psi = \psi \circ \mathrm{Tr}$, where $\mathrm{Tr} \colon G_\Q^{\rm ab} \to G_K^{\rm ab}$ is the transfer map. The weight one modular form associated with this Artin representation is explicitly constructed as a theta series for the character of $\A_K^\times$ corresponding to $\psi$:
$$\theta_\psi \in M_1(D_K \cdot N_{K/\Q}(\mathfrak c), \chi).$$
Moreover, $\theta_\psi$ is a cusp form if and only if $\psi^\sigma \neq \psi$ where $\sigma$ is a generator of $\Gal(K/\Q)$. We will only be interested in this case.

We consider two finite order characters $\psi_g$ and $\psi_h$ such that $\chi_h = \chi_g^{-1}$, where $\chi_g = \psi_g \circ \mathrm{Tr}$ and $\chi_h = \psi_h \circ \mathrm{Tr}$, and $\psi_g^\sigma \neq \psi_g$, $\psi_h^\sigma \neq \psi_h$. Then we have associated modular forms
$$g = \theta_{\psi_g} \in S_1(N_g, \chi) \qquad h = \theta_{\psi_g} \in S_1(N_h, \overline{\chi}).$$
Write $\mathfrak c_\star$ for the conductor of $\psi_\star$ for $\star \in \{g, h\}$.

The goal is to study the Elliptic Stark Conjecture~\ref{conj:ES} in this example. We have a factorization
$$V_{gh} = V_{\psi_g} \otimes V_{\psi_h} \iso V_{\psi_g \psi_h} \oplus V_{\psi_g \psi_h^\sigma},$$
and we will write
$$\psi_1 = \psi_g \psi_h^\sigma,\ \psi_2 = \psi_g \psi_h.$$
This corresponds to the factorization of $L$-functions
$$L(E, \varrho_{gh}, s) = L(E/K, \psi_1, s) L(E/K, \psi_2, s).$$

Hypothesis~\ref{hyp:epsilon-1} is hence equivalent to the following hypothesis.

\begin{hypothesisalphpr}\label{hyp:A'}
    We have that:
    $$\epsilon(E/K, \psi_1) \cdot \epsilon(E/K, \psi_2) = -1,$$
    i.e. without loss of generality, $\epsilon(E/K, \psi_1) = +1$, $\epsilon(E/K, \psi_2) = -1$.
\end{hypothesisalphpr}

Next, we want to work out when Hypothesis~\ref{hyp:classicality} holds in this case. The values $\alpha_p, \beta_p$ depend on the splitting of $p$ in $K$:
$$\{\alpha_p, \beta_p\} = \begin{cases}
    \{\psi(\sigma_{\mathfrak p}), \psi(\sigma_{\overline {\mathfrak p}})  \} & \text{if $(p) = \mathfrak p \overline{\mathfrak p}$ splits}, \\
    \{\sqrt{\psi(\sigma_p)}, -\sqrt{\psi(\sigma_p)}  \} & \text{if $(p)$ is inert}.
\end{cases}.$$
Therefore, Hypothesis~\ref{hyp:classicality} amounts to the following.

\begin{hypothesisalphpr}\label{hyp:B'}
    For $\psi \in \{\psi_g, \psi_h\}$:
    \begin{itemize}
        \item $\theta_\psi$ is ordinary at $p$,
        \item $\theta_\psi$ is not also the theta series of a character of a real quadratic field in which $p$ splits,
        \item when $p$ splits in $K$, $\psi(\sigma_{\mathfrak p}) \neq \psi(\sigma_{\overline{\mathfrak p}})$. 
    \end{itemize}
\end{hypothesisalphpr}

Finally, Hypothesis~\ref{hyp:Ccond_at_most_2} is that for each prime $\ell$ such that $\epsilon_\ell(E_K, \psi_1) \cdot \epsilon_\ell(E_K, \psi_2) = -1$, we have that $v_\ell(N_f), v_\ell(N_{K/\Q} \mathfrak c_g), v_\ell(N_{K/\Q} \mathfrak c_h)  \leq 2$. For simplicity, we make a slightly stronger assumption. 

\begin{hypothesisalphpr}\label{hyp:C'}
    For each prime $\ell$ of $K$ such that $\epsilon_\ell(E_K, \psi_1) \cdot \epsilon_\ell(E_K, \psi_2) = -1$, we have that:
	\begin{itemize}
		\item $\epsilon_\ell(E_K, \psi_2) = +1$,
		\item $v_\ell(N_f) = 1$,  $v_\ell(N_{K/\Q} \mathfrak c_g) = 2, v_\ell(N_{K/\Q} \mathfrak c_h) = 2$.
	\end{itemize}
    Moreover, for each prime $q$ of $K$ such that $\epsilon_q(E_K, \psi_1) \cdot \epsilon_q(E_K, \psi_2) = +1$, we have that $\epsilon_q(E_K, \psi_1) =+1$ and $\epsilon_q(E_K, \psi_2) =+1$.
\end{hypothesisalphpr}

\begin{remark}\label{rmk: expected result for hyp C}
	We expect that the results of the section still hold under the weaker assumption, by replacing of Appendix~\ref{appendix BDP} by a generalization of the results of Brooks~\cite{Brooks}, and by calculating the local integrals when all three representations are supercuspidal. 
\end{remark}

We consider three arithmetic quantities associated with our data of $f$, $\psi_g$, $\psi_h$:
\begin{itemize}
    \item following~\cite[Section 11]{Gross87} (more generally,~\cite{CaiShuTian}),
    \begin{align}\label{eqn:Gross_point}
    	c_{f, \psi_1} = \pi_f\left( \sum\limits_{\sigma \in \Gal(H/\Q)} \psi_1^{-1}(\sigma) s^\sigma  \right),
    \end{align}
    where $\pi_f$ is the projection onto the $f$-isotypic component of an appropriate Shimura set $S$ and $s \in S$ is a Heegner point for $H_{c(\psi_1)}/K$;

    \item following~\cite[p. 37]{Darmon_Lauder_Rotger_Stark_points} (cf.\ \cite[Definition 3.3.1]{LZZ}),
    \begin{align}\label{eqn:Heegner_pt}
        P_{\psi_2} & = \pi_f\left(\sum\limits_{\sigma \in \Gal(H/K)} \psi_2^{-1}(\sigma) t^\sigma\right) \in E(H)_L^{\psi_2} 
    \end{align}
    where $\pi_f$ is the modular parametrization by an appropriate Shimura curve $X$ and $t \in X$ is a Heegner point for $H_{c(\psi_2)}/K$;

    \item following~\cite[p. 38]{Darmon_Lauder_Rotger_Stark_points}, for $\psi_0 \in \{\psi_g/\psi_g^{\sigma}, \psi_h/\psi_h^{\sigma}\}$, we let
    \begin{align}\label{eqn:Heegner_unit}
        u_{\psi_0} & = \sum\limits_{\sigma \in \Gal(H/K)} \psi_0^{-1}(\sigma) u^{\sigma} \in (\O_H^\times)_L^{\psi_0}
    \end{align}
    where $u$ is an elliptic unit for $H_{c(\psi_0)}/K$ and the unit group is written additively.    
\end{itemize}

\begin{theorem}\label{thm:CM}
    Let $K$ be an imaginary quadratic field and $p$ be a prime which splits in $K$. Then, under Hypothesis~\ref{hyp:A'}, \ref{hyp:B'}, \ref{hyp:C'}, the rank one Elliptic Stark Conjecture~\ref{conj:ES} holds. Explicitly, if $r(E_K, \psi_1) = 0$ and $r(E_K, \psi_2) = 1$, then there is an explicit constant $\lambda \in \sqrt{L^\times}$ such that:
    $$\mathcal L_p^{f, \rm bal}(1,1) = \lambda \cdot  \overbrace{\langle c_{f, \psi_1}, c_{f, \psi_1} \rangle^{1/2}}^{\in \sqrt{L^\times}} \cdot \frac{\log_{E,p}(P_{\psi_2})}{\log_p(u_{\psi_g/\psi_g^\sigma})^{1/2} \log_p(u_{\psi_h/\psi_h^\sigma})^{1/2}},$$
    where $\langle -, - \rangle$ is the height pairing on $S$.
\end{theorem}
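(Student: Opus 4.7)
My plan is to follow the approach outlined in the introduction: factor the balanced triple product $p$-adic $L$-function into Rankin--Selberg and Katz $p$-adic $L$-functions, and then evaluate each factor at the BSD point $(1,1)$ using (generalizations of) the formulas of Bertolini--Darmon--Prasanna and of Katz.

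The first step is to establish a $p$-adic factorization of $\mathcal L_p^{\bal, f}$ (stated as Theorem~\ref{thm:CM_factorization}). On the complex side, the CM decomposition $V_{gh} \iso V_{\psi_1} \oplus V_{\psi_2}$ gives the factorization
$$L(f \times g_\ell \times h_m, s) = L(f_K, \psi_1(\ell, m), s) \cdot L(f_K, \psi_2(\ell, m), s),$$
where $\psi_i(\ell, m)$ are the natural $p$-adic families of Hecke characters of $K$ obtained by deforming $\psi_i$ along the Hida families $\mathbf g$ and $\mathbf h$. Moreover, the $p$-adic Euler factor $\mathcal E_p^{\bal}(\mathbf V_\kappa)$ and the adjoint factor $\mathcal E_p(\mathbf F_\kappa, \Ad)$ split correspondingly, producing $p$-adic Euler factors for the anticyclotomic characters $\psi_g/\psi_g^\sigma$ and $\psi_h/\psi_h^\sigma$. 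Combining the interpolation property in Corollary~\ref{cor:interpolation} with the interpolation properties of the Rankin--Selberg $p$-adic $L$-functions of BDP type attached to $(f/K, \psi_i)$, together with the Katz $p$-adic $L$-functions for $\psi_g/\psi_g^\sigma$ and $\psi_h/\psi_h^\sigma$, yields an identity of algebraic numbers on a Zariski-dense set of classical balanced points $(\ell, m)$. Since both sides extend to analytic functions on the two-dimensional affinoid $\mathcal U_g \times \mathcal U_h$, the identity extends to all of $\mathcal U_g \times \mathcal U_h$.

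The second step is to evaluate the factorization at $(\ell, m) = (1,1)$, which lies outside the balanced region. Under Hypothesis~\ref{hyp:C'}, the classical Heegner hypothesis required by Bertolini--Darmon--Prasanna is not satisfied, so I appeal to the generalization proved in Appendix~\ref{appendix BDP}. For the $\psi_1$ factor (whose local sign is $+1$), the generalized BDP formula evaluates $\mathcal L_p^{\rm BDP}(f/K, \psi_1)(1,1)$ to the algebraic part of the central $L$-value $L(f_K, \psi_1, 1)$, which by Gross's formula equals $\langle c_{f, \psi_1}, c_{f, \psi_1}\rangle$ up to an explicit factor, with $c_{f, \psi_1}$ as in \eqref{eqn:Gross_point}. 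For the $\psi_2$ factor (whose local sign is $-1$), the central $L$-value vanishes and the generalized BDP formula produces
$$\mathcal L_p^{\rm BDP}(f/K, \psi_2)(1,1) \sim_{L^\times} \log_{E, p}(P_{\psi_2})^2,$$
where $P_{\psi_2}$ is the Heegner point of \eqref{eqn:Heegner_pt}. Finally, Katz's $p$-adic Kronecker limit formula~\cite[Theorem~3.1]{Darmon_Lauder_Rotger_Stark_points} evaluates the Katz $p$-adic $L$-function of $\psi_\star/\psi_\star^\sigma$ at $s=1$ in terms of $\log_p(u_{\psi_\star/\psi_\star^\sigma})$ for $\star \in \{g, h\}$, with $u_{\psi_\star/\psi_\star^\sigma}$ as in \eqref{eqn:Heegner_unit}. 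Assembling these three evaluations and taking an appropriate square root (enlarging $L$ inside $\sqrt{L^\times}$ if necessary) produces the formula in the statement.

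The main obstacle is the first step: matching the local data precisely at the bad primes $\ell \in \Sigma^-$. On the balanced side, the local integrals at these primes were computed in Propositions~\ref{prop:local_int_spspsp}, \ref{prop:local_int_scscsp}, and~\ref{prop:local_integral_scscsc}, and are nonzero exactly for the canonical choice of signs $\epsilon = \underline{+1}$ used to define $\mathcal L_p^{\bal, f}$. On the BDP side, an analogous matching of local data is carried out in the proof of the generalized BDP formula in Appendix~\ref{appendix BDP}. It is through the compatibility of these two local calculations that Theorem~\ref{thm:CM_factorization} is an equality and not merely a proportionality, and this is what forces Hypothesis~\ref{hyp:C'} in its present form (cf.\ Remark~\ref{rmk: expected result for hyp C}). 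Once Step 1 is in place, Steps 2 and 3 are applications of known formulas in their generalized forms.
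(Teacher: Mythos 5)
Your overall strategy — factor $\mathcal L_p^{\bal, f}$ and evaluate the factors at $(1,1)$ via the generalized BDP formula, Katz's $p$-adic Kronecker limit formula, and Gross's formula — matches the paper's proof. But there is a genuine gap in your treatment of the $\psi_1$ factor. You posit two BDP-type Rankin--Selberg $p$-adic $L$-functions, one for each $\psi_i$, and claim ``the generalized BDP formula evaluates $\mathcal L_p^{\rm BDP}(f/K, \psi_1)(1,1)$ to the algebraic part of the central $L$-value $L(f_K, \psi_1, 1)$.'' This is not what the BDP formula gives at $(1,1)$: that point has infinity type $(1,1)$, lying in $\Sigma_{f,c}^{(1)}$ outside the interpolation region, and the $p$-adic Gross--Zagier formula~\eqref{eqn:BDP_Gross_Zagier} there produces a multiple of $\log_p(P_{\psi_1})^2$, not the $L$-value. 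Since $\epsilon(E_K, \psi_1) = +1$, the Heegner point $P_{\psi_1}$ is torsion, so such a $p$-adic $L$-function would \emph{vanish} at $(1,1)$, forcing $\mathcal L_p^{\bal,f}(1,1) = 0$ and contradicting the expected non-vanishing. Moreover, along the diagonal the deformation $\psi_1 N_K^{\ell-1}$ has infinity type $(\ell-1,\ell-1)$, which never lies in $\Sigma_{f,c}^{(2)}$, so there is no BDP $p$-adic $L$-function to build in this direction.

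The paper sidesteps this by a key structural observation, visible in the classical Artin factorization restricted to the diagonal:
\begin{equation*}
L(f \times g_\ell \times h_\ell, s) = L(f_K \times \psi_2\langle\lambda\rangle^{2\ell-2}, s) \cdot L\bigl(f_K \times \psi_1, s - (\ell-1)\bigr).
\end{equation*}
At the center $s = \ell$ the second factor is $L(f_K\times\psi_1, 1)$ for \emph{every} $\ell$. Thus the $\psi_1$ contribution to the $p$-adic factorization (Theorem~\ref{thm:CM_factorization}) is the single algebraic number $L(f_K\times\psi_1,1)/(\pi^2\langle f,f\rangle)$, not a second $p$-adic $L$-function; only the $\Psi_{gh}(\ell)$-direction (infinity type $(\ell,-\ell+2)$, lying in $\Sigma_{f,c}^{(2)}$ for $\ell\geq 2$) gives a genuine BDP $p$-adic $L$-function. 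Gross's classical formula~\eqref{eqn:Gross} is then applied directly to this constant, with no $p$-adic interpolation intervening. Once you replace your $\psi_1$ step by this observation (and accept that the local matching is carried out through the $K$-admissibility of $\mathcal F(\ell)$ and the Euler factor lemma showing $\mathcal E(\ell)$ is constant, rather than through an ab initio local comparison), the rest of your outline — the one BDP $p$-adic $L$-function for $\Psi_{gh}(\ell)$, the two Katz $p$-adic $L$-functions absorbing the Petersson norms of $g_\ell$ and $h_\ell$, and evaluation at $\ell = 1$ — is correct.
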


\begin{remark}
    The factor $\langle c_{f, \psi_1}, c_{f, \psi_1} \rangle^{1/2} \in L^\times$ could, of course, be combined with $\lambda \in \sqrt{L^\times}$ in the formula. However, we include it here, because it seems to describe the ``arithmetic'' contribution to the special value associated with the character $\psi_1$. It is quite interesting that this factor has a similar form to the other (non-algebraic) factors, and may therefore be relevant to an integral version of the result or to similar results in other settings.
\end{remark}

The proof of this theorem is based on a factorization of the $p$-adic $L$-function corresponding to the factorization
$$L(E, \varrho_{gh}, s) = L(E/K, \psi_1, s) \cdot L(E/K, \psi_2, s).$$

Following \cite[Section 3.2]{Darmon_Lauder_Rotger_Stark_points}, we recall two related $p$-adic $L$-functions.

\subsubsection{The Katz $p$-adic $L$-function for $K$~\cite{Katz-real_analytic}}\label{section: The Katz $p$-adic $L$-function for $K$}

    Let $\Sigma$ be the set of characters of $K$ of conductor dividing a fixed integral ideal $\mathfrak c \subseteq \O_K$. We then define $\Sigma_K = \Sigma_K^{(2)} \cup \Sigma_K^{(2')}$ where:
    \begin{align*}
        \Sigma_K^{(2)} & = \{\psi \in \Sigma \text{ of infinity type $(\kappa_1, \kappa_2)$ with $\kappa_1 \geq 1$, $\kappa_2 \leq 0$} \}, \\
        \Sigma_K^{(2')} & = \{\psi \in \Sigma \text{ of infinity type $(\kappa_1, \kappa_2)$ with $\kappa_1 \leq 0$, $\kappa_2 \geq 1$} \}.
    \end{align*}
    
    Katz defined a $p$-adic $L$-function 
    $$\mathcal L_p(K) \colon \widehat{\Sigma}_K \to \C_p$$ 
    defined on the $p$-adic completion $\widehat{\Sigma}_K$ of $\Sigma_K$ with the interpolation property
    \begin{align}\label{eqn:Katz_interpolation}
        \mathcal L_p(K)(\psi) & = \mathfrak a(\psi) \cdot \mathfrak e(\psi) \cdot \mathfrak f(\psi) \cdot \frac{\Omega_p^{\kappa_1 - \kappa_2}}{\Omega^{\kappa_1 - \kappa_2}} \cdot L_{\mathfrak c}(\psi^{-1}, 0) & \text{for $\psi \in \Sigma^{(2)}_K$},
    \end{align}
    where:
    \begin{itemize}
        \item $\mathfrak a(\psi) = (\kappa_1 - 1)\pi^{-\kappa_2}$, $\mathfrak e(\psi) = (1 - \psi(\mathfrak p) p^{-1})(1 - \psi^{-1}(\overline{\mathfrak p}))$ and $\mathfrak f(\psi) = D_K^{\kappa_2/2} 2^{-\kappa_2}$,
        \item $\Omega \in \C^\times$ and $\Omega_p \in \C_p^\times$ are CM periods attached to $K$, cf.\ \cite[(2-15), (2-17)]{BDP1},
        \item $L_{\mathfrak c}(\psi^{-1}, s)$ is the $\mathfrak c$-depleted Hecke $L$-function associated with $\psi^{-1}$.
    \end{itemize}

    It satisfies a functional equation of the form 
    \begin{align}\label{eqn:Katz_functional_eqn}
        \mathcal L_p(K)(\psi) = \mathcal L_p(K)((\psi^\sigma)^{-1} N_{K/\Q}),
    \end{align}
    relating the values in $\Sigma_K^{(2)}$ to $\Sigma_K^{(2')}$.

    A finite order Hecke character $\psi$ has trivial infinity type and hence lies outside $\Sigma_K$, i.e.\ outside the interpolation range~\eqref{eqn:Katz_interpolation}; however it still belongs to $\widehat{\Sigma}_K$. Katz proves a $p$-adic Kroenecker limit formula~\cite[(47)]{Darmon_Lauder_Rotger_Stark_points}:
    \begin{align}\label{eqn:p-adic_Kronecker}
        \mathcal L_p(\psi) & = \frac{-1}{24 N_{K/\Q} \mathfrak c} \mathfrak e(\psi) \log_p(u_\psi), & \text{for $\psi$ of finite order},
    \end{align}
    for $u_\psi$ defined in~\eqref{eqn:Heegner_unit}.

\begin{figure}[h]
	\begin{tikzpicture}
	\fill[color=darkblue!20]    (1,0) -- (4,0) -- (4,-3) -- (1,-3);
	\fill[color=darkgreen!20]    (0,1) -- (0,4) -- (-3,4) -- (-3,1);
 
 	\draw[->] (-3,0) -- (4,0) node[right] {$\kappa_1$};
	\draw[->] (0, -3) -- (0,4) node[above] {$\kappa_2$};
 
        \draw (0, 1) -- (-3, 1);
        \draw (1,0) -- (1, -3);
 
	\fill (0,0) circle(2.5pt);
        \draw (-1.75, -0.5) node {\begin{tabular}{c} $p$-adic Kronecker \\ formula equation~\eqref{eqn:p-adic_Kronecker} \end{tabular}};
	
	\draw[thick, dashed] (-3,4) -- (4,-3);

	\draw[<->] (0.5, 2) to[bend left] (2, 0.5);
	\draw (1.9,1.3) node[right] {\begin{tabular}{c} functional equation \\ equation~\eqref{eqn:Katz_functional_eqn} \end{tabular}};
        \draw[dotted] (-3,-3) -- (4, 4);

        \draw (2, -2) node {$\Sigma_K^{(2)}$};
        \draw (2.9, -0.5) node {\begin{tabular}{c} Interpolation \\ equation~\eqref{eqn:Katz_interpolation}\end{tabular}};

        \draw (-2, 2) node {$\Sigma_K^{(2')}$};

        \draw (-3, 4) node[above left] {\begin{tabular}{c}
            central \\ critical \\ line
        \end{tabular} };
	\end{tikzpicture}
	
	\caption{The following diagram shows the diagram of infinity types $(\kappa_1, \kappa_2)$ for the characters in $\Sigma$. We indicate interpolation region $\Sigma_K^{(2)}$ for the Katz $p$-adic $L$-function in blue~\eqref{eqn:Katz_interpolation}, the functional equation~\eqref{eqn:Katz_functional_eqn} with the axis of symmetry given by the dotted line, and the point where the $p$-adic Kroenecker limit formula~\eqref{eqn:p-adic_Kronecker} is valid. The dashed line is the central critical line}
\end{figure}
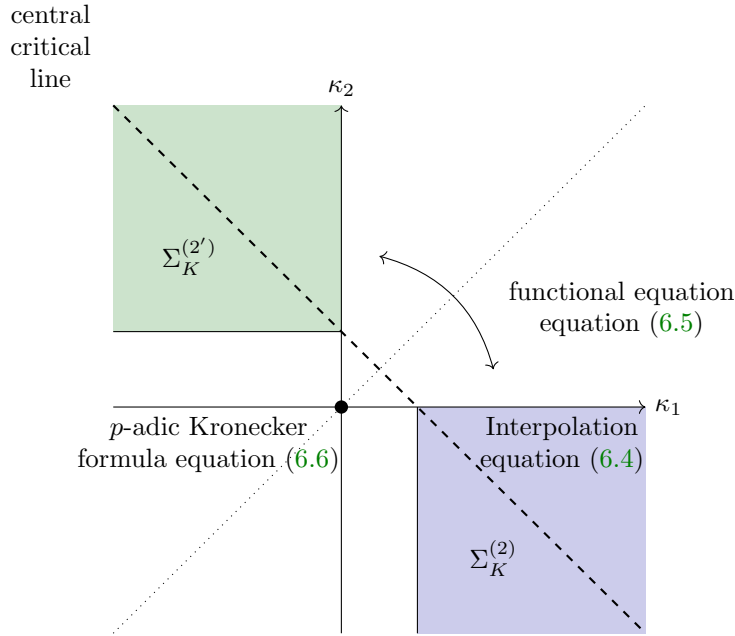

\subsubsection{The BDP $p$-adic Rankin $L$-function for $f$ and $K$}

    For any character $\psi$ of $K$ with infinity type $(\kappa_1, \kappa_2)$, we consider the Rankin--Selberg $L$-function associated with $f$ and $\theta_\psi$:
    $$L(f, \psi, s) = L(f \times \theta_\psi, s - (\kappa_1+ \kappa_2 + 1)/2).$$
    When the conductor of $\psi$ is coprime to the level $N_f$ of $f$, a $p$-adic $L$-function was constructed in~\cite{BDP} and a special value formula outside of the interpolation range was proved.

    Assume Hypothesis~\ref{hyp:weak_Heegner}, i.e. $N_f = N_+ N_-$ and $N_-$ is a square-free product of finite primes. Let $c \geq 1$ be a positive integer relatively prime to $pN^+$ and divisible by $N_-$. Bertolini--Darmon--Prasanna assume that $N_- = 1$, but we prove the analogue of their results in Appendix~\ref{appendix BDP}.
    
    Let $\Sigma_{f, c}$ be the set of characters defined in Definition~\ref{def:Sigma_fc}, so that for $\psi \in \Sigma_{f, c}$, $L(f, \psi, s)$ is self-dual and has $s = 0$ as its central critical point. Note that $\Sigma_{f, c}$ naturally decomposes as
    \begin{align*}
        \Sigma_{f, c} & = \Sigma_{f, c}^{(1)} \cup \Sigma_{f, c}^{(2)} \cup \Sigma_{f, c}^{(2')}, \\
        \Sigma_{f, c}^{(1)} & = \{\psi \text{ of infinity type $(1,1)$} \ | \ \psi \circ \mathrm{Tr} = 1 \}, \\
        \Sigma_{f, c}^{(2)} & = \{\psi \text{ of infinity type $(
        \kappa + 2, -\kappa)$ for $\kappa \geq 1$} \}, \\
        \Sigma_{f, c}^{(2')} & = \{\psi \text{ of infinity type $(
        -\kappa, \kappa + 2)$ for $\kappa \geq 1$} \}.
    \end{align*}
	
	We next summarize Theorem~\ref{thm:BDP_generalization} which is a generalization of the main results of~\cite{BDP}.

    There is a $p$-adic $L$-function
    $$\mathcal L_p(f, K) \colon \widehat{\Sigma}_{f, c} 
    \to \C_p^\times$$
    defined on the $p$-adic completion $\widehat{\Sigma}_{f, c}$ of $\Sigma_{f, c}$ with the interpolation property
    \begin{align}\label{eqn:BDP_interpolation}
        \mathcal L_p(f, K)(\psi) & = \mathfrak a(f, \psi) \cdot \mathfrak e(f, \psi)^2 \cdot \mathfrak f(f, \psi) \cdot \frac{\Omega_p^{4 \kappa + 4}}{\Omega^{4\kappa + 4}} \cdot L(f, \psi^{-1}, 0) & \psi \in \Sigma_{f, c}^{(2)},
    \end{align}
    where
    \begin{itemize}
        \item $\mathfrak a(f, \psi) = \kappa! (\kappa+1)! \pi^{2\kappa + 1}$, $\mathfrak e(f, \psi) = 1 - a_p(f) \psi^{-1}(\overline{\mathfrak p}) + \psi^{-2}(\overline{\mathfrak p}) p$,
        \item $\mathfrak f(f, \psi) = (2/c \sqrt{D_K})^{2 \kappa + 1} \cdot \prod\limits_{q|c_0} \frac{q - \chi_K(q)}{q-1} \cdot \prod\limits_{q|c_-} \frac{q^2}{1-q^2} \cdot \omega(f, \psi)^{-1}$, and $\omega(f, \psi) \in \overline \Q$ is defined in \cite[(5.1.11)]{BDP}.
    \end{itemize}
    
    Next, for a finite order character $\psi$ of $K$, we have that $\psi N_K \in \Sigma_{f, c}^{(1)}$ which is outside of the interpolation range~\eqref{eqn:BDP_interpolation} and Bertolini--Darmon--Prasanna prove the following $p$-adic Gross--Zagier formula:
    \begin{align}\label{eqn:BDP_Gross_Zagier}
        \mathcal L_p(f, K) & = \left( 1 - \frac{a_p(f)}{\psi(\overline{\mathfrak p}) p} + \frac{1}{\psi^2(\overline{\mathfrak p}) p} \right)^2 \cdot \log_p(P_{\psi})^2,
    \end{align}
    for $P_\psi$ defined in~\eqref{eqn:Heegner_pt}.

    \begin{figure}[h]
	\begin{tikzpicture}
	\fill[color=darkblue!20]    (2,0) -- (5,0) -- (5,-3) -- (2,-3);
	\fill[color=darkgreen!20]    (0,2) -- (0,5) -- (-3,5) -- (-3,2);
 
 	\draw[->] (-3,0) -- (5,0) node[right] {$\kappa_1$};
	\draw[->] (0, -3) -- (0,5) node[above] {$\kappa_2$};
 
        \draw (0, 2) -- (-3, 2);
        \draw (2,0) -- (2, -3);
 
	\fill (1,1) circle(2.5pt);
        \draw (-1.5, 1) node {\begin{tabular}{c} $p$-adic \\ Gross--Zagier \\ formula~\eqref{eqn:BDP_Gross_Zagier} \end{tabular}};
        \draw[->] (-0.2,1) -- (0.8,1);
	
	\draw[thick, dashed] (-3,5) -- (5,-3);

	\draw[<->] (0.5, 3) to[bend left] (3, 0.5);
 
	\draw (3,1.3) node[right] {functional equation};
        \draw[dotted] (-3,-3) -- (5, 5);

        \draw (3, -2) node {$\Sigma_{f, c}^{(2)}$};
        \draw (3.9, -0.5) node {\begin{tabular}{c} Interpolation \\ equation~\eqref{eqn:BDP_interpolation}\end{tabular}};

        \draw (-2, 3) node {$\Sigma_{f, c}^{(2')}$};

        \draw (-3, 5) node[above left] {\begin{tabular}{c}
             central \\ critical \\ line
        \end{tabular} };
	\end{tikzpicture}
	
	\caption{The following diagram shows the diagram of infinity types $(\kappa_1, \kappa_2)$ for the character in $\Sigma$. We indicate interpolation region $\Sigma_{f, c}^{(2)}$ for the BDP $p$-adic $L$-function in blue~\eqref{eqn:Katz_interpolation}, the expected functional equation with the axis of symmetry given by the dotted line, and the point where the $p$-adic Gross--Zagier formula~\eqref{eqn:BDP_Gross_Zagier} is valid.}
\end{figure}
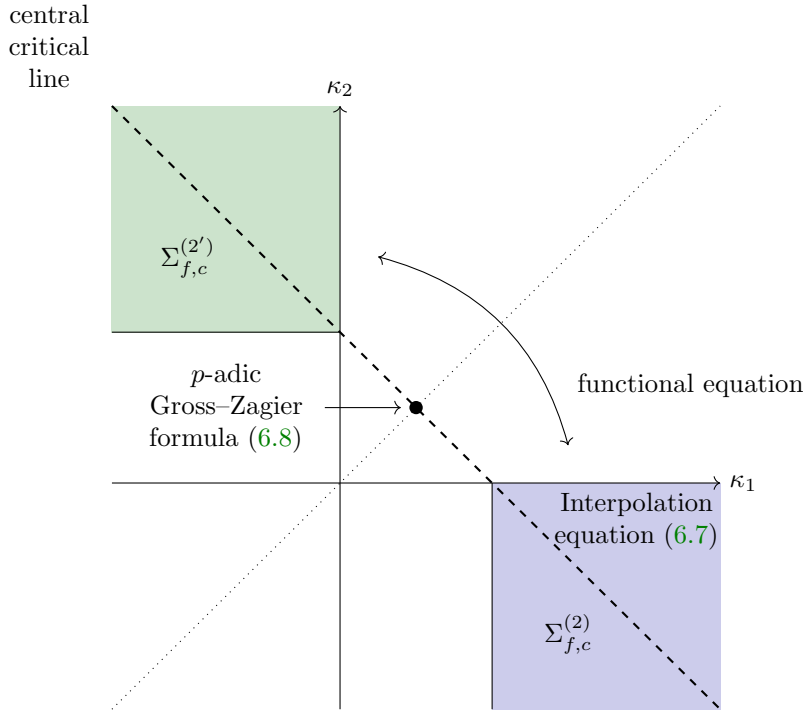

\subsubsection{Gross' formula}

The following formula was originally discovered by Gross~\cite[Proposition 11.2]{Gross87} in a special case and proved in full generality in~\cite[Theorem 1.10]{CaiShuTian}:
\begin{equation}\label{eqn:Gross}
    \frac{L^{(\Sigma)}(f, \psi, 1)}{\pi^2 \langle f, f \rangle} =
    2^{3} \frac{\langle c_{f, \psi}, c_{f, \psi} \rangle}{[\O_c^\times : \Z^\times]^2 c \sqrt{D_K}},
\end{equation}
for $c_{f, \chi}$ defined in~\eqref{eqn:Gross_point} and $\Sigma = \{ v \text{ places dividing } (N, c) \}$. In particular, $\frac{\sqrt{D_K} L(f, \psi, 1)}{\pi^2 \langle f, f \rangle}$ lies in the number field $\Q(f, \psi)$.

\subsubsection{Explicit CM Hida families}

We follow~\cite[Section 3.2]{Darmon_Lauder_Rotger_Stark_points} to recall Hida's construction of an explicit CM Hida family $\mathbf F$ specializing in weight one to $F_\alpha \in \{g_\alpha, h_\alpha\}$. Recall that for a finite order character $\psi$ of $K$ such that $\psi^\sigma \neq \psi$, we have an associated cusp form:
$$\theta_\psi \in S_1(D_K N_{K/\Q}(\mathfrak c), \chi).$$
We fix a character $\lambda$ of infinity type $(0,1)$ and conductor $\overline{\mathfrak p}$ and valued in $\Q(\lambda)$. Let $\Q_p(\lambda)$ be the $p$-adic completion of $\Q(\lambda)$ determined by a fixed embedding $\Q(\lambda) \hookrightarrow \overline{\Q_p}$. We have that
$$\O_{\Q_p(\lambda)}^\times = \mu \times W$$
where $\mu$ is finite and $W$ is a free $\Z_p$-module, and we write
$$\langle - \rangle \colon \O_{\Q_p(\lambda)}^\times \to W$$
for the projection. 

For $\psi \in \{\psi_g, \psi_h\}$ and every integer $k \geq 1$, define
\begin{equation}
    \psi_{k-1}^{(p)} = \psi \langle \lambda \rangle^{k-1}
\end{equation}
which is independent on the choice of $\lambda$, and let
\begin{equation}
    \psi_{k-1}(\mathfrak q) = \begin{cases}
        \psi_{k-1}^{(p)}(\mathfrak q) & \mathfrak q \neq \overline{\mathfrak p}, \\
        \chi_F(p) p^{k-1}/\psi_{k-1}^{(p)}(\mathfrak p) & \mathfrak q = \overline{\mathfrak p}.
    \end{cases}
\end{equation}
Then the ordinary $p$-stabilization of the associated theta series:
\begin{equation}
    F_k = \theta_{\psi_{k-1}} \in S_k(D_K N_{K/\Q} (\mathfrak c(\psi_F)), \chi_F)
\end{equation}
is the weight $k$ specialization of the Hida family $\mathcal F$ and, by definition, $F_1 = F$. 

\subsubsection{Factorization of the $p$-adic $L$-function}\label{section: Factorization of the $p$-adic $L$-function}

We are now ready prove the factorization of the triple product balanced $p$-adic $L$-function. We start by recalling the factorization of the classical $L$-functions, then analyze the periods, and finally deal with the auxillary factors. The resulting statement is Theorem~\ref{thm:CM_factorization} below.

Note that the norm $N_K$ from $K$ to $\Q$ can be regarded as a Hecke character of $K$ of infinity type $(1,1)$. Since $\langle \lambda \rangle$ has infinity type $(0,1)$, $\langle \lambda \lambda^\sigma \rangle$ has infinity type $(1,1)$ and we may identify it with $N_K$. 

Note that:
$$V_{g_\ell} \otimes V_{h_\ell} = V_{\psi_{g, \ell-1}^{(p)}} \otimes V_{\psi_{h, \ell - 1}^{(p)}} \iso V_{\psi_{g, \ell-1}^{(p)} \psi_{h, \ell-1}^{(p)}} \oplus V_{\psi_{g, \ell-1}^{(p)} \psi_{h, \ell-1}^{(p), \sigma}}$$
and
\begin{align*}
    \psi_{g, \ell-1}^{(p)} \cdot \psi_{h, \ell-1}^{(p)} & = \psi_g \cdot \psi_h \cdot \langle \lambda \rangle^{2\ell - 2}  = \psi_2 \langle \lambda \rangle^{2\ell -2} \\
    \psi_{g, \ell-1}^{(p)} \cdot \psi_{h, \ell-1}^{(p), \sigma} & = \psi_g \cdot \psi_h^\sigma \cdot \langle \lambda \lambda^\sigma \rangle^{\ell-1} = \psi_g \psi_h^\sigma  N_K^{\ell - 1} = \psi_1 N_K^{\ell - 1}. 
\end{align*}

Via the Artin formalism, the above results in a factorization of the triple product $L$-function:
\begin{align*}
    L(f \times g_\ell \times h_\ell, s) & = L(f_K \times \psi_2 \langle \lambda \rangle^{2\ell-2}, s) \cdot L(f_K \times \psi_1, s - (\ell - 1)),
\end{align*}
where $\theta_{\psi_1 \langle \lambda \rangle^{2k-2}}$ has weight $2k - 1$ and $\theta_{\psi_2}$ has weight 1.

By Corollary~\ref{cor:interpolation}, we know that $\mathcal L_p^{\bal, f}(\ell, \ell)$ is related to:
\begin{align*}
    L(f \times g_\ell \times h_\ell, \ell) & = L(f_K \times \psi_2 \langle \lambda \rangle^{2\ell-2}, \ell) \cdot L(f_K \times \psi_1, 1) \\
    & = L(f_K \times \psi_2 \langle \lambda \rangle^{2\ell-2} N_K^{-\ell}, 0) \cdot L(f_K \times \psi_1, 1).
\end{align*}
More specifically, dividing by the period $\langle f, f \rangle \langle g_\ell, g_\ell \rangle \langle h_\ell, h_\ell \rangle$, we have:
\begin{align}\label{eqn:fact_initial}
    \frac{L(f \times g_\ell \times h_\ell, \ell)}{\langle f, f \rangle \langle g_\ell, g_\ell \rangle \langle h_\ell, h_\ell \rangle} & = 
    \frac{1}{\langle g_\ell, g_\ell \rangle \langle h_\ell, h_\ell \rangle} \cdot L(f_K \times \psi_2 \langle \lambda \rangle^{2\ell-2} N_K^{-\ell}, 0) \cdot \frac{L(f_K \times \psi_1, 1)}{\langle f, f \rangle}.
\end{align}
Note that 
$$\Psi_{gh}(\ell) = \psi_2^{-1} \langle 
\lambda \rangle^{-2 \ell + 2} N_K^{\ell} \in \Sigma^{(2)}_K$$
has infinity type $(\ell , -\ell+2)$, so by equation~\eqref{eqn:BDP_interpolation}:
\begin{equation}\label{eqn:BDP_interpolation_ourcase}
    L(f_K \times \Psi_{gh}(\ell)^{-1}, 0) =  \mathfrak a(\Psi_{gh}(\ell))^{-1} \cdot \mathfrak e(f, \Psi_{gh}(\ell))^{-1} \cdot \mathfrak f(\Psi_{gh}(\ell))^{-1} \cdot \frac{\Omega^{4\ell - 4}}{\Omega_p^{4\ell - 4}} \cdot \mathcal L_p(f,K)(\Psi_{gh}(\ell)).
\end{equation}
Next, define for $F \in \{g, h\}$
$$\Psi_F(\ell) = \psi_{F, \ell-1}^{-2} \chi_F N_K^\ell$$
and recall fact from \cite[Lemma 3.8]{Darmon_Lauder_Rotger_Stark_points}: for some $K$-admissible functions $\mathfrak f_2(k)$, $\mathfrak f_3(k)$
\begin{align}\label{eqn:Katz_and_Petersson}
    \mathcal L_p(K)(\Psi_F(\ell)) & = \frac{\mathfrak e(\Psi_F(\ell)) \mathfrak f(\Psi_F(\ell))}{\mathfrak f_2(\ell
    ) \cdot \mathfrak f_3(\ell) } \cdot \langle F_
    \ell, F_\ell \rangle \cdot \left( \frac{\pi \Omega_p}{\Omega} \right)^{2 \ell - 2}.
\end{align}
Thus equation~\eqref{eqn:fact_initial} becomes
\begin{align*}
    \frac{\Lambda(f \times g_\ell \times h_\ell, \ell)}{\langle f, f \rangle \langle g_\ell, g_\ell \rangle \langle h_\ell, h_\ell \rangle} \cdot \mathcal L_p(K)(\psi_g(\ell)) \cdot \mathcal L_p(K)(\psi_h(\ell)) & = \frac{\mathfrak e(\Psi_g(\ell)) \mathfrak f(\Psi_g(\ell)) \mathfrak e(\Psi_h(\ell)) \mathfrak f(\Psi_h(\ell))}{\mathfrak f_2(\ell
    )^2  \mathfrak f_3(\ell)^2  \mathfrak e(f, \Psi_{gh}(\ell)) \mathfrak f(\Psi_{gh}(\ell))} \\
    & \hspace{2cm} \cdot \mathcal L_p(f, K)(\Psi_{gh}(\ell)) \cdot \frac{L(f_K \times \psi_1, 1)}{\pi^2 \langle f, f \rangle},
\end{align*}
where we have canceled the factor $\mathfrak a(\Psi_{gh}(\ell))$ with the other powers of $\pi$ and the $\Gamma$-factors for the triple product $L$-function.

Next, using Corollary~\ref{cor:interpolation}, we get that:
$$\mathcal L_p^{\bal, f}(\ell, \ell)^2 \cdot \mathcal L_p(K)(\psi_g(\ell)) \cdot \mathcal L_p(K)(\psi_h(\ell)) =  \mathcal E(\ell) \cdot \mathcal F(\ell) \cdot \mathcal L_p(f, K)(\Psi_{gh}(\ell)) \cdot \frac{L(f_K \times \psi_1, 1)}{\pi^2 \langle f, f \rangle},$$
where
\begin{align}
    \mathcal E(\ell) & = \frac{\mathcal E_p^{\bal}(\mathbf V_\kappa) \cdot  \mathcal E_p(f_\alpha, \Ad) \cdot \mathfrak e(\Psi_g(\ell)) \cdot \mathfrak e(\Psi_h(\ell))}{\mathfrak e(f, \Psi_{gh}(\ell))}, \\
    \mathcal F(\ell) & = 2^{3-2\ell} \frac{\mathfrak f(\Psi_g(\ell)) \mathfrak f(\Psi_h(\ell))}{\mathfrak f_2(\ell
    )^2  \mathfrak f_3(\ell)^2 \mathfrak f(\Psi_{gh}(\ell))}. \label{eqn:F(l)}
\end{align}

\begin{lemma}
    For any $\ell \geq 0$, 
    $$\mathcal E(\ell) =  (1 - \beta_f \psi_1(\mathfrak p) p^{-1})(1 - \beta_f \psi_1(\overline{\mathfrak p}) p^{-1}).$$
\end{lemma}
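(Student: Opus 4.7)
The plan is to expand both sides of the claimed equality, using the CM formulas for Frobenius eigenvalues, and verify that after systematic cancellations only the two factors on the right remain. Since $p$ splits in $K$, the ordinary $p$-stabilizations have Frobenius eigenvalues $\alpha_{F_\ell} = \psi_{F,\ell-1}(\mathfrak p)$ and $\beta_{F_\ell} = \psi_{F,\ell-1}(\overline{\mathfrak p})$ for $F \in \{g,h\}$, and these satisfy $\psi_{F,\ell-1}(\mathfrak p)\psi_{F,\ell-1}(\overline{\mathfrak p}) = \chi_F(p) p^{\ell-1}$ by construction.

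The first reduction is to cancel the adjoint factors $\mathcal E_p(F_\ell,\Ad)$ against the Katz Euler factors $\mathfrak e(\Psi_F(\ell))$ for $F \in \{g,h\}$. I would rewrite
$$\mathcal E_p(F_\ell,\Ad) = \bigl(1 - \tfrac{\beta_{F_\ell}}{\alpha_{F_\ell}}\bigr)\bigl(1 - \tfrac{\beta_{F_\ell}}{\alpha_{F_\ell} p}\bigr) = \bigl(1 - \tfrac{\Psi_F(\ell)(\mathfrak p)}{p}\bigr)\bigl(1 - \tfrac{\Psi_F(\ell)(\mathfrak p)}{p^2}\bigr),$$
using that $\beta_{F_\ell}/\alpha_{F_\ell} = \chi_F(p)p^{\ell-1}/\alpha_{F_\ell}^2 = \Psi_F(\ell)(\mathfrak p)/p$. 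The key observation is that $\Psi_F(\ell) = \psi_{F,\ell-1}^{-2}\chi_F N_K^\ell$ has infinity type of sum $2$, so $\Psi_F(\ell)(\mathfrak p)\Psi_F(\ell)(\overline{\mathfrak p}) = p^2$ (this can be verified directly from the multiplicativity $\psi_{F,\ell-1}(\mathfrak p)\psi_{F,\ell-1}(\overline{\mathfrak p}) = \chi_F(p)p^{\ell-1}$). Hence $\Psi_F(\ell)^{-1}(\overline{\mathfrak p}) = \Psi_F(\ell)(\mathfrak p)/p^2$, and $\mathfrak e(\Psi_F(\ell)) = (1 - \Psi_F(\ell)(\mathfrak p)/p)(1 - \Psi_F(\ell)^{-1}(\overline{\mathfrak p}))$ coincides with $\mathcal E_p(F_\ell,\Ad)$. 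Together with the obvious cancellation $\mathcal E_p(f_\alpha,\Ad)/\mathcal E_p(\mathbf F_\kappa, \Ad) = 1/(\mathcal E_p(g_\ell,\Ad)\mathcal E_p(h_\ell,\Ad))$, this reduces the claim to
$$\mathcal E(\ell) = \frac{\mathcal E_p^{\bal}(f \times g_\ell \times h_\ell)}{\mathfrak e(f,\Psi_{gh}(\ell))}.$$

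Next, I would split the four factors of $\mathcal E_p^{\bal}$ into two pairs. The \emph{mixed} pair involves $\alpha_{g_\ell}\beta_{h_\ell}$ and $\beta_{g_\ell}\alpha_{h_\ell}$: direct computation using $\chi_g\chi_h = 1$ and $\chi_h(p) = \psi_h(\mathfrak p)\psi_h(\overline{\mathfrak p})$ gives $\alpha_{g_\ell}\beta_{h_\ell} = p^{\ell-1}\psi_g(\mathfrak p)\psi_h(\overline{\mathfrak p}) = p^{\ell-1}\psi_1(\mathfrak p)$ and symmetrically $\beta_{g_\ell}\alpha_{h_\ell} = p^{\ell-1}\psi_1(\overline{\mathfrak p})$, so this pair contributes exactly the target $(1-\beta_f\psi_1(\mathfrak p)/p)(1-\beta_f\psi_1(\overline{\mathfrak p})/p)$. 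The \emph{unmixed} pair involves $\beta_{g_\ell}\beta_{h_\ell}$, which (using $\alpha_{g_\ell}\alpha_{h_\ell} = p^\ell/\Psi_{gh}(\ell)(\mathfrak p)$ from $\Psi_{gh}(\ell) = \psi_2^{-1}\langle\lambda\rangle^{-2\ell+2}N_K^\ell$) equals $p^{\ell-2}\Psi_{gh}(\ell)(\mathfrak p)$.

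The final step is to identify this unmixed pair with $\mathfrak e(f,\Psi_{gh}(\ell))$. For this I would verify $\Psi_{gh}(\ell)(\mathfrak p)\Psi_{gh}(\ell)(\overline{\mathfrak p}) = p^2$ by the same infinity-type argument, comparing with the determinant of the representation $V_{g_\ell}\otimes V_{h_\ell} = V_{\Psi_{gh}(\ell)^{-1}\cdot N_K^\ell} \oplus V_{\psi_1 N_K^{\ell-1}}$ at $\mathrm{Frob}_p$. This identification gives $\Psi_{gh}(\ell)^{-1}(\overline{\mathfrak p}) = \Psi_{gh}(\ell)(\mathfrak p)/p^2$, and hence
$$\mathfrak e(f,\Psi_{gh}(\ell)) = \bigl(1-\alpha_f\Psi_{gh}(\ell)(\mathfrak p)/p^2\bigr)\bigl(1-\beta_f\Psi_{gh}(\ell)(\mathfrak p)/p^2\bigr),$$
which exactly cancels the unmixed pair. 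The main obstacle is purely bookkeeping: ensuring that $\langle\lambda\rangle$-factors appearing at $\mathfrak p$ combine correctly and that the Hecke-character conventions (in particular the value of $\Psi_{gh}(\ell)$ at $\overline{\mathfrak p}$, which cannot be read off naively since $\lambda$ has conductor $\overline{\mathfrak p}$) are respected; I circumvent this by only using products $\Psi(\mathfrak p)\Psi(\overline{\mathfrak p})$, which are determined by the central character and hence intrinsic.
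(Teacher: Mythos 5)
Your proof is correct, and it follows the same underlying computation as the paper's: first identify $\mathcal E_p(F_\ell,\Ad) = \mathfrak e(\Psi_F(\ell))$ for $F\in\{g,h\}$, then expand the four factors of $\mathcal E_p^{\bal}$ in terms of $\psi_g,\psi_h$, and cancel the ``unmixed'' pair against $\mathfrak e(f,\Psi_{gh}(\ell))$. The difference is organizational: the paper's proof carries out all four expansions explicitly in terms of $\psi_g(\mathfrak p)$, $\psi_h(\overline{\mathfrak p})$ and powers of $\langle\lambda(\mathfrak p)\rangle$ and $\langle\lambda(\overline{\mathfrak p})\rangle$, and the final cancellation rests on the substitution $\langle\lambda(\mathfrak p)\rangle\langle\lambda(\overline{\mathfrak p})\rangle = p$. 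You instead package the $\langle\lambda\rangle$-bookkeeping into the intrinsic products $\Psi_F(\ell)(\mathfrak p)\Psi_F(\ell)(\overline{\mathfrak p}) = p^2$ and $\Psi_{gh}(\ell)(\mathfrak p)\Psi_{gh}(\ell)(\overline{\mathfrak p}) = p^2$, and the companion identity $\beta_{F_\ell}/\alpha_{F_\ell} = \Psi_F(\ell)(\mathfrak p)/p$, so that both the adjoint identification and the final cancellation become one-line manipulations in the single variable $\Psi(\mathfrak p)$. This is a genuine improvement in transparency: it makes visible that the cancellations are forced by the self-duality of the characters (equivalently, their central characters/infinity types) rather than by any specific choice of $\lambda$, and, as you note at the end, it sidesteps the small subtlety that $\lambda$ has conductor $\overline{\mathfrak p}$ so values of $\langle\lambda\rangle$ at $\overline{\mathfrak p}$ have to be handled with some care. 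The paper's presentation has the advantage of being fully explicit and therefore easy to spot-check numerically, but your version is the cleaner argument.
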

\begin{proof}
    We recall that
    $$L_p(\theta_{\psi_{\ell-1}}, s) = L_{\mathfrak p}(\psi_{\ell-1}, s) \cdot L_{\overline{\mathfrak p}}(\psi_{\ell-1}, s)$$
    and hence for $F \in \{g, h \}$
    \begin{align*}
       \alpha_F & = \psi_F(\mathfrak p) \langle \lambda(\mathfrak p) \rangle^{\ell-1} \\
       \beta_F & = \chi_F(p) p^{\ell-1}/\alpha_F = p^{\ell-1} \psi_F(\overline{\mathfrak p}) \langle \lambda(\mathfrak p ) \rangle^{1- \ell}.
    \end{align*}

    We first check that for $F \in \{g, h\}$:
    \begin{align}\label{eqn:adjoint_factor_CM_case}
        \mathcal E_p(F_{\alpha, \ell}, \Ad) & = \mathfrak e(\Psi_F(\ell)),
    \end{align}
    Indeed, according to \cite[p.\ 416]{Hsieh2021}
    \begin{align*}
        \mathcal E_p(F_{\alpha, \ell}, \Ad) & = (1 - \alpha_F^{-2} \chi_F(p) p^{\ell-1})  (1 - \alpha_F^{-2} \chi_F(p) p^{\ell-2}) \\
        & = (1 - \psi_F(\mathfrak p)^{-2} \langle \lambda(\mathfrak p) \rangle^{-2\ell+2} \chi_F(p) p^{\ell-1})  (1 - \psi_F(\mathfrak p)^{-2} \langle \lambda(\mathfrak p) \rangle^{-2\ell+2} \chi_F(p) p^{\ell-2}),
    \end{align*}
    while
    \begin{align*}
        \mathfrak e(\Psi_F(\ell)) & = (1 - \Psi_F(\ell)(\mathfrak p) p^{-1}) (1 - \Psi_F(\ell)^{-1}(\overline{\mathfrak p})) \\
        & = (1 - \psi_{F, \ell-1}^{-2}(\mathfrak p) \chi_F(p) p^\ell p^{-1}) (1 -  (\psi_{F, \ell-1}^{-2}(\overline{\mathfrak p}) \chi_F(p) p^\ell)^{-1} ) \\
        & = (1 - \psi_F(\mathfrak p)^{-2} \langle \lambda(\mathfrak p )\rangle^{-2\ell + 2} \chi_F(p) p^{\ell - 1})(1 - (\chi_F(p) p^{\ell-1}/(\psi_F(\mathfrak p) \langle \lambda(\mathfrak p) \rangle)^{\ell - 1} )^{2} \chi_F(p)^{-1} p^{-\ell}  ) \\
        & = (1 - \psi_F(\mathfrak p)^{-2} \langle \lambda(\mathfrak p )\rangle^{-2\ell + 2} \chi_F(p) p^{\ell - 1})(1 - \psi_F(\mathfrak p)^{-2} \langle \lambda(\mathfrak p) \rangle^{-2\ell + 2} \chi_F(p) p^{\ell - 2}  ),
    \end{align*}
    verifying equation~\eqref{eqn:adjoint_factor_CM_case}.
    
    Therefore, we have:
    \begin{align*}
        \mathcal E_p^{\bal}(V_\kappa) \cdot \mathcal E_p(f_\alpha, \Ad) \cdot \mathfrak e(\Psi_g(\ell)) \mathfrak e(\Psi_h(\ell)) & = (1 - \alpha_f \beta_g \beta_h p^{-\ell})(1 - \beta_f \alpha_g \beta_h p^{-\ell})(1 - \beta_f \beta_g \alpha_h p^{-\ell}) (1 - \beta_f \beta_g \beta_h p^{-\ell})
        \end{align*}
    $$=
        (1 - \alpha_f (\psi_g \psi_h)(\overline{\mathfrak p}) \langle \lambda (\mathfrak p) \rangle^{2 - 2\ell} p^{\ell - 2})
        (1 - \beta_f (\psi_g \psi_h^{\sigma})(\mathfrak p) p^{-1})
        (1 - \beta_f (\psi_g \psi_h^\sigma)(\overline{\mathfrak p}) p^{-1}) 
        (1 - \beta_f (\psi_g \psi_h)(\overline{\mathfrak p}) \langle \lambda (\mathfrak p) \rangle^{2 - 2\ell} p^{\ell - 2} )$$
    It remains to observe that:
    \begin{align*}
        \mathfrak e(f, \Psi_{gh}(\ell)) & = (1 - \alpha_f \Psi_{gh}^{-1}(\overline{\mathfrak p}))(1 - \beta_f \Psi_{gh}^{-1}(\overline{\mathfrak p}) )  \\
        & = (1 - \alpha_f \psi_2(\overline{\mathfrak p}) \langle \lambda(\overline{\mathfrak p}) \rangle^{2\ell - 2} p^{-\ell})(1 - \beta_f \psi_2(\overline{\mathfrak p}) \langle \lambda(\overline{\mathfrak p}) \rangle^{2\ell - 2} p^{-\ell} ) \\
        & = (1 - \alpha_f (\psi_g \psi_h)(\overline{\mathfrak p}) \langle \lambda(\overline{\mathfrak p}) \rangle^{2\ell - 2} p^{-\ell})(1 - \beta_f (\psi_g \psi_h)(\overline{\mathfrak p}) \langle \lambda(\overline{\mathfrak p}) \rangle^{2\ell - 2} p^{-\ell} ),
    \end{align*}
    and use $\mathfrak p\overline{\mathfrak p} = (p)$.
\end{proof}

\begin{remark}
    Note that:
    \begin{align*}
    L_p(f_K \times \psi_1, s)^{-1} & = (1 - \alpha_f \psi_1(\overline{\mathfrak p}) p^{-s})(1 - \beta_f \psi_1(\overline{\mathfrak p}) p^{-s}) (1 - \alpha_f \psi_1({\mathfrak p})p^{-s})(1 - \beta_f \psi_1({\mathfrak p}) p^{-s} ) 
    \end{align*}
    and hence the factor $\mathcal E(\ell)$ is part of the local $L$-factor at $p$ of $L(f_K \times \psi_1, 1)$. 
\end{remark}

\begin{lemma}
    The function $\mathcal F(\ell)$ is $K(\psi_g, \psi_h)$-admissible according to \cite[Definition\ 3.5]{Darmon_Lauder_Rotger_Stark_points}, i.e.\ it extends to an element of $\Frac \O(\mathcal U)$  and $\mathcal F(1) \in K(\psi_g, \psi_h)^\times$.
\end{lemma}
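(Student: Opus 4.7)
\emph{Proof proposal.} The plan is to verify admissibility by inserting the explicit formulas for the $\mathfrak f$-factors and showing that the $\ell$-dependence of $\mathcal F(\ell)$ reduces to a product of $p$-adically analytic functions (powers of $p$-adic units) and of constants manifestly in $K(\psi_g, \psi_h)$. First I compute the infinity types: since $\lambda$ is of type $(0,1)$ and the $\psi_F$ are of finite order, $\Psi_F(\ell) = \psi_{F,\ell-1}^{-2}\chi_F N_K^{\ell}$ has infinity type $(\ell, 2-\ell)$, and likewise $\Psi_{gh}(\ell) = \psi_2^{-1}\langle\lambda\rangle^{-2\ell+2} N_K^{\ell}$ has infinity type $(\ell, 2-\ell)$, corresponding to BDP's parameter $\kappa = \ell-2$.

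Second, substituting into the definitions of the fudge factors recalled in Sections~\ref{section: The Katz $p$-adic $L$-function for $K$} and following, I get
\[
\mathfrak f(\Psi_F(\ell)) = D_K^{(2-\ell)/2}\,2^{\ell-2}, \qquad \mathfrak f(\Psi_{gh}(\ell)) = (2/c\sqrt{D_K})^{2\ell-3}\cdot C_0\cdot \omega(f,\Psi_{gh}(\ell))^{-1},
\]
with $C_0 = \prod_{q|c_0}\frac{q-\chi_K(q)}{q-1}\cdot \prod_{q|c_-}\frac{q^2}{1-q^2}$ independent of $\ell$. Combining with the factor $2^{3-2\ell}$ yields, after elementary cancellation,
\[
\mathcal F(\ell) = \frac{\sqrt{D_K}}{c^{3}\,C_0}\cdot \left(\frac{c}{2}\right)^{2\ell-2} \cdot \omega(f,\Psi_{gh}(\ell))\cdot \mathfrak f_2(\ell)^{-2}\,\mathfrak f_3(\ell)^{-2}.
\]
Here $\sqrt{D_K}/(c^3 C_0) \in K^{\times}$; the power $(c/2)^{2\ell-2}$ extends, since $(c, 2p)=1$, to the $p$-adic analytic function $\kappa\mapsto\langle c/2\rangle_{}^{2\kappa-2}$ on a neighborhood of $1$ in weight space; and the factors $\mathfrak f_2(\ell)^{-2}\mathfrak f_3(\ell)^{-2}$ are $K$-admissible by~\cite[Lemma 3.8]{Darmon_Lauder_Rotger_Stark_points}, so they extend to $\Frac \O(\mathcal U)^{\times}$.

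The main obstacle is therefore to check that $\omega(f,\Psi_{gh}(\ell))$ extends to a $p$-adic analytic function of $\ell$. This constant is defined in~\cite[(5.1.11)]{BDP} (or, in our more general ramification setting, its generalization established in Appendix~\ref{appendix BDP}) as a finite product of local zeta integrals at primes dividing $Nc$, each of which depends on $\Psi_{gh}(\ell)$ only through its value on a fixed compact subgroup of $\A_{K,f}^{\times}$. Since $\Psi_{gh}(\ell)$ differs from $\Psi_{gh}(1)$ by the character $\langle\lambda\lambda^{\sigma}\rangle^{\ell-1} = \langle N_K\rangle^{\ell-1}$, each local factor is either independent of $\ell$ (away from primes above those dividing $N_{K/\Q}(\mathfrak c)$) or becomes a Laurent polynomial in $\langle q\rangle^{\ell-1}$ for $q$ in that finite set; both cases visibly interpolate $p$-adically. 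To conclude I evaluate at $\ell = 1$: all the power factors collapse to constants in $\Q$, while $\mathfrak f_2(1), \mathfrak f_3(1)\in K(\psi_g,\psi_h)^{\times}$ by~\cite[Lemma 3.8]{Darmon_Lauder_Rotger_Stark_points}, and $\omega(f,\Psi_{gh}(1))\in K(\psi_g,\psi_h)^{\times}$ by its algebraic definition, whence $\mathcal F(1)\in K(\psi_g,\psi_h)^{\times}$ as desired.
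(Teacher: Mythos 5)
Your proof is correct and takes the same route as the paper, which simply observes that the admissibility calculation of Darmon--Lauder--Rotger (their Lemma 3.6) carries over; you spell out that calculation explicitly, with the $p$-adic interpolation of the BDP constant $\omega(f,\Psi_{gh}(\ell))$ being the one genuinely new ingredient beyond what DLR needed. There is a harmless arithmetic slip in the constant ($\sqrt{D_K}/(c^{3}C_{0})$ should be $\sqrt{D_K}/(c\,C_{0})$, since $2^{2-2\ell}c^{2\ell-3}=(c/2)^{2\ell-2}c^{-1}$), but it is a unit in $K^\times$ and does not affect the conclusion.
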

\begin{proof}
    This follows from the same argument as the proof of~\cite[Lemma 3.6]{Darmon_Lauder_Rotger_Stark_points}.
\end{proof}

Altogether, this gives the following result.

\begin{theorem}\label{thm:CM_factorization}
    There is a factorization:
    $$\mathcal L_p^{\bal, f}(\ell, \ell)^2 \cdot \mathcal L_p(K)(\psi_g(\ell))  \cdot \mathcal L_p(K)(\psi_h(\ell)) = \mathcal F(\ell) \cdot \mathcal L_p(f, K)(\Psi_{gh}(\ell)) \cdot \frac{L(f_K \times \psi_1, 1)}{\langle f, f \rangle} \cdot \mathcal E(\ell) .$$
\end{theorem}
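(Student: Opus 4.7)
Both sides of the claimed identity are meromorphic functions on the admissible affinoid $\mathcal U$, and the subset of balanced arithmetic points $(\ell, \ell)$ with $\ell \geq 2$ is Zariski-dense in $\mathcal U$. It therefore suffices to verify the equality at such arithmetic points, which reduces the claim to a straightforward book-keeping of the interpolation formulas displayed in Section~\ref{section: Factorization of the $p$-adic $L$-function}.

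At a balanced arithmetic point $\ell \geq 2$, the plan is to start from the interpolation formula of Corollary~\ref{cor:interpolation}, which expresses $\mathcal L_p^{\bal, f}(\ell, \ell)^2$ as the ratio $\Lambda(f \times g_\ell \times h_\ell, \ell)/(\langle f, f\rangle \langle g_\ell, g_\ell\rangle \langle h_\ell, h_\ell\rangle)$ decorated by the Euler factors $\mathcal E_p^{\bal}(\mathbf V_\kappa)$ and $\mathcal E_p(f_\alpha, \Ad)$. By the Artin formalism applied to the decomposition $V_{g_\ell} \otimes V_{h_\ell} \iso V_{\psi_2 \langle \lambda \rangle^{2\ell-2}} \oplus V_{\psi_1 N_K^{\ell-1}}$, the triple product $L$-function factors as a product of two Rankin--Selberg $L$-functions, one of which is the self-dual Rankin--Selberg $L$-value in the range of interpolation of the BDP $p$-adic $L$-function (via the relation $\Psi_{gh}(\ell) = \psi_2^{-1} \langle \lambda \rangle^{-2\ell+2} N_K^{\ell} \in \Sigma_{f, c}^{(2)}$), and the other is the classical value $L(f_K \times \psi_1, 1)$ which appears unchanged on the right-hand side. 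Applying equation~\eqref{eqn:BDP_interpolation_ourcase} replaces the first factor by $\mathcal L_p(f, K)(\Psi_{gh}(\ell))$ times explicit periods and Euler factors. Simultaneously, invoking~\eqref{eqn:Katz_and_Petersson} twice (once for $F = g$ and once for $F = h$) converts the Petersson norms $\langle g_\ell, g_\ell \rangle \langle h_\ell, h_\ell \rangle$ into the Katz $p$-adic $L$-values $\mathcal L_p(K)(\psi_g(\ell)) \mathcal L_p(K)(\psi_h(\ell))$, transferred to the other side of the equality.

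After this substitution, all transcendental periods $\Omega$ and $\Omega_p$ cancel in pairs because the pair of CM Hida families $\mathbf g$ and $\mathbf h$ contributes $(\Omega_p/\Omega)^{2\ell-2}$ twice on the Katz side and $(\Omega/\Omega_p)^{4\ell-4}$ once on the BDP side. What remains on the right-hand side is an Euler factor piece $\mathcal E(\ell)$, an algebraic auxiliary piece $\mathcal F(\ell)$, the BDP $p$-adic $L$-value, and the classical ratio $L(f_K \times \psi_1, 1)/\langle f, f \rangle$. The two accompanying lemmas verify the two non-trivial parts of this identification: the first lemma shows that the Euler factor $\mathcal E_p^{\bal}(\mathbf V_\kappa) \cdot \mathcal E_p(f_\alpha, \Ad) \cdot \mathfrak e(\Psi_g(\ell)) \mathfrak e(\Psi_h(\ell)) / \mathfrak e(f, \Psi_{gh}(\ell))$ simplifies to $(1 - \beta_f \psi_1(\mathfrak p)p^{-1})(1 - \beta_f \psi_1(\overline{\mathfrak p})p^{-1})$, which is the expected partial $p$-Euler factor of $L(f_K \times \psi_1, 1)$; the second lemma shows that the collection of all remaining $\mathfrak f$-factors is $K(\psi_g, \psi_h)$-admissible in the sense of~\cite[Definition 3.5]{Darmon_Lauder_Rotger_Stark_points}.

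The main obstacle is the Euler factor bookkeeping in the first lemma, which requires the explicit Frobenius eigenvalues $\alpha_F = \psi_F(\mathfrak p) \langle \lambda(\mathfrak p)\rangle^{\ell-1}$ and $\beta_F = p^{\ell-1} \psi_F(\overline{\mathfrak p}) \langle \lambda(\mathfrak p)\rangle^{1-\ell}$ for $F \in \{g, h\}$ given by Hida's CM construction, together with the relation $\mathfrak p \overline{\mathfrak p} = (p)$ used to consolidate the four terms $(1 - \sigma_f \sigma_g \sigma_h p^{-\ell})$ appearing in $\mathcal E_p^{\bal}(\mathbf V_\kappa)$ into the two sought-for Euler factors after the remaining pair is absorbed by $\mathfrak e(f, \Psi_{gh}(\ell))^{-1}$. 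Once this identity and the $K$-admissibility of $\mathcal F(\ell)$ are established, the theorem follows by rearranging the equality and extending from arithmetic points to all of $\mathcal U$ by density.
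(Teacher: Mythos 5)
Your sketch matches the paper's own proof step for step: reduce to balanced arithmetic points by Zariski density, factor the triple product $L$-function via Artin formalism into the two Rankin--Selberg pieces, substitute the BDP interpolation formula (through $\Psi_{gh}(\ell) \in \Sigma_{f,c}^{(2)}$), substitute the Katz formula of equation~\eqref{eqn:Katz_and_Petersson} twice to absorb the Petersson norms of $g_\ell$ and $h_\ell$, observe the exact cancellation of CM periods, and defer the Euler-factor computation and the $K$-admissibility of $\mathcal F(\ell)$ to the two lemmas. This is the same decomposition, the same key identities, and the same bookkeeping as the paper; no divergence to report.
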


Then Theorem~\ref{thm:CM} follows by evaluating the above factorization at $\ell = 1$ and using equations~\eqref{eqn:p-adic_Kronecker},~\eqref{eqn:BDP_Gross_Zagier}, and~\eqref{eqn:Gross}.

\medskip

\appendix

\section{A generalization of results of Bertolini--Darmon--Prasanna}\label{appendix BDP}

\subsection{Statement of the results}

Let $f$ be a weight two cuspidal eigenform for $\Gamma_0(N)$, which we assume to have trivial Nebentypus for simplicity. Let $K$ be an imaginary quadratic field of discriminant $-D_K$ and $p$ be a prime which splits in $K$. For any Hecke character $\psi$ of $K$ with infinity type $(\kappa_1, \kappa_2)$, we consider the Rankin--Selberg $L$-function associated with $f$ and $\theta_\psi$:
$$L(f, \psi, s) = L(f \times \theta_\psi, s - (\kappa_1+ \kappa_2 + 1)/2).$$
We will assume the condition:
\begin{equation}\label{eqn:central_critical}
    \psi|_{\A^\times} = | \cdot |^2,
\end{equation}
where $|\cdot|$ is the norm character of $\Q$, which ensures the $L$-function $L(f, \psi, s)$ is self-dual with central critical value at $s = 0$.

Under the Heegner hypothesis (if $q|N$, then $q$ is either split of ramified in $K$ and if $q^2|N$, then $q$ is split in $K$) and the assumption that the conductor of $\psi$ is coprime to the level $N$ of~$f$, Bertolini, Darmon, and Prasanna~\cite{BDP} constructed a $p$-adic $L$-function interpolating the critical values $L(f, \psi^{-1}, 0)$ when $\kappa_1 \geq 1$ and $\kappa_2 \leq 0$, and proved a special value formula for $\kappa_1 = \kappa_2 = 0$. 

The goal of this appendix is to reprove their result, allowing a squarefree product of inert primes to divide $N$.
\begin{hypothesis}\label{hyp:weak_Heegner}
    If $q^2|N$, then $q$ is split in $K$.
\end{hypothesis}

Instead, we make the following assumption on the conductor of $\psi$ under which $\epsilon_q(f, \psi) = +1$ at the inert primes $q$ dividing $N$.

\begin{hypothesis}\label{hyp:cond_psi}
    Let $N_0$ be the product of primes dividing $N$ which are inert in $K$. Then the conductor of $\psi$ is divisible exactly by $N_0$ and coprime to $N/N_0$.
\end{hypothesis}

\begin{remark}\label{rmk: weak heegner hyp}
    The {\em weak Heegner hypothesis} allows a product $N_-$ of an even number of inert primes to divide $N$. Assuming that the conductor of $\psi$ is coprime to $N$ --- the opposite to Hypothesis~\ref{hyp:cond_psi} --- we have that $\epsilon_q(f, \psi) = -1$ for all primes $q | N_-$. The integral representation for the Rankin--Selberg $L$-function is then on a Shimura curve associated with a quaternion algebra of discriminant $N_-$. A generalization of the results of Bertolini--Darmon--Prasanna to this setting was obtained by Brooks~\cite{Brooks}.

    As mentioned above, our Hypothesis~\ref{hyp:cond_psi} instead forces $\epsilon_q(f, \psi) = +1$ at all primes $q | N_0$, and in fact we will assume that $\epsilon_q(f, \psi) = +1$ for all $q$, so that the integral representation for $L(f, \psi, s)$ is still on the modular curve. This is why we do not require the parity assumption.

    Finally, combining the results of this appendix with the results of Brooks, one could presumably allow $N_0 \cdot N_- | N$ where $N_-$ is a product of an even number of inert primes, and the conductor of $\psi$ is divisible exactly by $N_0$, and coprime to $N/N_0$. We decided not to pursue this generality here.
\end{remark}

Under Hypothesis~\ref{hyp:weak_Heegner}, there is an ideal $\mathfrak N$ of $\mathcal O_K$ of norm $N N_0$; we fix such an ideal. Given an integer $c \geq 1$ divisible exactly by $N_0$ and coprime to $(N/N_0)D_K$, we consider an order $\mathcal O_c$ of $\O_K$ of conductor $c$. Setting $\mathfrak N_c = \mathfrak N \cap \mathcal O_c$, we have:
$$\mathcal O_c/\mathfrak N_c \iso \Z/N \Z.$$
Indeed, for $q | N$ coprime to $c$, if $\mathfrak q | q$ then 
$$\O_{c, \mathfrak q}/\mathfrak N_{c, \mathfrak q} \iso \O_{K, \mathfrak q}/\mathfrak N_{\mathfrak q} \iso \Z/(q^{v_q(N)}) \Z.$$ 
Moreover, for $q | N_0$, $\O_{c, q} = \Z + q \O_{K_q}$ and:
$$\O_{c, q}/\mathfrak N_{c, q} = (\Z + q \O_{K_q})/q \O_{K_q} \iso \Z/q\Z.$$
Note that for $q|N_0$, $1 + q\O_{K_q} \subseteq \O_{c, q}^\times \subseteq \O_{K_q}^\times$ and
\begin{center}
\begin{tikzcd}
    \O_{c, q}^\times/(1 + q \O_{K_q}) \ar[r, hook] \ar[d, "\iso"] & \O_{K_q}^\times/(1 + q \O_{K_q}) \ar[r, two heads] \ar[d, "\iso"] & \O_{K_q}^\times/ \O_{c, q}^\times \ar[d, "\iso"] \\
    \F_q^\times \ar[r] & \F_{q^2}^\times \ar[r] &  \F_{q^2}^\times/\F_q^\times
\end{tikzcd}
\end{center}
In particular, a character of $\O_{K_q}^\times$ which is trivial on $\O_{c,q}^\times$ has conductor at most 1 and is trivial on $\F_{q}^\times \subseteq \F_{q^2}^\times$.

\begin{definition}\label{def:Sigma_fc}
    Let $\Sigma_{f, c}$ denote the set of Hecke characters $\psi$ of $K$ such that: 
    \begin{enumerate}
        \item $\psi|_{\widehat \O_c^\times} = 1$, but for $q|N_0$, $\psi_q|_{\O_{K, q}^\times} \neq 1$,
        \item the infinity type $(\kappa_1, \kappa_2)$ of $\psi$ satisfies $\kappa_1 + \kappa_2  = 2$,
        \item $\psi \psi^\sigma = |\cdot|^2$,
        \item $\epsilon_q(f, \psi^{-1}) = +1$ for all finite primes $q$. 
    \end{enumerate}
\end{definition}

By the above discussion, if $\psi \in \Sigma_{f, c}$, then $\psi_q$ for $q|N_0$ has conductor $1$ and $\psi|_{\Z_q^\times} = 1$. In particular, assumption (1) in the definition implies Hypothesis~\ref{hyp:cond_psi}. If $N_0 = 1$, this agrees with $\Sigma_{f, cc}$ in \cite{BDP}. The final condition is automatic except possibly for the primes in the set:
$$S(f) = \{ q \ | \ \text{$q$ divides $N$ and $D_K$}\}.$$
For example, if $N$ is coprime to $D_K$, $S(f) = \emptyset$.

For $\psi \in \Sigma_{f,c}$, $L(f, \psi, s)$ is self-dual and has $s = 0$ as its central critical point. Moreover, $\Sigma_{f, c}$ naturally decomposes as
    \begin{align*}
        \Sigma_{f, c} & = \Sigma_{f, c}^{(1)} \cup \Sigma_{f, c}^{(2)} \cup \Sigma_{f, c}^{(2')}, \\
        \Sigma_{f, c}^{(1)} & = \{\psi \text{ of infinity type $(1,1)$} \}, \\
        \Sigma_{f, c}^{(2)} & = \{\psi \text{ of infinity type $(
        \kappa + 2, -\kappa)$ for $\kappa \geq 1$} \}, \\
        \Sigma_{f, c}^{(2')} & = \{\psi \text{ of infinity type $(
        -\kappa, \kappa + 2)$ for $\kappa \geq 1$} \}.
    \end{align*}

\begin{theorem}\label{thm:BDP_generalization}
	\leavevmode
	\begin{enumerate}
		\item  There is a $p$-adic $L$-function $\mathcal L_p(f, K) \colon \widehat \Sigma_{f, c} \to \C_p^\times$ defined on the $p$-adic completion $\widehat \Sigma_{f, c}$ of $\Sigma_{f, c}$ with the following interpolation property:
		 \begin{align*}
			\frac{\mathcal L_p(f, K)(\psi)}{\Omega_p^{4 \kappa + 4}} & = \mathfrak a(f, \psi) \cdot \mathfrak e(f, \psi)^2 \cdot \mathfrak f(f, \psi) \cdot \frac{L(f, \psi^{-1}, 0)}{\Omega^{4\kappa + 4}} & \psi \in \Sigma_{f, c}^{(2)},
		\end{align*}
		where
		\begin{itemize}
			\item $\mathfrak a(f, \psi) = \kappa! (\kappa+1)! \pi^{2\kappa + 1}$, $\mathfrak e(f, \psi) = 1 - a_p(f) \psi^{-1}(\overline{\mathfrak p}) + \psi^{-2}(\overline{\mathfrak p}) p$,
			\item $\mathfrak f(f, \psi) = (2/c \sqrt{D_K})^{2 \kappa + 1} \prod\limits_{q|c/N_0} \frac{q - \chi_K(q)}{q-1} \prod\limits_{q|N_0} \frac{q^2 - 1}{q^2} \cdot \omega(f, \psi)^{-1}$, and $\omega(f, \psi) \in \overline \Q$ is defined in \cite[(5.1.11)]{BDP}.
		\end{itemize}
	
		\item For a finite order character $\psi$ of $K$, $\psi N_K \in \Sigma_{f, c}^{(1)}$ which is outside of the interpolation range~\eqref{eqn:BDP_interpolation}, there is a $p$-adic Gross--Zagier formula:
		\begin{align*}
			\mathcal L_p(f, K) & = \left( 1 - \frac{a_p(f)}{\psi(\overline{\mathfrak p}) p} + \frac{1}{\psi^2(\overline{\mathfrak p}) p} \right)^2 \cdot \log_p(P_{\psi})^2,
		\end{align*}
		for $P_\psi$ defined by:
		\begin{align*}
			P_{\psi} & = \pi_f\left(\sum\limits_{\sigma \in \Gal(H_c/K)} \psi^{-1}(\sigma) t^\sigma\right) \in E(H_c)_L^{\psi_2} 
		\end{align*}
		for a Heegner point $t$ for $H_c/K$.
	\end{enumerate}
	  
\end{theorem}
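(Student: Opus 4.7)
The plan is to follow the strategy of Bertolini--Darmon--Prasanna~\cite{BDP}, adapted to accommodate the inert primes $q \mid N_0$ dividing the level. Since $\epsilon_q(f, \psi^{-1}) = +1$ at every finite prime by Definition~\ref{def:Sigma_fc}(4), the global root number is $+1$ and the integral representation of $L(f, \psi, s)$ remains on the classical modular curve $X_0(N)$, rather than on a Shimura curve as in Brooks' generalization (cf.\ Remark~\ref{rmk: weak heegner hyp}). Accordingly, $\mathcal L_p(f, K)$ will be constructed by $p$-adically interpolating Waldspurger-type toric periods attached to CM points on $X_0(N)$ for the non-maximal order $\O_c$, which fails to be maximal precisely at the primes dividing $c/N_0$ and at the inert primes dividing $N_0$.

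The first step is to verify that CM points by $\O_c$ carry a canonical $\Gamma_0(N)$-level structure. For $q \mid N/N_0$, which is split or ramified in $K$ by Hypothesis~\ref{hyp:weak_Heegner}, the choice of ideal $\mathfrak N$ above $N$ provides the level structure as in~\cite{BDP}. For $q \mid N_0$, we have $\O_{c,q} = \Z_q + q\O_{K,q}$, and the isomorphism $\O_{c,q}/q\O_{K,q} \cong \F_q$ verified in the preamble furnishes the requisite $\Gamma_0(q)$-subgroup on the associated CM elliptic curve. With this in place, I would follow \cite[Sections~4--5]{BDP} to produce $\mathcal L_p(f, K)$ as a $p$-adic measure obtained by evaluating $p$-depleted nearly holomorphic modular forms attached to $f$, via Serre--Tate expansions, at these CM points.

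The key new analytic input is the local Waldspurger computation at the inert primes $q \mid N_0$. There, $\pi_{f, q}$ is a twist of the Steinberg representation of conductor $q$, while $\pi_{\theta_\psi, q} = \Ind_{K_q^\times}^{\GL_2(\Q_q)}(\psi_q)$ is a depth-zero supercuspidal of conductor $q^2$ by the analysis at the end of Section~\ref{Section: Local JL correspondence and test vectors} applied to $\GL_2(\Q_q)$ in place of $D^\times$. I would compute the local matrix coefficient integral pairing the unique $\Gamma_0(q)$-fixed newvector in $\pi_{f, q}$ against the distinguished vector in $\pi_{\theta_\psi, q}$ singled out by the embedding $K_q^\times \hookrightarrow \GL_2(\Q_q)$ associated with $\O_{c,q}$. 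The expected output is the local factor $(q^2-1)/q^2$ entering $\mathfrak f(f, \psi)$, replacing the factor $(q-\chi_K(q))/(q-1)$ that appears at the split/ramified primes of $c$ in~\cite[Theorem 5.9]{BDP}. This calculation is the main new technical obstacle: although it is straightforward in spirit, an explicit description of the test vectors and of the local Waldspurger integral on $\GL_2(\Q_q) \times K_q^\times$ is required; the techniques of Proposition~\ref{prop:reps_of_cond2} and Proposition~\ref{prop:local_int_scscsp} (restricted to the pair rather than the triple) should apply with minor modifications.

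Granted the interpolation formula of~(1), the $p$-adic Gross--Zagier formula of~(2) will follow from the argument of \cite[Sections~3, 5]{BDP}, which is essentially local at $p$. The point is to identify $\mathcal L_p(f, K)(\psi N_K)$ with the square of the $p$-adic Abel--Jacobi image of the generalized Heegner cycle $\Delta_\psi$ on the Kuga--Sato variety over $X_0(N)$, and then, by Coleman integration on the supersingular residue discs at CM points, to recognize this image as $\log_{E, p}(P_\psi)$ up to the standard Euler factor at $p$. Since $p$ splits in $K$ and is coprime to $N_0$, the supersingular locus and Coleman integration theory are unaffected by the presence of inert primes in $N_0$; the non-triviality of $\psi_q$ at $q \mid N_0$ only enters through the level structure of $\Delta_\psi$, and hence only through the local factor computed in~(1). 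Thus, once the inert-prime local computation is in place, both parts of Theorem~\ref{thm:BDP_generalization} are reduced to the arguments of~\cite{BDP}.
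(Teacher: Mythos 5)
Your high-level strategy matches the paper's: follow \cite{BDP} verbatim and isolate the new input to a local computation at the inert primes $q \mid N_0$, then feed the resulting explicit Waldspurger formula into the $p$-adic machinery of \cite[Sections 3, 5]{BDP} unchanged. You also correctly guess the shape of the new local factor $(q^2-1)/q^2$. However, the framework you propose for the key local computation is not the one that actually works here, and as written it contains an internal inconsistency.

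You suggest computing a ``local matrix coefficient integral'' by adapting Propositions~\ref{prop:reps_of_cond2} and~\ref{prop:local_int_scscsp} ``restricted to the pair rather than the triple.'' Those propositions live on the \emph{non-split} quaternion algebra $D_q^\times$ and produce a trilinear-form integral, which is the correct local period when $\epsilon_q = -1$. In the present setting you have already (correctly) observed that $\epsilon_q(f,\psi^{-1}) = +1$ at every finite prime, which means Prasad's theorem forces the local period onto $\GL_2(\Q_q)$, not onto $D_q^\times$; there is no two-dimensional ambiguity in the test vector and no $\langle\varpi_{D_q}\rangle$-operator to diagonalize. The paper instead computes, following \cite{BDP} and \cite{Prasanna_Integrality}, the local Rankin--Selberg zeta integral
\[
J(\varsigma, \vartheta) = \int_{N_{\Q_{q^2}/\Q_q}(\Q_{q^2}^\times)} W_F(d(a))\, W_{\Theta,\varsigma}(d(a))\, \Phi_\vartheta^s(d(a))\, |a|^{-1}\, d^\times a,
\]
pairing the Whittaker function of the Steinberg newvector against that of the theta lift of $\psi_q$ and an explicit Siegel--Weil section $\Phi_\vartheta^s$ adapted to the order $\O_{c,q}$; the computation hinges on the parametrization $\Q_{q^2}^{(1)} = \{t/t^\sigma\}$ and on the vanishing of $\int \psi/\psi^\sigma$ forced by anticyclotomy, and yields $J(\varsigma,\vartheta) = q^{-r}\tfrac{q-1}{q+1}\cdot L_q(\pi_f, \pi_{\overline{\eta'}}, s)L_q(2s,\epsilon_K)^{-1}|_{s=1/2}$ (Proposition~\ref{prop:local_int_BDP}). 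A toric period computation on $K_q^\times\backslash\GL_2(\Q_q)$ would be an acceptable alternative route to the same local constant, but it would have to be set up from scratch; the $D^\times$-computations in Section~\ref{Section: Local JL correspondence and test vectors} do not transfer, since they are built around the two-dimensional fixed space of Proposition~\ref{prop:reps_of_cond2}, a feature that is absent on $\GL_2$.

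One further caution: your first paragraph claims a ``requisite $\Gamma_0(q)$-subgroup on the associated CM elliptic curve'' at $q\mid N_0$. This is right in spirit, but the fact that the CM point by $\O_c$ sees a $\Gamma_0(q)$-level structure is not automatic from $\O_{c,q}/\mathfrak N_{c,q}\cong \Z/q\Z$ alone; what you need, and what the paper quietly uses, is that the local test vector $\varsigma = \mathbf I_{\Z_q + q^r\varpi\Z_q}$ is chosen consistently with this structure, and that condition (1) of Definition~\ref{def:Sigma_fc} (that $\psi_q$ be trivial on $\O_{c,q}^\times$ but \emph{not} on $\O_{K,q}^\times$) guarantees $\psi_q$ has conductor exactly $1$ and is trivial on $\Z_q^\times$, which is the precise input that makes $J(\varsigma,\vartheta)$ nonvanishing.
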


\begin{remark}
	We remark that Liu--Zhang--Zhang~\cite{LZZ} proved a formula similar to part (2) of the above theorem for any abelian variety parameterized by a Shimura curve over a totally real field. In particular, in the case of Shimura curves over $\Q$, they removed all the ramification hypothesis in~\cite{BDP, Brooks}.
	
	However, their construction of the $p$-adic $L$-function does not have an interpolation property as explicit as~(1). More specifically, (1) can be interpreted as an equality in $\overline \Q$ (or even an explicit finite extension of $\Q$), having fixed embeddings $\overline \Q \hookrightarrow \C, \C_p$. Crucially, the CM periods $\Omega$ and $\Omega_p$ make both sides algebraic.
	
	The interpolation property of Liu--Zhang--Zhang is not an equality of algebraic numbers and relies on an identification $\C \iso \C_p$. In particular, the CM periods do not show up in their interpolation property, and we have not find a direct way to deduce the above result from their work. 
\end{remark}

\subsection{An explicit Waldspurger formula}

The crux of the proof of Theorem~\ref{thm:BDP_generalization} is to give an explicit version of Waldspurger's formula, generalizing~\cite[Theorem 4.6]{BDP}. 

\begin{theorem}\label{thm:explicit_Waldspurger}
    Let $f$ be a normalized eigenform in $S_2(N)$ and let $\psi \in \Sigma_{f, c}^{(2)}$ be a Hecke character of~$K$ of infinity type $(2+j, -j)$. Suppose also that $c$ and $D_K$ are odd, and let $w_K$ denote the number of roots of unity in $K$. Then:
    $$
    C(f, \psi, c) \cdot L(f, \psi^{-1}, 0) = \left|  \sum_{[\mathfrak a] \in {\rm Pic}(\O_c)} \chi^{-1}(\mathfrak a) N\mathfrak a^{-j} \cdot (\delta_k^j f)(\mathfrak a^{-1}, t_{\mathfrak a})  \right|,
    $$
    where the representatives of the ideal classes in ${\rm Pic}(\O_c)$ are chosen to be coprime to the $\mathfrak N_c$ and the constant $C(f, \psi, c) \in \C$ is given by:
    $$C(f, \chi, c) = \frac{1}{4} \pi^{k + 2j - 1} \Gamma(j+1) \Gamma(k+j) w_K |D_K|^{1/2} \cdot c \vol(\O_c)^{-\ell} \cdot 2^{\# S_f} \cdot \prod_{q|c/N_0} \frac{q-\chi_K(q)}{q-1} \cdot \prod_{q|N_0} \frac{q^2 - 1}{q^2}.$$
\end{theorem}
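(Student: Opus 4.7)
The plan is to follow the classical argument of Bertolini--Darmon--Prasanna~\cite[Sections~4.2--4.6]{BDP}, with the only substantive change being the local analysis at the inert primes dividing $N_0$. The starting point is the Rankin--Selberg integral representation
$$L(f, \psi^{-1}, s) \;\propto\; \big\langle f, \, \theta_{\psi^{-1}}(z) \cdot E_s(z) \big\rangle_{\Gamma_0(\mathfrak M)}$$
for a suitable level $\mathfrak M$ (divisible by $N$, by $N_{K/\Q}(\mathfrak c(\psi))$, and by $D_K$) and a nearly-holomorphic Eisenstein series $E_s$ of the weight and character matching those of $f/\theta_{\psi^{-1}}$. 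Shimura's unfolding trick applied to $\theta_{\psi^{-1}}$, together with specialization at the central critical point $s = 0$, converts this pairing into a finite sum over CM points indexed by ideal classes in $\mathrm{Pic}(\O_c)$.

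The identification of this sum with the stated right-hand side uses the CM points $t_{\mathfrak a} = (\C/\mathfrak a^{-1}, \, \mathfrak N_c \mathfrak a^{-1}/\mathfrak a^{-1}) \in X_0(N)(\C)$, one for each ideal class $[\mathfrak a]$ whose representative is coprime to $\mathfrak N_c$. The cyclic level structure $\mathfrak N_c \mathfrak a^{-1}/\mathfrak a^{-1}$ has order exactly $N$ because $\O_c/\mathfrak N_c \cong \Z/N\Z$, a consequence of $N_0 \mid c$ and $(c, N/N_0) = 1$ established just before Definition~\ref{def:Sigma_fc}; this is precisely what allows the weakening from the full Heegner hypothesis to Hypothesis~\ref{hyp:weak_Heegner}. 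The extension from weight two to general weight $k$ via the Maass--Shimura derivative $\delta_k^j f$ is then handled exactly as in~\cite[Section~4.5]{BDP}.

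The hard part, and the genuinely new content, is the evaluation of the local zeta integral at primes $q \mid N_0$, which is the only place where our setup genuinely departs from~\cite{BDP}. At such a prime $q$ (inert in $K$), the representation $\pi_{f,q}$ is Steinberg (since $q \,\|\, N$ and $f$ has trivial Nebentypus), while $\pi_{\theta_{\psi^{-1}}, q} \cong \pi_{\psi_q^{-1}}$ is the depth-zero supercuspidal of conductor $q^2$ attached to the level-one character $\psi_q^{-1}$ of $K_q^\times$ (since $\psi_q$ factors through $\O_{K_q}^\times/\O_{c,q}^\times \cong \mathbb F_{q^2}^\times/\mathbb F_q^\times$ by the discussion preceding Definition~\ref{def:Sigma_fc}). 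The non-vanishing hypothesis $\epsilon_q(f, \psi^{-1}) = +1$ built into Definition~\ref{def:Sigma_fc}(4) is exactly Tunnell's criterion for the existence of a nonzero local toric period on $\GL_2(\Q_q)$; pairing the new vector in $\pi_{f,q}$ against the essential vector of $\pi_{\theta_{\psi^{-1}}, q}^{\O_{c,q}^\times}$, a direct local computation analogous to those in Section~\ref{Section: Local JL correspondence and test vectors} yields the factor $(q^2-1)/q^2$ at each $q \mid N_0$. This replaces the corresponding factor $(q - \chi_K(q))/(q-1)$ from~\cite[Theorem~4.6]{BDP}, which is only valid for primes $q \mid c$ with $(q, N) = 1$.

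All remaining local factors (archimedean, at $p$, and at primes $q \mid c/N_0$ coprime to $N$) together with the normalization of the CM period $\omega(f, \psi)$ and the sign $2^{\#S_f}$ from the Atkin--Lehner involution are identical to those in~\cite[Sections~4.4--4.6]{BDP}. Assembling these pieces yields the constant $C(f, \psi, c)$ as stated. An alternative, more uniform route would be to invoke Popa-type explicit Waldspurger formulas (or Cai--Shu--Tian~\cite{CaiShuTian}) already valid in this generality and translate them into the nearly-holomorphic CM-point formulation, but the direct unfolding outlined above gives the Maass--Shimura derivative form needed in Section~\ref{section: Factorization of the $p$-adic $L$-function} with essentially no additional work.
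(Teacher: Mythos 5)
The proposal correctly reproduces the paper's high-level strategy, which is simply: follow BDP Section 4 verbatim, and isolate the local zeta integral at inert primes $q \mid N_0$ as the only new computation. Your identification of the local representation types ($\pi_{f,q}$ Steinberg, the theta-series component a depth-zero supercuspidal of conductor $q^2$) is correct, as is the observation that $\O_c/\mathfrak N_c \cong \Z/N\Z$ is what licenses working on $X_0(N)$ under the weak Heegner hypothesis.

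The gap is in the substantive step. You assert that ``a direct local computation analogous to those in Section~\ref{Section: Local JL correspondence and test vectors} yields the factor $(q^2-1)/q^2$,'' but this sentence does not supply the computation, and the framing is off. The trilinear-form computations in Section~\ref{Section: Local JL correspondence and test vectors} concern a different object (trilinear forms on $D^\times$ via Ichino's integral), whereas what is needed here is the Whittaker-model Rankin--Selberg zeta integral $J(\varsigma,\vartheta) = \int W_F\, W_{\Theta,\varsigma}\, \Phi^s_\vartheta\, |a|^{-1}\, d^\times a$ in the style of BDP Section~4.6 and \cite{Prasanna_Integrality}. Carrying this out requires: correcting the domain of integration from $\Q_q^\times$ to $N_{\Q_{q^2}/\Q_q}(\Q_{q^2}^\times)$ (crucial because $q$ is inert), choosing explicit test data $\varsigma = \mathbf I_{\Z_q + q^r\varpi\Z_q}$ and the matching section $\vartheta$, showing the integrand vanishes unless $v(h')=0$ using the anticyclotomic hypothesis on $\psi_q$, and reducing the remaining term to computing the index of $\{v \in \Z_{q^2}^\times : v\equiv v^\sigma \bmod q^r\}$ in $\Z_{q^2}^\times$. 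Your description of the step as ``pairing the new vector in $\pi_{f,q}$ against the essential vector of $\pi_{\theta_{\psi^{-1}},q}^{\O_{c,q}^\times}$'' is not the shape of this calculation; it is a zeta integral with a Weil-representation test function, not a matrix-coefficient pairing. The invocation of Tunnell's criterion explains non-vanishing, but does not produce the numerical factor. The final constant $(q^2-1)/q^2$ you quote does match Proposition~\ref{prop:local_int_BDP}, but the proposal establishes none of it.
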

\begin{proof}
	The proof in \cite[Section 4]{BDP} can be applied verbatim to our case, until the local calculations in Section 4.6. The extra local integral is then computed in the next section, resulting in Proposition~\ref{prop:local_int_BDP}.
\end{proof}

\subsection{The key computation}

The assumption in~\cite{BDP} that the conductor of $\psi$ is coprime to the level $N$ of $f$ is used in the explicit computations of the local zeta integrals in Section 4.6. We use the same notation as loc. cit.\ but consider the following extra setting:
\begin{itemize}
    \item let $\mathbb Q_{q^2} = \mathbb Q_q[\varpi]$ where $\varpi^2 \in \Q_{q}^\times$ is a unit is the unramified quadratic extension of $\mathbb Q_q$, and let $\sigma$ be the non-trivial automorphism of $\mathbb Q_{q^2}$ over $\mathbb Q_q$,
    \item $\psi$ is an anticyclomotic character of $\mathbb Q_{q^2}^\times$  of conductor $r \geq 1$ such that $\psi \neq \psi^\sigma$ and $\psi|_{\Q_q^\times} = 1$,  (here, anticyclotomic means that $\psi \psi^\sigma = 1$; in particular, note that $\psi^2 \neq 1$), 
    \item $\pi$ is the Steinberg representation $\St$ of $\GL_2(\mathbb Q_q)$.
\end{itemize}
Analogously to p. 1118 of loc.\ cit.\ we consider:
$$J(\varsigma, \vartheta) = \int\limits_{ N_{\Q_{q^2}/\Q_q}(\Q_{q^2}^\times)} W_F(d(a)) W_{\Theta, \varsigma}(d(a)) \Phi_\vartheta^s(d(a))|a|^{-1} d^\times a.$$
As written, the equation in~\cite{BDP} involves the integral of $\Q_q^\times$, but tracing back the reference~\cite[Section 3]{Prasanna_Integrality} reveals this is the correct generalization at inert places.

We also recall from Proposition\ 4.24 of loc. cit.\ that:
$$W_{\Theta, \varsigma}(d(a)) = \int_{\Q_{q^2}^{(1)}} \varsigma( h^{-1} (h')^\sigma ) \psi(h h') \, dh $$
for any $h'$ such that $N(h') = a$, where
$$\Q_{q^2}^{(1)} = \{x \in \Q_{q^2} \ | \ N(x) = 1\} \subseteq \Z_{p^2}^\times$$
and the Haar measure is normalized so that $\vol( \Q_{q^2}^{(1)} ) = 1$.

We recall that the group of norm one elements in the unramified quadratic extension of $\Q_p$ has the following description:
\begin{align}
    \Q_{q^2}^{(1)} & = \left\{ \frac{t}{t^\sigma} \ \middle| \ t \in \Z_{q^2}^\times \right\}. \label{eqn:norm_1_elts}
\end{align}
Therefore,
$$W_{\Theta, \varsigma}(d(a)) = \int_{\Z_{q^2}^\times} \varsigma( (th')^\sigma/t ) \psi( th'/t^\sigma ) \, dt.$$

We choose:
\begin{itemize}
    \item $\varsigma = \mathbf I_{\mathbb Z_q + q^r \varpi \Z_q}$, the indicator function of $\mathbb Z_q + q^r \varpi \Z_q \subseteq \Z_q + \varpi \Z_q = \Z_{q^2}$,
    \item $\vartheta = \mathbf I_{(\Z_q + q^r \Z_q \varpi) \mathfrak j}$ where $\mathfrak j = \begin{pmatrix}
    	1 & 0 \\
    	0 & -1
    \end{pmatrix}$ 
    \item $W_F(d(a)) = |a| \mathbf I_{\Z_q}(a)$, since $\pi = \St$ (cf.\ Proposition\ 4.23 of loc. cit.).
\end{itemize}

Then $ \varsigma( (th')^\sigma/t ) = 1$ if and only if 
$$(th')^\sigma/t \in \Z_q + q^r \varpi \Z_q.$$
For $x \in \Z_{q^2}$, the condition $x \in \Z_q + q^r \varpi \Z_q$ is equivalent to $v_q(x - x^\sigma) \geq r$. Therefore, the above condition is simply:
$$v_q((t^\sigma)^2 (h')^\sigma - t^2 h') \geq r.$$

Next, we observe that for $v(h') \geq r$, this condition is automatic, and in this case:
\begin{align*}
    W_{\Theta, \varsigma}(d(a)) & = \psi(h') \int\limits_{\Z_{q^2}^\times} (\psi/\psi^\sigma)(t) \, dt \\
    & = 0 & \text{$\psi/\psi^\sigma \neq 1$}.
\end{align*}

Since $W_F(d(a)) = |a| \mathbf I_{\Z_q}(a)$ and $h' (h')^\sigma = a$, the remaining part of the integral $J(\varsigma, \vartheta)$ has $0 \leq v_q(h') \leq r - 1$. Fix $m = v(h')$ and write $h' = q^m \cdot u$ for $u \in \Z_{q^2}^\times$. Then:
$$(t^\sigma)^2 (h')^\sigma - t^2 h' = q^m ((t^{\sigma})^2 u^\sigma - t^2 u),$$
and the condition on the valuation can be concisely written as
$$t^2 u \equiv (t^2 u)^\sigma \mod q^{r-m}.$$
Dividing both sides by the unit $t t^\sigma$ gives the equivalent condition:
$$tu/t^\sigma \equiv (tu/t^\sigma)^\sigma \mod q^{r-m}.$$

Finally, this shows that:
\begin{align*}
    \int\limits_{v_q(a) = 2m} W_F(d(a)) W_{\Theta, \varsigma}(d(a)) |a|^{s-1} d^\times a & = \iint\limits_{\substack{u, t \in \Z_{q^2}^\times \\ 
 tu/t^\sigma \equiv (tu/t^\sigma)^\sigma \bmod q^{r-m}} } \psi(tu/t^\sigma) \, d^\times u \, d^\times t \\
 & = \iint\limits_{\substack{v, t \in \Z_{q^2}^\times  \\ v \equiv v^\sigma \bmod q^{r-m} }} \psi(v)  \, d^\times v & v = t/t^\sigma u.
\end{align*}
Now, if $m \geq 1$, this final integral is 0, because there exists $v_0 \in \Z_{q^2}^\times$ such that $v_0 \equiv 1 \mod q^{r-m}$ and $\psi(v_0) \neq 1$. 

For $v \in \Z_{q^2}^{\times}$ such that $v \equiv v^\sigma \bmod q^{r}$, there exists $v_0 \in \Z_q^\times$ such that $v_0 \equiv v \bmod q^r$, and hence
$$\psi(v) = \psi(v_0) = 1,$$
since $\psi|_{\Z_q^\times} = 1$. Altogether, this shows that:
$$J(\varsigma, \vartheta) = \vol(\Z_{q^2}^\times) \cdot \vol\{ v \in \Z_{q^\times}^\times \ | \ v \equiv v^\sigma \bmod q^{r} \}.$$

Next, recall that the measure is normalized so that $\vol(\Q_{q^2}^{(1)}) = 1$ and hence by equation~\eqref{eqn:norm_1_elts}, we also have $\vol(\Z_{q^2}^\times) = 1$. To compute $J(\varsigma, \vartheta)$, it remains to compute the index of
$$S = \{v \in \Z_{q^2}^\times \ | \ v \equiv v^\sigma \bmod q^r\}$$ 
in $\Z_{q^2}^\times$. We consider the intersection of $S$ with the filtration $U_n =  1 + q^n \Z_{q^2}^\times$ on $U_0 = \Z_{q^2}^\times$. Clearly, for $n \geq r$, $S_n = S \cap U_n = U_n$ because the condition $v^\sigma \equiv v \bmod p^r$ is automatically satisfied. The successive quotients of the filtration are:
\begin{align*}
	U_0/U_1 & \iso \F_{q^2}^\times \\
	U_n/U_{n+1} & \iso \F_{q^2}
\end{align*}
and for $n < r$ the condition $v \equiv v^\sigma \bmod q^r$ gives:
\begin{align*}
	S_0/S_1 & \iso \F_{q}^\times, \\
	S_n/S_{n+1} & \iso \F_{q}.
\end{align*}
Overall, this shows that:
$$[\Z_{q^2}^\times : S] = \frac{q^2 -1}{q-1} \cdot q^{r-1}.$$
Therefore, finally:
$$J(\varsigma, \vartheta) = \frac{q-1}{q^{r-1}(q^2 - 1)}.$$
We rephrase this result as in Section 4.6 of loc. cit.

\begin{proposition}\label{prop:local_int_BDP}
	For the above choices of $\varsigma$ and $\vartheta$, we have
		$$J(\varsigma, \vartheta) = \frac{1}{q^r}  \frac{q-1}{q+1}  \cdot L_q(\pi_f, \pi_{\overline{\eta'}}, s) L_q(2s, \epsilon_K)^{-1}|_{s = 1/2}.$$
\end{proposition}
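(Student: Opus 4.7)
My plan is to combine the explicit value
\[
J(\varsigma,\vartheta) \;=\; \frac{q-1}{q^{r-1}(q^{2}-1)} \;=\; \frac{1}{q^{r-1}(q+1)}
\]
obtained in the paragraphs preceding the proposition with explicit formulas for the two local $L$-factors on the right-hand side, thereby reducing the statement to an elementary algebraic check. Since $q$ is inert in $K$, the character $\epsilon_{K}$ restricts to the unramified quadratic character of $\Q_{q}^{\times}$ sending a uniformizer to $-1$, so $L_{q}(2s,\epsilon_{K}) = (1+q^{-2s})^{-1}$ and evaluation at $s=\tfrac{1}{2}$ yields $L_{q}(1,\epsilon_{K})^{-1} = 1+q^{-1} = (q+1)/q$.

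Next I would compute the Rankin--Selberg factor $L_{q}(\pi_{f},\pi_{\overline{\eta'}},s)$ via local Langlands. The parameter of $\pi_{f}=\St$ is $\mathrm{Sp}(2)$, and the parameter of the dihedral supercuspidal $\pi_{\overline{\eta'}}$ is $\mathrm{Ind}_{W_{\Q_{q^{2}}}}^{W_{\Q_{q}}}\overline{\eta'}$; working out the tensor product Weil--Deligne representation $\mathrm{Sp}(2)\otimes\mathrm{Ind}\,\overline{\eta'}$ (with $\overline{\eta'}$ ramified, so that no unramified line survives on the $\mathrm{Ind}$-factor alone but the interaction with $\mathrm{Sp}(2)$ contributes), one obtains, in the normalization used by Bertolini--Darmon--Prasanna in Section~4.6, the value $L_{q}(\pi_{f},\pi_{\overline{\eta'}},\tfrac{1}{2}) = (1-q^{-2})^{-1}$. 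Substituting into the right-hand side of the proposition gives
\[
\frac{1}{q^{r}}\cdot\frac{q-1}{q+1}\cdot\frac{1}{1-q^{-2}}\cdot\frac{q+1}{q} \;=\; \frac{1}{q^{r-1}(q+1)},
\]
which matches the value of $J(\varsigma,\vartheta)$ computed directly, and so completes the identification.

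The main (and essentially only) obstacle is reconciling normalizations for the Steinberg and Rankin--Selberg $L$-factors across the various references: arithmetic versus unitary conventions, the sign of geometric versus arithmetic Frobenius, and whether the Steinberg parameter is written with underlying Weil representation $|\cdot|^{1/2}\oplus|\cdot|^{-1/2}$ or its inverse. Once the same normalization as in Bertolini--Darmon--Prasanna Section~4.6 is pinned down, no further computation is required beyond the short algebraic simplification above.
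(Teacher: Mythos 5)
Your overall strategy matches the paper's exactly: take the value $J(\varsigma,\vartheta) = \frac{q-1}{q^{r-1}(q^2-1)}$ already computed in the paragraphs preceding the proposition, substitute explicit values for the two local $L$-factors, and check the resulting algebraic identity. The computation of $L_q(1,\epsilon_K) = q/(q+1)$ is the same, the final arithmetic is the same, and both routes arrive at the required ratio $\frac{1}{q^r}\cdot\frac{q-1}{q+1}$.

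The one place where you depart from the paper is the derivation of $L_q(\pi_f,\pi_{\overline{\eta'}},\tfrac12)=(1-q^{-2})^{-1}$, and here your argument does not actually work. You claim to obtain this by ``working out the tensor product Weil--Deligne representation $\mathrm{Sp}(2)\otimes\mathrm{Ind}\,\overline{\eta'}$,'' remarking that the ``interaction with $\mathrm{Sp}(2)$ contributes.'' It does not. The $L$-factor of a Weil--Deligne representation $(V,N)$ is computed on $V^I\cap\ker N$, and since $\mathrm{Ind}_{W_{\Q_{q^2}}}^{W_{\Q_q}}\overline{\eta'}$ has no inertia invariants (because $\overline{\eta'}$ is ramified and $\Q_{q^2}/\Q_q$ is unramified, so $I_{\Q_q}=I_{\Q_{q^2}}$ acts through $\overline{\eta'}$), tensoring with the unramified $\mathrm{Sp}(2)$ still gives $V^I = \mathrm{Sp}(2)\otimes 0 = 0$. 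The na\"ive Galois-side Rankin--Selberg $L$-factor is therefore $1$, not $(1-q^{-2})^{-1}$. The paper instead derives the value by invoking Jacquet's Theorem~15.1, which gives $L(s,\St\times\pi_\psi)=L(s+1/2,\pi_\psi)$, and then interprets this as the Hecke $L$-factor $L(s+1/2,\psi)$ of $\psi$ over the quadratic extension evaluated with $\psi(q)=1$; this yields $(1-q^{-2})^{-1}$ by a direct local computation. So while you land on the right number, the Weil--Deligne tensor product computation you sketch is not a valid path to it, and you should replace it with the reference to Jacquet's formula (or otherwise pin down exactly which normalization of $L_q(\pi_f,\pi_{\overline{\eta'}},s)$ is in play in the Bertolini--Darmon--Prasanna set-up).
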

\begin{proof}
	We recall that:
	\begin{itemize}
		\item $L_q(1, \epsilon_K) =\frac{1}{1+1/q} = \frac{q}{q+1}$,
		\item by~\cite[Theorem 15.1]{Jacquet1972}, since $\pi = \St$, we have
		$$L(s, \pi \times \pi_\psi) = L(s, \pi_\psi \otimes |\cdot|^{1/2}) = L(s + 1/2, \pi_\psi) = L(s + 1/2, \psi),$$ 
		and hence: 
		$$L(1/2, \pi \times \pi_\psi) = L(1, \psi) = \frac{1}{1 - \psi(q) q^{-2}} = \frac{1}{1 - q^{-2}} = \frac{q^2}{q^2 - 1}.$$
	\end{itemize}
	Thus:
	\begin{align*}
		\frac{I(\varsigma, \vartheta)}{L_q(\pi_f, \pi_{\overline{\eta'}}, s) L_q(2s, \epsilon_K)^{-1}|_{s = 1/2}} & = \frac{q-1}{q^{r-1}(q^2 - 1)} \frac{q^2 - 1}{q^2} \frac{q}{q+1} \\
		& = \frac{1}{q^r} \cdot \frac{q-1}{q+1}. \qedhere
	\end{align*}

\end{proof}

\subsection{Finishing the proof}

Theorem~\ref{thm:BDP_generalization} is again proved by following \cite[Section 5]{BDP} word-for-word, and replacing Theorem 4.6 of loc.\ cit,\ with Theorem~\ref{thm:explicit_Waldspurger} above. 

\bigskip

\section{Examples}\label{section: appendix digest of examples}

The hypotheses of Conjecture~\ref{conj:ES} and Theorem~\ref{thm:CM} may seem quite restrictive at first glance, so in this section we collect many examples where they are satisfied. Even though we have only provided evidence for the conjecture in the CM case so far, we still wish to give instances where the conjecture could be verified numerically by adapting the algorithms in~\cite{DallAva2021Approx}. We hope to carry out this numerical verification in future work.

We divide the examples into four categories, namely:
\begin{enumerate}
        \item $h = g^\ast$, weight 1 forms with CM by the same field $K$;
        \item $h \neq g^\ast$, weight 1 forms with CM by the same field $K$;
        \item $h = g^\ast$, weight 1 forms without CM;
        \item $h \neq g^\ast$, weight 1 forms without CM.
\end{enumerate}
Recall that we denote the form obtained by complex conjugation from $g$ by $g^*$ and that its corresponding representation is the contragredient of $\pi_g$. In the CM cases, $K$ represents the imaginary quadratic field. As in the previous sections, we denote the various analytic ranks by $r(E_K)=\operatorname{ord}_{s=1}(L(E_K, s))$, and $r(E_K,\psi)=\operatorname{ord}_{s=1}(L(E_K,\psi, s))$. We always consider elliptic curves with analytic (and algebraic) rank $0$ over $\Q$. In the first two categories of examples, we keep track of which among the various Hypotheses \ref{hyp:epsilon-1}, \ref{hyp:classicality}, \ref{hyp:Ccond_at_most_2}, \ref{hyp:A'}, \ref{hyp:B'}, and \ref{hyp:C'} is satisfied. All our computations are made with the help of \magma~ and we make extensive use of the LMFDB Database~\cite{lmfdb}; all the labels refer to items listed there. We developed a simple procedure for computing the Hecke characters $\psi_g$ and $\psi_h$ and hence studying order of vanishing of the $L$-functions; these routines can be found at \citeGitHub{2}. In the following tables, we highlight the primes of supercuspidal type in \textcolor{blue}{blue}.

\subsection{CM case, $h = g^\ast$}\label{section: digest h=g^*, CM}

We consider here the case of $h=g^*$ with CM by $K$. In this situation, the two characters $\psi_1$ and $\psi_2$ of Section \ref{section: Application: Elliptic Stark Conjecture in rank one} are, respectively, $\mathds{1}$ and $\psi = \psi_g\psi_h^{\sigma}=\psi_g/\psi_g^\sigma$, for $\sigma$ the generator of $\Gal(K/\Q)$. For each example below, we checked that Hypotheses \ref{hyp:A'}, \ref{hyp:B'}, and \ref{hyp:Ccond_at_most_2} are satisfied, so the Elliptic Stark Conjecture~\ref{conj:ES} applies. The stronger hypothesis~\ref{hyp:C'}, under which Theorem~\ref{thm:CM} applies, is only sometimes satisfied so we separate the examples into Tables~\ref{tab: digest table g eq h, CM, the CM applies} and \ref{tab: digest table g eq h, CM, conj no thm}, accordingly.

\begin{table}[H]
    \centering
    \caption{Examples where $h=g^*$, both with CM by the same field $K$. Hypotheses \ref{hyp:A'}, \ref{hyp:B'}, and \ref{hyp:C'} are satisfied; Theorem~\ref{thm:CM} applies.}
    \label{tab: digest table g eq h, CM, the CM applies}
    {\renewcommand{\arraystretch}{1.5}
    
    \begin{tabular}{|c|c|c|c|c|c|c|}
        
        \hline

        \hline
        \textnormal{$E$} & \textnormal{Level} & \textnormal{$g=h^*$}  & \textnormal{Level}& $r(E_K)+r(E_K,\psi)$ & $K$& $p$ \\
        \hline

        \hline
        \href{https://www.lmfdb.org/EllipticCurve/Q/14/a/2}{14.a2} & $2 \cdot\textcolor{blue}{7}$ & \href{https://www.lmfdb.org/ModularForm/GL2/Q/holomorphic/1911/1/h/c/}{1911.1.h.c} & $3\cdot \textcolor{blue}{7}^2\cdot 13 $ & $0\, + \, 1$  & $\Q(\sqrt{-39})$ & 41\\

        \hline

        \href{https://www.lmfdb.org/EllipticCurve/Q/14/a/2}{14.a.2} & $2 \cdot\textcolor{blue}{7}$ & \href{https://www.lmfdb.org/ModularForm/GL2/Q/holomorphic/1911/1/h/d/}{1911.1.h.d}  & $3\cdot \textcolor{blue}{7}^2\cdot 13 $ & $0\, + \, 1$  & $\Q(\sqrt{-39})$ & 5 \\ 

        \hline
        
        \href{https://www.lmfdb.org/EllipticCurve/Q/17/a/1}{17.a1} & $\textcolor{blue}{17}$ & \href{https://www.lmfdb.org/ModularForm/GL2/Q/holomorphic/2023/1/c/a/}{2023.1.c.a}  & $7\cdot\textcolor{blue}{17}^2$ & $0\, + \, 1$  & $\Q(\sqrt{-7})$ & 23\\
        
        \hline
        
        \href{https://www.lmfdb.org/EllipticCurve/Q/17/a/1}{17.a1} & $\textcolor{blue}{17}$ & \href{https://www.lmfdb.org/ModularForm/GL2/Q/holomorphic/2023/1/c/b/}{2023.1.c.b}  & $7\cdot\textcolor{blue}{17}^2$ & $0\, + \, 1$  & $\Q(\sqrt{-7})$ & 23\\
        
        \hline

        \href{https://www.lmfdb.org/EllipticCurve/Q/17/a/1}{17.a1} & $\textcolor{blue}{17}$ & \href{https://www.lmfdb.org/ModularForm/GL2/Q/holomorphic/2023/1/c/c/}{2023.1.c.c}  & $7\cdot\textcolor{blue}{17}^2$ & $0\, + \, 1$  & $\Q(\sqrt{-7})$ & 11\\ 
        
        \hline
        
        \href{https://www.lmfdb.org/EllipticCurve/Q/17/a/1}{17.a1} & $\textcolor{blue}{17}$ & \href{https://www.lmfdb.org/ModularForm/GL2/Q/holomorphic/2023/1/c/d/}{2023.1.c.d}  & $7\cdot\textcolor{blue}{17}^2$ & $0\, + \, 1$  & $\Q(\sqrt{-7})$ & 11\\
        
        \hline

        \href{https://www.lmfdb.org/EllipticCurve/Q/19/a/1}{19.a.1} & $\textcolor{blue}{19}$ &  \href{https://www.lmfdb.org/ModularForm/GL2/Q/holomorphic/2527/1/d/c/}{2527.1.d.c} & $7\cdot \textcolor{blue}{19}^2 $ & $0\, + \, 1$  & $\Q(\sqrt{-7})$ & 11\\
        
        \hline
        
        \href{https://www.lmfdb.org/EllipticCurve/Q/21/a/4}{21.a4} & $3\cdot \textcolor{blue}{7}$ & \href{https://www.lmfdb.org/ModularForm/GL2/Q/holomorphic/2695/1/l/a/}{2695.1.l.a}  & $5\cdot\textcolor{blue}{7}^2\cdot 11$ & $0\, + \, 1$  & $\Q(\sqrt{-11})$ & 37\\
        
        \hline
        
        \href{https://www.lmfdb.org/EllipticCurve/Q/21/a/4}{21.a4} & $3\cdot \textcolor{blue}{7}$ & \href{https://www.lmfdb.org/ModularForm/GL2/Q/holomorphic/2695/1/g/i/}{2695.1.g.i}  & $5\cdot\textcolor{blue}{7}^2\cdot 11$ & $0\, + \, 1$ & $\Q(\sqrt{-11})$ & 67\\
        
        \hline
        
        \href{https://www.lmfdb.org/EllipticCurve/Q/26/a/1}{26.a1} & $2 \cdot\textcolor{blue}{13}$ & \href{https://www.lmfdb.org/ModularForm/GL2/Q/holomorphic/1183/1/d/a/}{1183.1.d.a}  & $7\cdot \textcolor{blue}{13}^2 $ & $0\, + \, 1$  & $\Q(\sqrt{-7})$ & 11\\
        
        \hline
        
        \href{https://www.lmfdb.org/EllipticCurve/Q/26/b/1}{26.b1} & $2 \cdot\textcolor{blue}{13}$ & \href{https://www.lmfdb.org/ModularForm/GL2/Q/holomorphic/1183/1/d/a/}{1183.1.d.a}  & $7\cdot \textcolor{blue}{13}^2 $ & $0\, + \, 1$  & $\Q(\sqrt{-7})$ & 11\\
        
        \hline

        \href{https://www.lmfdb.org/EllipticCurve/Q/34/a/1}{34.a1} & $2\cdot \textcolor{blue}{17}$ & \href{https://www.lmfdb.org/ModularForm/GL2/Q/holomorphic/2023/1/c/a/}{2023.1.c.a}  & $7\cdot\textcolor{blue}{17}^2$ & $0\, + \, 1$  & $\Q(\sqrt{-7})$ & 11\\
        
        \hline
        
        \href{https://www.lmfdb.org/EllipticCurve/Q/34/a/1}{34.a1} & $2\cdot \textcolor{blue}{17}$ & \href{https://www.lmfdb.org/ModularForm/GL2/Q/holomorphic/2023/1/c/b/}{2023.1.c.b}  & $7\cdot\textcolor{blue}{17}^2$ & $0\, + \, 1$  & $\Q(\sqrt{-7})$ & 11\\
        
        \hline
        
        \href{https://www.lmfdb.org/EllipticCurve/Q/34/a/1}{34.a1} & $2\cdot \textcolor{blue}{17}$ & \href{https://www.lmfdb.org/ModularForm/GL2/Q/holomorphic/2023/1/c/d/}{2023.1.c.d}  & $7\cdot\textcolor{blue}{17}^2$ & $0\, + \, 1$  & $\Q(\sqrt{-7})$ & 11\\
        
        \hline
        
        \href{https://www.lmfdb.org/EllipticCurve/Q/52/a/2}{52.a2} & $2^2 \cdot\textcolor{blue}{13}$ & \href{https://www.lmfdb.org/ModularForm/GL2/Q/holomorphic/1183/1/d/a/}{1183.1.d.a}  & $7\cdot \textcolor{blue}{13}^2 $ & $0\, + \, 1$  & $\Q(\sqrt{-7})$ & 11\\
        
        \hline

        \href{https://www.lmfdb.org/EllipticCurve/Q/55/a/1}{55.a1} & $\textcolor{blue}{5}\cdot 11$ & \href{https://www.lmfdb.org/ModularForm/GL2/Q/holomorphic/175/1/d/a/}{175.1.d.a}  & $\textcolor{blue}{5}^2\cdot 7 $ & $0\, + \, 1$ & $\Q(\sqrt{-7})$ & 23\\
        
        \hline
        
        \href{https://www.lmfdb.org/EllipticCurve/Q/187/b/1}{187.b1} & $11\cdot \textcolor{blue}{17}$ & \href{https://www.lmfdb.org/ModularForm/GL2/Q/holomorphic/2023/1/c/c/}{2023.1.c.c}  & $7\cdot\textcolor{blue}{17}^2$ & $0\, + \, 1$  & $\Q(\sqrt{-7})$ & 11\\ 
        
        \hline
        
        \hline

    \end{tabular}
    }
\end{table}

\begin{table}[H]
    \centering
    \caption{Examples where $h=g^*$, both with CM by $K$. Hypotheses \ref{hyp:A'}, \ref{hyp:B'}, and \ref{hyp:Ccond_at_most_2} are satisfied, so Conjecture~\ref{conj:ES} applies. However, Theorem~\ref{thm:CM} does not apply as Hypothesis~\ref{hyp:C'} is not satisfied.}
    \label{tab: digest table g eq h, CM, conj no thm}
    {\renewcommand{\arraystretch}{1.5}
    
    \begin{tabular}{|c|c|c|c|c|c|c|}

        \hline
        
        \hline
        \textnormal{$E$} & \textnormal{Level} & \textnormal{$g$}  & \textnormal{Level}& $r(E_K)+r(E_K,\psi)$ & $K$& $p$ \\
        \hline

        \hline
        
        \href{https://www.lmfdb.org/EllipticCurve/Q/15/a/3}{15.a3} & $3\cdot \textcolor{blue}{5}$ & \href{https://www.lmfdb.org/ModularForm/GL2/Q/holomorphic/525/1/be/a/}{525.1.be.a}  & $3\cdot\textcolor{blue}{5}^2\cdot7$ & $0\, + \, 1$ & $\Q(\sqrt{-3})$ & 13 \\
        
        \hline
        
        \href{https://www.lmfdb.org/EllipticCurve/Q/15/a/3}{15.a3} & $3\cdot\textcolor{blue}{5}$ & \href{https://www.lmfdb.org/ModularForm/GL2/Q/holomorphic/525/1/k/a/}{525.1.k.a}  & $3\cdot\textcolor{blue}{5}^2\cdot7$ & $0\, + \, 1$ & $\Q(\sqrt{-3})$ & 13 \\
        
        \hline
        
        \href{https://www.lmfdb.org/EllipticCurve/Q/15/a/3}{15.a3} & $\textcolor{blue}{3}\cdot 5$ & \href{https://www.lmfdb.org/ModularForm/GL2/Q/holomorphic/693/1/h/a/}{693.1.h.a}  & $\textcolor{blue}{3}^2\cdot 7 \cdot 11$ & $1\, + \, 0$ & $\Q(\sqrt{-7})$ & 11 \\
        
        \hline
        \href{https://www.lmfdb.org/EllipticCurve/Q/30/a/7}{30.a7} & $2\cdot \textcolor{blue}{3}\cdot 5$ & \href{https://www.lmfdb.org/ModularForm/GL2/Q/holomorphic/693/1/bp/a/}{693.1.bp.a}  & $\textcolor{blue}{3}^2\cdot 7 \cdot 11$ & $1\, + \, 0$  & $\Q(\sqrt{-7})$ & 11 \\
        
        \hline
        
        \href{https://www.lmfdb.org/EllipticCurve/Q/35/a/1}{35.a1} & $\textcolor{blue}{5}\cdot7$ & \href{https://www.lmfdb.org/ModularForm/GL2/Q/holomorphic/525/1/p/a/}{525.1.p.a}  & $3\cdot\textcolor{blue}{5}^2\cdot7$ & $0\, + \, 1$  & $\Q(\sqrt{-3})$ & 13 \\
        
        \hline
        
        \href{https://www.lmfdb.org/EllipticCurve/Q/35/a/1}{35.a1} & $\textcolor{blue}{5}\cdot7$ & \href{https://www.lmfdb.org/ModularForm/GL2/Q/holomorphic/525/1/u/a/}{525.1.u.a}  & $3\cdot\textcolor{blue}{5}^2\cdot7$ & $0\, +  \, 1$ & $\Q(\sqrt{-3})$ & 13 \\
        
        \hline
        
        \href{https://www.lmfdb.org/EllipticCurve/Q/35/a/1}{35.a1} & $\textcolor{blue}{5}\cdot7$ & \href{https://www.lmfdb.org/ModularForm/GL2/Q/holomorphic/525/1/u/b/}{525.1.u.b}  & $3\cdot\textcolor{blue}{5}^2\cdot7$ & $0\, +  \, 1$  & $\Q(\sqrt{-3})$ & 13 \\
        
        \hline

        \href{https://www.lmfdb.org/EllipticCurve/Q/39/a/4}{39.a4} & $3\cdot\textcolor{blue}{13}$ & \href{https://www.lmfdb.org/ModularForm/GL2/Q/holomorphic/1183/1/d/a/}{1183.1.d.a}  & $7\cdot \textcolor{blue}{13}^2 $ & $0\, + \, 1$  & $\Q(\sqrt{-7})$ & 11\\
        
        \hline

        \href{https://www.lmfdb.org/EllipticCurve/Q/49/a/4}{49.a4} & $\textcolor{blue}{7}^2$ & \href{https://www.lmfdb.org/ModularForm/GL2/Q/holomorphic/539/1/c/b/}{539.1.c.b}  & $\textcolor{blue}{7}^2\cdot 11$ & $0\, + \, 1$ & $\Q(\sqrt{-11})$ & 5 \\

        \hline

        \href{https://www.lmfdb.org/EllipticCurve/Q/51/a/1}{51.a.1} & $3\cdot \textcolor{blue}{17}$ & \href{https://www.lmfdb.org/ModularForm/GL2/Q/holomorphic/2023/1/c/d/}{2023.1.c.d}  & $7\cdot\textcolor{blue}{17}^2$ & $1\, + \, 0$  & $\Q(\sqrt{-7})$ & 11 \\
        
        \hline

        \href{https://www.lmfdb.org/EllipticCurve/Q/65/a/2}{65.a2} & $5\cdot\textcolor{blue}{13}$ & \href{https://www.lmfdb.org/ModularForm/GL2/Q/holomorphic/1183/1/d/a/}{1183.1.d.a}  & $7\cdot \textcolor{blue}{13}^2 $ & $1\, + \, 0$  & $\Q(\sqrt{-7})$ & 11\\

        \hline
        
        \href{https://www.lmfdb.org/EllipticCurve/Q/85/a/2}{85.a.2} & $5\cdot \textcolor{blue}{17}$ & \href{https://www.lmfdb.org/ModularForm/GL2/Q/holomorphic/2023/1/c/a/}{2023.1.c.a}  & $7\cdot\textcolor{blue}{17}^2$ & $1\, + \, 0$  & $\Q(\sqrt{-7})$ & 23 \\
        
        \hline
        
        \href{https://www.lmfdb.org/EllipticCurve/Q/85/a/2}{85.a.2} & $5\cdot \textcolor{blue}{17}$ & \href{https://www.lmfdb.org/ModularForm/GL2/Q/holomorphic/2023/1/c/b/}{2023.1.c.b}  & $7\cdot\textcolor{blue}{17}^2$ & $1\, + \, 0$  & $\Q(\sqrt{-7})$ & 23 \\
        
        \hline
        
        \href{https://www.lmfdb.org/EllipticCurve/Q/85/a/2}{85.a.2} & $5\cdot \textcolor{blue}{17}$ & \href{https://www.lmfdb.org/ModularForm/GL2/Q/holomorphic/2023/1/c/d/}{2023.1.c.d}  & $7\cdot\textcolor{blue}{17}^2$ & $1\, + \, 0$  & $\Q(\sqrt{-7})$ & 23 \\
        
        \hline

        \href{https://www.lmfdb.org/EllipticCurve/Q/195/a/1}{195.a1} & $3 \cdot\textcolor{blue}{5} \cdot 13$ & \href{https://www.lmfdb.org/ModularForm/GL2/Q/holomorphic/175/1/d/a/}{175.1.d.a}  & $\textcolor{blue}{5}^2\cdot 7 $ & $0\, + \, 1$ & $\Q(\sqrt{-7})$ & 23 \\
        
        \hline

        \hline 
    \end{tabular}
    }
\end{table}

\FloatBarrier

\subsection{CM case, $h \neq g^\ast$}\label{section: digest h neq g^*, CM}

We consider here the case of $h\neq g^*$ with CM by the same field $K$ and we collect the examples into two tables. Note that, differently from the previous section (Section \ref{section: digest h=g^*, CM}), we have $\psi_1,\psi_2\neq\mathds{1}$, hence it becomes difficult to check Hypothesis~\ref{hyp:C'} for the examples in Table~\ref{tab: digest table g neq h, CM, theorem applies} and Table~\ref{tab: digest table g neq h, CM, conj no thm}. This last condition can be checked by explicitly recovering the Hecke characters $\psi_1$ and $\psi_2$ via \citeGitHub{2} and computing the local $\epsilon$-factors. We list a few forms satisfying all the Hypotheses \ref{hyp:A'}, \ref{hyp:B'}, and \ref{hyp:Ccond_at_most_2} in Table~\ref{tab: digest table g neq h, CM, theorem applies}. In these examples, Hypothesis~\ref{hyp:C'} represents a rather strict assumption but, as observed in Remark~\ref{rmk: weak heegner hyp}, it can be relaxed by extending \cite{Brooks}. Some pairs are related by a twist of a Dirichler character, while others are not. The latter represents the most interesting situation and one can notice that the Artin representation $V_g\otimes V_h$ decomposes as direct sum of two irreducible two-dimensional Artin representations. The second table of this section, that is, Table~\ref{tab: digest table g neq h, CM, conj no thm} presents a few examples not satisfying Hypothesis~\ref{hyp:C'}, i.e.\ ones where Theorem~\ref{thm:CM} does not apply, but Conjecture~\ref{conj:ES} does.


\begin{table}[H]
        \centering
        \caption{Examples where $h\neq g^*$ both with CM by the same field $K$. Hypotheses \ref{hyp:A'}, \ref{hyp:B'}, and \ref{hyp:C'} are satisfied; Theorem~\ref{thm:CM} applies.}
        \label{tab: digest table g neq h, CM, theorem applies}
        {\renewcommand{\arraystretch}{1.5}
        \begin{tabular}{|c|c|c|c|c|c|c|}
            \hline
            
            \hline
            \multirow{2}{*}{\textnormal{$E$}} & \multirow{2}{*}{\textnormal{Level}} & $g$  & \textnormal{Level} & \multirow{2}{*}{$r(E_K,\psi_1)+r(E_K,\psi_2)$} & \multirow{2}{*}{$K$} & \multirow{2}{*}{$p$} \\
            \cline{3-4}
            & & $h$ & \textnormal{Level}  & &  &\\

            \hline

            \hline
         
            \multirow{2}{*}{\href{https://www.lmfdb.org/EllipticCurve/Q/14/a/1}{14.a1}} & \multirow{2}{*}{$2\cdot \textcolor{blue}{7}$} & \href{https://www.lmfdb.org/ModularForm/GL2/Q/holomorphic/539/1/c/a/}{539.1.c.a} & $\textcolor{blue}{7}^2 \cdot 11$ &  \multirow{2}{*}{$0\, + \, 1$} & \multirow{2}{*}{$\Q(\sqrt{-11})$} &  \multirow{2}{*}{5} \\
            \cline{3-4}
            & & \href{https://www.lmfdb.org/ModularForm/GL2/Q/holomorphic/2156/1/h/b/}{2156.1.h.b}  & $2^2\cdot\textcolor{blue}{7}^2 \cdot 5$  & & & \\

            \hline
            
            \multirow{2}{*}{\href{https://www.lmfdb.org/EllipticCurve/Q/15/a/1}{15.a1}} & \multirow{2}{*}{$3\cdot \textcolor{blue}{5}$} & \href{https://www.lmfdb.org/ModularForm/GL2/Q/holomorphic/175/1/d/a/}{175.1.d.a} & $\textcolor{blue}{5}^2 \cdot 7$ &  \multirow{2}{*}{$0\, + \, 1$} & \multirow{2}{*}{$\Q(\sqrt{-7})$} &  \multirow{2}{*}{11} \\
            \cline{3-4}
            & & \href{https://www.lmfdb.org/ModularForm/GL2/Q/holomorphic/1575/1/h/d/}{1575.1.h.d}  & $3^2\cdot\textcolor{blue}{5}^2 \cdot 7$  & & & \\

            \hline

            \multirow{2}{*}{\href{https://www.lmfdb.org/EllipticCurve/Q/70/a/1}{70.a1}} & \multirow{2}{*}{$2\cdot \textcolor{blue}{5}\cdot 7$} & \href{https://www.lmfdb.org/ModularForm/GL2/Q/holomorphic/525/1/u/a/}{525.1.u.a} & $3 \cdot\textcolor{blue}{5}^2 \cdot 7$ &  \multirow{2}{*}{$0\, + \, 1$} & \multirow{2}{*}{$\Q(\sqrt{-3})$} &   \multirow{2}{*}{13}\\
            \cline{3-4}
            & & \href{https://www.lmfdb.org/ModularForm/GL2/Q/holomorphic/2100/1/bn/a/}{2100.1.bn.a}  & $2^2 \cdot 3 \cdot\textcolor{blue}{5}^2 \cdot 7$  & & & \\
            
            \hline
            
            \multirow{2}{*}{\href{https://www.lmfdb.org/EllipticCurve/Q/77/b/1}{77.b1}} & \multirow{2}{*}{$\textcolor{blue}{7} \cdot 11$} & \href{https://www.lmfdb.org/ModularForm/GL2/Q/holomorphic/539/1/c/a/}{539.1.c.a} & $\textcolor{blue}{7}^2 \cdot 11$ &  \multirow{2}{*}{$0\, + \, 1$} & \multirow{2}{*}{$\Q(\sqrt{-11})$} &  \multirow{2}{*}{5} \\
            \cline{3-4}
            & & \href{https://www.lmfdb.org/ModularForm/GL2/Q/holomorphic/539/1/c/b/}{539.1.c.b}  & $\textcolor{blue}{7}^2 \cdot 11$  & & & \\

            \hline

            \hline

        \end{tabular}
        }
\end{table}

\begin{table}[H]
        \centering
        \caption{Examples where $h\neq g^*$ both with CM by the same field $K$. Hypotheses \ref{hyp:A'}, \ref{hyp:B'}, and \ref{hyp:Ccond_at_most_2} are satisfied; Conjecture~\ref{conj:ES} applies.}
        \label{tab: digest table g neq h, CM, conj no thm}
        {\renewcommand{\arraystretch}{1.5}
        \begin{tabular}{|c|c|c|c|c|c|c|}
            \hline
        
            \hline
            \multirow{2}{*}{\textnormal{$E$}} & \multirow{2}{*}{\textnormal{Level}} & $g$  & \textnormal{Level} & \multirow{2}{*}{$r(E_K,\psi_1)+r(E_K,\psi_2)$} & \multirow{2}{*}{$K$} & \multirow{2}{*}{$p$} \\
            \cline{3-4}
            & & $h$ & \textnormal{Level}  & & & \\

            \hline
            
            \hline

            \multirow{2}{*}{\href{https://www.lmfdb.org/EllipticCurve/Q/15/a/1}{15.a1}} & \multirow{2}{*}{$3 \cdot\textcolor{blue}{5}$} & \href{https://www.lmfdb.org/ModularForm/GL2/Q/holomorphic/525/1/u/a/}{525.1.u.a} & $3 \cdot\textcolor{blue}{5}^2 \cdot 7$ &  \multirow{2}{*}{$0\, + \, 1$}  & \multirow{2}{*}{$\Q(\sqrt{-3})$} &   \multirow{2}{*}{13}\\
            \cline{3-4}
            & & \href{https://www.lmfdb.org/ModularForm/GL2/Q/holomorphic/2100/1/bn/a/}{2100.1.bn.a}  & $2^2 \cdot 3 \cdot\textcolor{blue}{5}^2 \cdot 7$  & & & \\
            
            \hline

            \multirow{2}{*}{\href{https://www.lmfdb.org/EllipticCurve/Q/20/a/1}{20.a1}} & \multirow{2}{*}{$2^2\cdot \textcolor{blue}{5}$} & \href{https://www.lmfdb.org/ModularForm/GL2/Q/holomorphic/525/1/u/a/}{525.1.u.a} & $3\cdot\textcolor{blue}{5}^2\cdot7$ &  \multirow{2}{*}{$0\, + \, 1$} & \multirow{2}{*}{$\Q(\sqrt{-3})$} &  \multirow{2}{*}{13} \\
            \cline{3-4}
            & & \href{https://www.lmfdb.org/ModularForm/GL2/Q/holomorphic/525/1/u/b/}{525.1.u.b}  & $3\cdot\textcolor{blue}{5}^2\cdot7$  & & & \\

            \hline
            
            \multirow{2}{*}{\href{https://www.lmfdb.org/EllipticCurve/Q/42/a/1}{42.a1}} & \multirow{2}{*}{$2\cdot 3 \cdot  \textcolor{blue}{7}$} & \href{https://www.lmfdb.org/ModularForm/GL2/Q/holomorphic/539/1/c/a/}{539.1.c.a} & $\textcolor{blue}{7}^2 \cdot 11$ &  \multirow{2}{*}{$0\, + \, 1$} & \multirow{2}{*}{$\Q(\sqrt{-11})$} &  \multirow{2}{*}{11} \\
            \cline{3-4}
            & & \href{https://www.lmfdb.org/ModularForm/GL2/Q/holomorphic/2156/1/h/b/}{2156.1.h.b}  & $2^2\cdot\textcolor{blue}{7}^2 \cdot 5$  & & & \\
            
            \hline

            \multirow{2}{*}{\href{https://www.lmfdb.org/EllipticCurve/Q/55/a/1}{55.a1}} & \multirow{2}{*}{$\textcolor{blue}{5}\cdot 11$} & \href{https://www.lmfdb.org/ModularForm/GL2/Q/holomorphic/525/1/u/a/}{525.1.u.a} & $3\cdot\textcolor{blue}{5}^2\cdot7$ &  \multirow{2}{*}{$0\, + \, 1$} & \multirow{2}{*}{$\Q(\sqrt{-3})$} &  \multirow{2}{*}{13} \\
            \cline{3-4}
            & & \href{https://www.lmfdb.org/ModularForm/GL2/Q/holomorphic/525/1/u/b/}{525.1.u.b}  & $3\cdot\textcolor{blue}{5}^2\cdot7$  & & & \\

            \hline

            \multirow{2}{*}{\href{https://www.lmfdb.org/EllipticCurve/Q/155/b/2}{155.a2}} & \multirow{2}{*}{$\textcolor{blue}{5}\cdot 31$} & \href{https://www.lmfdb.org/ModularForm/GL2/Q/holomorphic/525/1/u/a/}{525.1.u.a} & $3 \cdot\textcolor{blue}{5}^2 \cdot 7$ &  \multirow{2}{*}{$0\, + \, 1$} & \multirow{2}{*}{$\Q(\sqrt{-3})$} &   \multirow{2}{*}{13}\\
            \cline{3-4}
            & & \href{https://www.lmfdb.org/ModularForm/GL2/Q/holomorphic/2100/1/bn/a/}{2100.1.bn.a}  & $2^2 \cdot 3 \cdot\textcolor{blue}{5}^2 \cdot 7$  & & & \\
            
            \hline

            \multirow{2}{*}{\href{https://www.lmfdb.org/EllipticCurve/Q/155/b/2}{155.a2}} & \multirow{2}{*}{$\textcolor{blue}{5}\cdot 31$} & \href{https://www.lmfdb.org/ModularForm/GL2/Q/holomorphic/525/1/u/a/}{525.1.u.a} & $3 \cdot\textcolor{blue}{5}^2 \cdot 7$ &  \multirow{2}{*}{$0\, + \, 1$} & \multirow{2}{*}{$\Q(\sqrt{-3})$} &   \multirow{2}{*}{13}\\
            \cline{3-4}
            & &  \href{https://www.lmfdb.org/ModularForm/GL2/Q/holomorphic/2100/1/bn/b/}{2100.1.bn.b}  & $2^2 \cdot 3 \cdot\textcolor{blue}{5}^2 \cdot 7$  & & & \\

            \hline

            \multirow{2}{*}{\href{https://www.lmfdb.org/EllipticCurve/Q/210/a/1}{210.a1}} & \multirow{2}{*}{$2\cdot 3 \cdot \textcolor{blue}{5}\cdot 7$} & \href{https://www.lmfdb.org/ModularForm/GL2/Q/holomorphic/525/1/u/a/}{525.1.u.a} & $3 \cdot\textcolor{blue}{5}^2 \cdot 7$ &  \multirow{2}{*}{$0\, + \, 1$} & \multirow{2}{*}{$\Q(\sqrt{-3})$} &   \multirow{2}{*}{13}\\
            \cline{3-4}
            & & \href{https://www.lmfdb.org/ModularForm/GL2/Q/holomorphic/2100/1/bn/a/}{2100.1.bn.a}  & $2^2 \cdot 3 \cdot\textcolor{blue}{5}^2 \cdot 7$  & & & \\

            \hline

            \multirow{2}{*}{\href{https://www.lmfdb.org/EllipticCurve/Q/210/d/1}{210.d1}} & \multirow{2}{*}{$2\cdot 3 \cdot \textcolor{blue}{5}\cdot 7$} & \href{https://www.lmfdb.org/ModularForm/GL2/Q/holomorphic/525/1/u/a/}{525.1.u.a} & $3 \cdot\textcolor{blue}{5}^2 \cdot 7$ &  \multirow{2}{*}{$0\, + \, 1$} & \multirow{2}{*}{$\Q(\sqrt{-3})$} &   \multirow{2}{*}{13}\\
            \cline{3-4}
            & & \href{https://www.lmfdb.org/ModularForm/GL2/Q/holomorphic/2100/1/bn/a/}{2100.1.bn.a}  & $2^2 \cdot 3 \cdot\textcolor{blue}{5}^2 \cdot 7$  & & & \\

            \hline

            \multirow{2}{*}{\href{https://www.lmfdb.org/EllipticCurve/Q/490/a/1}{490.a1}} & \multirow{2}{*}{$2\cdot 3 \cdot \textcolor{blue}{5}\cdot 7$} & \href{https://www.lmfdb.org/ModularForm/GL2/Q/holomorphic/525/1/u/a/}{525.1.u.a} & $3 \cdot\textcolor{blue}{5}^2 \cdot 7$ &  \multirow{2}{*}{$0\, + \, 1$} & \multirow{2}{*}{$\Q(\sqrt{-3})$} &   \multirow{2}{*}{13}\\
            \cline{3-4}
            & & \href{https://www.lmfdb.org/ModularForm/GL2/Q/holomorphic/2100/1/bn/a/}{2100.1.bn.a}  & $2^2 \cdot 3 \cdot\textcolor{blue}{5}^2 \cdot 7$  & & & \\

            \hline
            
            \hline
            
        \end{tabular}
        }
\end{table}

\begin{remark}
    It is not automatic that finding the right level and character produces a situation where our work applies. By Proposition~\ref{prop:epsilon=-1}~(2) (\cite[Proposition 8.5]{Prasad1990}), in order to obtain local sign $\varepsilon_\ell(E, \varrho_{gh})=-1$, the supercuspidal representations $\pi_{g,\ell}$ and $\pi_{h,\ell}$ need to satisfy $\pi_{g,\ell}\cong \pi_{h,\ell}^*=\pi_{h^*,\ell}$. There are several examples which fail this final condition:
    \begin{itemize}
        \item $g$: \href{https://www.lmfdb.org/ModularForm/GL2/Q/holomorphic/2023/1/c/a/}{2023.1.c.a}, \href{https://www.lmfdb.org/ModularForm/GL2/Q/holomorphic/2023/1/c/b/}{2023.1.c.b} and $h$:  \href{https://www.lmfdb.org/ModularForm/GL2/Q/holomorphic/2023/1/c/c/}{2023.1.c.c}, \href{https://www.lmfdb.org/ModularForm/GL2/Q/holomorphic/2023/1/c/d/}{2023.1.c.d};
        \item $g$: \href{https://www.lmfdb.org/ModularForm/GL2/Q/holomorphic/2527/1/d/c/}{2527.1.d.c}, \href{https://www.lmfdb.org/ModularForm/GL2/Q/holomorphic/2527/1/d/d/}{2527.1.d.d} and $h$:  \href{https://www.lmfdb.org/ModularForm/GL2/Q/holomorphic/2527/1/d/f/}{2527.1.d.f}.
    \end{itemize}
    
\end{remark}

\FloatBarrier

\subsection{Non-CM case, $h = g^\ast$} In this section we report a few examples to which our Conjecture~\ref{conj:ES} applies, but the form $g$ does not have CM, so Theorem~\ref{thm:CM} does not. We divide the examples depending on the projective image of the Artin representation associated with $g$.

\subsubsection{RM but no CM:} We begin by giving examples of some weight 1 modular forms with RM by a field $F/\Q$ but without CM. When $h = g^\ast$, we once again have
\begin{equation}
    V_g\otimes V_{g^*} = \Ind^F_\Q(\mathds{1}) \oplus V_\psi,
\end{equation}
for $\psi =\psi_g/\psi_g^\sigma$; here $\sigma$ is a generator of $\Gal(F/\Q)$.
\begin{table}[ht!]
    \centering
    \caption{Examples where $h=g^*$, both with RM by $F$; $r(E_F)$ and $r(E_F,\psi)$ are computed.}
    \label{tab:digest_table_geq_h_RM}
    {\renewcommand{\arraystretch}{1.5}
    \begin{tabular}{|c|c|c|c|c|c|c|c|}
        \hline
        \textnormal{$E$} & \textnormal{Level} & $g$ & \textnormal{Level} & $r(E_F) + r(E_F,\psi)$ & $F$ & $p$\\
        \hline
        
        \hline
        \href{https://www.lmfdb.org/EllipticCurve/Q/15/a/1}{15.a1} & $3\cdot\textcolor{blue}{5}$ & \href{https://www.lmfdb.org/ModularForm/GL2/Q/holomorphic/1025/1/i/a/}{1025.1.i.a}  & $\textcolor{blue}{5}^2\cdot 41$ & 0\, +\, 1 & $\Q(\sqrt{5})$ & 11\\
        \hline
        \href{https://www.lmfdb.org/EllipticCurve/Q/15/a/1}{15.a1} & $3\cdot\textcolor{blue}{5}$ & \href{https://www.lmfdb.org/ModularForm/GL2/Q/holomorphic/1025/1/f/a/}{1025.1.f.a}  & $\textcolor{blue}{5}^2\cdot 41$ & 0\, +\, 1 & $\Q(\sqrt{5})$ & 11\\
        
        \hline
        \href{https://www.lmfdb.org/EllipticCurve/Q/20/a/1}{20.a1} & $2^2\cdot\textcolor{blue}{5}$ & \href{https://www.lmfdb.org/ModularForm/GL2/Q/holomorphic/1025/1/i/a/}{1025.1.i.a}  & $\textcolor{blue}{5}^2\cdot 41$ & 0\, +\, 1 & $\Q(\sqrt{5})$ & 11\\
        
        \hline
        \href{https://www.lmfdb.org/EllipticCurve/Q/20/a/1}{20.a1} & $2^2\cdot\textcolor{blue}{5}$ & \href{https://www.lmfdb.org/ModularForm/GL2/Q/holomorphic/1025/1/f/a/}{1025.1.f.a}  & $\textcolor{blue}{5}^2\cdot 41$ & 0\, +\, 1 & $\Q(\sqrt{5})$ & 11\\

        \hline
        \href{https://www.lmfdb.org/EllipticCurve/Q/21/a/1}{21.a1} & $\textcolor{blue}{3}\cdot 7$ & \href{https://www.lmfdb.org/ModularForm/GL2/Q/holomorphic/396/1/d/a/}{396.1.d.a}  & $2^2\cdot \textcolor{blue}{3}^2\cdot 11$ & 1\, +\, 0 & $\Q(\sqrt{11})$ & 5\\ 
        
        \hline
        \href{https://www.lmfdb.org/EllipticCurve/Q/30/a/1}{30.a1} & $2\cdot 3 \cdot\textcolor{blue}{5}$ & \href{https://www.lmfdb.org/ModularForm/GL2/Q/holomorphic/1025/1/i/a/}{1025.1.i.a}  & $\textcolor{blue}{5}^2\cdot 41$ & 0\, +\, 1 & $\Q(\sqrt{5})$ & 11\\
        
        \hline
        \href{https://www.lmfdb.org/EllipticCurve/Q/30/a/1}{30.a1} & $2\cdot 3 \cdot\textcolor{blue}{5}$ & \href{https://www.lmfdb.org/ModularForm/GL2/Q/holomorphic/1025/1/f/a/}{1025.1.f.a}  & $\textcolor{blue}{5}^2\cdot 41$ & 0\, +\, 1 & $\Q(\sqrt{5})$ & 11\\

        \hline
        \href{https://www.lmfdb.org/EllipticCurve/Q/39/a/1}{39.a1} & $\textcolor{blue}{3}\cdot 7$ & \href{https://www.lmfdb.org/ModularForm/GL2/Q/holomorphic/396/1/d/a/}{396.1.d.a}  & $2^2\cdot \textcolor{blue}{3}^2\cdot 11$ & 0\, +\, 1 & $\Q(\sqrt{11})$ & 5\\ 
        
        \hline

        \hline
    \end{tabular}
    }
\end{table}

\FloatBarrier

\subsubsection{$A_4$:} To compute these examples, we make wide use of the technical remarks in \cite[Section 5.1]{Darmon_Lauder_Rotger_Stark_points}. In this situation, $V_g\otimes V_{g^*}$ decomposes as $\mathds{1} \oplus \Ad_g$, for $\Ad_g$ the adjoint representation of $g$. Therefore it is enough to compute the order of vanishing $\mathrm{ord}_{s=1}L(E,\Ad_g,s)$. Let $H$ be the Galois field defined by the \emph{projective} Artin representation associated with $g$, let $h(x)$ be a degree 4 polynomial whose splitting field is $H$. Denoting by $M$ the field generated by a single root of $h(x)$, by \cite[Section 5.1.1]{Darmon_Lauder_Rotger_Stark_points}, we have
\begin{equation}
    r(E,\Ad_g)=\mathrm{ord}_{s=1}L(E,\Ad_g,s)= r(E_M) - r(E).
\end{equation}
Hence, $\mathrm{ord}_{s=1} L(f\times g \times h, 1) = r(E_M)$.

\begin{table}[ht!]
    \centering
    \caption{Examples where $h=g^*$ is an exotic form of projective type $A_4$; $r(E_M)$ and $r(E_H)$ are computed.}
    \label{tab:digest_table_geq_h_A4}
    {\renewcommand{\arraystretch}{1.5}
    \begin{tabular}{|c|c|c|c|c|c|c|c|c|}
        \hline
        \textnormal{$E$} & \textnormal{Level} & \textnormal{$g$} & \textnormal{Level} & $r(E)$ & $r(E_M)$ & $r(E_H)$ & Polynomial of $M$ & $p$ \\
        \hline
        
        \hline
        
        \href{https://www.lmfdb.org/EllipticCurve/Q/15/a/1}{15.a1} & $3\cdot\textcolor{blue}{5}$ & \href{https://www.lmfdb.org/ModularForm/GL2/Q/holomorphic/325/1/u/a/}{325.1.u.a} & $\textcolor{blue}{5}^2\cdot 13$ & 0 & 1 & $3$ & $x^4 - x^3 - 3\cdot x + 4$ & 11 \\
        \hline
        
        \href{https://www.lmfdb.org/EllipticCurve/Q/26/a/1}{26.a1} & $2\cdot\textcolor{blue}{13}$ & \href{https://www.lmfdb.org/ModularForm/GL2/Q/holomorphic/1183/1/x/a/}{1183.1.x.a} & $7\cdot \textcolor{blue}{13}^2$ & 0 & 1 & $3$ & $x^4 - x^3 + 5\cdot x^2 - 4\cdot x + 3$ & 11 \\
        \hline
        
        \href{https://www.lmfdb.org/EllipticCurve/Q/26/a/1}{26.a1} & $2\cdot\textcolor{blue}{13}$ & \href{https://www.lmfdb.org/ModularForm/GL2/Q/holomorphic/1183/1/z/a/}{1183.1.z.a} & $7\cdot\textcolor{blue}{13}^2$ & 0 & 1 & $3$ & $x^4 - x^3 + 5\cdot x^2 - 4\cdot x + 3$ & 11 \\
        \hline
        
        \href{https://www.lmfdb.org/EllipticCurve/Q/26/a/1}{26.a1} & $2\cdot\textcolor{blue}{13}$ & \href{https://www.lmfdb.org/ModularForm/GL2/Q/holomorphic/1183/1/bd/a/}{1183.1.bd.a} & $7\cdot\textcolor{blue}{13}^2$ & 0 & 1 & $3$ & $x^4 - x^3 + 5\cdot x^2 - 4\cdot x + 3$ & 11 \\
        \hline
        
        \href{https://www.lmfdb.org/EllipticCurve/Q/39/a/1}{39.a1} & $3\cdot\textcolor{blue}{13}$ & \href{https://www.lmfdb.org/ModularForm/GL2/Q/holomorphic/1183/1/x/a/}{1183.1.x.a} & $7\cdot \textcolor{blue}{13}^2$ & 0 & 1 & $5$ & $x^4 - x^3 + 5\cdot x^2 - 4\cdot x + 3$ & 11 \\
        \hline
        
        \href{https://www.lmfdb.org/EllipticCurve/Q/39/a/1}{39.a1} & $3\cdot\textcolor{blue}{13}$ & \href{https://www.lmfdb.org/ModularForm/GL2/Q/holomorphic/1183/1/z/a/}{1183.1.z.a} & $7\cdot\textcolor{blue}{13}^2$ & 0 & 1 & $5$ & $x^4 - x^3 + 5\cdot x^2 - 4\cdot x + 3$ & 11 \\
        \hline
        
        \href{https://www.lmfdb.org/EllipticCurve/Q/39/a/1}{39.a1} & $3\cdot\textcolor{blue}{13}$ & \href{https://www.lmfdb.org/ModularForm/GL2/Q/holomorphic/1183/1/bd/a/}{1183.1.bd.a} & $7\cdot\textcolor{blue}{13}^2$ & 0 & 1 & $5$ & $x^4 - x^3 + 5\cdot x^2 - 4\cdot x + 3$ & 11 \\
        \hline
        
        \hline
    \end{tabular}
    }
\end{table}

\FloatBarrier

\subsubsection{$S_4$:} As above, $V_g\otimes V_h$ decomposes as $\mathds{1} \oplus \Ad_g$.
\begin{table}[H]
    \centering
    \caption{Examples where $h=g^*$ is an exotic form of projective type $S_4$; $r(E)$ and $r(E,\Ad_g)$ are computed.}
    \label{tab:digest_table_geq_h_S4}
    {\renewcommand{\arraystretch}{1.5}
    
    \begin{tabular}{|c|c|c|c|c|c|}
        
        \hline
        
        \textnormal{$E$} & \textnormal{Level} & \textnormal{$g$}  & \textnormal{Level}& $r(E)+r(E,\Ad_g)$ & $p$ \\
        
        \hline

        \hline
        
        \href{https://www.lmfdb.org/EllipticCurve/Q/11/a/1}{11.a1} & $\textcolor{blue}{11}$ & \href{https://www.lmfdb.org/ModularForm/GL2/Q/holomorphic/968/1/h/a/}{968.1.h.a}  & $2^3\cdot \textcolor{blue}{11}^2$ & $0\, + \, 1$ & 5 \\
        
        \hline

        \href{https://www.lmfdb.org/EllipticCurve/Q/33/a/1}{33.a1} & $3\cdot \textcolor{blue}{11}$ & \href{https://www.lmfdb.org/ModularForm/GL2/Q/holomorphic/968/1/h/a/}{968.1.h.a}  & $2^3\cdot \textcolor{blue}{11}^2$ & $0\, + \, 1$ & 5 \\
        
        \hline

        \href{https://www.lmfdb.org/EllipticCurve/Q/55/a/1}{55.a1} & $5\cdot \textcolor{blue}{11}$ & \href{https://www.lmfdb.org/ModularForm/GL2/Q/holomorphic/968/1/h/a/}{968.1.h.a}  & $2^3\cdot \textcolor{blue}{11}^2$ & $0\, + \, 1$ & 23 \\
        
        \hline
        
        \hline
    \end{tabular}
    }
\end{table}

We report here a few more modular forms to which our setting applies, but computing the corresponding analytic ranks seems computationally demanding: \href{https://www.lmfdb.org/ModularForm/GL2/Q/holomorphic/1224/1/m/a/}{1224.1.m.a}, \href{https://www.lmfdb.org/ModularForm/GL2/Q/holomorphic/1224/1/m/b/}{1224.1.m.b}, \href{https://www.lmfdb.org/ModularForm/GL2/Q/holomorphic/1800/1/l/a/}{1800.1.l.a}, and \href{https://www.lmfdb.org/ModularForm/GL2/Q/holomorphic/1800/1/l/b/}{1800.1.l.b}.

\FloatBarrier

\subsubsection{$A_5$:}

Unfortunately, computing examples with forms of projective image $A_5$
seems out of reach at the present moment, due to the difficulty of computing Artin representations over degree 60 number fields. Even trying to address the problem as in \cite[Section 5.1.2]{Darmon_Lauder_Rotger_Stark_points}, as the computation of the unique subfield of degree 2 contained in the Artin field is demanding. However, we list here a few modular forms to which our setting should apply: \href{https://www.lmfdb.org/ModularForm/GL2/Q/holomorphic/1825/1/y/a/}{1825.1.y.a}, \href{https://www.lmfdb.org/ModularForm/GL2/Q/holomorphic/2079/1/dd/a/}{2079.1.dd.a}, \href{https://www.lmfdb.org/ModularForm/GL2/Q/holomorphic/3069/1/cd/a/}{3069.1.cd.a}, \href{https://www.lmfdb.org/ModularForm/GL2/Q/holomorphic/3069/1/cd/b/}{3069.1.cd.b}, and \href{https://www.lmfdb.org/ModularForm/GL2/Q/holomorphic/3168/1/cb/a/}{3168.1.cb.a}.

\subsection{Non-CM case, $h \neq g^\ast$}
Computing examples in this setting is rather challenging, but we present a few examples of dihedral forms with RM and no CM, and exotic ones with $S_4$ projective image. In both cases, we only consider forms $g$ and $h$ with are related by a twist by Dirichlet character.

\subsubsection{RM but no CM:} In this situation, we consider $g$ and $h$ with RM by the same field $\Q(\sqrt{5})$.
We write $V_g\otimes V_h=\Ind^\Q_{\Q(\sqrt{5})}(\psi_g)\otimes\Ind^\Q_{\Q(\sqrt{5})}(\psi_h) \Ind^\Q_{\Q(\sqrt{5})}(\psi_g \psi_h)\oplus\Ind^\Q_{\Q(\sqrt{5})}(\psi_g \psi_h^\sigma)$. 

\begin{table}[ht!]
    \centering
    \caption{Examples where $h\neq g^*$ are dihedral forms, both with RM by $\Q(\sqrt{5})$ and without CM.}
    \label{tab:digest_table_gneq_h_RM}
    {\renewcommand{\arraystretch}{1.5}
    \begin{tabular}{|c|c|c|c|c|c|}
        \hline
        \multirow{2}{*}{\textnormal{$E$}} & \multirow{2}{*}{\textnormal{Level}} & $g$ & \textnormal{Level} & \multirow{2}{*}{$r(E_F,\psi_g\psi_h) + r(E_F,\psi_g\psi_h^\sigma)$} & \multirow{2}{*}{$p$} \\
        \cline{3-4}
        & & $h$ & \textnormal{Level}  & & \\

        \hline
        
        \hline
        \multirow{2}{*}{\href{https://www.lmfdb.org/EllipticCurve/Q/15/a/1}{15.a1}} & \multirow{2}{*}{$3 \cdot\textcolor{blue}{5}$} & \href{https://www.lmfdb.org/ModularForm/GL2/Q/holomorphic/1025/1/i/a/}{1025.1.i.a} & $\textcolor{blue}{5}^2\cdot 41$ & \multirow{2}{*}{0\, +\, 1} & \multirow{2}{*}{11}\\
        \cline{3-4}
        & & \href{https://www.lmfdb.org/ModularForm/GL2/Q/holomorphic/1025/1/f/a/}{1025.1.f.a}  & $\textcolor{blue}{5}^2\cdot 41$  & & \\
        
        \hline
        \multirow{2}{*}{\href{https://www.lmfdb.org/EllipticCurve/Q/20/a/1}{20.a1}} & \multirow{2}{*}{$2^2\cdot\textcolor{blue}{5}$} & \href{https://www.lmfdb.org/ModularForm/GL2/Q/holomorphic/1025/1/i/a/}{1025.1.i.a} & $\textcolor{blue}{5}^2\cdot 41$ & \multirow{2}{*}{1\, +\, 0} & \multirow{2}{*}{11}\\
        \cline{3-4}
        & & \href{https://www.lmfdb.org/ModularForm/GL2/Q/holomorphic/1025/1/f/a/}{1025.1.f.a}  & $\textcolor{blue}{5}^2\cdot 41$  & & \\
        
        \hline
        \multirow{2}{*}{\href{https://www.lmfdb.org/EllipticCurve/Q/30/a/1}{30.a1}} & \multirow{2}{*}{$2\cdot 3 \cdot\textcolor{blue}{5}$} & \href{https://www.lmfdb.org/ModularForm/GL2/Q/holomorphic/1025/1/i/a/}{1025.1.i.a} & $\textcolor{blue}{5}^2\cdot 41$ & \multirow{2}{*}{0\, +\, 1} & \multirow{2}{*}{11}\\
        \cline{3-4}
        & & \href{https://www.lmfdb.org/ModularForm/GL2/Q/holomorphic/1025/1/f/a/}{1025.1.f.a}  & $\textcolor{blue}{5}^2\cdot 41$  & & \\
        
        \hline
        
        \hline
    \end{tabular}
    }
\end{table}

\FloatBarrier

\subsubsection{$S_4$:} We consider the case of $g\neq h^*$ but $h=g\otimes \chi$, for $\chi$ a Dirichlet character. In this situation, the representation $V_g\otimes V_h$ decomposes as $\rho_1\oplus \rho_3$ for $\rho_d$ a $d$-dimensional Artin representation, $d=1,3$.
\begin{table}[ht!]
    \centering
    \caption{Examples where $h\neq g^*$, $h=g\otimes \chi$,  exotic forms of projective type $S_4$; $r(E,\rho_1)$ and $r(E,\rho_3)$ are computed.}
    \label{tab:digest_table_gneq_h_non-CM_non-twisted}
    {\renewcommand{\arraystretch}{1.5}
    \begin{tabular}{|c|c|c|c|c|c|}
        \hline
        \multirow{2}{*}{\textnormal{$E$}} & \multirow{2}{*}{\textnormal{Level}} & $g$ & \textnormal{Level} & \multirow{2}{*}{$r(E,\rho_1)+r(E,\rho_3)$} & \multirow{2}{*}{$p$} \\
        \cline{3-4}
        & & $h$ & \textnormal{Level}  & & \\

        \hline
        
        \hline
        
        \multirow{2}{*}{\href{https://www.lmfdb.org/EllipticCurve/Q/15/a/1}{15.a1}} & \multirow{2}{*}{$\textcolor{blue}{3}\cdot 5$} & \href{https://www.lmfdb.org/ModularForm/GL2/Q/holomorphic/981/1/d/a/}{981.1.d.a} & $\textcolor{blue}{3}^2 \cdot 109$ &  \multirow{2}{*}{$0\, + \, 1$} & \multirow{2}{*}{7} \\
        \cline{3-4}
        & & \href{https://www.lmfdb.org/ModularForm/GL2/Q/holomorphic/981/1/d/b/}{981.1.d.b}  & $\textcolor{blue}{3}^2 \cdot 109$  & & \\
        
        \hline
        
        \multirow{2}{*}{\href{https://www.lmfdb.org/EllipticCurve/Q/24/a/1}{24.a1}} & \multirow{2}{*}{$2^3\cdot \textcolor{blue}{3}$} & \href{https://www.lmfdb.org/ModularForm/GL2/Q/holomorphic/981/1/d/a/}{981.1.d.a} & $\textcolor{blue}{3}^2 \cdot 109$ &  \multirow{2}{*}{$1\, + \, 0$} & \multirow{2}{*}{7} \\
        \cline{3-4}
        & & \href{https://www.lmfdb.org/ModularForm/GL2/Q/holomorphic/981/1/d/b/}{981.1.d.b}  & $\textcolor{blue}{3}^2 \cdot 109$  & & \\
        
        \hline
        
        \multirow{2}{*}{\href{https://www.lmfdb.org/EllipticCurve/Q/30/a/1}{30.a1}} & \multirow{2}{*}{$2\cdot\textcolor{blue}{3}\cdot 5$} & \href{https://www.lmfdb.org/ModularForm/GL2/Q/holomorphic/981/1/d/a/}{981.1.d.a} & $\textcolor{blue}{3}^2 \cdot 109$ &  \multirow{2}{*}{$1\, + \, 0$} & \multirow{2}{*}{7} \\
        \cline{3-4}
        & & \href{https://www.lmfdb.org/ModularForm/GL2/Q/holomorphic/981/1/d/b/}{981.1.d.b}  & $\textcolor{blue}{3}^2 \cdot 109$  & & \\
        
        \hline
        
        \multirow{2}{*}{\href{https://www.lmfdb.org/EllipticCurve/Q/33/a/1}{33.a1}} & \multirow{2}{*}{$\textcolor{blue}{3}\cdot 11$} & \href{https://www.lmfdb.org/ModularForm/GL2/Q/holomorphic/981/1/d/a/}{981.1.d.a} & $\textcolor{blue}{3}^2 \cdot 109$ &  \multirow{2}{*}{$1\, + \, 0$} & \multirow{2}{*}{7} \\
        \cline{3-4}
        & & \href{https://www.lmfdb.org/ModularForm/GL2/Q/holomorphic/981/1/d/b/}{981.1.d.b}  & $\textcolor{blue}{3}^2 \cdot 109$  & & \\
        
        \hline
        
        \hline
    \end{tabular}
    }
\end{table}

\FloatBarrier

{\setstretch{1.0}
    \bibliographystyle{amsalpha}
	\bibliography{Bibliography}
    \addtocontents{toc}{\protect\setcounter{tocdepth}{-1}}\bibliographystyleGitHub{unsrt}
    \bibliographyGitHub{GitHubRepositories}
 }
{   \hypersetup{hidelinks}	
	\Addresses}

\end{document}